\newcolumntype{P}[1]{>{\centering\arraybackslash}p{#1}}
\newtheorem{lemma}{Lemma}
\newtheorem{proposition}{Proposition}
\newtheorem{remark}{Remark}
\numberwithin{equation}{section}
\begin{document}
\title{On the design of energy-decaying momentum-conserving integrator for nonlinear dynamics using energy splitting and perturbation techniques}
\author{Ju Liu \\
\textit{\small Department of Mechanics and Aerospace Engineering,}\\
\textit{\small Southern University of Science and Technology,}\\
\textit{\small 1088 Xueyuan Avenue, Shenzhen, Guangdong 518055, China}\\
\textit{\small Guangdong-Hong Kong-Macao Joint Laboratory for Data-Driven Fluid Mechanics and Engineering Applications,}\\
\textit{\small Southern University of Science and Technology}\\
\textit{\small 1088 Xueyuan Avenue, Shenzhen, Guangdong 518055, China}\\
\small \textit{E-mail address:} liuj36@sustech.edu.cn, liujuy@gmail.com
}
\date{}
\maketitle

\section*{Abstract}
This work proposes a suite of numerical techniques to facilitate the design of structure-preserving integrators for nonlinear dynamics. The celebrated LaBudde-Greenspan integrator and various energy-momentum schemes adopt a difference quotient formula in their algorithmic force definitions, which suffers from numerical instability as the denominator gets close to zero. There is a need to develop structure-preserving integrators without invoking the quotient formula. In this work, the potential energy of a Hamiltonian system is split into two parts, and specially developed quadrature rules are applied separately to them. The resulting integrators can be regarded as classical ones perturbed with first- or second-order terms, and the energy split guarantees the dissipative nature in the numerical residual. In the meantime, the conservation of invariants is respected in the design. A complete analysis of the proposed integrators is given, with representative numerical examples provided to demonstrate their performance. They can be used either independently as energy-decaying and momentum-conserving schemes for nonlinear problems or as an alternate option with a conserving integrator, such as the LaBudde-Greenspan integrator, when the numerical instability in the difference quotient is detected.

\vspace{5mm}

\noindent \textbf{Keywords:} Nonlinear dynamics, time integration, quadrature, energy-momentum scheme, many-body dynamics

\section{Introduction}
\label{sec:introduction}
Nonlinear dynamics presents itself as a prototypical model problem in sciences and engineering, as it is related to molecular dynamics, celestial dynamics, structural dynamics after Galerkin projection, and computer graphics, among others \cite{Hairer2006,Sharma2020}. It is anticipated that the properties of continuum models can be inherited to the discrete model, which is critical and beneficial for practical predictions. Among different attributes, stability plays a central role in guaranteeing reliable calculations, and energy is a natural candidate as the metric for stability characterization. There have been a few different approaches for ensuring energy stability, either in terms of energy conservation or energy decaying. The Lagrange multiplier was introduced to enforce energy conservation as a constraint for the trapezoidal scheme \cite{Hughes1978}, and additional constraints were introduced to enforce momentum conservation \cite{Kuhl1996}. Nevertheless, adding additional constraints has been observed to suffer from convergence issues within the nonlinear solver \cite{Kuhl1996}. There has been another branch of methods developed, which is known as the energy-momentum conserving algorithms. The design of such schemes is typically based on the Hamiltonian systems with symmetry, and the concept of stability is generalized to the concept of preserving invariants of the motion.  In particle dynamics, the conservative force is represented by a difference quotient formula that guarantees the conservation of total energy and angular momentum ab initio \cite{Chorin1978,Greenspan1984,LaBudde1974,LaBudde1976}. This strategy has inspired the development of energy-momentum conserving algorithms in nonlinear dynamics as well as elastodynamics \cite{Simo1992,Simo1992a}. In particular, a collocation scheme was proposed by Simo with a parameter determined by the mean value theorem \cite{Simo1992}. A discrete gradient formula was later proposed by Gonzalez that is applicable to arbitrary nonlinear elastic materials \cite{Gonzalez1996a,Gonzalez2000}. A variety of formulas was developed following the work of Gonzalez \cite{Armero2007,Bui2007,Orden2019}, and it has been recently found that there exist infinitely many energy-momentum formulas \cite{Romero2012}. Oftentimes, it is desirable to introduce numerical dissipation, and this leads to the concept of energy-decaying and momentum-preserving algorithms. The introduction of numerical dissipation for nonlinear problems was initially developed by Armero and Romero \cite{Armero2001,Armero2001a}. Analogous to the HHT-$\alpha$ scheme \cite{Hilber1977}, its design goal was to damp the poorly resolved high-frequency modes without harming the low-frequency modes.

To date, the energy-momentum consistent integrators have been applied to a variety of areas \cite{Sharma2020,VuQuoc1993}. Here, we want to point out an issue that has plagued practical calculations. The aforementioned energy-momentum consistent integrators can be viewed as generalizations of the LaBudde-Greenspan integrator, which approximate, for example, the conservative force $\hat{V}'(\|\bm q\|)$ for a single particle in a central force field as
\begin{align*}
\frac{\hat{V}(\|\bm q_{n+1}\|) - \hat{V}(\|\bm q_n\|)}{\|\bm q_{n+1}\| - \|\bm q_{n}\|}.
\end{align*}
Similar quotient formulas can be found in the discrete gradient approaches \cite{Romero2012}. An apparent pitfall is that the quotient formula will become numerically unstable when $\|\bm q_{n+1}\|$ gets too close to $\|\bm q_n\|$. There are several circumstances that this becomes unavoidable. Besides attractors existing in the phase space, the orbit may be driven to a limiting point under an external forcing field. This is partly the reason that the energy-momentum consistent integrators are rarely used to pursue steady-state calculations. Also, when the physical system has a dissipative mechanism, this issue may arise after long time calculations. Even in Hamiltonian systems, the transient analysis may experience this issue occasionally. A solution to this problem is to switch the energy-momentum consistent integrator to a traditional integrator that does not invoke the quotient formula, with a pre-defined tolerance set for the denominator \cite{Gonzalez1996a,Janz2019}. Arguably, switching to an alternative integrator may cause a loss of the algorithmic conservation property. In stretch-based elastic problems, for example, it was noticed that significant errors already arise when the denominator approaches $\mathcal{O}(10^{-2})$, due to the tensorial spectral decomposition algorithm adopted (see Fig. 5 of \cite{Mohr2008}). Setting the switching tolerance to $\mathcal{O}(10^{-2})$ inevitably leads to a noticeable loss of the discrete conservation property. Therefore, it is desirable to construct an energy-momentum consistent integrator without invoking the quotient formula.

Based on the well-established LaBudde-Greenspan integrator, we propose three energy-decaying and momentum-conserving integrators by invoking a combination of energy-splitting and perturbation techniques. The design techniques are inspired by the work on energy- and entropy-stable schemes for diffuse-interface models \cite{Eyre1998,Gomez2011,Liu2013,Liu2015}. The basic idea is that one can always split the energy into two parts. A pair of specifically developed quadrature rules can be separately applied to the two parts. The split of the energy is made to guarantee the dissipative nature of the quadrature residuals. This idea can be traced to the work of Eyre \cite{Eyre1998}, in which the energy was split into convex and concave parts. Yet, Eyre's approach is only first-order accurate. Recently, two suites of quadrature rules were developed based on the mid-point and trapezoidal schemes \cite{Gomez2011,Liu2013}. As those quadrature rules have a higher-order residual term, the split of the energy function has to be performed to control the fourth-order derivative. Therefore, the concept of super-convex and super-concave parts of the energy was introduced \cite{Liu2013}. The resulting schemes can be viewed as the mid-point and trapezoidal integrators with second-order perturbations, and the perturbation terms guarantee energy dissipation without harming the rest conservation properties. The three developed integrators can be used independently for nonlinear dynamics. Or, they can be used as a switching option when the energy-conserving integrator faces the aforesaid numerical pitfall, and the combined algorithm guarantees that the energy does not exhibit pathological growth.

It is also worth mentioning that the invariant energy quadratization \cite{Yang2017} and scalar auxiliary variable \cite{Shen2018} techniques originally developed for gradient flow problems have also been introduced to integrate Hamiltonian systems with attractive features in recent years. These techniques led to linearly implicit and explicit integrators, which conserve energy \cite{Bilbao2023}. Additionally, an explicit integrator was also recently designed that preserves a pseudo energy \cite{Marazzato2019}. A systematic approach for deriving conservative integrators was constructed using the concept of conservation law multiplier \cite{Wan2017,Wan2022}, and it is related to the LaBudde-Greenspan integrator.

The remainder of this article is organized as follows. In Section \ref{sec:dynamics}, we formulate the nonlinear dynamics based on the Hamiltonian system, with a brief discussion of the conservation properties. In Section \ref{sec:single-particle-dynamics}, we restrict our discussion to a single particle subjecting to a central forcing field. Based on the LaBudde-Greenspan integrator, we construct and analyze the three integrators using the energy split and perturbed quadrature rules. After a discussion of the algorithm implementation, we demonstrate numerical results to justify the claimed properties. In Section \ref{sec:many-body-dynamics}, we generalize our approach to many-body dynamics, which enjoys more conserved quantities. It is shown that the additional invariants, such as linear momentum and the center of mass, can also be respected in the proposed integrators. We draw conclusions in Section \ref{sec:conclusion}.

\section{Nonlinear dynamics}
\label{sec:dynamics}
We consider a mechanical system with $N$ material points. The configuration space of the material points is denoted by $\bm Q$, and the points in $\bm Q$ can be coordinated as 
\begin{align*}
\bm q = \left( \bm q_1, \bm q_2, \cdots, \bm q_N \right) \in \bm Q = \mathbb R^{3N}.
\end{align*}
The cotangent bundle associated with $\bm Q$ is known as the phase space, which is denoted by $\bm P$. For an arbitrary point $(\bm q, \bm p) \in \bm P$ in the phase space, the position vector $\bm q$ and its linear momentum $\bm p$ can be characterized by
\begin{align*}
\bm q = \left( \bm q_1, \bm q_2, \cdots, \bm q_N \right) \in \mathbb R^{3N}, \quad \mbox{ and } \quad \bm p = \left( \bm p^1, \bm p^2, \cdots, \bm p^{N} \right) \in \mathbb R^{3N}.
\end{align*}
The motion of the particles can be characterized by a Hamiltonian $H:\bm P \rightarrow \mathbb R$ and the canonical equations
\begin{align*}
\frac{d}{dt}\bm q_A = \frac{\partial H}{\partial \bm p^A}, \qquad \frac{d}{dt}\bm p^A = - \frac{\partial H}{\partial \bm q_A}, \quad \mbox{ for } A = 1,\cdots, N.
\end{align*}
In this work, we further assume that the Hamiltonian takes a separable form $H = K + V$, where $K:\bm P \rightarrow \mathbb R$ is the kinetic energy and $V: \bm Q \rightarrow \mathbb R$ is the potential energy. While the form of the potential energy $V(\bm q)$ can be arbitrary, the form of the kinetic energy adopts the following quadratic form,
\begin{align}
\label{eq:Hamiltonian_K}
K(\bm p) := \frac12 \sum_{A=1}^{N} \sum_{B=1}^{N} \bm m^{-1}_{AB} \bm p^A \cdot \bm p^B = \frac12 \bm p \cdot \bm M^{-1} \bm p.
\end{align}
The matrix $\bm M$ is constant, symmetric, and positive definite. With the Hamiltonian defined, the canonical equations can be specialized into the following form,
\begin{align}
\label{eq:Hamiltonian_canonical}
\frac{d}{dt}\bm q_A = \sum_{B=1}^{N} \bm m^{-1}_{AB} \bm p^{B}, \qquad \frac{d}{dt}\bm p^{A} = - \frac{\partial V}{\partial \bm q_A}, \qquad \mbox{ for } A = 1,\cdots, N.
\end{align}
Let $\mathbb I := (0,T)$ be the time interval of interest, and let $\left( \bm q(t), \bm p(t) \right) \in \bm P$ be the solutions of \eqref{eq:Hamiltonian_canonical} for $t\in \mathbb I$. The system \eqref{eq:Hamiltonian_canonical} is endowed with the following properties.  The Hamiltonian $H$ is conserved over time, which can be straightforwardly verified by inspecting its time derivative,
\begin{align*}
\frac{d}{dt}H = \sum_{A=1}^{N} \left( \frac{\partial H}{\partial \bm q_A} \cdot \frac{d}{dt}\bm q_A + \frac{\partial H}{\partial \bm p^A} \cdot \frac{d}{dt}\bm p^A \right)  = -\frac{d}{dt}\bm p^{A} \cdot \frac{d}{dt} \bm q_A + \frac{d}{dt}\bm q_A \cdot \frac{d}{dt}\bm p^A = 0.
\end{align*}
The angular momentum map $\bm J: \bm P \rightarrow \mathbb R^3$ and the linear momentum map $\bm L: \bm P \rightarrow \mathbb R^3$ are defined as
\begin{align}
\label{eq:def_angular_linear_momenta}
\bm J(\bm q, \bm p) := \sum_{A=1}^{N} \bm q_A \times \bm p^A \qquad \mbox{and} \qquad \bm L(\bm q, \bm p) := \sum_{A=1}^{N} \bm p^A,
\end{align}
respectively. Due to the celebrated Noether's theorem, the SO(3)-invariance of the Hamiltonian $H$ implies the conservation of the angular momentum map $\bm J$,
\begin{align*}
\frac{d}{dt} \bm J(\bm q(t), \bm p(t)) = \bm 0, \qquad \mbox{ for } t \in \mathbb I.
\end{align*}
In the same vein, the translational invariance of the Hamiltonian $H$ implies the conservation of the linear momentum map $\bm L$, 
\begin{align*}
\frac{d}{dt} \bm L(\bm q(t), \bm p(t)) = \bm 0, \qquad \mbox{ for } t \in \mathbb I.
\end{align*}
For a rigorous derivation of the above arguments, one may consult the work by Marsden and Ratiu \cite[pp.372-374]{Marsden1998}. The SO(3)-invariance of $H$ is satisfied if the potential energy $V$ is SO(3)-invariant. In the sequel, the potential energy $V$ is assumed to be SO(3)-invariant, as this coincides with the physical requirement of frame indifference. The Hamiltonian system can be further generalized with the aid of the General Equations for Non-Equilibrium Reversible Irreversible Coupling (GENERIC) formalism. In addition to a reversible part corresponding to the conservative structure, an irreversible part is incorporated through a dissipative bracket. Numerical strategies developed based on the Hamiltonian system can be conveniently extended to more general evolution systems within its framework \cite{Romero2009,Betsch2019}.

\section{Single particle dynamics}
\label{sec:single-particle-dynamics}
In this section, we present a suite of integrators based on a model that is simple but still exhibits representative nonlinear behaviors. In doing so, we may concentrate our discussion on the motivation and technical strategies. 

\subsection{The model problem}
Consider a single particle in $\mathbb R^3$ subjecting to a central force field. For notational simplicity, we temporarily neglect the subscripts and superscripts used for identifying particles in this subsection. The single particle's mass, position, and linear momentum are denoted as $m \in \mathbb R$, $\bm q \in \mathbb R^3$, and $\bm p \in \mathbb R^3$, respectively. Due to the considered central force field, its potential field can be written as $V(\bm q) = \hat{V}(\|\bm q\|)$, where $\hat{V}: \mathbb R_{+} \rightarrow \mathbb R$, and $\|\cdot \|$ represents the standard Euclidean norm in $\mathbb R^3$. It needs to be pointed out that translational invariance is lost in this potential energy and the linear momentum is not a conserved quantity in this case. With the particular choice of the potential energy, the force can be explicitly written as
\begin{align*}
\bm F(\bm q) = \hat{V}'(\|\bm q\|) \frac{\bm q}{\|\bm q \|},
\end{align*}
where $\hat{V}'$ is the first derivative of $\hat{V}$. In the following, we need the second, third, and fourth derivatives of the potential energy, which are denoted by $\hat{V}''$, $\hat{V}'''$, and $\hat{V}^{(4)}$, respectively. The phase space for this problem can be defined as $\bm P = \mathbb R^3 \times \mathbb R^3$, and the Hamiltonian canonical equations can be simplified as
\begin{align}
\label{eq:single-particle-strong}
\frac{d}{dt} \bm q = \frac{\bm p}{m}, \qquad \frac{d}{dt} \bm p = - \bm F(\bm q),
\end{align}
with the Hamiltonian $H:\bm P \rightarrow \mathbb R$ and angular momentum $\bm J: \bm P \rightarrow \mathbb R$ given by 
\begin{align*}
H(\bm q, \bm p) = \frac{1}{2m} \| \bm p \|^2 + V(\|\bm q\|), \quad \mbox{and} \quad J(\bm q, \bm p) = \bm q \times \bm p,
\end{align*}
both of which are invariants of the system to be inherited in the discrete model.

This model problem comes from a variety of physical sources. For example, the reduction of the two-body problem in celestial mechanics results in the above model problem, in which the potential energy is inversely proportional to $\|\bm q\|$. The model problem may also characterize intermolecular interaction in a dilute gas with the potential taking the form 
\begin{align}
\label{eq:lennard_jones_12_6}
\hat{V}(\|\bm q\|) = 4\varepsilon \left( \left( \sigma/\|\bm q\|\right)^{12} - \left( \sigma/\|\bm q\| \right)^6 \right),
\end{align}
with $\varepsilon$ and $\sigma$ being constants related to the molecules. The above is known as the Lennard-Jones 12-6 potential \cite{Allen2017}. As a third example, the potential energy can be derived from a one-dimensional elastic spring model. Interested readers may refer to \cite{Gonzalez1996} for a  nonlinear spring model $\hat{V}$ taking a quartic form and \cite{Betsch2000} for a model of $\hat{V}$ derived from a neo-Hookean type spring. 

\subsection{A summary of classical integrators}
\label{sec:summary_classcial_integrators}
We review three well-established integrators that are closely relevant to our methodology development. The discussion starts from the time finite element formulation that is conditionally conservative; based on that, the implicit mid-point scheme will be recovered; in the last, the LaBudde-Greenspan integrator will be presented as a special case of the time finite element formulation. The quotient formula in the LaBudde-Greenspan integrator inspires the subsequent discussion. We consider the time interval of interest $\mathbb I$ can be subdivided into $n_{\mathrm{ts}}$ sub-intervals $\mathbb I_n := (t_n, t_{n+1})$ of size $\Delta t_n := t_{n+1} - t_n$. To facilitate our discussion, we introduce the following notations,
\begin{align*}
\bm q_{n+\alpha} := (1-\alpha)\bm q_n + \alpha \bm q_{n+1}, \quad \bm p_{n+\alpha} := (1-\alpha)\bm p_n + \alpha \bm p_{n+1}, \quad \|\bm q\|_{n+\alpha} := (1-\alpha)\|\bm q_n\| + \alpha \|\bm q_{n+1}\|,
\end{align*}
where the parameter $\alpha \in [0,1]$. In particular, it is worth pointing out that $\|\bm q_{n+\frac12}\| \neq \|\bm q\|_{n+\frac12}$ in general. 

\subsubsection{The low-order time finite element integrator}
\label{sec:low-order-space-time-formulation}
A discrete formulation can be constructed by a Petrov-Galerkin method, using continuous $k$-th order polynomials for the trial solution and discontinuous $k-1$-th order polynomials for the test function space \cite{Betsch2000}. If $k=1$, the test function is piecewise constant over the sub-interval, and the solutions can be represented as
\begin{align*}
\bm q(t) = \left( \bm q_{n+1} - \bm q_{n} \right) \frac{t-t_n}{\Delta t_n} + \bm q_n, \quad \mbox{ and } \quad \bm p(t) = \left( \bm p_{n+1} - \bm p_{n} \right) \frac{t-t_n}{\Delta t_n} + \bm p_n,
\end{align*}
in which $\bm q_{n} := \bm q(t_n)$ and $\bm p_n := \bm p(t_n)$. The corresponding discrete formulation for the single-particle problem \eqref{eq:single-particle-strong} can be stated as
\begin{align}
\label{eq:space-time-q-single-particle}
\bm q_{n+1} - \bm q_{n} &= \frac{\Delta t_n}{m} \bm p_{n+\frac12}, \\
\label{eq:space-time-p-single-particle}
\bm p_{n+1} - \bm p_{n} &= -\int_{t_n}^{t_{n+1}} \hat{V}'(\|\bm q(t)\|) \frac{\bm q(t)}{\|\bm q(t) \|} dt.
\end{align}
The conservation properties of the above integrator are contingent upon the evaluation of the integral on the right-hand side of \eqref{eq:space-time-p-single-particle}. It has a local truncation error of second order, if the right-hand side of \eqref{eq:space-time-p-single-particle} is treated by a quadrature rule of second- or higher-order accuracy. In the meantime, assuming the integration and the solution of the nonlinear system are arithmetically exact, one can show $H(\bm q_{n+1}, \bm p_{n+1})=H(\bm q_n, \bm p_n)$. Yet, due to the nonlinear nature of the conservative force, an exact integration is often unattainable. Using a higher-order quadrature rule may improve the energy conservation property asymptotically. As will be shown, a specially designed quadrature rule can ensure energy conservation. The momentum conservation property of the integrator \eqref{eq:space-time-q-single-particle}-\eqref{eq:space-time-p-single-particle} can be analyzed by utilizing the following identity \cite{Simo1992a},
\begin{align}
\label{eq:momentum_identity}
\bm J(\bm q_{n+1}, \bm p_{n+1}) - \bm J(\bm q_{n}, \bm p_{n}) = \bm q_{n+\frac12} \times \left( \bm p_{n+1} - \bm p_n \right) + \bm p_{n+\frac12} \times \left( \bm q_{n+1} - \bm q_n \right).
\end{align}
Inserting \eqref{eq:space-time-q-single-particle}-\eqref{eq:space-time-p-single-particle} into the above identity leads to
\begin{align*}
\bm J(\bm q_{n+1}, \bm p_{n+1}) - \bm J(\bm q_{n}, \bm p_{n}) = -\bm q_{n+\frac12} \times \int_{t_n}^{t_{n+1}} \hat{V}'(\|\bm q\|) \frac{\bm q}{\|\bm q \|} dt.
\end{align*}
Therefore, it is necessary and sufficient to guarantee momentum conservation by demanding that $\bm p_{n+1} - \bm p_n$ be parallel to $\bm q_{n+\frac12}$. In this work, the integrators to be discussed are all based on the low-order time finite element formulation. Interested readers may refer to \cite{Gros2005} for the investigation on higher-order time finite element schemes.

\subsubsection{The mid-point integrator}
Applying the one-point Gaussian quadrature on the right-hand side of \eqref{eq:space-time-p-single-particle} results in the following,
\begin{align}
\label{eq:mid-point-q-single-particle}
\bm q_{n+1} - \bm q_{n} &= \frac{\Delta t_n}{m} \bm p_{n+\frac12}, \\
\label{eq:mid-point-p-single-particle}
\bm p_{n+1} - \bm p_{n} &= -\Delta t_n \hat{V}'(\|\bm q_{n+\frac12}\|) \frac{\bm q_{n+\frac12}}{\|\bm q_{n+\frac12} \|}.
\end{align}
This is the implicit mid-point rule and is apparently momentum conserving based on the above discussion. The energy conservation is satisfied if and only if the mid-point rule can exactly integrate the integral of the conservative force, which implies the potential $\hat{V}$ has to take a quadratic form (i.e., linear dynamics). It is well-known that this integrator cannot guarantee energy conservation for general nonlinear dynamics \cite{Simo1992a,Ge1988}. Instead, as its major advantage, this discrete scheme preserves the symplectic two-form in the phase space \cite{Feng1986}.

\subsubsection{The LaBudde-Greenspan integrator}
A numerical integrator that conserves both energy and momentum was proposed by LaBudde and Greenspan \cite{LaBudde1974}, and it can be stated as follows. In each time step, given $\bm q_n$ and $\bm p_n$, determine $\bm q_{n+1}$ and $\bm p_{n+1}$ such that
\begin{align}
\label{eq:LaBudde-Greenspan-q}
\bm q_{n+1} - \bm q_{n} &= \frac{\Delta t_n}{m} \bm p_{n+\frac12}, \\
\label{eq:LaBudde-Greenspan-approximation}
\bm p_{n+1} - \bm p_{n} &= - \Delta t_n \frac{\hat{V}(\|\bm q_{n+1}\|) - \hat{V}(\|\bm q_{n}\|)}{\|\bm q_{n+1}\| - \|\bm q_n\|} \frac{\bm q_{n+\frac12}}{\|\bm q\|_{n+\frac12}}.
\end{align}
The above can be viewed as a result of applying a special quadrature rule to the right-hand side of \eqref{eq:space-time-p-single-particle}. In specific, the terms $\hat{V}'$, $\bm q$, and $\|\bm q\|$ in the denominator are treated by a difference formula, the mid-point rule, the trapezoidal rule, respectively. The conservation property $H(\bm q_{n+1}, \bm p_{n+1})=H(\bm q_n, \bm p_n)$ can be straightforwardly shown by evaluating $\left( \bm q_{n+1} - \bm q_{n} \right) \cdot \left( \bm p_{n+1} - \bm p_n \right)$ using \eqref{eq:LaBudde-Greenspan-q} and \eqref{eq:LaBudde-Greenspan-approximation}. Following the discussion made in Section \ref{sec:low-order-space-time-formulation}, this integrator apparently conserves the momentum $\bm J$ as well. Again, due to the Ge-Marsden Theorem \cite{Ge1988}, this integrator cannot be symplectic. The LaBudde-Greenspan scheme inspires the development of the energy-momentum scheme for elastodynamics, originally designed for the Saint Venant-Kirchhoff model \cite{Simo1992}. Later, several different \textit{discrete gradient} formulas were found \cite{Armero2007,Bui2007,Gonzalez2000,Romero2012} for arbitrary nonlinear material models. It is also worth pointing out that the conservation properties were shown by assuming the resulting nonlinear system is solved exactly. Yet, as the nonlinear problem is typically solved iteratively, the accuracy of the solution procedure has an impact on the conservation property in practical calculations. The quality of the conservation is contingent upon the accuracy of the implicit solver, and readers may refer to an analysis made in \cite[Sec.~4.2]{Betsch2000}.

As was discussed in the introduction, one crucial issue of this integrator is that the finite difference formula $\hat{V}(\|\bm q_{n+1}\|) - \hat{V}(\|\bm q_{n}\|)/\|\bm q_{n+1}\| - \|\bm q_n\|$ becomes numerically unstable when $\|\bm q_{n+1}\|$ gets too close to $\|\bm q_n\|$. A remedy for this issue is switching to its analytic limit \cite{Janz2019}, 
\begin{align}
\label{eq:janz-mid-point-rescue}
\hat{V}'\left( \|\bm q\|_{n+\frac12} \right),
\end{align}
or \cite[p.~211]{Gonzalez1996},
\begin{align}
\label{eq:gonzalez-mid-point-rescue}
\hat{V}'\left( \|\bm q\|_{n+\frac12} \right) + \frac{1}{24} \left( \|\bm q_{n+1}\| - \|\bm q_n\| \right)^2 \hat{V}'''\left( \|\bm q\|_{n+\frac12} \right),
\end{align} 
when the difference between $\|\bm q_{n+1}\|$ and $\|\bm q_n\|$ is detected to be smaller than a prescribed tolerance.  Although the third derivative added to the mid-point evaluation of $\hat{V}'$ in \eqref{eq:gonzalez-mid-point-rescue} may improve the approximation accuracy to the quotient $\hat{V}(\|\bm q_{n+1}\|) - \hat{V}(\|\bm q_{n}\|) / \|\bm q_{n+1}\| - \|\bm q_n\|$ over \eqref{eq:janz-mid-point-rescue}, there is inevitably approximation residuals for a general form of $\hat{V}$, and the energy is thus uncontrolled when the alternate formula is switched on. This observation inspires the strategies and integrators to be discussed and analyzed subsequently.

\subsection{Integrators based on splitting and perturbation techniques}
\label{sec:integrator-split-perturbation}
In this section, we propose a suite of integrators based on the splits of the potential energy and perturbation terms introduced through special quadrature rules. We assume that the energy function $\hat{V}$ can be additively split into a convex part $\hat{V}_{\mathrm c}$ and a concave part $\hat{V}_{\mathrm e}$, both of which are twice differentiable, that is,
\begin{align}
\label{eq:convex-split}
\hat{V}(r) = \hat{V}_{\mathrm c}(r) + \hat{V}_{\mathrm e}(r), \qquad \hat{V}''_{\mathrm c}(r) \geq 0, \qquad \hat{V}''_{\mathrm e}(r) \leq 0.
\end{align}
This split will be shown to result in a first-order perturbation to the LaBudde-Greenspan integrator, which is energy decaying and momentum conserving. Further, if the energy function $\hat{V}$ can be split into a super-convex part $\hat{V}_{+}$ and a super-concave\footnote{The terminologies `super-convex' and `super-concave' were originally introduced in \cite{Liu2013}.} part $\hat{V}_{-}$, both of which are at least fourth differentiable, that is,
\begin{align}
\label{eq:super-convex-split}
\hat{V}(r) = \hat{V}_{+}(r) + \hat{V}_{-}(r), \qquad \hat{V}^{(4)}_{+}(r) \geq 0, \qquad \hat{V}^{(4)}_{-}(r) \leq 0,
\end{align}
it is feasible to improve the accuracy to second order. The integrators to be presented do not involve a quotient formula and do not suffer from unstable numerical behavior when $\|\bm q_{n+1}\|$ gets close to $\|\bm q_n\|$. It needs to be pointed out that the above splits are not necessarily unique, and how different splits may affect the numerical behavior remains an interesting topic to be pursued.

\subsubsection{The generalized Eyre integrator}
Given the convex-concave split of the energy \eqref{eq:convex-split}, an integrator can be presented as follows. In the time subinterval $\mathbb I_n$, given $\bm q_n$ and $\bm p_n$, find $\bm q_{n+1}$ and $\bm p_{n+1}$ such that, 
\begin{align}
\label{eq:convex-concave-scheme-q}
\bm q_{n+1} - \bm q_n &= \frac{\Delta t_n}{m} \bm p_{n+\frac12}, \\
\label{eq:convex-concave-scheme-p}
\bm p_{n+1} - \bm p_n &= -\Delta t_n \left( \hat{V}_{\mathrm c}'(\|\bm q_{n+1}\|) + \hat{V}_{\mathrm e}'(\|\bm q_{n}\|) \right) \frac{\bm q_{n+\frac12} }{\|\bm q\|_{n+\frac12}}.
\end{align}
To analyze the energy behavior of the above scheme, we need to leverage the rectangular quadrature rules, which are stated in the following lemma. The proof of the lemma can be found in Appendix A of \cite{Liu2013}.
\begin{lemma}[Rectangular quadrature rules]
For a function $f \in C^1[a,b]$, there exists $\xi_1, \xi_2 \in (a,b)$ such that,
\begin{align}
\label{eq:rectangular-quad-rule-1}
\int_{a}^{b} f(x)dx &= (b-a) f(b) - \frac{(b-a)^2}{2} f'(\xi_1), \\
\label{eq:rectangular-quad-rule-2}
\int_{a}^{b} f(x)dx &= (b-a) f(a) + \frac{(b-a)^2}{2} f'(\xi_2).
\end{align}
\end{lemma}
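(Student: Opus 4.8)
The plan is to prove both identities by applying the Cauchy mean value theorem to a carefully chosen auxiliary function, or equivalently by a single integration by parts followed by the integral mean value theorem. I will prove \eqref{eq:rectangular-quad-rule-1} in detail; \eqref{eq:rectangular-quad-rule-2} follows by the symmetric argument (or by the change of variable $x \mapsto a+b-x$). The key observation is that the quantity $(b-a)f(b) - \int_a^b f(x)\,dx$ is precisely the error of the right-endpoint rectangle rule, and this error can be written exactly as an integral that is amenable to the mean value theorem for integrals.

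First I would write the error as an integral involving $f'$. Starting from integration by parts with the primitive $x-a$, one has
\begin{align*}
\int_a^b f(x)\,dx = \Big[(x-b) f(x)\Big]_a^b - \int_a^b (x-b) f'(x)\,dx = (b-a)f(a) - \int_a^b (x-b) f'(x)\,dx.
\end{align*}
To target \eqref{eq:rectangular-quad-rule-1} instead, I would use the primitive $x-a$ paired so the boundary term produces $f(b)$; concretely, integrating $\int_a^b f\,dx$ by parts against $(x-a)$ gives
\begin{align*}
\int_a^b f(x)\,dx = (b-a)f(b) - \int_a^b (x-a) f'(x)\,dx.
\end{align*}
Thus the residual equals $-\int_a^b (x-a) f'(x)\,dx$, and it remains to extract the stated closed form.

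The main step is to apply the mean value theorem for integrals to the last integral. Since the weight $(x-a)$ is nonnegative and continuous on $[a,b]$ and $f'$ is continuous (here $f\in C^1[a,b]$ is exactly the hypothesis used), there exists $\xi_1 \in (a,b)$ with
\begin{align*}
\int_a^b (x-a) f'(x)\,dx = f'(\xi_1) \int_a^b (x-a)\,dx = f'(\xi_1)\,\frac{(b-a)^2}{2}.
\end{align*}
Substituting this back yields \eqref{eq:rectangular-quad-rule-1}. The only delicate point is ensuring the mean value point lies in the \emph{open} interval $(a,b)$ rather than merely in $[a,b]$; this is where I expect the main (though minor) obstacle to lie, and it is handled by the standard refinement of the integral mean value theorem, which guarantees an interior point whenever the weight is not almost-everywhere zero (true here since $(x-a)>0$ on $(a,b)$). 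For \eqref{eq:rectangular-quad-rule-2} I would integrate by parts against $(x-b)$ to obtain the residual $\int_a^b (b-x) f'(x)\,dx$, apply the same mean value argument with the nonnegative weight $(b-x)$, and recover the $+\frac{(b-a)^2}{2} f'(\xi_2)$ term, with $\xi_2 \in (a,b)$.
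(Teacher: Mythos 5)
Your proof is correct, and it is worth noting that the paper itself does not prove this lemma at all: it simply defers to Appendix A of the cited reference (Liu et al.\ 2013), so your argument supplies a self-contained proof where the paper gives only a pointer. Your route --- integration by parts against the linear weights $(x-a)$ and $(x-b)$, followed by the first mean value theorem for integrals with a nonnegative weight --- is clean and uses exactly the hypothesis $f\in C^1[a,b]$; both displayed computations check out, and the signs land correctly in \eqref{eq:rectangular-quad-rule-1} and \eqref{eq:rectangular-quad-rule-2}. You are also right that the only delicate point is placing $\xi_1,\xi_2$ in the \emph{open} interval, and your appeal to the refinement of the integral mean value theorem (valid here because the weights are strictly positive on the interior, so the weighted average of $f'$ either lies strictly between the extremes of $f'$ or $f'$ is constant on $(a,b)$) disposes of it. One cosmetic slip: your opening sentence announces ``the primitive $x-a$'' while the first display actually integrates by parts against $(x-b)$ (producing the $f(a)$ boundary term); the mathematics in the displays is right, but the label should read $x-b$ there.
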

The significance of the quadrature rule is that the residuals adopt opposite signs, thus allowing us to make the residual dissipative. This quadrature rule is related to Eyre's scheme originally designed for the Cahn-Hilliard equation \cite{Eyre1998,Liu2013}, and we thus term this one as the generalized Eyre integrator.
\begin{proposition}
The integrator \eqref{eq:convex-concave-scheme-q}-\eqref{eq:convex-concave-scheme-p} is momentum conserving and energy decaying in the following sense,
\begin{align*}
\bm J(\bm q_{n+1}, \bm p_{n+1}) &= \bm J(\bm q_{n}, \bm p_{n}),\\
H(\bm q_{n+1}, \bm p_{n+1}) - H(\bm q_n, \bm p_n) &= \frac{\left(\|\bm q_{n+1}\| - \|\bm q_n\|\right)^2 }{2} \left( -\hat{V}''_{\mathrm c}(\|\bm q\|_{n+\xi_1}) +  \hat{V}''_{\mathrm e}(\|\bm q\|_{n+\xi_2}) \right) \leq 0,
\end{align*}
with $\xi_1, \xi_2 \in (0,1)$.
\end{proposition}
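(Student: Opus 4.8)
The plan is to prove the two assertions separately. Momentum conservation is structural and follows at once from the geometry of the update, while the energy statement requires converting the kinetic increment into a potential-energy difference and then applying the two rectangular quadrature rules to the convex and concave halves of the split.

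For momentum conservation, I would invoke the identity \eqref{eq:momentum_identity}, which writes $\bm J(\bm q_{n+1}, \bm p_{n+1}) - \bm J(\bm q_{n}, \bm p_{n})$ as a sum of two cross products. The update \eqref{eq:convex-concave-scheme-p} makes $\bm p_{n+1} - \bm p_n$ a scalar multiple of $\bm q_{n+\frac12}$, so $\bm q_{n+\frac12} \times (\bm p_{n+1} - \bm p_n) = \bm 0$; likewise \eqref{eq:convex-concave-scheme-q} makes $\bm q_{n+1} - \bm q_n$ parallel to $\bm p_{n+\frac12}$, so the second cross product also vanishes. Hence $\bm J$ is conserved, which is exactly the sufficient condition identified in Section~\ref{sec:low-order-space-time-formulation}.

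For the energy identity, I would first reduce the kinetic part. Writing $\|\bm p_{n+1}\|^2 - \|\bm p_n\|^2 = 2(\bm p_{n+1} - \bm p_n)\cdot \bm p_{n+\frac12}$ and substituting $\bm p_{n+\frac12} = \frac{m}{\Delta t_n}(\bm q_{n+1}-\bm q_n)$ from \eqref{eq:convex-concave-scheme-q}, the kinetic energy difference becomes proportional to $(\bm p_{n+1}-\bm p_n)\cdot(\bm q_{n+1}-\bm q_n)$. Inserting \eqref{eq:convex-concave-scheme-p} introduces the factor $\bm q_{n+\frac12}\cdot(\bm q_{n+1}-\bm q_n)/\|\bm q\|_{n+\frac12}$, and the crux of the calculation is the algebraic identity $\bm q_{n+\frac12}\cdot(\bm q_{n+1}-\bm q_n) = \frac12(\|\bm q_{n+1}\|^2 - \|\bm q_n\|^2) = (\|\bm q_{n+1}\| - \|\bm q_n\|)\,\|\bm q\|_{n+\frac12}$, which cancels the trapezoidal denominator and leaves the kinetic change equal to $-(\hat{V}_{\mathrm c}'(\|\bm q_{n+1}\|) + \hat{V}_{\mathrm e}'(\|\bm q_n\|))(\|\bm q_{n+1}\| - \|\bm q_n\|)$. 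This is precisely why the denominator is taken to be the trapezoidal average $\|\bm q\|_{n+\frac12}$ rather than $\|\bm q_{n+\frac12}\|$. Adding the potential difference $\hat{V}(\|\bm q_{n+1}\|) - \hat{V}(\|\bm q_n\|)$ and regrouping through $\hat{V} = \hat{V}_{\mathrm c} + \hat{V}_{\mathrm e}$, the total energy change collapses into one term built from $\hat{V}_{\mathrm c}$ with its derivative sampled at the right endpoint and one built from $\hat{V}_{\mathrm e}$ with its derivative sampled at the left endpoint. I would recognize each as a quadrature residual: rule \eqref{eq:rectangular-quad-rule-1} applied to $\int \hat{V}_{\mathrm c}'$ converts the convex term into $-\frac{1}{2}(\|\bm q_{n+1}\| - \|\bm q_n\|)^2 \hat{V}_{\mathrm c}''(\|\bm q\|_{n+\xi_1})$, while rule \eqref{eq:rectangular-quad-rule-2} applied to $\int \hat{V}_{\mathrm e}'$ converts the concave term into $+\frac{1}{2}(\|\bm q_{n+1}\| - \|\bm q_n\|)^2 \hat{V}_{\mathrm e}''(\|\bm q\|_{n+\xi_2})$. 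Summation gives the claimed formula, and non-positivity is then immediate from $\hat{V}_{\mathrm c}''\geq 0$ and $\hat{V}_{\mathrm e}''\leq 0$ in \eqref{eq:convex-split}.

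The step I expect to be the crux is the correct pairing of the two quadrature rules with the two halves of the split: the convex part must be sampled at $\|\bm q_{n+1}\|$ and the concave part at $\|\bm q_n\|$ so that the two residuals inherit opposite structural signs and combine into a manifestly non-positive quantity; this is exactly what dictates the asymmetric force in \eqref{eq:convex-concave-scheme-p}, and any other pairing would fail to fix the sign. A secondary point requiring care is purely notational: the lemma delivers mean-value points lying strictly between $\|\bm q_n\|$ and $\|\bm q_{n+1}\|$ regardless of their ordering, which I would record in the affine parametrization as $\|\bm q\|_{n+\xi_1}$ and $\|\bm q\|_{n+\xi_2}$ with $\xi_1,\xi_2\in(0,1)$.
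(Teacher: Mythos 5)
Your proposal is correct and follows essentially the same route as the paper's proof: momentum conservation via the parallelism of $\bm p_{n+1}-\bm p_n$ to $\bm q_{n+\frac12}$ in the identity \eqref{eq:momentum_identity}, the reduction of the kinetic increment through $(\bm p_{n+1}-\bm p_n)\cdot(\bm q_{n+1}-\bm q_n)$ with the cancellation $(\bm q_{n+1}+\bm q_n)\cdot(\bm q_{n+1}-\bm q_n)=\|\bm q_{n+1}\|^2-\|\bm q_n\|^2$, and the pairing of rule \eqref{eq:rectangular-quad-rule-1} with $\hat{V}'_{\mathrm c}$ and rule \eqref{eq:rectangular-quad-rule-2} with $\hat{V}'_{\mathrm e}$ to obtain the signed residuals. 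The only difference is organizational (you assemble the kinetic change first and then recognize the quadrature residuals, while the paper expands the potential jump first and substitutes), which does not constitute a different argument.
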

\begin{proof}
The discrete momentum conservation holds because the right-hand side of \eqref{eq:convex-concave-scheme-p} is apparently parallel to $\bm q_{n+\frac12}$. To demonstrate the energy decaying property, we first show that 
\begin{align*}
 & \hat{V}(\|\bm q_{n+1}\|) - \hat{V}(\|\bm q_{n}\|) = \int_{\|\bm q_n\|}^{\|\bm q_{n+1}\|} \hat{V}'(r)dr = \int_{\|\bm q_n\|}^{\|\bm q_{n+1}\|} \hat{V}_{\mathrm c}'(r) + \hat{V}_{\mathrm e}'(r) dr \\
=& \left( \|\bm q_{n+1}\| - \|\bm q_n\| \right) \left( \hat{V}'_{\mathrm c}(\|\bm q_{n+1}\|) + \hat{V}'_{\mathrm e}(\|\bm q_{n}\|) \right) + \frac{\left(\|\bm q_{n+1}\| - \|\bm q_n\|\right)^2}{2} \left( -\hat{V}''_{\mathrm c}(\|\bm q\|_{n+\xi_1}) + \hat{V}''_{\mathrm e}(\|\bm q\|_{n+\xi_2}) \right),
\end{align*}
after applying the rectangular quadrature rule \eqref{eq:rectangular-quad-rule-1} to the convex part $\hat{V}'_{\mathrm c}$ and \eqref{eq:rectangular-quad-rule-2} to the concave part $\hat{V}'_{\mathrm e}$. With the above relation, we may obtain the following according to \eqref{eq:convex-concave-scheme-q}-\eqref{eq:convex-concave-scheme-p},
\begin{align*}
& \frac{\Delta t_n}{2m}\left( \|\bm p_{n+1}\|^2 - \|\bm p_n\|^2 \right) = \left( \bm p_{n+1} - \bm p_{n} \right) \cdot \left( \bm q_{n+1} - \bm q_n \right)\\
=& - \Delta t_n \left(  \hat{V}'_{\mathrm c}(\|\bm q_{n+1}\|) + \hat{V}'_{\mathrm e}(\|\bm q_{n}\|) \right)  \frac{\left(\bm q_{n+1} + \bm q_n \right)}{\|\bm q_{n+1}\| + \|\bm q_n\|} \cdot \left( \bm q_{n+1} - \bm q_n \right) \\
=& -\Delta t_n \left(  \hat{V}'_{\mathrm c}(\|\bm q_{n+1}\|) + \hat{V}'_{\mathrm e}(\|\bm q_{n}\|) \right) \left( \|\bm q_{n+1}\| - \|\bm q_n\| \right) \\
=& - \Delta t_n \left( \hat{V}(\|\bm q_{n+1}\|) - \hat{V}(\|\bm q_{n}\|) \right) + \frac{\left(\|\bm q_{n+1}\| - \|\bm q_n\|\right)^2 \Delta t_n}{2} \left( - \hat{V}''_{\mathrm c}(\|\bm q\|_{n+\xi_1}) + \hat{V}''_{\mathrm e}(\|\bm q\|_{n+\xi_2}) \right).
\end{align*}
After reorganizing terms, one immediately has
\begin{align*}
\frac{1}{2m}\|\bm p_{n+1}\|^2 + \hat{V}(\|\bm q_{n+1}\|) = \frac{1}{2m}\|\bm p_{n}\|^2 + \hat{V}(\|\bm q_{n}\|) + \frac{\left(\|\bm q_{n+1}\| - \|\bm q_n\|\right)^2 }{2} \left( -\hat{V}''_{\mathrm c}(\|\bm q\|_{n+\xi_1}) +  \hat{V}''_{\mathrm e}(\|\bm q\|_{n+\xi_2}) \right).
\end{align*}
The term $\hat{V}''_{\mathrm c}(\|\bm q\|_{n+\xi_1}) -  \hat{V}''_{\mathrm e}(\|\bm q\|_{n+\xi_2})$ is non-positive due to the convex-concave split \eqref{eq:convex-split} of the potential energy.
\end{proof}

In the last, we analyze the accuracy of the integrator \eqref{eq:convex-concave-scheme-q}-\eqref{eq:convex-concave-scheme-p} by comparing it with a known second-order integrator.
\begin{proposition}
\label{prop:convex-concave-accuracy}
The local truncation error associated with \eqref{eq:convex-concave-scheme-q} and \eqref{eq:convex-concave-scheme-p} are denoted as $\bm \tau_{\bm q}(t)$ and $\bm \tau_{\bm p}(t)$, respectively. They are bounded by
\begin{align*}
\|\bm \tau_{\bm q}(t_n)\| \leq K_{\bm q} \Delta t_n^3, \qquad \mbox{and} \qquad \|\bm \tau_{\bm q}(t_n)\| \leq K_{\bm p} \Delta t^2_n,
\end{align*}
for all $t_n \in (0,T)$.
\end{proposition}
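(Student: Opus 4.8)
The plan is to adopt the residual convention for the local truncation errors: substitute the exact (smooth) solution $(\bm q(t),\bm p(t))$ of \eqref{eq:single-particle-strong} into \eqref{eq:convex-concave-scheme-q}--\eqref{eq:convex-concave-scheme-p} and measure the defect,
\begin{align*}
\bm\tau_{\bm q}(t_n) &:= \bm q(t_{n+1}) - \bm q(t_n) - \frac{\Delta t_n}{m}\,\frac{\bm p(t_n)+\bm p(t_{n+1})}{2}, \\
\bm\tau_{\bm p}(t_n) &:= \bm p(t_{n+1}) - \bm p(t_n) + \Delta t_n\Big(\hat V_{\mathrm c}'(\rho_{n+1}) + \hat V_{\mathrm e}'(\rho_{n})\Big)\frac{\bm q(t_n)+\bm q(t_{n+1})}{\rho_n+\rho_{n+1}},
\end{align*}
where I abbreviate $\rho(t):=\|\bm q(t)\|$, $\rho_n:=\rho(t_n)$, and write $t_*:=t_n+\tfrac12\Delta t_n$ for the time midpoint, $\bm q_m:=\tfrac12(\bm q(t_n)+\bm q(t_{n+1}))$, and $\bar r:=\tfrac12(\rho_n+\rho_{n+1})$. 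Throughout I would impose the standing nondegeneracy hypothesis $\inf_{t\in\mathbb I}\rho(t)=:\rho_{\min}>0$, so that the central force is smooth along the orbit, together with $\hat V\in C^3$ (hence $\bm q,\bm p\in C^3$); these guarantee that the sup-norms entering the constants are finite. I prove $\|\bm\tau_{\bm q}(t_n)\|\le K_{\bm q}\Delta t_n^3$ and $\|\bm\tau_{\bm p}(t_n)\|\le K_{\bm p}\Delta t_n^2$.

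For $\bm\tau_{\bm q}$ the argument is routine: \eqref{eq:convex-concave-scheme-q} is exactly the trapezoidal rule for $\dot{\bm q}=\bm p/m$, so the standard trapezoidal remainder (equivalently, a three-term Taylor expansion of $\bm q$ and $\bm p$ about $t_*$) gives $\bm\tau_{\bm q}(t_n)=-\tfrac{1}{12}\Delta t_n^3\,\bm q^{(3)}(\eta)$ for some $\eta\in\mathbb I_n$, whence $\|\bm\tau_{\bm q}(t_n)\|\le K_{\bm q}\Delta t_n^3$ with $K_{\bm q}=\tfrac{1}{12}\sup_{\mathbb I}\|\bm q^{(3)}\|$.

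The substance is $\bm\tau_{\bm p}$, which I would estimate by comparison with the exact force at the time midpoint, i.e.\ against the (second-order) midpoint rule. First I would apply the midpoint quadrature rule to $\bm p(t_{n+1})-\bm p(t_n)=-\int_{t_n}^{t_{n+1}}\bm F(\bm q(t))\,dt$ to obtain $\bm p(t_{n+1})-\bm p(t_n)=-\Delta t_n\,\bm F(\bm q(t_*))+\mathcal O(\Delta t_n^3)$, which reduces the claim to showing that the discrete force $\bm G_E:=(\hat V_{\mathrm c}'(\rho_{n+1})+\hat V_{\mathrm e}'(\rho_n))\,\bm q_m/\bar r$ obeys $\bm G_E-\bm F(\bm q(t_*))=\mathcal O(\Delta t_n)$, since then $\bm\tau_{\bm p}=\Delta t_n(\bm G_E-\bm F(\bm q(t_*)))+\mathcal O(\Delta t_n^3)=\mathcal O(\Delta t_n^2)$. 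To verify the $\mathcal O(\Delta t_n)$ bound I would Taylor expand about $t_*$: the odd-order terms cancel, giving $\bm q_m=\bm q(t_*)+\mathcal O(\Delta t_n^2)$ and $\bar r=\rho(t_*)+\mathcal O(\Delta t_n^2)$, while $\rho_{n+1}-\rho_n=\mathcal O(\Delta t_n)$. Expanding the scalar factor about $\bar r$ yields
\begin{align*}
\hat V_{\mathrm c}'(\rho_{n+1})+\hat V_{\mathrm e}'(\rho_n)=\hat V'(\bar r)+\frac{\rho_{n+1}-\rho_n}{2}\big(\hat V_{\mathrm c}''(\bar r)-\hat V_{\mathrm e}''(\bar r)\big)+\mathcal O(\Delta t_n^2),
\end{align*}
so the factor equals $\hat V'(\rho(t_*))+\mathcal O(\Delta t_n)$; combined with $\bm q_m/\bar r=\bm q(t_*)/\rho(t_*)+\mathcal O(\Delta t_n^2)$ — where the lower bound $\rho_{\min}>0$ is invoked to control the quotient — this gives $\bm G_E=\bm F(\bm q(t_*))+\mathcal O(\Delta t_n)$. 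Tracking the constants through these expansions produces an explicit $K_{\bm p}$ depending on $\rho_{\min}$, on $\sup_{\mathbb I}|\rho'|$, and on sup-norms of $\hat V',\hat V'',\hat V_{\mathrm c}'',\hat V_{\mathrm e}''$ over the range of $\rho$.

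The instructive point, and the reason the two bounds carry different powers, is that the $\mathcal O(\Delta t_n)$ term in $\bm G_E$ is precisely the split-induced perturbation $\tfrac12(\rho_{n+1}-\rho_n)(\hat V_{\mathrm c}''-\hat V_{\mathrm e}'')(\bar r)$, which does not vanish for a genuine convex--concave split; this is the one place where the estimate cannot be pushed to $\mathcal O(\Delta t_n^2)$, consistent with the generalized Eyre integrator being only first-order accurate globally. The main obstacle is thus not any individual estimate but the bookkeeping of keeping the two competing expansion centers straight ($\bar r$ versus $\rho(t_*)$, and $\bm q_m$ versus $\bm q(t_*)$) so that the denominator and direction enter only at $\mathcal O(\Delta t_n^2)$ while the split supplies the dominant $\mathcal O(\Delta t_n)$ contribution; the nondegeneracy $\rho_{\min}>0$ must be used to keep every constant finite.
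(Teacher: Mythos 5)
Your proposal is correct and follows essentially the same route as the paper's proof: both reduce the momentum truncation error to showing that the split force $\hat V_{\mathrm c}'(\rho_{n+1})+\hat V_{\mathrm e}'(\rho_{n})$ agrees with a second-order midpoint-type reference up to an $\mathcal{O}(\Delta t_n)$ term generated by the convex--concave split, which is exactly the paper's argument. The only differences are bookkeeping: the paper Taylor-expands the composite functions $\tilde U_{\mathrm c}(t)$, $\tilde U_{\mathrm e}(t)$ in time about $t_{n+\frac12}$ and asserts the second-order reference formula directly, whereas you expand in the radial variable about $\bar r$ and derive the reference from the midpoint quadrature rule, making the nondegeneracy hypothesis $\rho_{\min}>0$ (implicit in the paper) explicit.
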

\begin{proof}
Let $\bm q(t)$ and $\bm p(t)$ be the exact solutions of the model problem \eqref{eq:single-particle-strong}. Invoking the Taylor expansion, one has
\begin{align}
& \bm q(t_{n+1}) - \bm q(t_n) = \frac{\Delta t}{2m} \left( \bm p(t_{n+1}) + \bm p(t_n) \right) + \mathcal{O}(\Delta t_n^3), \\
\label{eq:convex-concave-accuracy-proof-1}
& \bm p(t_{n+1}) - \bm p(t_n) = -\Delta t_n \hat{V}'(\|\bm q(t_{n+\frac12})\|) \frac{\bm q(t_{n+1}) + \bm q(t_n)}{\|\bm q(t_{n+1})\| + \|\bm q(t_n)\|} + \mathcal{O}(\Delta t_n^3).
\end{align}
Introducing the notation $\tilde{U}(t) := \hat{V}'(\|\bm q(t)\|)$, one has the following relations,
\begin{align*}
& \hat{V}'_{\mathrm c}(\|\bm q(t_{n+1})\|)=\tilde{U}_{\mathrm c}(t_{n+1}) = \tilde{U}_{\mathrm c}(t_{n+\frac12}) + \tilde{U}'_{\mathrm c}(t_{n+\frac12})\frac{\Delta t_n}{2} + \tilde{U}''_{\mathrm c}(t_{n+\frac12})\frac{\Delta t^2_n}{4} + \mathcal O(\Delta t_n^3), \\
& \hat{V}'_{\mathrm e}(\|\bm q(t_{n})\|)=\tilde{U}_{\mathrm e}(t_{n}) = \tilde{U}_{\mathrm e}(t_{n+\frac12}) - \tilde{U}'_{\mathrm e}(t_{n+\frac12})\frac{\Delta t_n}{2} + \tilde{U}''_{\mathrm e}(t_{n+\frac12})\frac{\Delta t^2_n}{4} + \mathcal O(\Delta t_n^3).
\end{align*}
Combining the two results in
\begin{align}
\label{eq:convex-concave-accuracy-proof-2}
\hat{V}'_{\mathrm c}(\|\bm q(t_{n+1})\|) + \hat{V}'_{\mathrm e}(\|\bm q(t_{n})\|) = \hat{V}(\|\bm q(t_{n+\frac12})\|) + \left( \tilde{U}'_{\mathrm c}(t_{n+\frac12}) - \tilde{U}'_{\mathrm e}(t_{n+\frac12}) \right)\frac{\Delta t_n}{2} + \mathcal O(\Delta t_n^2).
\end{align}
Substituting \eqref{eq:convex-concave-accuracy-proof-2} into \eqref{eq:convex-concave-accuracy-proof-1} leads to
\begin{align*}
\bm p(t_{n+1}) - \bm p(t_n) = -\Delta t_n \left( \hat{V}_{\mathrm c}'(\|\bm q(t_{n+1})\|) + \hat{V}_{\mathrm e}'(\|\bm q(t_{n})\|)  \right)\frac{\bm q(t_{n+1}) + \bm q(t_n)}{\|\bm q(t_{n+1})\| + \|\bm q(t_n)\|} + \mathcal{O}(\Delta t_n^2),
\end{align*}
which completes the proof.
\end{proof}
Proposition \ref{prop:convex-concave-accuracy} indicates that the application of the rectangular quadrature rule results in a loss of the accuracy, and the integrator \eqref{eq:convex-concave-scheme-q}-\eqref{eq:convex-concave-scheme-p} is only first-order accurate. In the subsequent discussion, we will introduce higher-order quadrature rules that maintain the second-order accuracy with the sign of the residual controlled.

\subsubsection{Perturbed mid-point integrator}
In this section, we present an integrator based on the energy split \eqref{eq:super-convex-split}, with the super-convex part $\hat{V}_{+}$ and the super-concave part $\hat{V}_{-}$. In each time step, given $\bm q_{n}$ and $\bm p_{n}$, determine $\bm q_{n+1}$ and $\bm p_{n+1}$ from the following equations,
\begin{align}
\label{eq:super-convex-concave-scheme-q}
\bm q_{n+1} - \bm q_n &= \frac{\Delta t_n}{m} \bm p_{n+\frac12}, \\
\label{eq:super-convex-concave-scheme-p}
\bm p_{n+1} - \bm p_n &= - \Delta t_n \left( \hat{V}'(\|\bm q\|_{n+\frac12}) + \frac{\left(\| \bm q_{n+1}\| - \|\bm q_n \| \right)^2}{24} \left( \hat{V}_{+}'''(\|\bm q_{n+1}\|) + \hat{V}_{-}'''(\|\bm q_{n}\|) \right) \right) \frac{\bm q_{n+\frac12}}{\|\bm q\|_{n+\frac12}}.
\end{align}
In \eqref{eq:super-convex-concave-scheme-p}, the conservative force is represented in terms of $\hat{V}'(\|\bm q\|_{n+1/2})$ with a perturbation term, and the perturbation term vanishes for linear or quadratic potential energies. It is worthy to note that $\|\bm q\|_{n+1/2} \neq \|\bm q_{n+1/2}\|$, which differentiates the above integrator from the mid-point integrator \eqref{eq:mid-point-q-single-particle}-\eqref{eq:mid-point-p-single-particle}. The quadrature rule related to the integrator above is a suite of perturbed mid-point rules stated in the following lemma. This quadrature was originally designed for an entropy-stable time integration scheme for the van der Waals fluids \cite{Liu2013,Liu2015}, and its proof can be found in Appendix B of \cite{Liu2013}. 
\begin{lemma}
\label{lemma:perturbed-mid-point}
For a function $f\in C^3([a,b])$, there exists $\xi_1,\xi_2 \in (a,b)$ such that the following quadrature rules hold,
\begin{align}
\label{eq:perturbed-mid-point-a}
\int_a^b f(x)dx &= (b-a) f(\frac{a+b}{2}) + \frac{(b-a)^3}{24}f''(b) - \frac{(b-a)^4}{48}f'''(\xi_1), \\
\label{eq:perturbed-mid-point-b}
\int_a^b f(x)dx &= (b-a) f(\frac{a+b}{2}) + \frac{(b-a)^3}{24}f''(a) + \frac{(b-a)^4}{48}f'''(\xi_2).
\end{align}
\end{lemma}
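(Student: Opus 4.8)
The plan is to treat each quadrature identity as a linear error functional and represent that functional through its Peano kernel, with the sign analysis of the kernel being the crux. For \eqref{eq:perturbed-mid-point-a} I introduce
\[
E[f] := \int_a^b f(x)\,dx - (b-a)\,f\!\left(\frac{a+b}{2}\right) - \frac{(b-a)^3}{24}\,f''(b),
\]
and abbreviate $h := b-a$, $c := (a+b)/2$; the goal is to show $E[f] = -\tfrac{h^4}{48}\,f'''(\xi_1)$ for some $\xi_1\in(a,b)$. First I would verify by direct computation that $E$ annihilates all polynomials of degree at most two, that is $E[1]=E[x]=E[x^2]=0$; these are short evaluations exploiting that $c$ is the midpoint so that the odd moments $\int_a^b (x-c)\,dx$ vanish. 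This annihilation is exactly the hypothesis of the Peano kernel theorem with $n=3$, which then gives, for $f\in C^3([a,b])$,
\[
E[f] = \int_a^b K(t)\,f'''(t)\,dt, \qquad K(t) = \frac{1}{2}\,E_x\!\left[(x-t)_{+}^{2}\right],
\]
where $E_x$ denotes $E$ acting in the variable $x$ with $t$ held fixed and $(x-t)_+ := \max(x-t,0)$.

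Next I would compute $K$ in closed form. Using $\int_a^b (x-t)_{+}^{2}\,dx = \tfrac13 (b-t)^3$ and $\partial_x^2 (x-t)_{+}^{2}\big|_{x=b} = 2$, together with $(c-t)_{+}^{2}=(c-t)^2$ for $t<c$ and $0$ for $t>c$, one obtains the piecewise expression
\[
K(t) = \frac12\left(\frac{(b-t)^3}{3} - \frac{h^3}{12}\right) \text{ on } (c,b), \qquad K(t) = \frac12\left(\frac{(b-t)^3}{3} - h(c-t)^2 - \frac{h^3}{12}\right) \text{ on } (a,c).
\]

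The main obstacle is proving that $K$ keeps a constant sign on $(a,b)$, since this is what licenses the mean-value theorem for integrals. On $(c,b)$ the inequality is elementary: $b-t<h/2$ forces $\tfrac{(b-t)^3}{3}<\tfrac{h^3}{24}<\tfrac{h^3}{12}$, so $K<0$. On $(a,c)$ I would study $\phi(t) := \tfrac{(b-t)^3}{3} - h(c-t)^2 - \tfrac{h^3}{12}$ and check the boundary data $\phi(a)=0$ and $\phi'(a)=0$, followed by $\phi''(t) = 2(b-t)-2h < 0$ throughout $(a,c)$; concavity together with the vanishing value and slope at $t=a$ forces $\phi<0$ on $(a,c]$. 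Hence $K\le 0$ on all of $(a,b)$. The mean-value theorem for integrals then yields $E[f] = f'''(\xi_1)\int_a^b K(t)\,dt$, and I would evaluate the constant via $\int_a^b K\,dt = \tfrac16 E[x^3] = -\tfrac{h^4}{48}$ (cleanest in the shifted variable $u=x-c$), which establishes \eqref{eq:perturbed-mid-point-a}.

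Finally, \eqref{eq:perturbed-mid-point-b} follows from \eqref{eq:perturbed-mid-point-a} by the reflection $x\mapsto a+b-x$: applying the first identity to $\tilde f(x):=f(a+b-x)$ leaves $\int_a^b$ and $f(c)$ unchanged, replaces $f''(b)$ by $f''(a)$, and produces $\tilde f'''=-f'''(a+b-\,\cdot\,)$, so the remainder changes sign and $\xi_2 := a+b-\xi_1\in(a,b)$ delivers the $+\tfrac{h^4}{48}\,f'''(\xi_2)$ term. I expect the concavity-based sign analysis of $K$ on $(a,c)$ to be the only genuinely nontrivial step; verifying the polynomial annihilation, computing the leading constant, and carrying out the reflection are all routine bookkeeping.
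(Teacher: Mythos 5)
Your proof is correct, and it is worth noting that the paper itself never proves this lemma: it defers to Appendix B of \cite{Liu2013}, where the identities are assembled by hand from Taylor expansions with integral remainder (expand $f$ about the midpoint, integrate, trade $f''\bigl(\tfrac{a+b}{2}\bigr)$ for $f''(b)$ or $f''(a)$ by a further expansion, then check that the combined remainder kernel is sign-definite so the mean value theorem applies). Your Peano-kernel route constructs exactly the same kernel, but through the general machinery rather than by explicit bookkeeping: I verified that $E$ annihilates $\mathbb{P}_2$, that your piecewise formula for $K$ is right, that the sign analysis holds (the elementary bound $\tfrac{(b-t)^3}{3}<\tfrac{h^3}{24}$ on $(c,b)$; and on $(a,c)$ indeed $\phi(a)=\phi'(a)=0$ with $\phi''(t)=2(b-t)-2h<0$, forcing $\phi<0$), and that $\int_a^b K\,dt=\tfrac16E[x^3]=-\tfrac{h^4}{48}$, so \eqref{eq:perturbed-mid-point-a} follows; the reflection $x\mapsto a+b-x$ then yields \eqref{eq:perturbed-mid-point-b} with $\xi_2=a+b-\xi_1$. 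What your route buys: the template (annihilation of $\mathbb{P}_2$, kernel sign, normalization via $E[x^3]$) is systematic, produces the sharp constant without tracking nested remainders, transfers verbatim to the perturbed trapezoidal rules of Lemma \ref{lemma:perturbed-trapezoidal-rules}, and your reflection trick halves the work, whereas the cited proof handles the two identities separately. What the Taylor route buys: it is elementary and self-contained, needing neither the Peano kernel theorem nor the justification (which you use implicitly) that the functional commutes with the remainder integral — in particular that the $\partial_x^2\big|_{x=b}$ term may be taken under the integral, giving the constant value $2$ for all $t\in(a,b)$. Two small points to make explicit if you write this up: state that interchange, and note that the first mean value theorem with weight $K\le 0$ a priori gives $\xi_1\in[a,b]$; since $K<0$ throughout the open interval, the standard refinement does place $\xi_1\in(a,b)$ as the lemma asserts, but that deserves a sentence.
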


The motivation of the above quadrature comes from the observation that the mid-point rule has a residual term $(b-a)^3f''(\tilde{\xi})/24$ with $\tilde{\xi} \in (a,b)$. Therefore, perturbing the mid-point rule by a similar term is anticipated to improve the accuracy. Collocating the perturbation term at the two end points leads to fourth-order residual terms with opposite signs, which turns out to be quite beneficial in ensuring energy stability. Let us state and prove the properties of the integrator in the following.

\begin{proposition}
The scheme \eqref{eq:super-convex-concave-scheme-q}-\eqref{eq:super-convex-concave-scheme-p} is momentum conserving and energy decaying, i.e.,
\begin{align}
\bm J(\bm q_{n+1}, \bm p_{n+1}) &= \bm J(\bm q_{n}, \bm p_{n}), \\
H(\bm q_{n+1}, \bm p_{n+1}) - H(\bm q_n, \bm p_n) &= \frac{\left(\|\bm q_{n+1}\| - \|\bm q_n\|\right)^4 }{2} \left( -\hat{V}^{(4)}_{+}(\|\bm q\|_{n+\xi_1}) +  \hat{V}^{(4)}_{-}(\|\bm q\|_{n+\xi_2}) \right) \leq 0,
\end{align}
with $\xi_1, \xi_2 \in (0,1)$.
\end{proposition}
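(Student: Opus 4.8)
The plan is to mirror the proof of the generalized Eyre integrator, replacing the rectangular quadrature rules with the perturbed mid-point rules of Lemma~\ref{lemma:perturbed-mid-point}. Momentum conservation is immediate: the right-hand side of \eqref{eq:super-convex-concave-scheme-p} is a scalar multiple of $\bm q_{n+\frac12}$, so $\bm p_{n+1} - \bm p_n$ is parallel to $\bm q_{n+\frac12}$, and the identity \eqref{eq:momentum_identity} together with \eqref{eq:super-convex-concave-scheme-q} forces the cross-product terms to cancel, giving $\bm J(\bm q_{n+1}, \bm p_{n+1}) = \bm J(\bm q_n, \bm p_n)$.

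For the energy balance, I would first express the exact potential increment via the fundamental theorem of calculus and the split $\hat{V}' = \hat{V}'_{+} + \hat{V}'_{-}$, writing $\hat{V}(\|\bm q_{n+1}\|) - \hat{V}(\|\bm q_n\|) = \int_{\|\bm q_n\|}^{\|\bm q_{n+1}\|} (\hat{V}'_{+}(r) + \hat{V}'_{-}(r))\, dr$. I would then apply \eqref{eq:perturbed-mid-point-a} to the super-convex integrand $\hat{V}'_{+}$ and \eqref{eq:perturbed-mid-point-b} to the super-concave integrand $\hat{V}'_{-}$, taking $a = \|\bm q_n\|$ and $b = \|\bm q_{n+1}\|$. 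Because $(a+b)/2 = \|\bm q\|_{n+\frac12}$, the two mid-point contributions combine to $(\|\bm q_{n+1}\| - \|\bm q_n\|)\hat{V}'(\|\bm q\|_{n+\frac12})$; the two cubic perturbation terms, collocated at $b$ and $a$ respectively, combine to $\frac{(\|\bm q_{n+1}\| - \|\bm q_n\|)^3}{24}(\hat{V}'''_{+}(\|\bm q_{n+1}\|) + \hat{V}'''_{-}(\|\bm q_n\|))$; and the two fourth-order residuals, carrying opposite signs, combine to a fourth-power factor times $(-\hat{V}^{(4)}_{+}(\|\bm q\|_{n+\xi_1}) + \hat{V}^{(4)}_{-}(\|\bm q\|_{n+\xi_2}))$. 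The crucial observation is that, after factoring out $(\|\bm q_{n+1}\| - \|\bm q_n\|)$, the first two combined contributions reproduce exactly the bracketed algorithmic force appearing in \eqref{eq:super-convex-concave-scheme-p}.

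With this identity in hand, I would take the inner product of \eqref{eq:super-convex-concave-scheme-p} with $\bm q_{n+1} - \bm q_n$. Using $\bm q_{n+\frac12} \cdot (\bm q_{n+1} - \bm q_n) = \tfrac12(\|\bm q_{n+1}\|^2 - \|\bm q_n\|^2)$ and $\|\bm q\|_{n+\frac12} = \tfrac12(\|\bm q_{n+1}\| + \|\bm q_n\|)$, the geometric factor collapses to $\frac{\bm q_{n+\frac12} \cdot (\bm q_{n+1} - \bm q_n)}{\|\bm q\|_{n+\frac12}} = \|\bm q_{n+1}\| - \|\bm q_n\|$, exactly as in the generalized Eyre proof. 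Substituting the quadrature identity then replaces the algorithmic force by $\hat{V}(\|\bm q_{n+1}\|) - \hat{V}(\|\bm q_n\|)$ minus the fourth-order residual. Meanwhile, \eqref{eq:super-convex-concave-scheme-q} gives $(\bm p_{n+1} - \bm p_n)\cdot(\bm q_{n+1} - \bm q_n) = \frac{\Delta t_n}{2m}(\|\bm p_{n+1}\|^2 - \|\bm p_n\|^2)$. Equating the two evaluations and dividing by $\Delta t_n$ yields the discrete energy balance, with the surviving residual supplying $H(\bm q_{n+1}, \bm p_{n+1}) - H(\bm q_n, \bm p_n)$.

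The sign of the residual follows directly from the split \eqref{eq:super-convex-split}: since $\hat{V}^{(4)}_{+} \geq 0$ and $\hat{V}^{(4)}_{-} \leq 0$, the quantity $-\hat{V}^{(4)}_{+}(\|\bm q\|_{n+\xi_1}) + \hat{V}^{(4)}_{-}(\|\bm q\|_{n+\xi_2})$ is non-positive, which establishes energy decay. The main obstacle I anticipate is the bookkeeping in the middle step: one must verify that Lemma~\ref{lemma:perturbed-mid-point} is invoked with the correct pairing --- rule \eqref{eq:perturbed-mid-point-a}, whose perturbation is collocated at $b = \|\bm q_{n+1}\|$, applied to the super-convex part, and rule \eqref{eq:perturbed-mid-point-b}, collocated at $a = \|\bm q_n\|$, applied to the super-concave part --- so that the cubic perturbations reproduce precisely the endpoints $\|\bm q_{n+1}\|$ and $\|\bm q_n\|$ occurring in \eqref{eq:super-convex-concave-scheme-p} and, at the same time, the fourth-order residuals inherit the opposite signs required for dissipation. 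Any mismatch in this collocation would break the telescoping of the algorithmic force against the exact potential increment.
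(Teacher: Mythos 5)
Your proposal is correct and follows essentially the same route as the paper's own proof: the same pairing of rule \eqref{eq:perturbed-mid-point-a} with $\hat{V}'_{+}$ and \eqref{eq:perturbed-mid-point-b} with $\hat{V}'_{-}$, the same telescoping of the resulting quadrature identity against the bracketed algorithmic force in \eqref{eq:super-convex-concave-scheme-p}, and the same geometric collapse $\bm q_{n+\frac12}\cdot(\bm q_{n+1}-\bm q_n)/\|\bm q\|_{n+\frac12} = \|\bm q_{n+1}\|-\|\bm q_n\|$. Carrying out your bookkeeping explicitly yields the dissipation prefactor $1/48$, which agrees with the paper's derivation (up to its stray $\Delta t_n$ in the final line) and shows the constant $1/2$ in the proposition's display is a typo, though the sign conclusion is unaffected.
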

\begin{proof}
The momentum conservation property follows straightforwardly from the identity \eqref{eq:momentum_identity}. To demonstrate the energy dissipation property, we analyze the jump of the potential energy as follows, 
\begin{align}
\label{eq:proof-perturbed-mid-point-1}
 & \hat{V}(\|\bm q_{n+1}\|) - \hat{V}(\|\bm q_{n}\|) = \int_{\|\bm q_n\|}^{\|\bm q_{n+1}\|} \hat{V}'(r)dr = \int_{\|\bm q_n\|}^{\|\bm q_{n+1}\|} \hat{V}_{+}'(r) + \hat{V}_{-}'(r) dr \nonumber \\
=& \left( \|\bm q_{n+1}\| - \|\bm q_n\| \right) \left( \hat{V}'_{+}(\|\bm q\|_{n+\frac12}) + \hat{V}'_{-}(\|\bm q\|_{n+\frac12}) \right) + \frac{\left(\|\bm q_{n+1}\| - \|\bm q_n\|\right)^3}{24} \left( \hat{V}'''_{+}(\|\bm q_{n+1}\|) +  \hat{V}'''_{-}(\|\bm q_{n}\|) \right) \nonumber \\
&+ \frac{\left(\|\bm q_{n+1}\| - \|\bm q_n\|\right)^4}{48} \left( -\hat{V}^{(4)}_{+}(\|\bm q\|_{n+\xi_1}) +  \hat{V}^{(4)}_{-}(\|\bm q\|_{n+\xi_2}) \right).
\end{align}
In the above, we have applied the quadrature rule \eqref{eq:perturbed-mid-point-a} to $\hat{V}'_{+}$ and \eqref{eq:perturbed-mid-point-b} to $\hat{V}'_{-}$, respectively. Noticing that $\hat{V}(\|\bm q\|_{n+\frac12}) = \hat{V}'_{+}(\|\bm q\|_{n+\frac12}) + \hat{V}'_{-}(\|\bm q\|_{n+\frac12})$, one has
\begin{align*}
\hat{V}'(\|\bm q\|_{n+\frac12}) +& \frac{\left(\|\bm q_{n+1}\| - \|\bm q_n\|\right)^2}{24} \left( \hat{V}'''_{+}(\|\bm q_{n+1}\|) +  \hat{V}'''_{-}(\|\bm q_{n}\|) \right)  = \frac{\hat{V}(\|\bm q_{n+1}\|) - \hat{V}(\|\bm q_{n}\|)}{\|\bm q_{n+1}\| - \|\bm q_n\|} \\
& + \frac{\left(\|\bm q_{n+1}\| - \|\bm q_n\|\right)^3}{48} \left( \hat{V}^{(4)}_{+}(\|\bm q\|_{n+\xi_1}) -  \hat{V}^{(4)}_{-}(\|\bm q\|_{n+\xi_2}) \right),
\end{align*}
after rearranging terms in \eqref{eq:proof-perturbed-mid-point-1}. With the above, one may obtain the following relations from \eqref{eq:super-convex-concave-scheme-q}-\eqref{eq:super-convex-concave-scheme-p},
\begin{align*}
& \frac{\Delta t_n}{2m}\left( \|\bm p_{n+1}\|^2 - \|\bm p_n\|^2 \right) = \left( \bm p_{n+1} - \bm p_n \right) \cdot \left( \bm q_{n+1} - \bm q_n \right) \\
=& -\Delta t_n \left( \hat{V}'(\|\bm q\|_{n+\frac12}) + \frac{\left(\| \bm q_{n+1}\| - \|\bm q_n \| \right)^2}{24} \left( \hat{V}_{+}'''(\|\bm q_{n+1}\|) + \hat{V}_{-}'''(\|\bm q_{n}\|) \right) \right) \left( \|\bm q_{n+1}\| - \|\bm q_n\| \right) \\
=& -\Delta t_n \left( \hat{V}(\|\bm q_{n+1}\|) - \hat{V}(\|\bm q_{n}\|) + \frac{\left(\|\bm q_{n+1}\| - \|\bm q_n\|\right)^4}{48} \left( \hat{V}^{(4)}_{+}(\|\bm q\|_{n+\xi_1}) -  \hat{V}^{(4)}_{-}(\|\bm q\|_{n+\xi_2}) \right) \right).
\end{align*}
Reorganizing terms leads to
\begin{align*}
H(\bm q_{n+1}, \bm p_{n+1}) - H(\bm q_n, \bm p_n) = \frac{\Delta t_n}{48} \left(\|\bm q_{n+1}\| - \|\bm q_n\|\right)^4 \left( - \hat{V}^{(4)}_{+}(\|\bm q\|_{n+\xi_1}) +  \hat{V}^{(4)}_{-}(\|\bm q\|_{n+\xi_2}) \right) \leq 0,
\end{align*}
which completes the proof.
\end{proof}

In the last, let us analyze the accuracy of the integrator \eqref{eq:super-convex-concave-scheme-q}-\eqref{eq:super-convex-concave-scheme-p}.
\begin{proposition}
\label{prop:perturbed-mid-point-accuracy}
The local truncation errors associated with \eqref{eq:super-convex-concave-scheme-q} and \eqref{eq:super-convex-concave-scheme-p} are denoted as $\bm \tau_{\bm q}(t)$ and $\bm \tau_{\bm p}(t)$, respectively. They are bounded by
\begin{align*}
\|\bm \tau_{\bm q}(t_n)\| \leq \hat{K}_{\bm q} \Delta t_n^3, \qquad \mbox{and} \qquad \|\bm \tau_{\bm q}(t_n)\| \leq \hat{K}_{\bm p} \Delta t^3_n,
\end{align*}
for all $t_n \in (0,T)$.
\end{proposition}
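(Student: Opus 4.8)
The plan is to mirror the proof of Proposition~\ref{prop:convex-concave-accuracy}, taking advantage of the fact that the position update \eqref{eq:super-convex-concave-scheme-q} is identical to that of the generalized Eyre integrator. The bound $\|\bm\tau_{\bm q}(t_n)\| \leq \hat{K}_{\bm q}\Delta t_n^3$ is therefore inherited without change: writing $\bm q(t_{n+1}) - \bm q(t_n) = \int_{t_n}^{t_{n+1}} \bm p(t)/m\, dt$ and applying the trapezoidal rule (equivalently, the first Taylor expansion in the proof of Proposition~\ref{prop:convex-concave-accuracy}) produces a remainder of order $\Delta t_n^3$. All the work lies in the momentum update \eqref{eq:super-convex-concave-scheme-p}.

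For the $\bm p$-equation I would reuse the exact identity \eqref{eq:convex-concave-accuracy-proof-1}, which records that the true solution obeys
\begin{align*}
\bm p(t_{n+1}) - \bm p(t_n) = -\Delta t_n \hat{V}'(\|\bm q(t_{n+\frac12})\|) \frac{\bm q(t_{n+1}) + \bm q(t_n)}{\|\bm q(t_{n+1})\| + \|\bm q(t_n)\|} + \mathcal{O}(\Delta t_n^3).
\end{align*}
The decisive observation is that the direction vector here equals $\bm q_{n+\frac12}/\|\bm q\|_{n+\frac12}$ evaluated along the exact trajectory, hence it coincides exactly with the direction used in \eqref{eq:super-convex-concave-scheme-p}. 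Substituting the exact solution into the scheme therefore collapses $\bm\tau_{\bm p}(t_n)$ to $\Delta t_n$ times the discrepancy between the scalar prefactor of \eqref{eq:super-convex-concave-scheme-p} and $\hat{V}'(\|\bm q(t_{n+\frac12})\|)$, modulo an $\mathcal{O}(\Delta t_n^3)$ term.

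It remains to estimate that scalar discrepancy. Setting $r(t) := \|\bm q(t)\|$ and Taylor expanding about $t_{n+\frac12}$, the odd-order terms cancel in the average, giving $\|\bm q\|_{n+\frac12} = \tfrac12(\|\bm q(t_n)\| + \|\bm q(t_{n+1})\|) = \|\bm q(t_{n+\frac12})\| + \mathcal{O}(\Delta t_n^2)$, and hence $\hat{V}'(\|\bm q\|_{n+\frac12}) = \hat{V}'(\|\bm q(t_{n+\frac12})\|) + \mathcal{O}(\Delta t_n^2)$. Next, since $\|\bm q_{n+1}\| - \|\bm q_n\| = \mathcal{O}(\Delta t_n)$ and $\hat{V}'''_{+}, \hat{V}'''_{-}$ are bounded by the fourth-differentiability hypothesis in \eqref{eq:super-convex-split}, the perturbation term is $\mathcal{O}(\Delta t_n^2)$. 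Summing the two contributions shows the prefactor equals $\hat{V}'(\|\bm q(t_{n+\frac12})\|) + \mathcal{O}(\Delta t_n^2)$; multiplying by $\Delta t_n$ delivers $\|\bm\tau_{\bm p}(t_n)\| \leq \hat{K}_{\bm p}\Delta t_n^3$.

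The step demanding the most care, and the one that distinguishes this result from Proposition~\ref{prop:convex-concave-accuracy}, is confirming that the prefactor matches $\hat{V}'(\|\bm q(t_{n+\frac12})\|)$ to second rather than first order. The improvement rests on two cancellations: the average-of-norms $\|\bm q\|_{n+\frac12}$ agrees with the norm-at-midpoint $\|\bm q(t_{n+\frac12})\|$ only up to $\mathcal{O}(\Delta t_n^2)$ because the linear Taylor terms cancel, whereas the generalized Eyre prefactor $\hat{V}'_{\mathrm c}(\|\bm q_{n+1}\|) + \hat{V}'_{\mathrm e}(\|\bm q_n\|)$ retains an uncancelled $\mathcal{O}(\Delta t_n)$ term; and the dissipative perturbation, though indispensable for energy decay, is harmless here precisely because it scales like $(\|\bm q_{n+1}\| - \|\bm q_n\|)^2 = \mathcal{O}(\Delta t_n^2)$. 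I emphasize that no exact cancellation of the perturbation against the expansion of $\hat{V}'(\|\bm q\|_{n+\frac12})$ is needed; it is enough that both deviations sit at order $\Delta t_n^2$.
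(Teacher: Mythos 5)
Your proposal is correct and follows essentially the same route as the paper's proof: Taylor-expand the exact solution about $t_{n+\frac12}$, note that $\|\bm q(t_{n+1})\| - \|\bm q(t_n)\| = \mathcal{O}(\Delta t_n)$ so the dissipative perturbation contributes only at $\mathcal{O}(\Delta t_n^3)$, and use the midpoint symmetry to match the scheme's prefactor with $\hat{V}'(\|\bm q(t_{n+\frac12})\|)$. If anything, you are slightly more explicit than the paper on one step it leaves implicit, namely that the average of norms $\|\bm q\|_{n+\frac12}$ agrees with $\|\bm q(t_{n+\frac12})\|$ to $\mathcal{O}(\Delta t_n^2)$ because the odd Taylor terms cancel, which is exactly what the paper's two expansions of $\tilde{W}$ encode.
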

\begin{proof}
Let $\bm q(t)$ and $\bm p(t)$ be the exact solutions of the model problem \eqref{eq:single-particle-strong}. Invoking the Taylor expansion, one has
\begin{align}
& \bm q(t_{n+1}) - \bm q(t_n) = \frac{\Delta t}{2m} \left( \bm p(t_{n+1}) + \bm p(t_n) \right) + \mathcal{O}(\Delta t_n^3), \\
\label{eq:perturbed-mid-point-accuracy-proof-1}
& \bm p(t_{n+1}) - \bm p(t_n) = -\Delta t_n \hat{V}'(\|\bm q(t_{n+\frac12})\|) \frac{\bm q(t_{n+1}) + \bm q(t_n)}{\|\bm q(t_{n+1})\| + \|\bm q(t_n)\|} + \mathcal{O}(\Delta t_n^3).
\end{align}
Introducing the notation $\tilde{W}(t) := \|\bm q(t)\|$, one may get the following,
\begin{align*}
& \|\bm q(t_{n+1})\|=\tilde{W}(t_{n+1}) = \tilde{W}(t_{n+\frac12}) + \tilde{W}'(t_{n+\frac12})\frac{\Delta t_n}{2} + \tilde{W}''(t_{n+\frac12})\frac{\Delta t^2_n}{4} + \mathcal O(\Delta t_n^3), \\
& \|\bm q(t_{n})\|=\tilde{W}(t_{n}) = \tilde{W}(t_{n+\frac12}) - \tilde{W}'(t_{n+\frac12})\frac{\Delta t_n}{2} + \tilde{W}''(t_{n+\frac12})\frac{\Delta t^2_n}{4} + \mathcal O(\Delta t_n^3).
\end{align*}
Combining the above two relations, it can be shown that
\begin{align*}
\|\bm q(t_{n+1})\| - \|\bm q(t_n)\| = \tilde{W}'(t_{n+\frac12}) \Delta t_n + \mathcal O(\Delta t_n^3),
\end{align*}
and
\begin{align*}
-\Delta t_n \frac{\left(\|\bm q(t_{n+1})\|- \|\bm q(t_n)\|\right)^2}{24} \left( \hat{V}_{+}'''(\|\bm q(t_{n+1})\|) + \hat{V}_{-}'''(\|\bm q(t_n)\|) \right) = \mathcal O(\Delta t_n^3),
\end{align*}
which completes the proof.
\end{proof}

\subsubsection{Perturbed trapezoidal integrator}
In this section, we present a third integrator based on the super-convex and super-concave splits of the energy $\hat{V}$. In each time step, given $\bm q_{n}$ and $\bm p_{n}$, determine $\bm q_{n+1}$ and $\bm p_{n+1}$ from the following equations,
\begin{align}
\label{eq:super-convex-concave-trapezoidal-scheme-q}
\bm q_{n+1} - \bm q_n =& \frac{\Delta t_n}{m} \bm p_{n+\frac12}, \\
\label{eq:super-convex-concave-trapezoidal-scheme-p}
\bm p_{n+1} - \bm p_n =& - \Delta t_n \bigg( \frac{ \hat{V}'(\|\bm q_{n}\|) + \hat{V}'(\|\bm q_{n+1}\|)}{2} \nonumber \\
& \quad - \frac{\left(\| \bm q_{n+1}\| - \|\bm q_n \| \right)^2}{12} \left( \hat{V}_{+}'''(\|\bm q_{n}\|) + \hat{V}_{-}'''(\|\bm q_{n+1}\|) \right) \bigg) \frac{\bm q_{n+\frac12}}{\|\bm q\|_{n+\frac12}}.
\end{align}
The above can be viewed as a modified trapezoidal scheme with additional terms involving third derivatives of $\hat{V}_{+}$ and $\hat{V}_{-}$. The quadrature rules associated with the above scheme were originally introduced for the design of an energy-stable scheme for the Cahn-Hilliard equation \cite{Gomez2011}, and it was later successfully applied to non-gradient-flow systems, such as the isothermal \cite{Liu2013} and thermal Navier-Stokes-Korteweg equations \cite{Liu2015}. We state the quadrature rules as Lemma \ref{lemma:perturbed-trapezoidal-rules}, whose proof can be found in Appendix A of \cite{Gomez2011}.
\begin{lemma}
\label{lemma:perturbed-trapezoidal-rules}
For a function $f\in C^3([a,b])$, there exists $\xi_1,\xi_2 \in (a,b)$ such that the following quadrature rules hold,
\begin{align}
\label{eq:perturbed-trapezoidal-a}
\int_a^b f(x)dx &= \frac{(b-a)}{2} \left( f(a) + f(b) \right) - \frac{(b-a)^3}{12}f''(b) + \frac{(b-a)^4}{24}f'''(\xi_1), \\
\label{eq:perturbed-trapezoidal-b}
\int_a^b f(x)dx &= \frac{(b-a)}{2} \left( f(a) + f(b) \right) - \frac{(b-a)^3}{12}f''(a) - \frac{(b-a)^4}{24}f'''(\xi_2).
\end{align}
\end{lemma}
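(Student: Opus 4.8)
The plan is to prove the two formulas by the Peano kernel technique, carried out by hand, and to obtain the second from the first by a reflection argument so that only one of them requires real work. Since the residual involves $f'''$ while only $f\in C^3$ is assumed, the strategy must isolate a single integration against $f'''$ and must never differentiate past the third derivative; this dictates collocating the corrective $f''$ term at the \emph{fixed} endpoint, as in \eqref{eq:perturbed-trapezoidal-b}, rather than at the moving one, since collocating at the moving endpoint would generate an $f^{(4)}$ term that the $C^3$ hypothesis cannot support.

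First I would treat \eqref{eq:perturbed-trapezoidal-b}. Fixing $a$ and regarding the upper limit as a variable, define
\begin{align*}
\phi(t) := \int_a^{t} f(x)\,dx - \frac{t-a}{2}\bigl( f(a) + f(t) \bigr) + \frac{(t-a)^3}{12} f''(a).
\end{align*}
A direct computation gives $\phi(a)=\phi'(a)=0$ and, crucially,
\begin{align*}
\phi''(t) = -\frac{t-a}{2}\bigl( f''(t) - f''(a) \bigr).
\end{align*}
The point to check carefully here is the cancellation: the coefficient $1/12$ in the corrective term is exactly what is needed for the $f'(t)$ contribution and the lower-order $f''$ terms to drop out, leaving an expression in which $f''$ enters only through the increment $f''(t)-f''(a)$. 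Because this increment is anchored at the fixed-endpoint value $f''(a)$, the expression for $\phi''$ stays within $C^1$ and no fourth derivative is produced.

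Next I would convert $\phi(b)$ into a single integral against $f'''$. Since $\phi(a)=\phi'(a)=0$, Taylor's theorem with integral remainder yields $\phi(b)=\int_a^b (b-t)\phi''(t)\,dt$, and writing $f''(t)-f''(a)=\int_a^t f'''(s)\,ds$ and exchanging the order of integration over the triangle $a\le s\le t\le b$ gives
\begin{align*}
\phi(b) = \int_a^b K(s)\, f'''(s)\,ds, \qquad K(s) := -\frac{1}{2}\int_s^b (b-t)(t-a)\,dt.
\end{align*}
The decisive observation is that the kernel $K$ does not change sign: the integrand $(b-t)(t-a)$ is nonnegative on $[a,b]$, so $K(s)\le 0$ throughout. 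This is exactly what licenses the weighted mean value theorem for integrals, producing a single $\xi_2\in(a,b)$ with $\phi(b)=f'''(\xi_2)\int_a^b K(s)\,ds$. A short evaluation gives $\int_a^b K(s)\,ds = -(b-a)^4/24$, and since $\phi(b)$ is precisely the error functional of \eqref{eq:perturbed-trapezoidal-b}, this establishes that formula.

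Finally, \eqref{eq:perturbed-trapezoidal-a} follows by symmetry. Applying the just-proved formula to the reflected function $\tilde f(x):=f(a+b-x)$ and using $\int_a^b \tilde f = \int_a^b f$, $\tilde f(a)=f(b)$, $\tilde f''(a)=f''(b)$, and $\tilde f'''(x)=-f'''(a+b-x)$, the corrective term switches to $f''(b)$ and the residual flips sign, giving \eqref{eq:perturbed-trapezoidal-a} with an interior point $\xi_1\in(a,b)$ obtained by reflecting the one supplied for $\tilde f$. The main obstacle is the middle step: producing the clean representation $\phi(b)=\int_a^b K\,f'''$ with a one-signed kernel, since it is the sign-definiteness of $K$—not merely an $\mathcal O((b-a)^4)$ bound—that upgrades the remainder to an exact evaluation at a single interior point and thereby fixes the controlled sign of the residual relied upon in the subsequent energy analysis.
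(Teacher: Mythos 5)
Your proof is correct, but note that the paper does not actually prove this lemma itself: it defers entirely to Appendix A of \cite{Gomez2011}, so your argument is necessarily a different, self-contained route rather than a reproduction of the paper's. Your computations check out: with $\phi(t) = \int_a^t f(x)\,dx - \frac{t-a}{2}\left( f(a)+f(t) \right) + \frac{(t-a)^3}{12}f''(a)$ one indeed has $\phi(a)=\phi'(a)=0$ and $\phi''(t) = -\frac{t-a}{2}\left( f''(t)-f''(a) \right)$; the Fubini exchange over the triangle $a\le s\le t\le b$ produces the kernel $K(s) = -\frac12\int_s^b (b-t)(t-a)\,dt \le 0$ with $\int_a^b K(s)\,ds = -(b-a)^4/24$; the weighted mean value theorem then gives \eqref{eq:perturbed-trapezoidal-b}, and the reflection $\tilde f(x)=f(a+b-x)$ correctly converts \eqref{eq:perturbed-trapezoidal-b} into \eqref{eq:perturbed-trapezoidal-a}, including the sign flip of the residual (I verified both formulas against $f(x)=x^3$ on $[0,1]$, and they hold). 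Compared with the Taylor-expansion-style derivation in the cited reference, your Peano-kernel route buys two things: it exhibits explicitly that the sign-definiteness of $K$ --- not merely an $\mathcal O((b-a)^4)$ bound --- is what collapses the remainder to a single interior evaluation $f'''(\xi)$ with a controlled sign, which is exactly the structural property exploited in Proposition \ref{proposition:trapezoidal-properties}; and the reflection argument halves the work while explaining why the two formulas differ only in where $f''$ is collocated and in the sign of the residual. One minor point to acknowledge in a careful write-up: the crude weighted mean value theorem only places $\xi_2$ in $[a,b]$; obtaining the open interval $(a,b)$ requires the standard refinement (if $\int_a^b K f'''\,ds \big/ \int_a^b K\,ds$ attains an extreme value of $f'''$, then $f'''$ is constant on $[a,b)$ and any interior point serves), which is routine but should be stated.
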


With the above lemma, we give the main results of the temporal scheme \eqref{eq:super-convex-concave-trapezoidal-scheme-q}-\eqref{eq:super-convex-concave-trapezoidal-scheme-p} in the following proposition.
\begin{proposition}
\label{proposition:trapezoidal-properties}
The scheme \eqref{eq:super-convex-concave-trapezoidal-scheme-q}-\eqref{eq:super-convex-concave-trapezoidal-scheme-p} is momentum conserving and energy decaying,
\begin{align}
\bm J(\bm q_{n+1}, \bm p_{n+1}) &= \bm J(\bm q_{n}, \bm p_{n}), \\
H(\bm q_{n+1}, \bm p_{n+1}) - H(\bm q_n, \bm p_n) &= \frac{\left(\|\bm q_{n+1}\| - \|\bm q_n\|\right)^4 }{2} \left( \hat{V}^{(4)}_{+}(\|\bm q\|_{n+\xi_1}) -  \hat{V}^{(4)}_{-}(\|\bm q\|_{n+\xi_2}) \right) \leq 0,
\end{align}
with $\xi_1,\xi_2 \in (0,1)$.
\end{proposition}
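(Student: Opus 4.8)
The plan is to mirror the structure of the two preceding propositions almost verbatim, since the perturbed trapezoidal integrator \eqref{eq:super-convex-concave-trapezoidal-scheme-q}-\eqref{eq:super-convex-concave-trapezoidal-scheme-p} differs from the perturbed mid-point scheme only in which quadrature rules (Lemma \ref{lemma:perturbed-trapezoidal-rules} instead of Lemma \ref{lemma:perturbed-mid-point}) are applied to the split pieces $\hat{V}'_{+}$ and $\hat{V}'_{-}$. Momentum conservation is immediate: the right-hand side of \eqref{eq:super-convex-concave-trapezoidal-scheme-p} is a scalar multiple of $\bm q_{n+\frac12}$, so inserting \eqref{eq:super-convex-concave-trapezoidal-scheme-q}-\eqref{eq:super-convex-concave-trapezoidal-scheme-p} into the identity \eqref{eq:momentum_identity} kills both cross products (the first because $\bm p_{n+1}-\bm p_n \parallel \bm q_{n+\frac12}$, the second because \eqref{eq:super-convex-concave-trapezoidal-scheme-q} makes $\bm q_{n+1}-\bm q_n \parallel \bm p_{n+\frac12}$), giving $\bm J(\bm q_{n+1},\bm p_{n+1}) = \bm J(\bm q_n,\bm p_n)$.

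For the energy statement, first I would express the exact potential jump via the split and the two trapezoidal quadrature rules. Applying \eqref{eq:perturbed-trapezoidal-a} to $\hat{V}'_{+}$ and \eqref{eq:perturbed-trapezoidal-b} to $\hat{V}'_{-}$ over $[\|\bm q_n\|,\|\bm q_{n+1}\|]$ and summing yields
\begin{align*}
\hat{V}(\|\bm q_{n+1}\|) - \hat{V}(\|\bm q_{n}\|) =& \left( \|\bm q_{n+1}\| - \|\bm q_n\| \right) \frac{\hat{V}'(\|\bm q_n\|) + \hat{V}'(\|\bm q_{n+1}\|)}{2} \\
& - \frac{\left(\|\bm q_{n+1}\| - \|\bm q_n\|\right)^3}{12} \left( \hat{V}'''_{+}(\|\bm q_{n+1}\|) +  \hat{V}'''_{-}(\|\bm q_{n}\|) \right) \\
& + \frac{\left(\|\bm q_{n+1}\| - \|\bm q_n\|\right)^4}{24} \left( \hat{V}^{(4)}_{+}(\|\bm q\|_{n+\xi_1}) -  \hat{V}^{(4)}_{-}(\|\bm q\|_{n+\xi_2}) \right),
\end{align*}
where the leading terms recombine because $\hat{V}' = \hat{V}'_{+} + \hat{V}'_{-}$. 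The point to watch is that the $f''$ perturbation terms carry a negative sign and attach to $b=\|\bm q_{n+1}\|$ for the super-convex piece but to $a=\|\bm q_n\|$ for the super-concave piece, which is exactly the asymmetric collocation written into \eqref{eq:super-convex-concave-trapezoidal-scheme-p}; I must check the sign bookkeeping so that the bracketed force in \eqref{eq:super-convex-concave-trapezoidal-scheme-p} matches the mean-value term plus the cubic correction above, leaving only the quartic remainder.

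Next I would compute the kinetic energy increment by dotting \eqref{eq:super-convex-concave-trapezoidal-scheme-p} with $\bm q_{n+1}-\bm q_n$, using \eqref{eq:super-convex-concave-trapezoidal-scheme-q} to identify $(\bm p_{n+1}-\bm p_n)\cdot(\bm q_{n+1}-\bm q_n)$ with $\tfrac{\Delta t_n}{2m}(\|\bm p_{n+1}\|^2 - \|\bm p_n\|^2)$, and the collapse $\tfrac{\bm q_{n+\frac12}}{\|\bm q\|_{n+\frac12}}\cdot(\bm q_{n+1}-\bm q_n) = \|\bm q_{n+1}\| - \|\bm q_n\|$ that follows from $\bm q_{n+\frac12} = \tfrac12(\bm q_{n+1}+\bm q_n)$ and $\|\bm q\|_{n+\frac12} = \tfrac12(\|\bm q_{n+1}\|+\|\bm q_n\|)$. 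Substituting the quadrature identity above cancels the trapezoidal-mean and cubic terms against the algorithmic force, and after reorganizing one obtains $H(\bm q_{n+1},\bm p_{n+1}) - H(\bm q_n,\bm p_n)$ equal to a constant times $\left(\|\bm q_{n+1}\| - \|\bm q_n\|\right)^4\bigl(\hat{V}^{(4)}_{+}(\|\bm q\|_{n+\xi_1}) - \hat{V}^{(4)}_{-}(\|\bm q\|_{n+\xi_2})\bigr)$.

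The only genuine obstacle is reconciling the sign conventions: the trapezoidal remainders in Lemma \ref{lemma:perturbed-trapezoidal-rules} have $+\tfrac{(b-a)^4}{24}f'''(\xi_1)$ versus $-\tfrac{(b-a)^4}{24}f'''(\xi_2)$, so I must verify that the final quartic remainder indeed reads $\hat{V}^{(4)}_{+} - \hat{V}^{(4)}_{-}$ (the opposite ordering from the mid-point proposition) and that its overall sign renders the bracket nonpositive. Because $\hat{V}^{(4)}_{+}\geq 0$ and $\hat{V}^{(4)}_{-}\leq 0$ by the super-convex/super-concave split \eqref{eq:super-convex-split}, the bracket $\hat{V}^{(4)}_{+}(\|\bm q\|_{n+\xi_1}) - \hat{V}^{(4)}_{-}(\|\bm q\|_{n+\xi_2})$ is nonnegative; I would therefore expect the prefactor $\tfrac{\Delta t_n}{24}$ to emerge with a negative sign so that $H(\bm q_{n+1},\bm p_{n+1}) - H(\bm q_n,\bm p_n)\leq 0$, and I would trace the constants carefully to confirm the $\tfrac12$ written in the proposition statement reflects dividing out $\Delta t_n$ and absorbing the factor of two from the $\tfrac{1}{24}$ versus $\tfrac{1}{12}$ coefficients.
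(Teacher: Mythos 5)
Your momentum argument is correct and coincides with the paper's (the algorithmic force is parallel to $\bm q_{n+\frac12}$, so both cross products in \eqref{eq:momentum_identity} vanish). The energy argument, however, has a genuine error: you have swapped the assignment of the two quadrature rules to the two parts of the split, and you have also misread the collocation pattern of the scheme. Writing $a = \|\bm q_n\|$ and $b = \|\bm q_{n+1}\|$, the algorithmic force in \eqref{eq:super-convex-concave-trapezoidal-scheme-p} evaluates $\hat{V}'''_{+}$ at $a$ and $\hat{V}'''_{-}$ at $b$ — not, as you claim, the super-convex piece at $b$ and the super-concave piece at $a$. Consequently the paper applies \eqref{eq:perturbed-trapezoidal-b} (whose $f''$ term sits at $a$ and whose residual is $-\tfrac{(b-a)^4}{24}f'''(\xi_2)$) to $\hat{V}'_{+}$, and \eqref{eq:perturbed-trapezoidal-a} (whose $f''$ term sits at $b$ and whose residual is $+\tfrac{(b-a)^4}{24}f'''(\xi_1)$) to $\hat{V}'_{-}$ — exactly the opposite of your choice. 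Only with that pairing do the trapezoidal mean and the cubic terms cancel exactly against the algorithmic force, leaving the residual $-\tfrac{(b-a)^4}{24}\bigl( \hat{V}^{(4)}_{+}(\|\bm q\|_{n+\xi_1}) - \hat{V}^{(4)}_{-}(\|\bm q\|_{n+\xi_2}) \bigr)$, which is nonpositive by \eqref{eq:super-convex-split}. (Incidentally, the coefficient $\tfrac12$ and the sign in the proposition's displayed equality are typos in the paper; the proof yields $-\tfrac{1}{24}$, so you should not expect to ``trace the constants'' back to $\tfrac12$.)

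The swap is not a harmless relabeling; it destroys the proof. With your assignment, the cubic term in your expansion is $\hat{V}'''_{+}(b) + \hat{V}'''_{-}(a)$, which does not match the scheme's $\hat{V}'''_{+}(a) + \hat{V}'''_{-}(b)$, so the promised cancellation never occurs. If you try to repair the mismatch by the mean value theorem, the difference of the cubic terms contributes an extra quantity $-\tfrac{(b-a)^4}{12}\bigl( \hat{V}^{(4)}_{+}(\eta_1) - \hat{V}^{(4)}_{-}(\eta_2) \bigr)$ alongside your quadrature residual $+\tfrac{(b-a)^4}{24}\bigl( \hat{V}^{(4)}_{+}(\xi_1) - \hat{V}^{(4)}_{-}(\xi_2) \bigr)$; these two fourth-order terms have opposite signs and are evaluated at different intermediate points, so no sign conclusion can be drawn. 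Put differently, had the scheme actually been collocated the way you read it, your own computation would give $H(\bm q_{n+1},\bm p_{n+1}) - H(\bm q_n,\bm p_n) = +\tfrac{(b-a)^4}{24}\bigl( \hat{V}^{(4)}_{+}(\xi_1) - \hat{V}^{(4)}_{-}(\xi_2) \bigr) \geq 0$, i.e., guaranteed energy \emph{growth}. The asymmetric collocation written into \eqref{eq:super-convex-concave-trapezoidal-scheme-p} is precisely the device that makes the residual dissipative, and it is the one detail your proof gets backwards.
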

\begin{proof}
The momentum-conserving property holds because the right-hand side of \eqref{eq:perturbed-trapezoidal-b} is parallel to  $\bm q_{n+\frac12}$. Applying the trapezoidal rule \eqref{eq:perturbed-trapezoidal-b} to the function $\hat{V}'_{+}$ and \eqref{eq:perturbed-trapezoidal-a} to the function $\hat{V}'_{-}$ leads to the following,
\begin{align*}
 & \hat{V}(\|\bm q_{n+1}\|) - \hat{V}(\|\bm q_{n}\|) = \int_{\|\bm q_n\|}^{\|\bm q_{n+1}\|} \hat{V}'(r)dr = \int_{\|\bm q_n\|}^{\|\bm q_{n+1}\|} \hat{V}_{+}'(r) + \hat{V}_{-}'(r) dr \\
=& \frac{\|\bm q_{n+1}\| - \|\bm q_n\|}{2} \left( \hat{V}'_{+}(\|\bm q_{n}\|) + \hat{V}'_{+}(\|\bm q_{n+1}\|) \right) - \frac{\left(\|\bm q_{n+1}\| - \|\bm q_n\|\right)^3}{12} \hat{V}'''_{+}(\|\bm q_{n}\|) \\
&+ \frac{\|\bm q_{n+1}\| - \|\bm q_n\|}{2} \left( \hat{V}'_{-}(\|\bm q_{n}\|) + \hat{V}'_{-}(\|\bm q_{n+1}\|) \right) - \frac{\left(\|\bm q_{n+1}\| - \|\bm q_n\|\right)^3}{12} \hat{V}'''_{-}(\|\bm q_{n+1}\|) \\
& - \frac{\left(\|\bm q_{n+1}\| - \|\bm q_n\|\right)^4}{24} \left( \hat{V}^{(4)}_{+}(\|\bm q\|_{n+\xi_1}) - \hat{V}^{(4)}_{+}(\|\bm q\|_{n+\xi_2}) \right) \\
=& \frac{\|\bm q_{n+1}\| - \|\bm q_n\|}{2} \left( \hat{V}'(\|\bm q_{n}\|) + \hat{V}'(\|\bm q_{n+1}\|) \right) - \frac{\left(\|\bm q_{n+1}\| - \|\bm q_n\|\right)^3}{12} \left( \hat{V}'''_{+}(\|\bm q_{n}\|) + \hat{V}'''_{-}(\|\bm q_{n+1}\|) \right) \\
&-\frac{\left(\|\bm q_{n+1}\| - \|\bm q_n\|\right)^4}{24} \left( \hat{V}^{(4)}_{+}(\|\bm q\|_{n+\xi_1}) - \hat{V}^{(4)}_{+}(\|\bm q\|_{n+\xi_2}) \right).
\end{align*}
With the aid of the above relation, one may obtain the following from \eqref{eq:super-convex-concave-trapezoidal-scheme-q}-\eqref{eq:super-convex-concave-trapezoidal-scheme-p},
\begin{align*}
& \frac{\Delta t_n}{2m}\left( \|\bm p_{n+1}\|^2 - \|\bm p_n\|^2 \right) = \left( \bm p_{n+1} - \bm p_n \right) \cdot \left( \bm q_{n+1} - \bm q_n \right) \displaybreak[2] \\
=& -\Delta t_n \left( \frac{ \hat{V}'(\|\bm q_{n}\|) + \hat{V}'(\|\bm q_{n+1}\|)}{2} - \frac{\left(\| \bm q_{n+1}\| - \|\bm q_n \| \right)^2}{12} \left( \hat{V}_{+}'''(\|\bm q_{n}\|) + \hat{V}_{-}'''(\|\bm q_{n+1}\|) \right) \right) \left( \|\bm q_{n+1}\| - \|\bm q_n\|  \right) \displaybreak[2] \\
=& -\Delta t_n \left( \hat{V}(\|\bm q_{n+1}\|) - \hat{V}(\|\bm q_{n}\|) + \frac{\left(\|\bm q_{n+1}\| - \|\bm q_n\|\right)^4}{24} \left( \hat{V}^{(4)}_{+}(\|\bm q\|_{n+\xi_1}) -  \hat{V}^{(4)}_{-}(\|\bm q\|_{n+\xi_2}) \right) \right).
\end{align*}
Reorganization of the above leads to
\begin{align*}
H(\bm q_{n+1}, \bm p_{n+1}) - H(\bm q_n, \bm p_n) = -\frac{\left(\|\bm q_{n+1}\| - \|\bm q_n\|\right)^4}{24} \left( \hat{V}^{(4)}_{+}(\|\bm q\|_{n+\xi_1}) -  \hat{V}^{(4)}_{-}(\|\bm q\|_{n+\xi_2}) \right) \leq 0,
\end{align*}
which completes the proof.
\end{proof}

In the last, we demonstrate that the integrator is second-order accurate by analyzing its local truncation error.
\begin{proposition}
\label{prop:perturbed-trapezoidal-accuracy}
The local truncation errors associated with \eqref{eq:super-convex-concave-trapezoidal-scheme-q} and \eqref{eq:super-convex-concave-trapezoidal-scheme-p} are denoted as $\bm \tau_{\bm q}(t)$ and $\bm \tau_{\bm p}(t)$, respectively. They are bounded by
\begin{align*}
\|\bm \tau_{\bm q}(t_n)\| \leq \tilde{K}_{\bm q} \Delta t_n^3, \qquad \mbox{and} \qquad \|\bm \tau_{\bm q}(t_n)\| \leq \tilde{K}_{\bm p} \Delta t^3_n,
\end{align*}
for all $t_n \in (0,T)$.
\end{proposition}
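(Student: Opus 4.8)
The plan is to follow the template of the proof of Proposition~\ref{prop:perturbed-mid-point-accuracy} almost verbatim, since the two schemes share the identical position update \eqref{eq:super-convex-concave-trapezoidal-scheme-q} and differ only in the scalar force multiplying the common direction $\bm q_{n+\frac12}/\|\bm q\|_{n+\frac12}$. First I would dispatch the bound on $\bm \tau_{\bm q}$: because \eqref{eq:super-convex-concave-trapezoidal-scheme-q} coincides with \eqref{eq:super-convex-concave-scheme-q}, the same Taylor expansion of $\bm q(t)$ about $t_{n+\frac12}$ gives $\|\bm \tau_{\bm q}(t_n)\| \leq \tilde K_{\bm q}\Delta t_n^3$ with no extra work. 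The substance of the proof is therefore the momentum residual, and I would reduce it to a single scalar statement: that, evaluated along the exact solution, the bracketed force in \eqref{eq:super-convex-concave-trapezoidal-scheme-p} agrees with the mid-point value $\hat V'(\|\bm q(t_{n+\frac12})\|)$ up to $\mathcal O(\Delta t_n^2)$. Once this holds, substituting into the second-order reference representation \eqref{eq:perturbed-mid-point-accuracy-proof-1}, whose direction factor is exactly the one used by the scheme when evaluated on the exact solution, and multiplying the $\mathcal O(\Delta t_n^2)$ force discrepancy by the $\Delta t_n$ prefactor yields $\|\bm \tau_{\bm p}(t_n)\| \leq \tilde K_{\bm p}\Delta t_n^3$.

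To establish the scalar statement, I would carry out two short Taylor computations about $t_{n+\frac12}$. Writing $\tilde U(t) := \hat V'(\|\bm q(t)\|)$, expanding $\tilde U(t_n)$ and $\tilde U(t_{n+1})$ and averaging shows that the trapezoidal mean $\tfrac12(\hat V'(\|\bm q(t_n)\|) + \hat V'(\|\bm q(t_{n+1})\|))$ equals $\hat V'(\|\bm q(t_{n+\frac12})\|) + \mathcal O(\Delta t_n^2)$, the odd-order terms cancelling and the first surviving correction being quadratic. Next, writing $\tilde W(t) := \|\bm q(t)\|$ as in the mid-point proof gives $\|\bm q(t_{n+1})\| - \|\bm q(t_n)\| = \tilde W'(t_{n+\frac12})\Delta t_n + \mathcal O(\Delta t_n^3) = \mathcal O(\Delta t_n)$, so the stabilizing perturbation $-\tfrac{1}{12}(\|\bm q(t_{n+1})\| - \|\bm q(t_n)\|)^2(\hat V_{+}'''(\|\bm q(t_n)\|) + \hat V_{-}'''(\|\bm q(t_{n+1})\|))$ is itself $\mathcal O(\Delta t_n^2)$, its derivative factors being bounded along the orbit. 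Adding the two contributions gives precisely the scalar agreement claimed above.

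The step I expect to require the most care is conceptual rather than computational: recognizing that no cancellation between the $\mathcal O(\Delta t_n^2)$ trapezoid-versus-mid-point gap and the $\mathcal O(\Delta t_n^2)$ perturbation term is needed. Unlike the energy-decay proposition, where the perturbation was engineered to pair with a specific quadrature residual of definite sign, here the perturbation is simply a lower-order additive modification of the force; since the target bound on $\bm \tau_{\bm p}$ is one power of $\Delta t_n$ finer than the $\Delta t_n$ prefactor, both $\mathcal O(\Delta t_n^2)$ effects are absorbed harmlessly into the truncation error. I would also take care that the third- and fourth-derivative factors appearing in these expansions are evaluated at points on a compact portion of the orbit, so that the implied constants may be taken uniform in $t_n \in (0,T)$, which is what licenses the uniform constant $\tilde K_{\bm p}$ in the statement.
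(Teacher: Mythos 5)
Your proposal is correct and follows essentially the same route as the paper: both arguments reuse the expansion $\|\bm q(t_{n+1})\| - \|\bm q(t_n)\| = \tilde{W}'(t_{n+\frac12})\Delta t_n + \mathcal O(\Delta t_n^3)$ from the proof of Proposition \ref{prop:perturbed-mid-point-accuracy} to conclude that the perturbation term contributes only $\mathcal O(\Delta t_n^3)$ to the momentum residual, and then invoke the second-order accuracy of the trapezoidal force average. The only difference is one of completeness: the paper simply asserts that the trapezoidal rule with a second-order perturbation retains second-order accuracy, whereas you explicitly verify the trapezoidal average equals the mid-point value up to $\mathcal O(\Delta t_n^2)$ and route the conclusion through \eqref{eq:perturbed-mid-point-accuracy-proof-1}, which is a legitimate filling-in of the step the paper treats as standard.
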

\begin{proof}
From the proof of Proposition \ref{prop:perturbed-mid-point-accuracy}, we have
\begin{align*}
\|\bm q(t_{n+1})\| - \|\bm q(t_n)\| = \tilde{W}'(t_{n+\frac12}) \Delta t_n + \mathcal O(\Delta t_n^3),
\end{align*}
which indicates that 
\begin{align*}
\Delta t_n \frac{\left( \|\bm q(t_{n+1})\| - \|\bm q(t_n)\| \right)^2}{12} \left( \hat{V}_{+}'''(\|\bm q(t_n)\|) + \hat{V}_{-}'''(\|\bm q(t_{n+1})\|) \right) = \mathcal O(\Delta t_n^3).
\end{align*}
Therefore, the integrator \eqref{eq:super-convex-concave-trapezoidal-scheme-q}-\eqref{eq:super-convex-concave-trapezoidal-scheme-p} is the trapezoidal rule with a second-order perturbation and maintains the second-order time accuracy.
\end{proof}

In this section, three integrators have been presented and analyzed. They are all provably energy-decaying and momentum-conserving. The latter two (i.e., the perturbed mid-point and perturbed trapezoidal integrators) maintain the second-order accuracy. Importantly, the three proposed integrators do not involve the quotient formula and thus do not suffer from numerical instabilities when $\|\bm q_{n+1}\|$ gets close to $\|\bm q_n\|$. This attribute makes them attractive as alternate options for the existing conserving integrators.

\subsection{Algorithm implementation}
\label{sec:algorithm_implementation}
Here, we first present a predictor multi-corrector algorithm for the proposed integrators. In the energy-conserving and energy-decaying integrators, the difference of the residuals resides in the formula for the conservative force, and thus we may state the residuals as follows,
\begin{align*}
& \boldsymbol{\mathrm R}_{\bm q}\left( \bm q_{n+1}, \bm p_{n+1}, \bm q_{n}, \bm p_n \right) := \bm q_{n+1} - \bm q_n - \frac{\Delta t_n}{m} \bm p_{n+\frac12},\\
& \boldsymbol{\mathrm R}^{\star}_{\bm p}\left( \bm q_{n+1}, \bm p_{n+1}, \bm q_{n}, \bm p_n \right) := \bm p_{n+1} - \bm p_n + \Delta t_n \Lambda^{\star}\left( \bm q_{n+1}, \bm p_{n+1}, \bm q_{n}, \bm p_n \right) \frac{\bm q_{n+\frac12}}{\|\bm q\|_{n+\frac12}},
\end{align*}
in which the discrete conservative force is denoted by $\Lambda^{\star}\left( \bm q_{n+1}, \bm p_{n+1}, \bm q_{n}, \bm p_n \right)$. For example, if the residual is based on the LaBudde-Greenspan integrator, we have
\begin{align*}
\Lambda^{\star}\left( \bm q_{n+1}, \bm p_{n+1}, \bm q_{n}, \bm p_n \right) := \frac{\hat{V}(\|\bm q_{n+1}\|) - \hat{V}(\|\bm q_n\|)}{\|\bm q_{n+1}\| - \|\bm q_n\|};
\end{align*}
if the residual is based on the perturbed mid-point integrator, we have,
\begin{align*}
\Lambda^{\star}\left( \bm q_{n+1}, \bm p_{n+1}, \bm q_{n}, \bm p_n \right) := \hat{V}'(\|\bm q\|_{n+\frac12}) + \frac{\left(\| \bm q_{n+1}\| - \|\bm q_n \| \right)^2}{24} \left( \hat{V}_{+}'''(\|\bm q_{n+1}\|) + \hat{V}_{-}'''(\|\bm q_{n}\|) \right).
\end{align*}
The consistent tangent of the nonlinear system is written as
\begin{align*}
& \boldsymbol{\mathrm A} := \frac{\partial \boldsymbol{\mathrm R}_{\bm q}\left( \bm q_{n+1}, \bm p_{n+1}, \bm q_{n}, \bm p_n \right)}{\partial \bm q_{n+1}} = \boldsymbol{\mathrm I}_3, \quad \boldsymbol{\mathrm B} := \frac{\partial \boldsymbol{\mathrm R}_{\bm q}\left( \bm q_{n+1}, \bm p_{n+1}, \bm q_{n}, \bm p_n \right)}{\partial \bm p_{n+1}} = -\frac{\Delta t_n}{2m} \boldsymbol{\mathrm I}_3, \displaybreak[2] \\
& \boldsymbol{\mathrm C}^{\star} := \frac{\partial \boldsymbol{\mathrm R}^{\star}_{\bm p}\left( \bm q_{n+1}, \bm p_{n+1}, \bm q_{n}, \bm p_n \right)}{\partial \bm q_{n+1}} = \Delta t_n \left( \frac{\bm q_{n+\frac12}}{\|\bm q\|_{n+\frac12}} \otimes \frac{\partial \Lambda^{\star}}{\partial \bm q_{n+1}} + \frac{\Lambda^{\star}}{2\|\bm q\|_{n+\frac12}} \boldsymbol{\mathrm I}_3 - \frac{\Lambda^{\star} \bm q_{n+\frac12} \otimes \bm q_{n+1}}{2\|\bm q\|^{2}_{n+\frac12}\|\bm q_{n+1}\|} \right), \displaybreak[2] \\
& \boldsymbol{\mathrm D} := \frac{\partial \boldsymbol{\mathrm R}_{\bm p}\left( \bm q_{n+1}, \bm p_{n+1}, \bm q_{n}, \bm p_n \right)}{\partial \bm p_{n+1}} = \boldsymbol{\mathrm I}_3.
\end{align*}
The nonlinear equations are solved in each time step iteratively by the Newton-Raphson method. We use $\bm q_{n+1,(l)}$ and $\bm p_{n+1,(l)}$ to denote the solutions obtained at the subinterval $\mathbb I_n$ and the iteration step $l$, with $0 \leq l \leq l_{\mathrm{max}}$. If we further introduce the following notation,
\begin{align*}
\boldsymbol{\mathrm R}_{\bm q,n+1,(l)} :=& \boldsymbol{\mathrm R}_{\bm q}\left( \bm q_{n+1,(l)}, \bm p_{n+1,(l)}, \bm q_{n}, \bm p_n \right), \\
\boldsymbol{\mathrm R}^{\star}_{\bm p,n+1,(l)} :=& \boldsymbol{\mathrm R}^{\star}_{\bm p}\left( \bm q_{n+1}, \bm p_{n+1}, \bm q_{n}, \bm p_n \right), \\
\boldsymbol{\mathrm C}^{\star}_{n+1,(l)} :=& \frac{\partial \boldsymbol{\mathrm R}_{\bm p}\left( \bm q_{n+1,(l)}, \bm p_{n+1,(l)}, \bm q_{n}, \bm p_n \right)}{\partial \bm q_{n+1}},
\end{align*}
in each Newton-Raphson iteration, we need to determine $\Delta \bm q_{n+1,(l)}$ and $\Delta \bm p_{n+1,(l)}$ from the following system,
\begin{align*}
\begin{bmatrix}
\boldsymbol{\mathrm I}_3 & -\frac{\Delta t_n}{2m}\boldsymbol{\mathrm I}_3 \\[0.6mm]
\boldsymbol{\mathrm C}^{\star}_{n+1,(l)} & \boldsymbol{\mathrm I}_3
\end{bmatrix}
\begin{bmatrix}
\Delta \bm q_{n+1,(l)} \\[1.2mm]
\Delta \bm p_{n+1,(l)}
\end{bmatrix}
= -
\begin{bmatrix}
\boldsymbol{\mathrm R}_{\bm q,n+1,(l)}  \\[1.2mm]
\boldsymbol{\mathrm R}^{\star}_{\bm p,n+1,(l)} 
\end{bmatrix}.
\end{align*}
Due to the particular structure of the above block system, the incrementals can be solved in a segregated manner. First, the incremental $\Delta \bm q_{n+1,(l)}$ can be obtained by solving the system,
\begin{align*}
& \left( \boldsymbol{\mathrm I}_3 + \frac{\Delta t_n}{2m} \boldsymbol{\mathrm C}^{\star}_{n+1,(l)}  \right) \Delta \bm q_{n+1,(l)} = -\boldsymbol{\mathrm R}_{\bm q,n+1,(l)} - \frac{\Delta t_n}{2m} \boldsymbol{\mathrm R}^{\star}_{\bm p,n+1,(l)}.
\end{align*}
Second, the incremental $\Delta \bm p_{n+1,(l)}$ can be determined by
\begin{align*}
\Delta \bm p_{n+1,(l)} = \frac{2m}{\Delta t_n} \left( \boldsymbol{\mathrm R}_{\bm q,n+1,(l)}  + \Delta \bm q_{n+1,(l)} \right).
\end{align*}
The above discussion is summarized in the following algorithm, in which we use 
\begin{align*}
\|\left( \boldsymbol{\mathrm R}_{\bm q,n+1,(l-1)}; \boldsymbol{\mathrm R}^{\star}_{\bm p,n+1,(l-1)} \right)\|
\end{align*}
as the residual norm. We assign two tolerances $\mathrm{tol}_{\mathrm R}$ and $\mathrm{tol}_{\mathrm A}$ and the maximum number of iterations $l_{\mathrm{max}}$ to define the stopping criteria.

\noindent \textbf{Predictor:}
\begin{enumerate}
\item Set $\bm p_{n+1,(0)} = \bm p_n$ and $\bm q_{n+1,(0)} = \bm q_{n}$.

\item Assemble the matrix $\boldsymbol{\mathrm C}^{\star}_{n+1,(0)}$ and the residual vectors $\boldsymbol{\mathrm R}_{\bm q,n+1,(0)}$, $\boldsymbol{\mathrm R}^{\star}_{\bm p,n+1,(0)}$.

\item Record the initial residual norm $\|( \boldsymbol{\mathrm R}_{\bm q,n+1,(0)}; \boldsymbol{\mathrm R}^{\star}_{\bm p,n+1,(0)} ) \|$.
\end{enumerate}

\noindent \textbf{Multi-corrector:} Repeat the following steps for $l=1, \cdots, l_{\mathrm{max}}$.
\begin{enumerate}
\item Solve the linear system
\begin{align*}
& \left( \boldsymbol{\mathrm I}_3 + \frac{\Delta t_n}{2m} \boldsymbol{\mathrm C}^{\star}_{n+1,(l-1)}  \right) \Delta \bm q_{n+1,(l-1)} = -\boldsymbol{\mathrm R}_{\bm q,n+1,(l-1)} - \frac{\Delta t_n}{2m} \boldsymbol{\mathrm R}^{\star}_{\bm p,n+1,(l-1)}.
\end{align*}

\item Update the iterates as
\begin{align*}
\bm q_{n+1,(l)} = \bm q_{n+1,(l-1)} + \Delta \bm q_{n+1,(l-1)}, \quad
\bm p_{n+1,(l)} = \bm p_{n+1,(l-1)} + \frac{2m}{\Delta t_n} \left( \boldsymbol{\mathrm R}_{\bm q,n+1,(l-1)}  + \Delta \bm q_{n+1,(l-1)} \right).
\end{align*}

\item Assemble the matrix $\boldsymbol{\mathrm C}^{\star}_{n+1,(l)}$ and the residual vectors $\boldsymbol{\mathrm R}_{\bm q,n+1,(l)}$, $\boldsymbol{\mathrm R}^{\star}_{\bm p,n+1,(l)}$ using $\bm q_{n+1,(l)}$ and $\bm p_{n+1,(l)}$.

\item If either one of the stopping criteria
\begin{align*}
\frac{\|( \boldsymbol{\mathrm R}_{\bm q,n+1,(l-1)}; \boldsymbol{\mathrm R}^{\star}_{\bm p,n+1,(l-1)} ) \|}{\|( \boldsymbol{\mathrm R}_{\bm q,n+1,(0)}; \boldsymbol{\mathrm R}^{\star}_{\bm p,n+1,(0)} ) \|} \leq \mathrm{tol}_{\mathrm R}, \qquad \|( \boldsymbol{\mathrm R}_{\bm q,n+1,(l-1)}; \boldsymbol{\mathrm R}^{\star}_{\bm p,n+1,(l-1)} ) \| \leq \mathrm{tol}_{\mathrm A}
\end{align*}
is satisfied, set the solution at the time step $t_{n+1}$ as $\bm q_{n+1} = \bm q_{n+1,(l)}$ and $\bm p_{n+1} = \bm p_{n+1,(l)}$ and exit the multi-corrector stage.
\end{enumerate}

In the above algorithm, an integrator can be implemented to integrate the dynamics independently. For the LaBudde-Greenspan integrator, we need to be cautious because the predictor will inevitably cause the denominator to be zero in its quotient formula. Therefore, numerical instability will arise in each Newton-Raphson iteration. To address this issue, a prescribed tolerance $\mathrm{tol}_{\mathrm Q}$ is given, and an alternate integrator will be invoked when the denominator is detected to be smaller than $\mathrm{tol}_{\mathrm Q}$. In this work, we follow the approach adopted in \cite{Janz2019} and invoke the definition \eqref{eq:janz-mid-point-rescue} for $\Lambda^{\star}$, that is,
\begin{align}
\label{eq:labudde-greenspan-default-switch}
\Lambda^{\star} = \hat{V}'(\|\bm q\|_{n+\frac12})
\end{align}
as the \textit{default} option in the LaBudde-Greenspan integrator. Also, we set $\mathrm{tol}_{\mathrm Q} = 10^{-8}$, unless otherwise specified. This choice of $\Lambda^{\star}$ is convenient and intuitive. Yet, there is no guarantee in the conservation property when this option is invoked. In fact, the above definition of $\Lambda^{\star}$ may trigger energy accumulation and break the conservation property, as will be shown in the numerical examples. Based on the new integrators, we propose three more options for $\Lambda^{\star}$ when $|\|\bm q_{n+1,(l)}\| - \|\bm q_{n}\|| \leq \mathrm{tol}_{\mathrm Q}$. They include the definition for $\Lambda^{\star}$ based on the generalized Eyre, perturbed mid-point, or perturbed trapezoidal schemes. Combining with either one of the three formulas, the LaBudde-Greenspan integrator can guarantee no growth for the Hamiltonian in the discrete formulation. We cautiously mention that one may still observe energy growth in practice, even when the LaBudde-Greenspan is paired with one provably energy-decaying integrator. This phenomenon is attributed by the error from the iterative solver for the nonlinear system. Readers may refer to \cite[Sec.~4.2]{Betsch2000} for an analysis on this point.

\subsection{Numerical results}
\label{sec:numerical_results}
In this section, we perform numerical studies of the proposed integrators using a stiff neo-Hookean spring model \cite[Sec.~4.8.2]{Gros2004}. The setup of the problem involves a single particle with mass $m=10$ located at $\bm q_0 = (2,1,1)$ initially, whose initial momentum is $\bm p_0 = (-30, 15, 45)$. The potential energy is given by 
\begin{align*}
\hat{V}(r) = c\frac{\bar{r}^2}{6} \left[ \left( \frac{r}{\bar{r}} \right)^2 + 2 \frac{\bar{r}}{r} -3 \right],
\end{align*}
with the stiffness coefficient $c=10^3$ and the reference radius $\bar{r} = 4$. With the above settings, the values of the Hamiltonian and angular momentum at the initial time are given by $H(\bm q_0, \bm p_0) = 1866.8$ and $\bm J(\bm q_0, \bm p_0) = (30, -120, 60)$. In terms of numerical methods, we make the potential energy split in the following manner,
\begin{align*}
\hat{V}_{\mathrm{c}} = \hat{V}, \qquad \hat{V}_{\mathrm e} = 0, \qquad \hat{V}_{+} = \hat{V}, \qquad \hat{V}_{-} = 0. 
\end{align*}
In the predictor multi-corrector algorithm, the stopping criteria are given by $\mathrm{tol}_{\mathrm R} = 10^{-10}$, $\mathrm{tol}_{\mathrm A} = 10^{-15}$, and $l_{\mathrm{max}} = 20$.

\begin{figure}
	\begin{center}
	\begin{tabular}{ccc}
\includegraphics[angle=0, trim=120 100 120 110, clip=true, scale = 0.32]{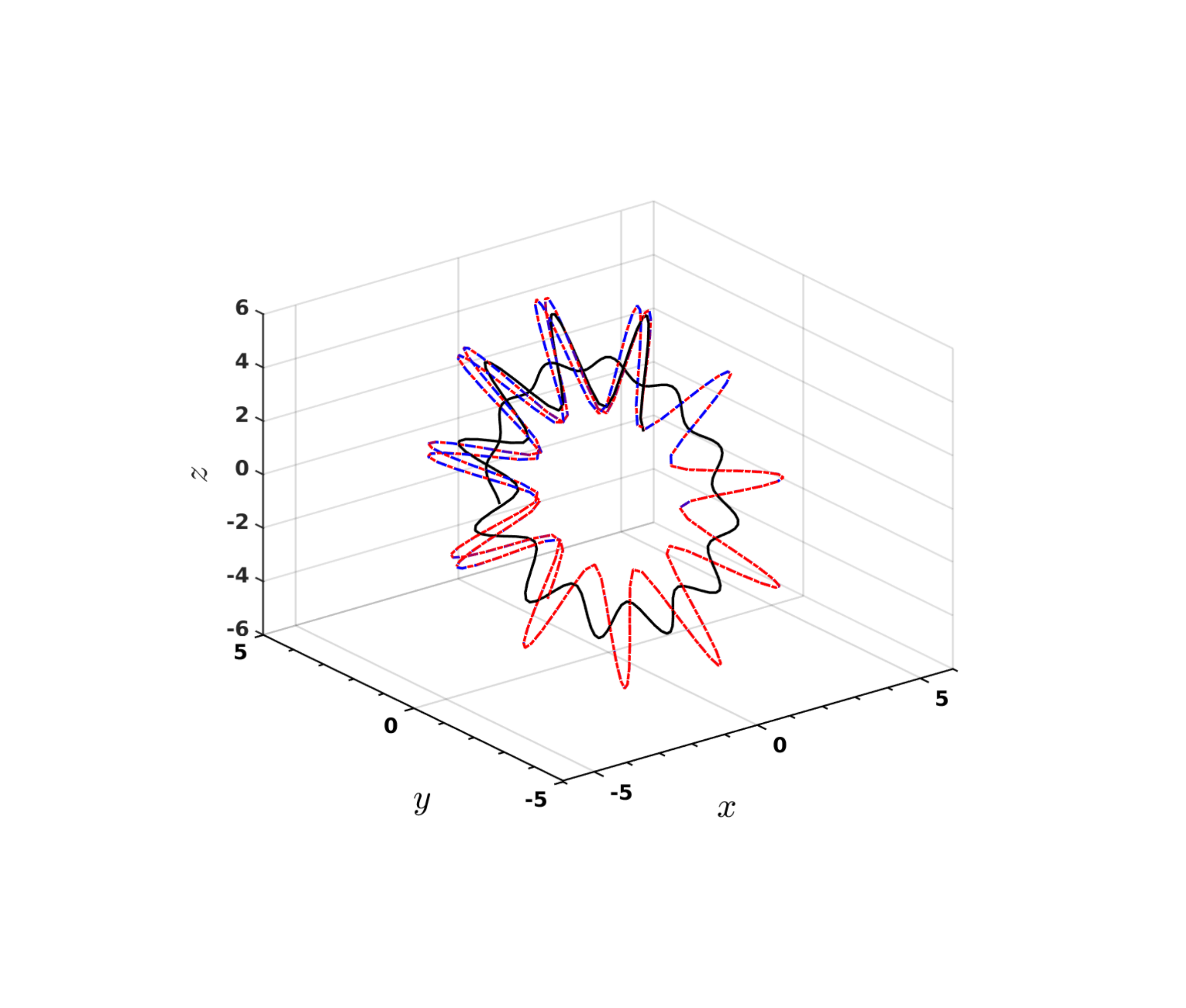} &
\includegraphics[angle=0, trim=120 100 120 110, clip=true, scale = 0.32]{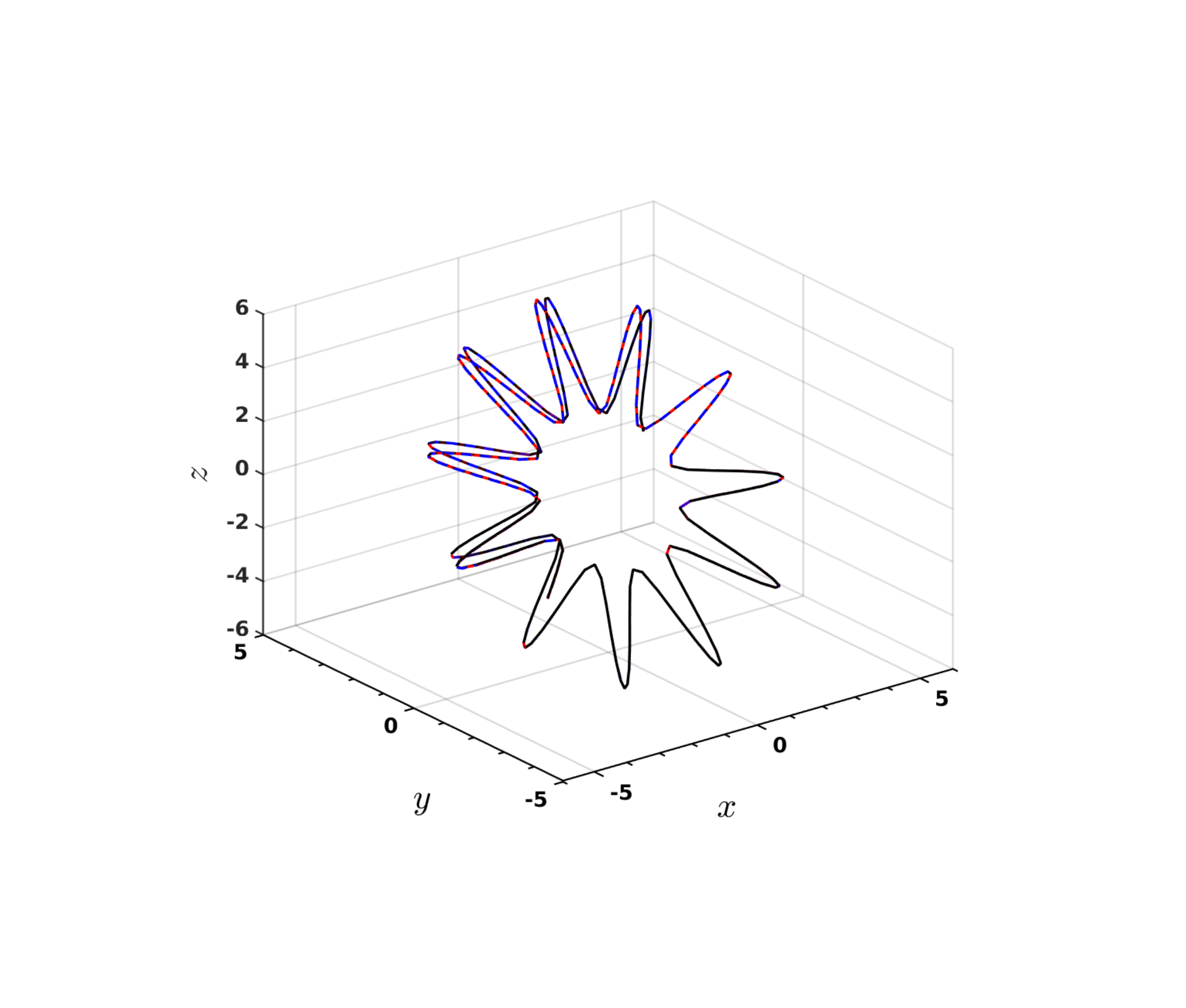} &
\includegraphics[angle=0, trim=120 100 120 110, clip=true, scale = 0.32]{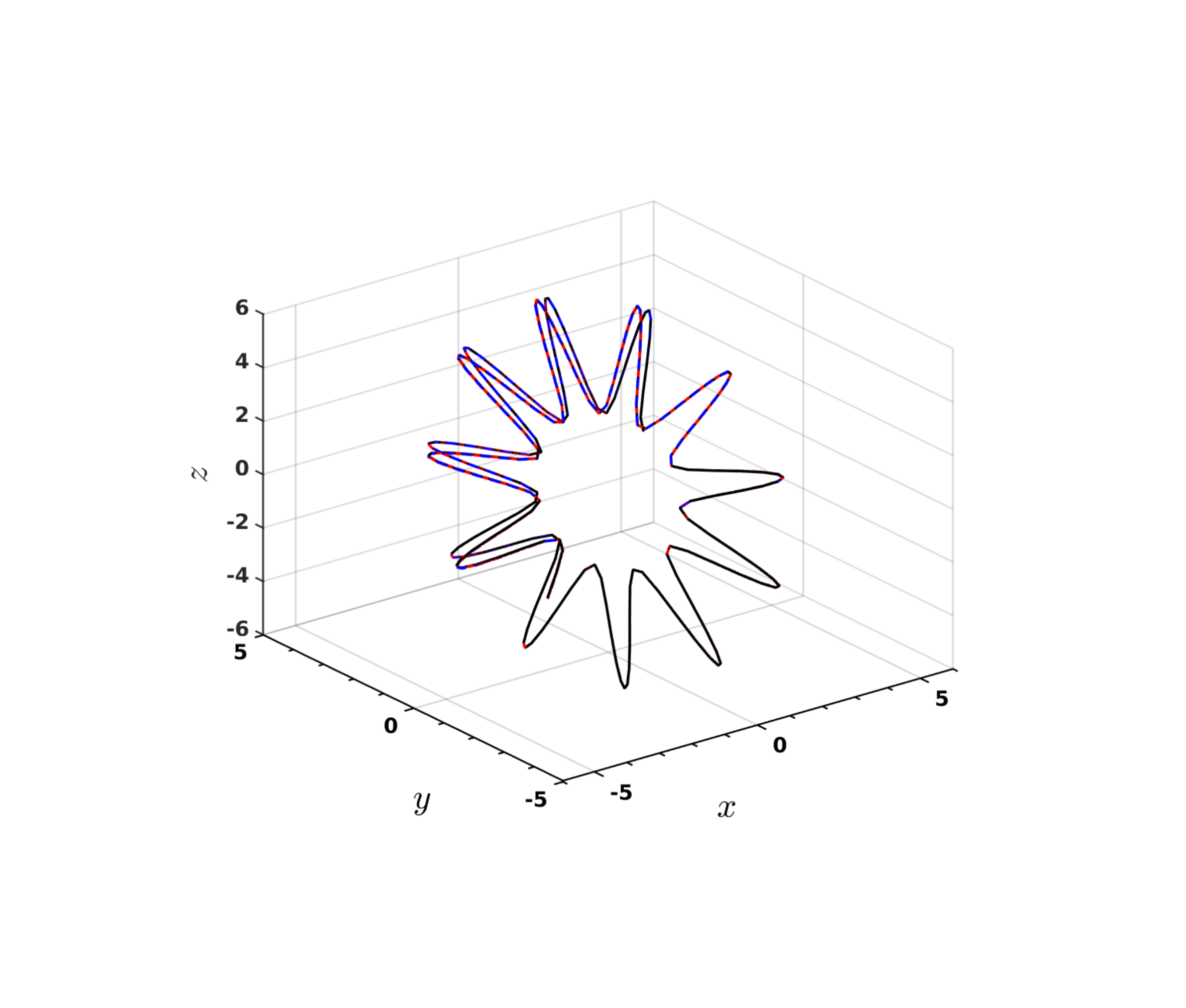} \\
(a) & (b) & (c)
\end{tabular}
\caption{The orbits of the single particle computed by the generalized Eyre integrator (a), the perturbed mid-point integrator (b), and the perturbed trapezoidal integrator (c) are illustrated as black solid lines, respectively. For comparison purposes, the orbits computed by the mid-point (blue dashed lines) and LaBudde-Greenspan integrators (red dash-dot lines) are depicted.} 
\label{fig:single_particle_orbits}
\end{center}
\end{figure}

\paragraph{Accuracy}
We performed computation using the mid-point integrator with $\Delta t_n = 10^{-6}$ to obtain a reference solution at $T=10$ denoted by $\bm q^{\mathrm{ref}}$ and $\bm p^{\mathrm{ref}}$. We then calculated the relative error by comparing numerical solutions with the reference solution and the results are reported in Table \ref{table:accuracy-stiff-new-Hookean-model}. The convergence rates corroborate our error analysis made in Section \ref{sec:integrator-split-perturbation}. Also, from the last two columns of Table \ref{table:accuracy-stiff-new-Hookean-model}, we observe that the errors of the perturbed mid-point and perturbed trapezoidal integrators are no larger than those of the LaBudde-Greenspan or mid-point integrators after more than $\mathcal O(10^5)$ time steps. In Figure \ref{fig:single_particle_orbits}, the orbits calculated with the largest time step size, i.e., $\Delta t_n= 10^{-2}$, are depicted. It can be seen that the trajectories calculated by the mid-point, LaBudde-Greenspan, perturbed mid-point, and perturbed trapezoidal schemes are indistinguishable. The strong dissipative character of the generalized Eyre integrator is observed in Figure \ref{fig:single_particle_orbits} (a), in which the oscillations of the spring are quickly damped.

\begin{table}[htbp]
\footnotesize
\begin{center}
\tabcolsep=0.21cm
\renewcommand{\arraystretch}{1.2}
\begin{tabular}{c | c c c c c}
\hline
\hline
$T / \Delta t$ & $1 \times 10^3$ & $2\times 10^3$ & $1 \times 10^4$ & $2\times 10^4$ & $1 \times 10^5$ \\
\hline 
Mid-point & & & & &  \\
$\|\bm q_{n_{\mathrm{ts}}} - \bm q^{\mathrm{ref}}\|/\|\bm q^{\mathrm{ref}}\|$  & $4.32 \times 10^{-1}$ & $1.08 \times 10^{-2}$ & $4.31 \times 10^{-4}$ & $1.08 \times 10^{-4}$ & $4.31 \times 10^{-6}$  \\
order & - & 2.00 & 2.00 & 2.00 & 2.00 \\
$\|\bm p_{n_{\mathrm{ts}}} - \bm p^{\mathrm{ref}}\|/\|\bm p^{\mathrm{ref}}\|$ & $2.47 \times 10^{-2}$ & $6.74 \times 10^{-3}$ & $2.77 \times 10^{-4}$ & $6.92 \times 10^{-5}$ & $2.77 \times 10^{-6}$ \\
order & - & 1.88 & 1.98 & 2.00 & 2.00 \\
\hline 
LaBudde-Greenspan & & & & &  \\
$\|\bm q_{n_{\mathrm{ts}}} - \bm q^{\mathrm{ref}}\|/\|\bm q^{\mathrm{ref}}\|$  & $4.30 \times 10^{-1}$ & $1.07 \times 10^{-2}$ & $4.29 \times 10^{-4}$ & $1.07 \times 10^{-4}$ & $4.29 \times 10^{-6}$  \\
order & - & 2.00 & 2.00 & 2.00 & 2.00 \\
$\|\bm p_{n_{\mathrm{ts}}} - \bm p^{\mathrm{ref}}\|/\|\bm p^{\mathrm{ref}}\|$ & $2.45 \times 10^{-2}$ & $6.71 \times 10^{-3}$ & $2.76 \times 10^{-4}$ & $6.90 \times 10^{-5}$ & $2.76 \times 10^{-6}$ \\
order & - & 1.87 & 1.98 & 2.00 & 2.00 \\
\hline 
Generalized Eyre & & & & &\\
$\|\bm q_{n_{\mathrm{ts}}} - \bm q^{\mathrm{ref}}\|/\|\bm q^{\mathrm{ref}}\|$  & $7.18 \times 10^{-1}$ & $5.88 \times 10^{-1}$ &  $2.52 \times 10^{-1}$ & $1.45 \times 10^{-1}$ & $3.27 \times 10^{-2}$\\
order & - & 0.29 & 0.53 & 0.80 & 0.93  \\
$\|\bm p_{n_{\mathrm{ts}}} - \bm p^{\mathrm{ref}}\|/\|\bm p^{\mathrm{ref}}\|$ & $8.51 \times 10^{-1}$ & $7.67 \times 10^{-1}$ & $2.39 \times 10^{-1}$ & $1.19 \times 10^{-1}$ & $2.36 \times 10^{-2}$ \\
order & - & 0.15 & 0.72 & 1.00 & 1.00 \\
\hline 
Perturbed mid-point & & & & &\\
$\|\bm q_{n_{\mathrm{ts}}} - \bm q^{\mathrm{ref}}\|/\|\bm q^{\mathrm{ref}}\|$  & $4.43 \times 10^{-2}$ & $1.09 \times 10^{-2}$ & $4.30 \times 10^{-4}$ & $1.07 \times 10^{-4}$ & $4.29 \times 10^{-6}$  \\
order & - & 2.02 & 2.01 & 2.00 & 2.00 \\
$\|\bm p_{n_{\mathrm{ts}}} - \bm p^{\mathrm{ref}}\|/\|\bm p^{\mathrm{ref}}\|$ & $2.26 \times 10^{-2}$ & $6.50 \times 10^{-3}$ & $2.74 \times 10^{-4}$ & $6.89 \times 10^{-5}$ & $2.76 \times 10^{-6}$ \\
order & - & 1.80 & 1.97 & 1.99 & 2.00 \\
\hline 
Perturbed trapezoid & & & & &\\
$\|\bm q_{n_{\mathrm{ts}}} - \bm q^{\mathrm{ref}}\|/\|\bm q^{\mathrm{ref}}\|$  & $4.56 \times 10^{-2}$ & $1.10 \times 10^{-2}$ & $4.32 \times 10^{-4}$ & $1.07 \times 10^{-4}$ & $4.29 \times 10^{-6}$  \\
order & - & 2.04 & 2.01 & 2.00 & 2.00 \\
$\|\bm p_{n_{\mathrm{ts}}} - \bm p^{\mathrm{ref}}\|/\|\bm p^{\mathrm{ref}}\|$ & $2.07 \times 10^{-2}$ & $6.30 \times 10^{-3}$ & $2.73 \times 10^{-4}$ & $6.87 \times 10^{-5}$ & $2.76 \times 10^{-6}$ \\
order & - & 1.72 & 1.95 & 1.99 & 2.00 \\
\hline 
\hline
\end{tabular}
\end{center}
\caption{Relative errors and convergence rates of the mid-point, LaBudde-Greenspan, generalized Eyre, perturbed mid-point, and perturbed trapezoidal integrators using the stiff new-Hookean spring model.}
\label{table:accuracy-stiff-new-Hookean-model}
\end{table}

\paragraph{The conserving and decaying properties}
We investigate the conservation and dissipation properties of the integrators using a fixed time step size $\Delta t_n = 10^{-3}$. The differences between the calculated Hamiltonian $H(\bm q_{n}, \bm p_{n})$ and its value at the initial time are plotted in Figure \ref{fig:single_particle_energy}. It can be observed that the LaBudde-Greenspan integrator preserves the energy up to $\mathcal O(10^{-10})$, which corroborates the analysis made previously \cite{Betsch2000}. In the meantime, its energy error also shows a periodical oscillatory pattern, following the pattern of the mid-point integrator. The similarity of the two indicates that the default alternate option in the LaBudde-Greenspan integrator indeed has a tendency to destroy the conservation property. The two dissipative second-order integrators clearly exhibit dissipation effects on the Hamiltonian, and the amount of dissipation is less than the energy error of the mid-point integrator. The first-order generalized Eyre integrator shows the strongest damping effect, with around $40\%$ energy dissipated over the simulated time period. In terms of the angular momentum, we have the three components depicted in Figure \ref{fig:single_particle_momentum}, and all integrators conserve the angular momentum with the relative error less than $\mathcal O(10^{-11})$, which is proportional to the nonlinear solver tolerance.

\begin{figure}
\begin{center}
\begin{tabular}{cc}
\includegraphics[angle=0, trim=80 80 125 100, clip=true, scale = 0.3]{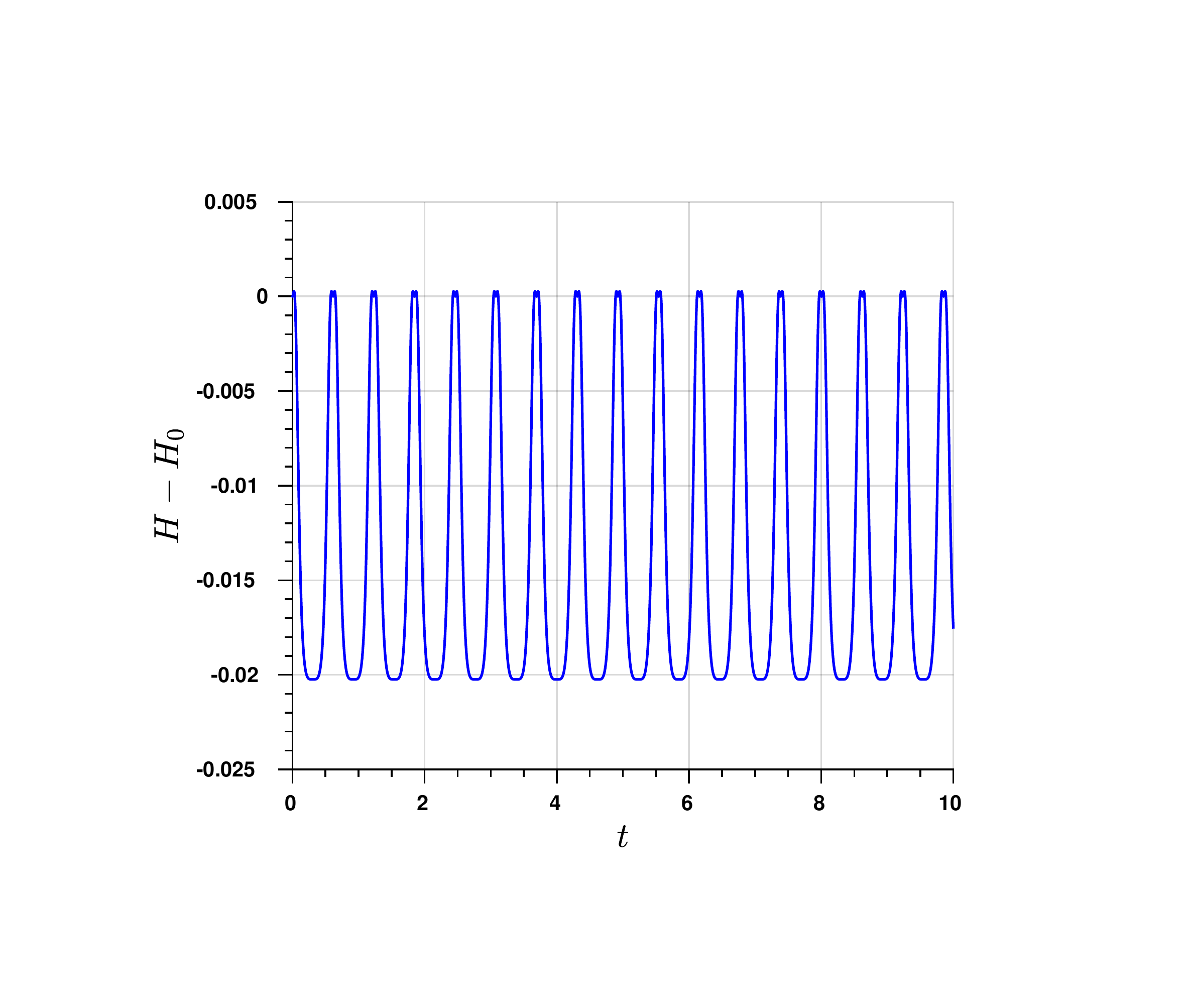} & \includegraphics[angle=0, trim=80 80 125 100, clip=true, scale = 0.3]{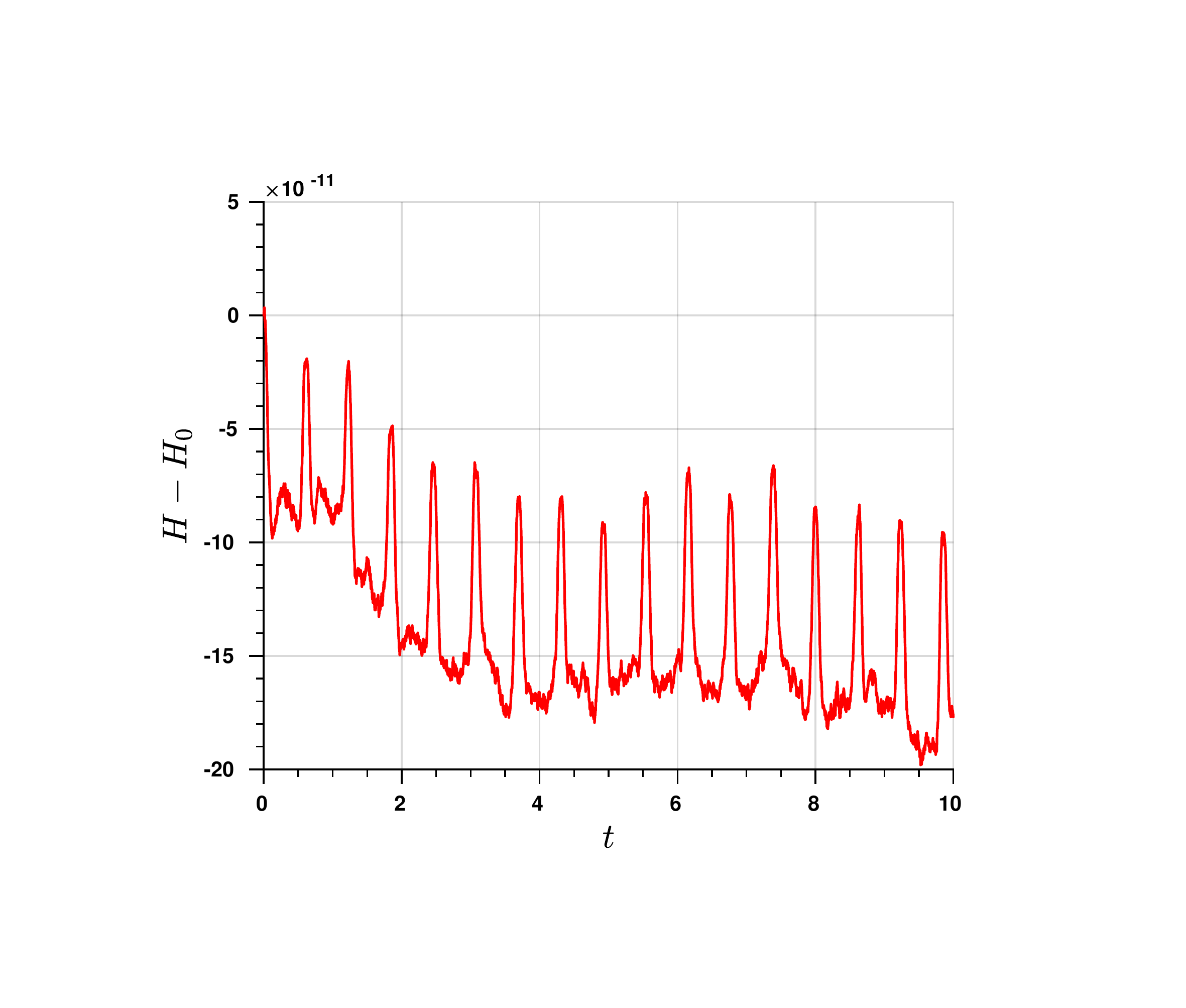} \\
(a) & (b)
\end{tabular}
\begin{tabular}{ccc}
\includegraphics[angle=0, trim=80 80 125 100, clip=true, scale = 0.3]{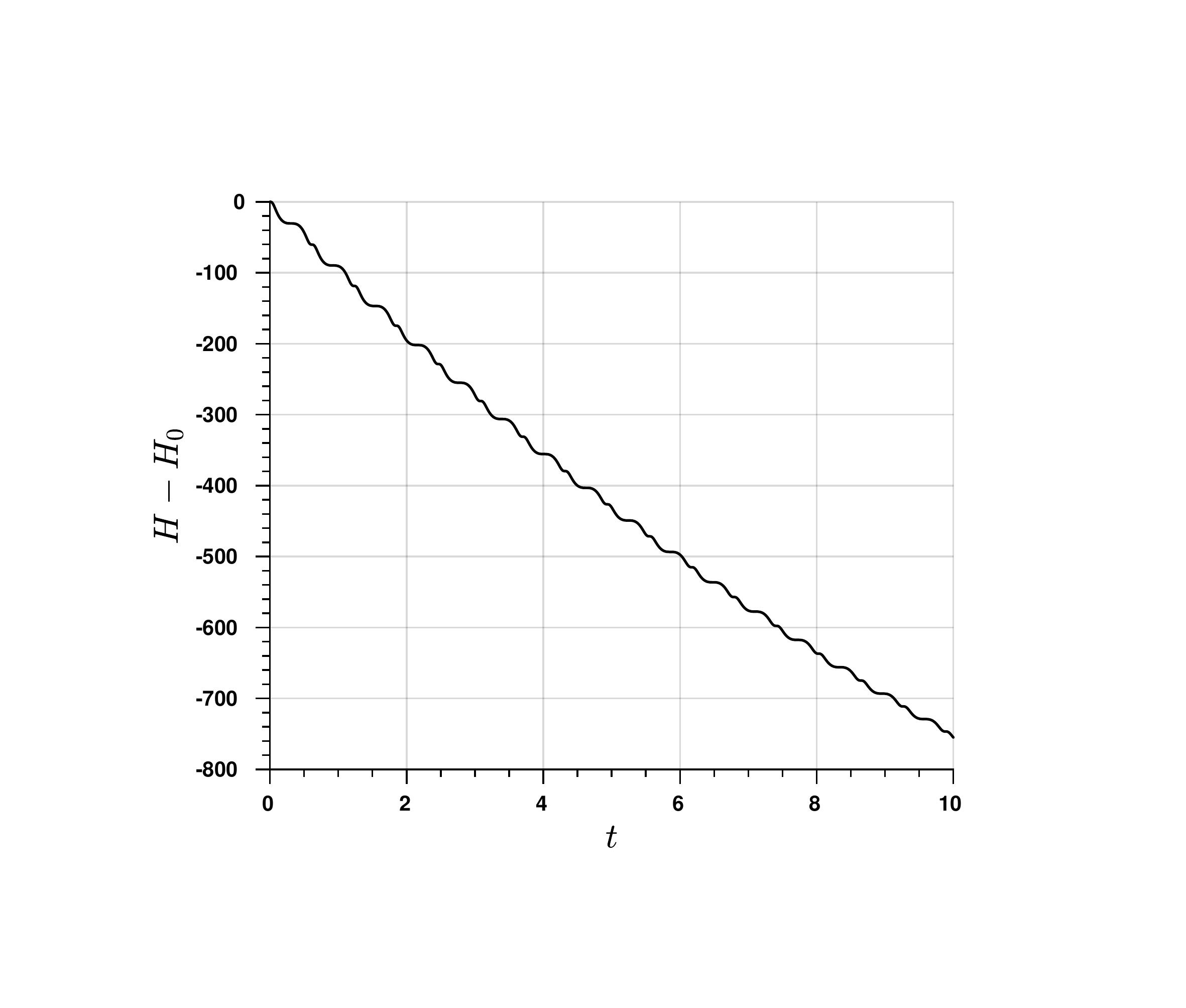} &
\includegraphics[angle=0, trim=80 80 125 100, clip=true, scale = 0.3]{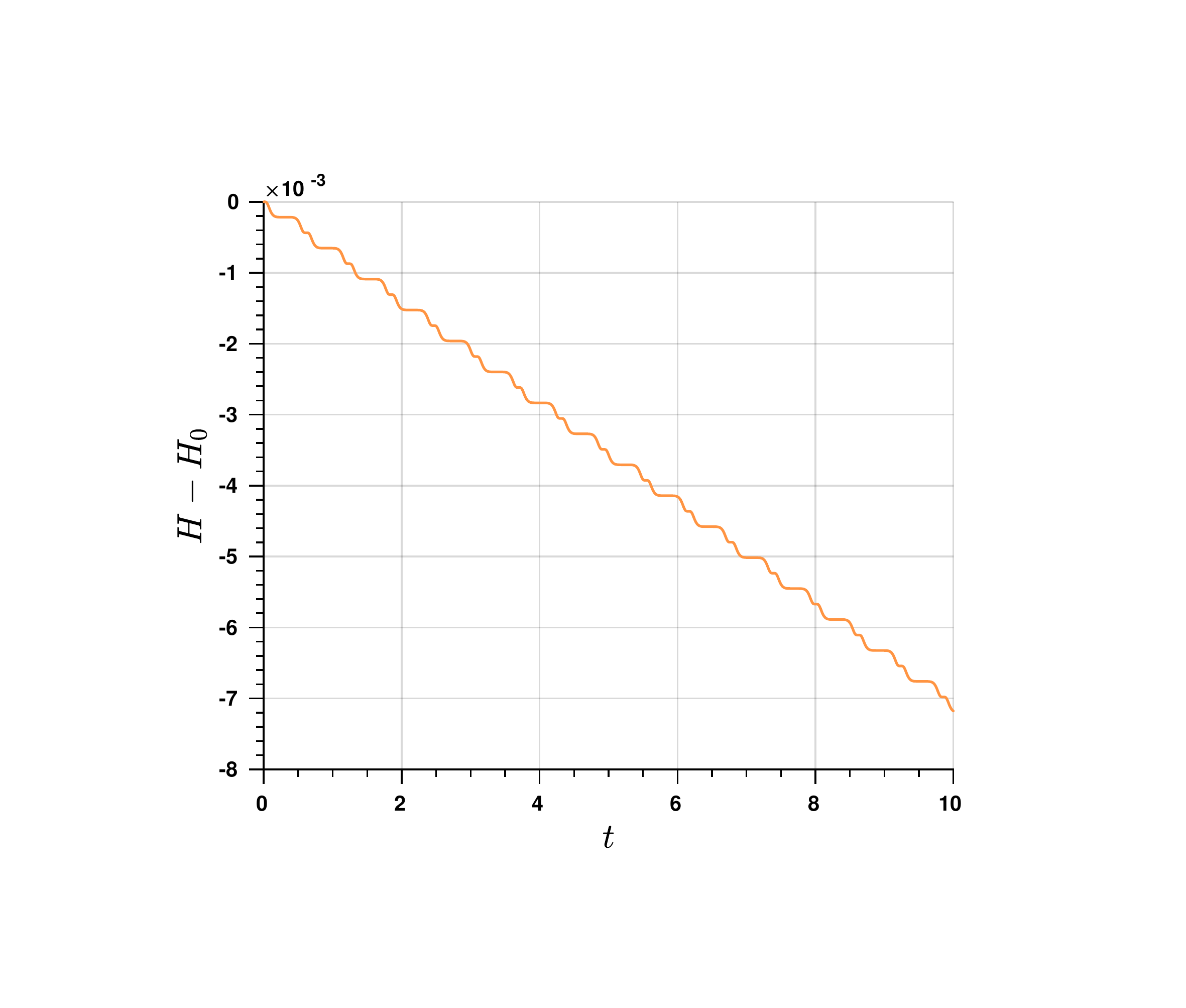} & \includegraphics[angle=0, trim=80 80 125 100, clip=true, scale = 0.3]{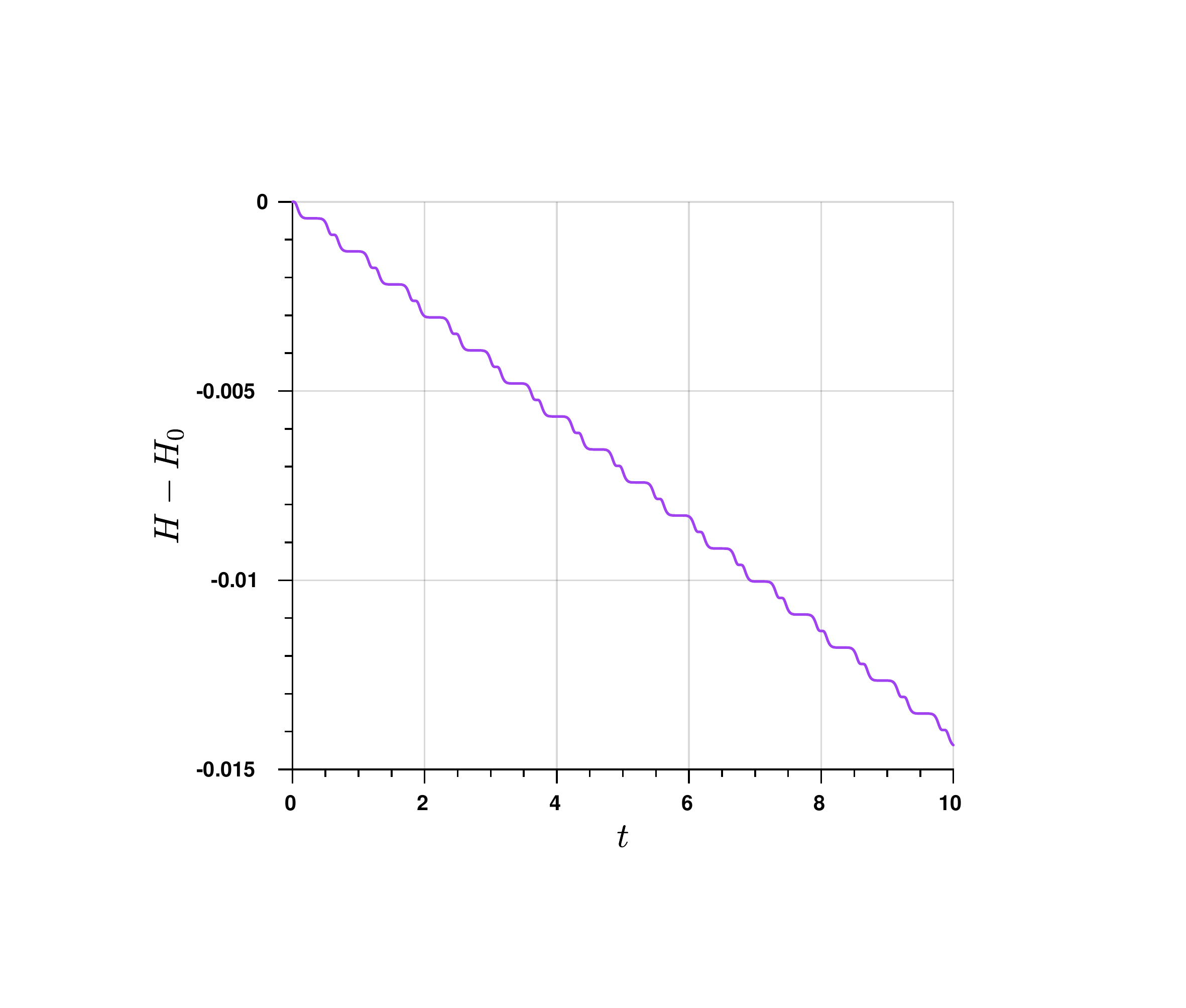} \\
(c) & (d) & (e)
\end{tabular}
\caption{The error of the total energy $H(\bm q_n, \bm p_n) - H(\bm q_0, \bm p_0)$ over time calculated by (a) the mid-point, (b) LaBudde-Greenspan, (c) generalized Eyre, (d) perturbed mid-point, and (e) perturbed trapezoidal integrators.} 
\label{fig:single_particle_energy}
\end{center}
\end{figure}

\begin{figure}
\begin{center}
\begin{tabular}{ccc}
\multicolumn{3}{c}{ \includegraphics[angle=0, trim=780 265 690 1730, clip=true, scale = 0.45]{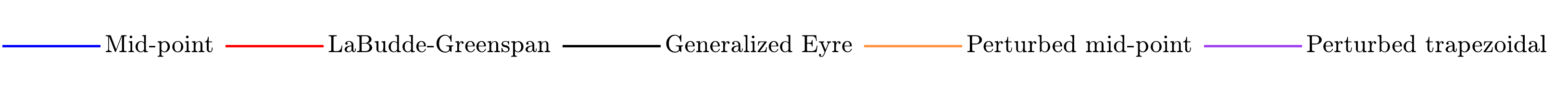} }\\
\includegraphics[angle=0, trim=80 80 125 110, clip=true, scale = 0.3]{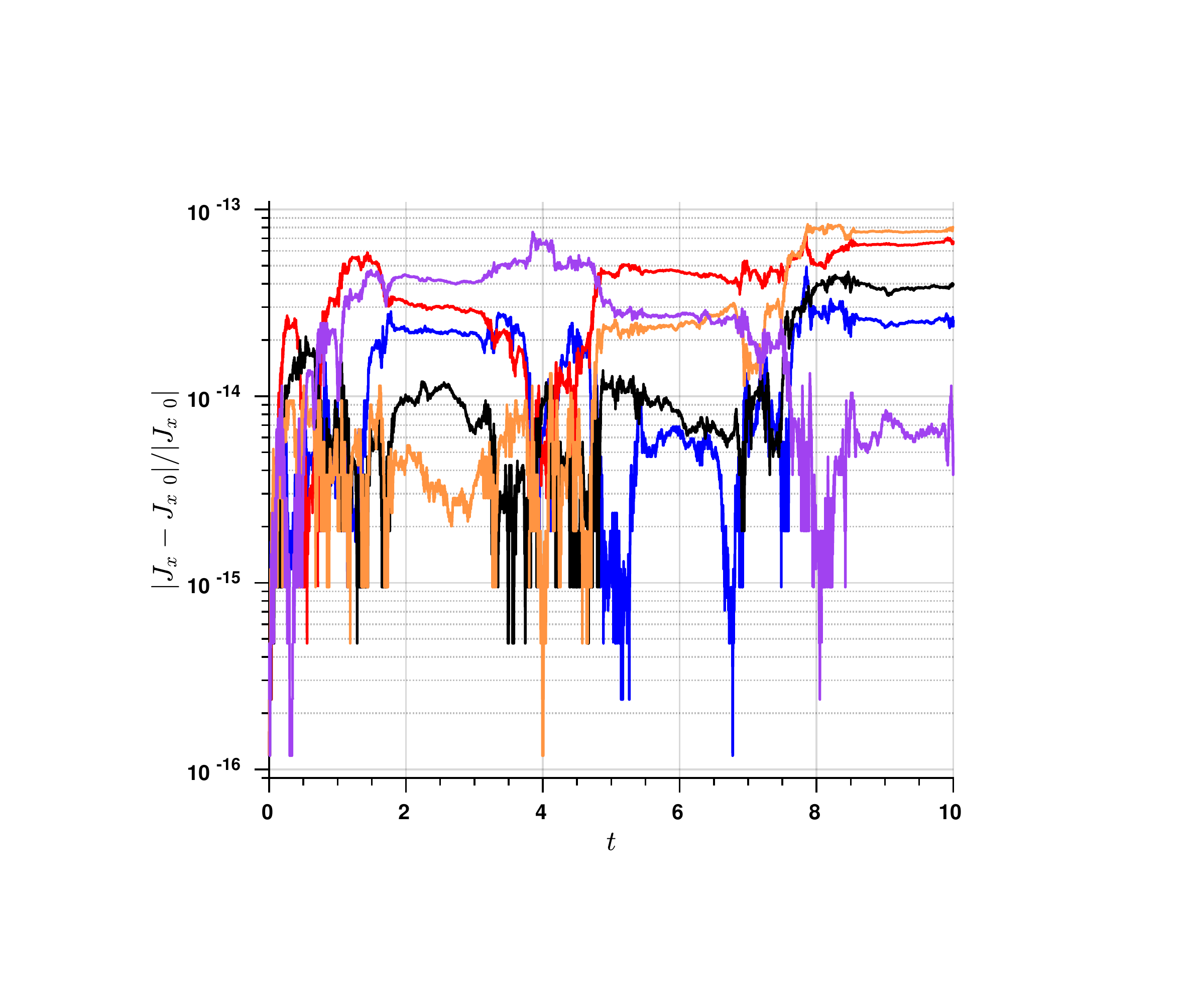} &
\includegraphics[angle=0, trim=80 80 125 110, clip=true, scale = 0.3]{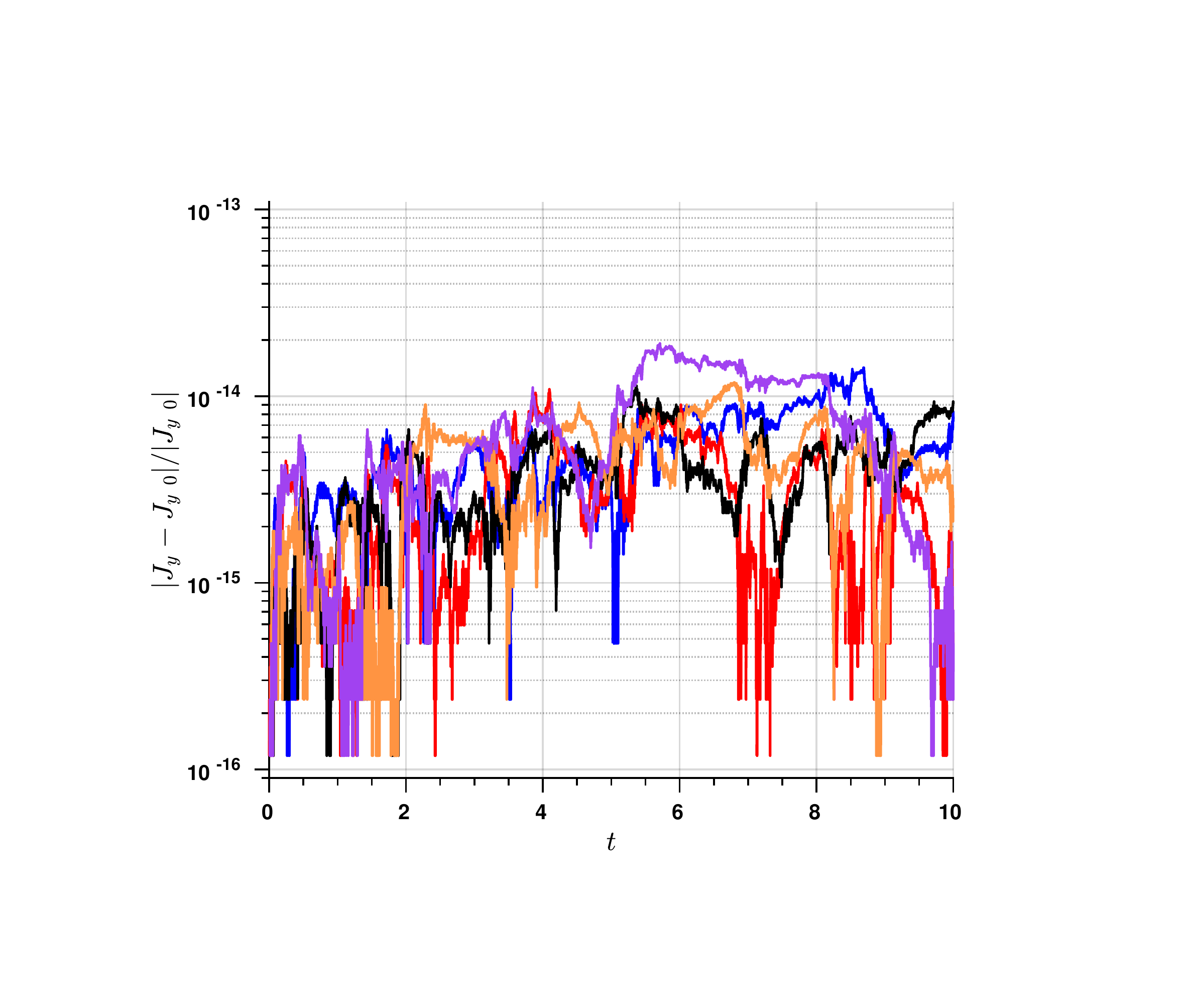} &
\includegraphics[angle=0, trim=80 80 125 110, clip=true, scale = 0.3]{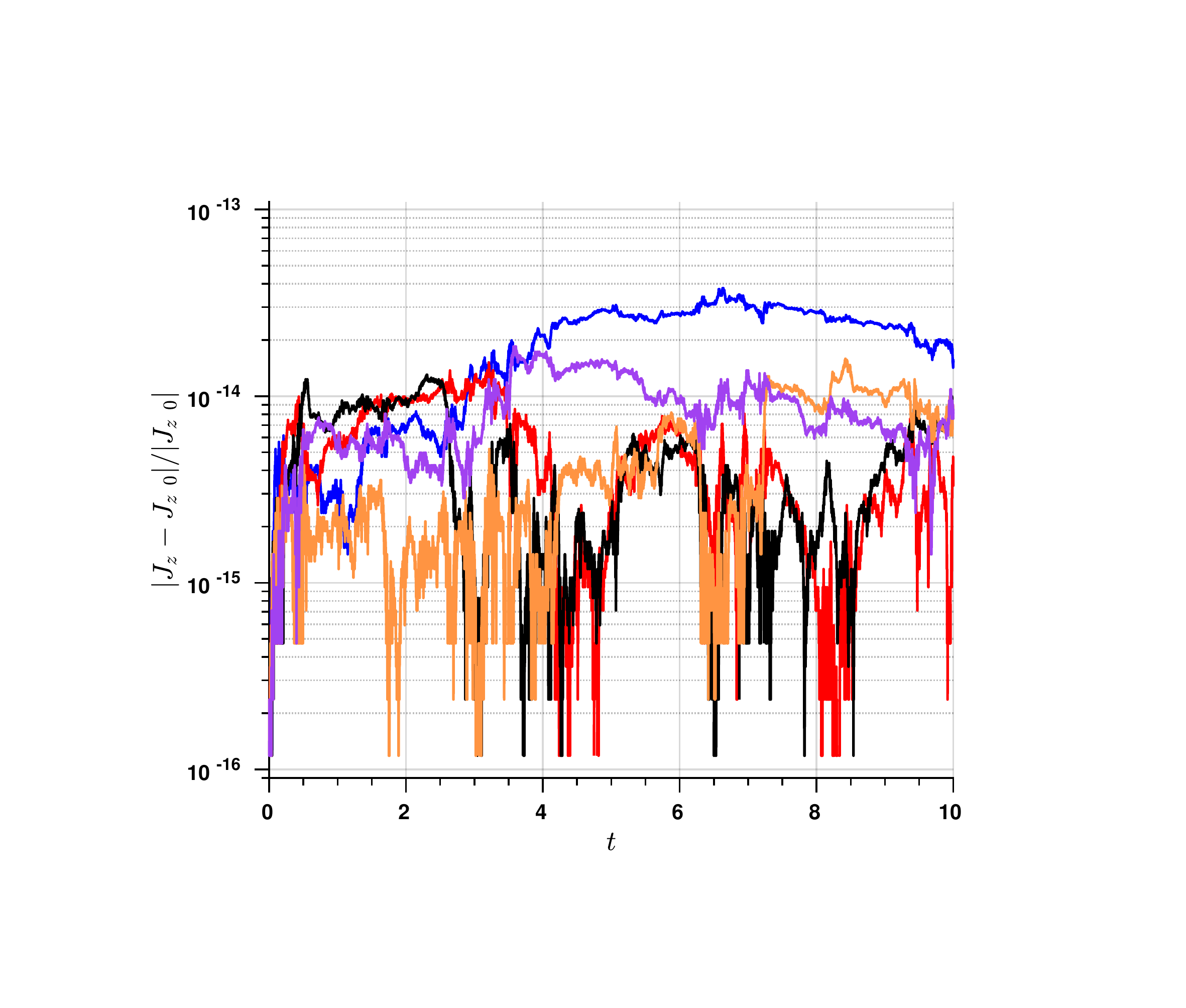} 
\end{tabular}
\caption{The angular momentum relative errors of the mid-point, LaBudde-Greenspan, generalized Eyre, perturbed mid-point, and perturbed trapezoidal integrators.} 
\label{fig:single_particle_momentum}
\end{center}
\end{figure}

\begin{figure}
	\begin{center}
	\begin{tabular}{cc}
\includegraphics[angle=0, trim=80 80 120 110, clip=true, scale = 0.45]{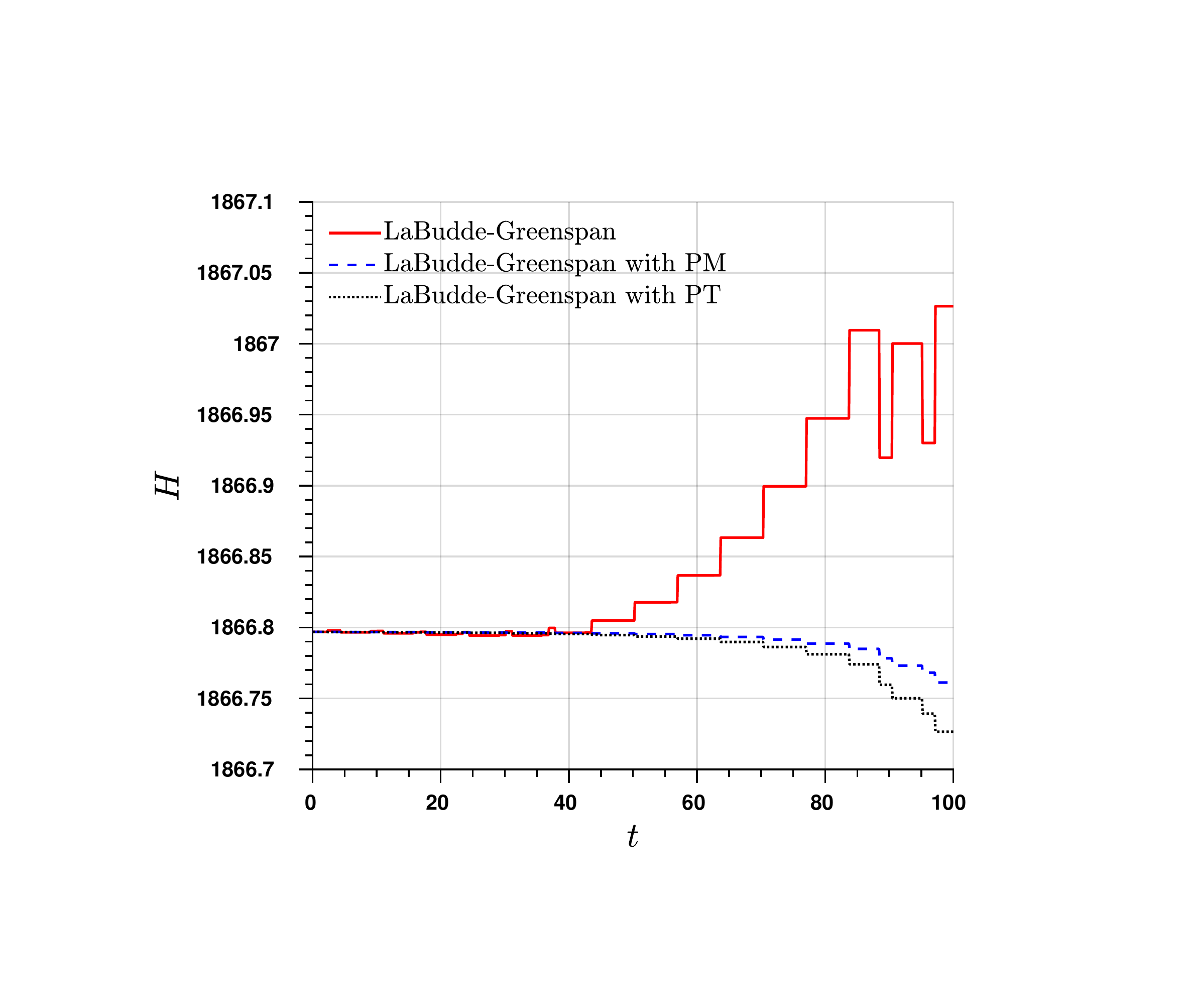} &\includegraphics[angle=0, trim=80 80 120 110, clip=true, scale = 0.45]{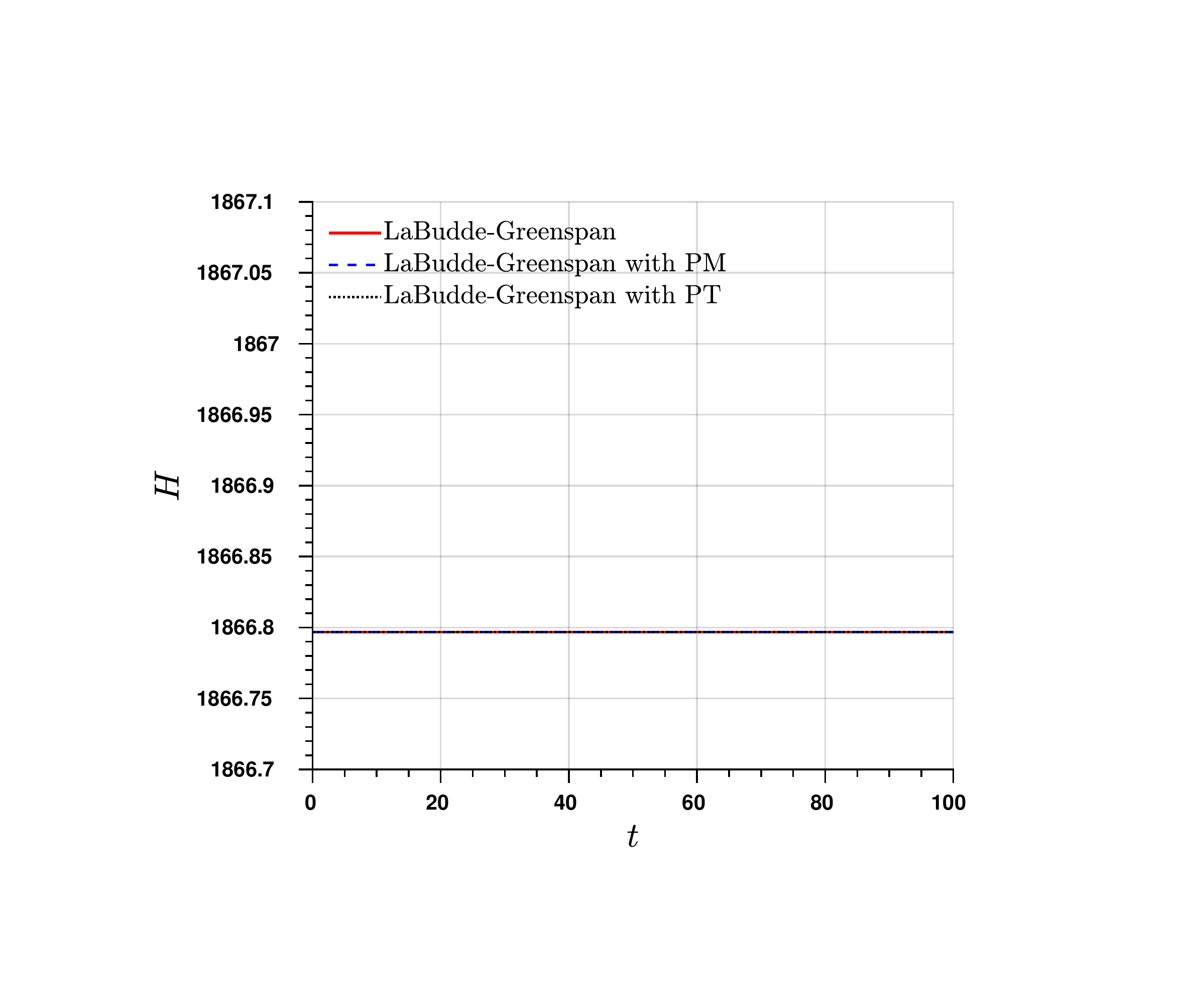} \\
$\mathrm{tol}_{\mathrm Q} = 10^{-1}$ & $\mathrm{tol}_{\mathrm Q} = 10^{-8}$
\end{tabular}
\caption{The Hamiltionian of the particle calculated by the the LaBudde-Greenspan with different alternate options. The red solid line is the LaBudde-Greenspan integrator, which invokes \eqref{eq:labudde-greenspan-default-switch} for small denominator. The blue dashed and black dotted lines represent the results calculated by the LaBudde-Greenspan paired with the perturbed mid-point (PM) and perturbed trapezoidal (PT) schemes, respectively.} 
\label{fig:hybird_algorithm_energy}
\end{center}
\end{figure}

\paragraph{The robustness of the LaBudde-Greenspan integrator} As was analyzed previously, the quotient formula in the LaBudde-Greenspan integrator is by default switched to \eqref{eq:labudde-greenspan-default-switch} when the criterion $|\|\bm q_{n+1,(l)}\| - \|\bm q_{n}\|| \leq \mathrm{tol}_{\mathrm Q}$ is met. However, this option is likely to induce the growth of the energy. In this study, the perturbed mid-point and perturbed trapezoidal integrators are studied as the alternate option for $\Lambda^{\star}$. We performed numerical investigation with $\Delta t_n=10^{-1}$ and $T=10^{2}$, and the results are illustrated in Figure \ref{fig:hybird_algorithm_energy}. It can be seen that there indeed exists an uncontrolled energy growth in the LaBudde-Greenspan integrator when the tolerance $\mathrm{tol}_{\mathrm Q}$ is relatively large (i.e., $10^{-1}$). Invoking the energy-decaying integrators ensures the boundedness of the energy and thus enjoys better numerical stability. When the tolerance $\mathrm{tol}_{\mathrm Q}$ is small (i.e., $10^{-8}$), the three integrators behave similarly in terms of energy preservation.

\section{Many-body dynamics}
\label{sec:many-body-dynamics}
In this section, we generalize our proposed algorithm to many-body dynamics. The linear momentum and center of mass are two additional invariants of many-body dynamics, which will be respected as well in the proposed integrators.

\subsection{The model problem and integrators}
\label{sec:many-body-model-integrators}
In many-body dynamics, the kinetic energy $K(\bm p)$ adopts the form given in \eqref{eq:Hamiltonian_K}, while the potential energy takes the following form,
\begin{align*}
V(\bm q) = \frac12 \sum_{A=1}^{N} \sum_{\substack{B=1, \\ B\neq A}}^{N} \hat{V}_{AB}(d_{AB}), \quad d_{AB} := \|\bm q_A - \bm q_B\|,
\end{align*}
wherein $\hat{V}_{AB}: \mathbb R_{+} \rightarrow \mathbb R$ is a scalar function. Following the notations introduced in the previous section, we use $\hat{V}_{AB~\mathrm c}$, $\hat{V}_{AB~\mathrm e}$, $\hat{V}_{AB~+}$ and $\hat{V}_{AB~-}$ to represent the convex, concave, super-convex, and super-concave parts of $\hat{V}_{AB}$, respectively. With the above form of the potential energy, the force $\bm F^A$ can be written as
\begin{align*}
\bm F^A := \frac{\partial V}{\partial \bm q_A} = \sum^{N}_{\substack{B=1\\B \neq A}} \bm f^{AB}, \quad \bm f^{AB} := \hat{V}'_{AB}(d_{AB}) \frac{\bm q_A - \bm q_B}{d_{AB}}.
\end{align*}
Different from the single-particle problem, the linear momentum $\bm L(\bm q, \bm p)$ is conserved in many-body dynamics due to the translational invariance in the Hamiltonian $H$. The center of mass for the multiple bodies can be defined as
\begin{align*}
\bm C(\bm q, \bm p, t) := \left( \sum_{A=1}^{N} \sum_{B=1}^{N} \bm m^{AB} \bm q_{B} - t \bm L \right) / \left( \sum_{A=1}^{N} \sum_{B=1}^{N} \bm m^{AB} \right),
\end{align*}
and it can be conveniently shown that $\bm C$ is also an invariant for many-body dynamics \cite{Wan2022}.

Following the approach outlined in Sections \ref{sec:summary_classcial_integrators} and \ref{sec:integrator-split-perturbation}, the discrete formulation for the many-body problem can be stated as follows. In each time subinterval $\mathbb I_n$, given $\bm q_{A~n}$ and $\bm p^{A~n}$, find $\bm q_{A~n+1}$ and $\bm p^{A~n+1}$, for $A=1,\cdots, N$, such that
\begin{align}
\label{eq:many-body-space-time-q}
\bm q_{A~n+1} - \bm q_{A~n} =& \Delta t_n \sum_{B=1}^{N} \bm m^{-1}_{AB}  \bm p^{B}_{n+\frac12}, \\
\label{eq:many-body-space-time-p}
\bm p^A_{n+1} - \bm p^A_{n} =& - \sum^{N}_{\substack{B=1\\B \neq A}} \bm f^{AB}_{\square},
\end{align}
and $\bm f^{AB}_{\square}$ is the discrete approximation to the conservative force $\bm f^{AB}$ over the subinterval $\mathbb I_n$. Using linear finite element in time, the discrete force is given by
\begin{align}
\label{eq:many-body-time-fe-time-integral}
\bm f^{AB}_{\square} = \bm f^{AB}_{\mathrm{fe}} := \int_{t_n}^{t_{n+1}}  \hat{V}'(d_{AB}(t)) \frac{\bm q_A(t) - \bm q_B(t)}{d_{AB}(t)} dt,
\end{align}
wherein the quantities are linearly interpolated as
\begin{align*}
\bm q_{A}(t) = \left( \bm q_{A~n+1} - \bm q_{A~n} \right) \frac{t-t_n}{\Delta t_n} + \bm q_{A~n}, \quad \bm p^{A}(t) = \left( \bm p^{A}_{n+1} - \bm p^{A}_n \right) \frac{t-t_n}{\Delta t_n} + \bm p^{A}_{n}, \quad d_{AB}(t) = \|\bm q_{A}(t) - \bm q_{B}(t)\|.
\end{align*}
In the time finite element formulation, one needs to address the integral appearing in \eqref{eq:many-body-time-fe-time-integral} to complete its definition, and the Gaussian quadrature can be adopted \cite{Betsch2000}. If the one-point Gaussian quadrature is used, the time finite element formulation reduces to the mid-point scheme, whose formulation can be obtained by representing $\bm f^{AB}_{\square}$ as
\begin{align*}
\bm f^{AB}_{\square} = \bm f^{AB}_{\mathrm{mp}} := \Delta t_n \hat{V}'_{AB}(d_{AB~n+\frac12}) \frac{\bm q_{A~n+\frac12} - \bm q_{B~n+\frac12}}{d_{AB~m}},
\end{align*}
wherein $\bm q_{A~n+\frac12} := (\bm q_{A~n} + \bm q_{A~n+1})/2$ and $d_{AB~m}:= \|\bm q_{A~n+\frac12} - \bm q_{B~n+\frac12}\|$. The LaBudde-Greenspan scheme can be defined by
\begin{align*}
\bm f^{AB}_{\square} = \bm f^{AB}_{\mathrm{lg}} := 2\Delta t_n \frac{\hat{V}_{AB}(d_{AB~n+1}) - \hat{V}_{AB}(d_{AB~n})}{d_{AB~n+1} - d_{AB~n}} \frac{\bm q_{A~n+\frac12} - \bm q_{B~n+\frac12}}{d_{AB~n+1} + d_{AB~n}}.
\end{align*}
The three energy-decaying integrators can be stated as follows. The generalized Eyre integrator is given by
\begin{align*}
\bm f^{AB}_{\square} = \bm f^{AB}_{\mathrm{ge}} := 2\Delta t_n \left( \hat{V}'_{AB~\mathrm c}(d_{AB~n+1}) + \hat{V}'_{AB~\mathrm e}(d_{AB~n}) \right) \frac{\bm q_{A~n+\frac12} - \bm q_{B~n+\frac12}}{d_{AB~n+1} + d_{AB~n}};
\end{align*}
the perturbed mid-point integrator is given by
\begin{align*}
\bm f^{AB}_{\square} = \bm f^{AB}_{\mathrm{pm}} :=& 2 \Delta t_n  \Bigg( \hat{V}'_{AB}\left(\frac{d_{AB~n+1} + d_{AB~n}}{2}\right) \displaybreak[2] \nonumber \\
& + \frac{\left( d_{AB~n+1} - d_{AB~n} \right)^2}{24} \left( \hat{V}'''_{AB~+}(d_{AB~n+1}) + \hat{V}'''_{AB~-}(d_{AB~n}) \right) \Bigg) \frac{\bm q_{A~n+\frac12} - \bm q_{B~n+\frac12}}{d_{AB~n+1} + d_{AB~n}};
\end{align*}
the perturbed trapezoidal integrator can be obtained by choosing the algorithmic force as
\begin{align*}
\bm f^{AB}_{\square} = \bm f^{AB}_{\mathrm{pt}} := 2 \Delta t_n & \Bigg( \frac12 \left( \hat{V}'_{AB}(d_{AB~n+1}) + \hat{V}'_{AB}(d_{AB~n}) \right) - \frac{\left( d_{AB~n+1} - d_{AB~n} \right)^2}{12} \left( \hat{V}'''_{AB~+}(d_{AB~n}) \right. \displaybreak[2] \nonumber \\
& \left. + \hat{V}'''_{AB~-}(d_{AB~n+1}) \right) \Bigg) \frac{\bm q_{A~n+\frac12} - \bm q_{B~n+\frac12}}{d_{AB~n+1} + d_{AB~n}}.
\end{align*}
With the above five different definitions of the algorithmic force $\bm f^{AB}_{\square}$, the integrators for many-body dynamics are presented in a unified manner in \eqref{eq:many-body-space-time-q}-\eqref{eq:many-body-space-time-p}.

\subsection{The structure-preserving properties}
Here, we investigate the behavior of the five integrators in terms of the structure-preserving properties. In particular, it will be shown that they preserve the linear momentum, angular momentum, and center of mass. The total energy will be dissipated in the three proposed integrators. First, to demonstrate the angular momentum conservation, the following lemma is useful.
\begin{lemma}
\label{proposition:many-body-momentum}
If the algorithmic force can be represented as $\bm f^{AB}_{\square} = \beta_{AB}\left( \bm q_{A~n+\frac12} - \bm q_{B~n+\frac12} \right)$, in which $\beta_{AB} = \beta_{BA}$, the integrator \eqref{eq:many-body-space-time-q}-\eqref{eq:many-body-space-time-p} conserves the angular momentum.
\end{lemma}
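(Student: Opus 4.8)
The plan is to mirror the single-particle momentum argument of Section~\ref{sec:low-order-space-time-formulation}, but carried out for the full collection of particles. First I would establish the many-body analogue of the identity \eqref{eq:momentum_identity}. For each index $A$, bilinearity of the cross product gives
\begin{align*}
\bm q_{A~n+1} \times \bm p^{A}_{n+1} - \bm q_{A~n} \times \bm p^{A}_{n} = \bm q_{A~n+\frac12} \times \left( \bm p^{A}_{n+1} - \bm p^{A}_{n} \right) + \bm p^{A}_{n+\frac12} \times \left( \bm q_{A~n+1} - \bm q_{A~n} \right),
\end{align*}
and summing over $A = 1,\dots,N$ together with the definition of $\bm J$ in \eqref{eq:def_angular_linear_momenta} yields
\begin{align*}
\bm J(\bm q_{n+1}, \bm p_{n+1}) - \bm J(\bm q_{n}, \bm p_{n}) = \sum_{A=1}^{N} \bm q_{A~n+\frac12} \times \left( \bm p^{A}_{n+1} - \bm p^{A}_{n} \right) + \sum_{A=1}^{N} \bm p^{A}_{n+\frac12} \times \left( \bm q_{A~n+1} - \bm q_{A~n} \right).
\end{align*}
It then suffices to show that each of the two sums on the right-hand side vanishes separately.

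For the second sum I would substitute the position update \eqref{eq:many-body-space-time-q}, which turns it into $\Delta t_n \sum_{A=1}^{N}\sum_{B=1}^{N} \bm m^{-1}_{AB}\, \bm p^{A}_{n+\frac12} \times \bm p^{B}_{n+\frac12}$. Since $\bm m^{-1}_{AB}$ is symmetric under $A \leftrightarrow B$ while $\bm p^{A}_{n+\frac12} \times \bm p^{B}_{n+\frac12}$ is antisymmetric, relabeling the dummy indices shows this double sum equals its own negative and hence is zero. This step uses only the symmetry and positive definiteness structure of the mass matrix and is independent of the force formula.

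For the first sum I would insert the momentum update \eqref{eq:many-body-space-time-p} and then the hypothesis $\bm f^{AB}_{\square} = \beta_{AB}\left( \bm q_{A~n+\frac12} - \bm q_{B~n+\frac12} \right)$, obtaining
\begin{align*}
-\sum_{A=1}^{N} \sum_{\substack{B=1\\ B\neq A}}^{N} \beta_{AB}\, \bm q_{A~n+\frac12} \times \left( \bm q_{A~n+\frac12} - \bm q_{B~n+\frac12} \right).
\end{align*}
The self term $\bm q_{A~n+\frac12} \times \bm q_{A~n+\frac12}$ vanishes identically, leaving $\sum_{A\neq B} \beta_{AB}\, \bm q_{A~n+\frac12} \times \bm q_{B~n+\frac12}$. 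Here the hypothesis $\beta_{AB} = \beta_{BA}$ is exactly what is required: swapping the names of $A$ and $B$ and using the antisymmetry of the cross product once more shows this sum is its own negative, so it too vanishes, and conservation of $\bm J$ follows.

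The only genuine subtlety — and the step I would be most careful with — is the pair of relabeling arguments that convert symmetry of the coefficients plus antisymmetry of the cross product into cancellation; writing the double sums explicitly and pairing the $(A,B)$ and $(B,A)$ contributions makes the cancellation transparent. Everything else is routine bookkeeping. I would also note that the conclusion is purely algebraic in $\beta_{AB}$, so it applies verbatim to all five force definitions of Section~\ref{sec:many-body-model-integrators}, since each $\beta_{AB}$ is built from the symmetric distance $d_{AB} = d_{BA}$ and the symmetric pair potential $\hat{V}_{AB}$, hence satisfies $\beta_{AB} = \beta_{BA}$.
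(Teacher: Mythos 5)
Your proof is correct and follows essentially the same route as the paper's own: generalize the per-particle momentum identity, insert the two update equations, and annihilate both resulting double sums by playing the symmetry of the coefficients ($\beta_{AB}$ and $\bm m^{-1}_{AB}$) against the antisymmetry of the cross product under relabeling. One harmless slip worth flagging: the identity you start from, with a plus sign on the $\bm p_{n+\frac12} \times \left( \bm q_{n+1} - \bm q_n \right)$ term (copied from \eqref{eq:momentum_identity}, which itself contains this typo), is off by a sign --- the correct decomposition is $\bm q_{n+\frac12} \times \left( \bm p_{n+1} - \bm p_n \right) - \bm p_{n+\frac12} \times \left( \bm q_{n+1} - \bm q_n \right)$, as written in the paper's proof of this lemma --- but since you show each of the two sums vanishes separately, the sign has no bearing on the validity of your conclusion.
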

\begin{proof}
The identity \eqref{eq:momentum_identity} can be generalized to the following form,
\begin{align*}
\bm J(\bm q_{n+1}, \bm p_{n+1}) - \bm J(\bm q_n, \bm p_n) &= \sum_{A=1}^{N}\left( \bm q_{A~n+1} \times \bm p^{A}_{n+1} \right) - \sum_{A=1}^{N}\left( \bm q_{A~n} \times \bm p^{A}_{n}  \right) \nonumber \\
&= \sum_{A=1}^{N} \left( \bm q_{A~n+\frac12} \times \left( \bm p^A_{n+1} - \bm p^{A}_n \right) - \bm p^{A}_{n+\frac12} \times \left( \bm q_{A~n+1} - \bm q_{A~n} \right) \right).
\end{align*}
Inserting \eqref{eq:many-body-space-time-q} and \eqref{eq:many-body-space-time-p} into the above identity results in
\begin{align*}
\bm J(\bm q_{n+1}, \bm p_{n+1}) - \bm J(\bm q_n, \bm p_n) = - \sum_{A=1}^{N} \sum^{N}_{\substack{B=1\\B \neq A}} \bm q_{A~n+\frac12} \times \bm f^{AB}_{\square} - \sum_{A=1}^{N} \sum_{B=1}^{N} \bm p^{A}_{n+\frac12} \times \bm m^{-1}_{AB} \bm p^{B}_{n+\frac12}.
\end{align*}
Using the fact that $\bm f^{AB}_{\square} = \beta_{AB} \left( \bm q_{A~n+\frac12} - \bm q_{B~n+\frac12} \right)$, one has the following,
\begin{align*}
& \sum_{A=1}^{N} \sum^{N}_{\substack{B=1\\B \neq A}} \bm q_{A~n+\frac12} \times \bm f^{AB}_{\square} = \sum_{A=1}^{N} \sum^{N}_{\substack{B=1\\B \neq A}} \bm q_{A~n+\frac12} \times \beta_{AB} \left( \bm q_{A~n+\frac12} - \bm q_{B~n+\frac12} \right) \displaybreak[2] \nonumber \\
&= -\sum_{A=1}^{N} \sum^{N}_{\substack{B=1\\B \neq A}} \beta_{AB} \left( \bm q_{A~n+\frac12} \times \bm q_{B~n+\frac12}\right) = -\sum_{A=1}^{N} \sum^{N}_{B=1} \beta_{AB} \left( \bm q_{A~n+\frac12} \times \bm q_{B~n+\frac12}\right) = 0.
\end{align*}
Using a similar argument, it can be shown that
\begin{align*}
\sum_{A=1}^{N} \sum_{B=1}^{N} \bm p^{A}_{n+\frac12} \times \bm m^{-1}_{AB} \bm p^{B}_{n+\frac12} = 0.
\end{align*}
Therefore, one has $\bm J(\bm q_{n+1}, \bm p_{n+1}) = \bm J(\bm q_n, \bm p_n)$, which completes the proof.
\end{proof}

With the aid of Lemma \ref{proposition:many-body-momentum}, we may conveniently have the following results.

\begin{proposition}
The mid-point, LaBudde-Greenspan, generalized Eyre, perturbed mid-point, and perturbed trapezoidal integrators are all momentum conserving.
\end{proposition}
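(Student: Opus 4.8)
The plan is to show that each of the five algorithmic forces satisfies the hypothesis of Lemma \ref{proposition:many-body-momentum}, whereupon angular momentum conservation is immediate. Inspecting the five definitions of $\bm f^{AB}_{\square}$ in Section \ref{sec:many-body-model-integrators}, one sees that every one of them is already presented as a scalar multiple of the vector $\bm q_{A~n+\frac12} - \bm q_{B~n+\frac12}$. It therefore suffices to read off the scalar coefficient $\beta_{AB}$ in each case and to verify the symmetry $\beta_{AB} = \beta_{BA}$ demanded by the lemma; Lemma \ref{proposition:many-body-momentum} then yields $\bm J(\bm q_{n+1}, \bm p_{n+1}) = \bm J(\bm q_n, \bm p_n)$ for all five schemes at once.

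First I would record the coefficients: $\beta_{AB} = \Delta t_n \hat{V}'_{AB}(d_{AB~n+\frac12})/d_{AB~m}$ for the mid-point scheme, the difference quotient $2\Delta t_n(\hat{V}_{AB}(d_{AB~n+1}) - \hat{V}_{AB}(d_{AB~n}))/((d_{AB~n+1} - d_{AB~n})(d_{AB~n+1} + d_{AB~n}))$ for LaBudde-Greenspan, $2\Delta t_n(\hat{V}'_{AB~\mathrm c}(d_{AB~n+1}) + \hat{V}'_{AB~\mathrm e}(d_{AB~n}))/(d_{AB~n+1} + d_{AB~n})$ for the generalized Eyre scheme, and the analogous expressions for the perturbed mid-point and perturbed trapezoidal schemes, whose $\beta_{AB}$ carries the extra collocated third-derivative perturbation over the common denominator $d_{AB~n+1} + d_{AB~n}$. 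The required symmetry then rests on two standing facts: the interparticle distances are symmetric, $d_{AB~n} = d_{BA~n}$ and $d_{AB~n+1} = d_{BA~n+1}$ (hence so are their midpoints, sums, and squared differences), and the pairwise potentials together with their convex/concave and super-convex/super-concave splits are symmetric in the pair index, $\hat{V}_{AB} = \hat{V}_{BA}$, $\hat{V}_{AB~+} = \hat{V}_{BA~+}$, $\hat{V}_{AB~-} = \hat{V}_{BA~-}$. I would state this reciprocity explicitly, since it is the discrete counterpart of Newton's third law and is precisely what makes $\beta_{AB} = \beta_{BA}$ hold.

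The verification is largely bookkeeping, and the only place that deserves care is the perturbed mid-point and perturbed trapezoidal forces: their perturbations evaluate $\hat{V}'''_{AB~+}$ and $\hat{V}'''_{AB~-}$ at the two distinct endpoints $d_{AB~n+1}$ and $d_{AB~n}$, so I must check that interchanging $A \leftrightarrow B$ sends $\hat{V}'''_{AB~+}(d_{AB~n+1}) + \hat{V}'''_{AB~-}(d_{AB~n})$ to itself rather than scrambling the two endpoints. This is the main, and rather mild, obstacle; it closes once the symmetry of the split and of each endpoint distance is invoked. Finally, should ``momentum conserving'' here be intended to cover the linear momentum $\bm L$ as well, I would append a one-line argument: $\bm L_{n+1} - \bm L_n = -\sum_{A} \sum_{B \neq A} \bm f^{AB}_{\square}$, and the antisymmetry $\bm f^{AB}_{\square} = -\bm f^{BA}_{\square}$ entailed by $\beta_{AB} = \beta_{BA}$ makes this double sum cancel under relabeling, giving $\bm L_{n+1} = \bm L_n$.
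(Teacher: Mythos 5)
Your proposal is correct and follows essentially the same route as the paper: both reduce the claim to Lemma \ref{proposition:many-body-momentum} and verify by inspection that each of the five algorithmic forces has the form $\beta_{AB}\left( \bm q_{A~n+\frac12} - \bm q_{B~n+\frac12} \right)$ with $\beta_{AB} = \beta_{BA}$. You simply carry out the inspection more explicitly than the paper does --- recording each $\beta_{AB}$, stating the reciprocity facts $d_{AB} = d_{BA}$ and $\hat{V}_{AB} = \hat{V}_{BA}$ (with their splits), and checking the endpoint pairing in the perturbed schemes --- and your closing remark on linear momentum, via the antisymmetry $\bm f^{AB}_{\square} = -\bm f^{BA}_{\square}$, corresponds to a separate proposition the paper proves later.
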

\begin{proof}
It suffices to show that all of the algorithmic forces can be written in the form of 
\begin{align*}
\bm f^{AB}_{\square} = \beta_{AB}\left( \bm q_{A~n+\frac12} - \bm q_{B~n+\frac12} \right)
\end{align*}
with $\beta_{AB} = \beta_{BA}$. This can be verified by inspecting the definitions of $\bm f^{AB}_{\mathrm{mp}}$, $\bm f^{AB}_{\mathrm{lg}}$, $\bm f^{AB}_{\mathrm{ge}}$, $\bm f^{AB}_{\mathrm{pm}}$, and $\bm f^{AB}_{\mathrm{pt}}$ stated in Section \ref{sec:many-body-model-integrators}.
\end{proof}

Considering the total energy $H(\bm q, \bm p) = K(\bm p) + V(\bm q)$ of the model problem, the behavior of the integrators can be summarized in the following proposition.
\begin{proposition}
Let $H(\bm q_{n}, \bm p_{n})$ be the discrete Hamiltonian at time step $t_n$. For the LaBudde-Greenspan integrator, the discrete energy is conserved, i.e., $H(\bm q_{n+1}, \bm p_{n+1}) = H(\bm q_{n}, \bm p_{n})$;
for the generalized Eyre integrator, the discrete energy satisifes $H(\bm q_{n+1}, \bm p_{n+1}) = H(\bm q_{n}, \bm p_{n}) - \mathcal D_{\mathrm{ge}}$, with the dissipation $\mathcal D_{\mathrm{ge}}$ being
\begin{align*}
\mathcal D_{\mathrm{ge}} = \frac{1}{4} \sum_{A=1}^{N} \sum_{\substack{B=1\\B \neq A}}^{N} \left( d_{AB~n+1} - d_{AB~n} \right)^2 \left( \hat{V}''_{AB~\mathrm c}(d_{AB~n+\xi_1}) - \hat{V}''_{AB~\mathrm e}(d_{AB~n+\xi_2}) \right) \geq 0,
\end{align*}
for $\xi_1, \xi_2 \in (0,1)$; for the perturbed mid-point integrator, the discrete energy satisfies $H(\bm q_{n+1}, \bm p_{n+1}) = H(\bm q_{n}, \bm p_{n}) - \mathcal D_{\mathrm{pm}}$, with the dissipation $\mathcal D_{\mathrm{pm}}$ being
\begin{align*}
\mathcal D_{\mathrm{pm}} = \frac{1}{96} \sum_{A=1}^{N} \sum_{\substack{B=1\\B \neq A}}^{N} \left( d_{AB~n+1} - d_{AB~n} \right)^4 \left( \hat{V}^{(4)}_{AB~+}(d_{AB~n+\xi_3}) - \hat{V}^{(4)}_{AB~-}(d_{AB~n+\xi_4}) \right) \geq 0,
\end{align*}
for $\xi_3, \xi_4 \in (0,1)$; for the perturbed trapezoidal integrator, the discrete energy satisfies $H(\bm q_{n+1}, \bm p_{n+1}) = H(\bm q_{n}, \bm p_{n}) - \mathcal D_{\mathrm{pt}}$, with the dissipation $\mathcal D_{\mathrm{pt}}$ being
\begin{align*}
\mathcal D_{\mathrm{pt}} = \frac{1}{48} \sum_{A=1}^{N} \sum_{\substack{B=1\\B \neq A}}^{N} \left( d_{AB~n+1} - d_{AB~n} \right)^4 \left( \hat{V}^{(4)}_{AB~+}(d_{AB~n+\xi_5}) - \hat{V}^{(4)}_{AB~-}(d_{AB~n+\xi_6}) \right) \geq 0,
\end{align*}
for $\xi_5, \xi_6 \in (0,1)$. In the above, $d_{AB~n+\alpha}$ is defined as $(1-\alpha) d_{AB~n} + \alpha d_{AB~n+1}$ for a parameter $\alpha \in [0,1]$.
\end{proposition}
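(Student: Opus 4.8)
The plan is to reduce the many-body energy balance to a sum over interacting pairs, each of which reproduces exactly the single-particle structure already established in Section \ref{sec:integrator-split-perturbation}, and then to invoke the corresponding quadrature rule term by term. Throughout I write $\bm M^{-1}$ for the constant symmetric matrix appearing in \eqref{eq:Hamiltonian_K} and abbreviate the scalar algorithmic force of a given scheme by $G_{AB}$, so that each of the four relevant forces $\bm f^{AB}_{\mathrm{lg}}$, $\bm f^{AB}_{\mathrm{ge}}$, $\bm f^{AB}_{\mathrm{pm}}$, $\bm f^{AB}_{\mathrm{pt}}$ takes the common shape $\bm f^{AB}_{\square} = 2\Delta t_n\, G_{AB}\, (\bm q_{A~n+\frac12} - \bm q_{B~n+\frac12})/(d_{AB~n+1} + d_{AB~n})$.

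First I would handle the kinetic energy. Because $K$ is a quadratic form in $\bm p$ with constant symmetric $\bm M^{-1}$, the polarization identity gives $K(\bm p_{n+1}) - K(\bm p_n) = \bm p_{n+\frac12}\cdot\bm M^{-1}(\bm p_{n+1}-\bm p_n)$ with no remainder. Reading \eqref{eq:many-body-space-time-q} in block form as $\bm q_{n+1} - \bm q_n = \Delta t_n\bm M^{-1}\bm p_{n+\frac12}$ and using the symmetry of $\bm M^{-1}$ rewrites this as $\Delta t_n^{-1}\sum_A (\bm q_{A~n+1}-\bm q_{A~n})\cdot(\bm p^A_{n+1}-\bm p^A_n)$, and inserting the momentum update \eqref{eq:many-body-space-time-p} turns the kinetic increment into $-\Delta t_n^{-1}\sum_A\sum_{B\neq A}(\bm q_{A~n+1}-\bm q_{A~n})\cdot\bm f^{AB}_{\square}$.

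The crux is the symmetrization of this double sum, and it is where the pairwise form $\bm f^{AB}_{\square} = \beta_{AB}(\bm q_{A~n+\frac12}-\bm q_{B~n+\frac12})$ with $\beta_{AB}=\beta_{BA}$ from Lemma \ref{proposition:many-body-momentum} is indispensable. Relabeling $A\leftrightarrow B$ and averaging the two copies replaces the single-particle increment $\bm q_{A~n+1}-\bm q_{A~n}$ by the relative increment, so the summand collapses to $[(\bm q_{A~n+1}-\bm q_{B~n+1})-(\bm q_{A~n}-\bm q_{B~n})]\cdot\tfrac12[(\bm q_{A~n+1}-\bm q_{B~n+1})+(\bm q_{A~n}-\bm q_{B~n})] = \tfrac12(d_{AB~n+1}^2 - d_{AB~n}^2)$. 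The common denominator $d_{AB~n+1}+d_{AB~n}$ carried by $\beta_{AB}$ then cancels against the factorization $d_{AB~n+1}^2 - d_{AB~n}^2 = (d_{AB~n+1}+d_{AB~n})(d_{AB~n+1}-d_{AB~n})$, leaving $K(\bm p_{n+1})-K(\bm p_n) = -\tfrac12\sum_A\sum_{B\neq A} G_{AB}(d_{AB~n+1}-d_{AB~n})$. I expect the only delicate point in this step to be tracking the factor $2\Delta t_n$ in the many-body forces against the $\tfrac12$ produced by the midpoint averaging.

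It then remains to add the potential increment. Since $V = \tfrac12\sum_A\sum_{B\neq A}\hat V_{AB}(d_{AB})$, the total energy change factors into pairs as $H(\bm q_{n+1},\bm p_{n+1}) - H(\bm q_n,\bm p_n) = \tfrac12\sum_A\sum_{B\neq A}\left( \hat V_{AB}(d_{AB~n+1}) - \hat V_{AB}(d_{AB~n}) - G_{AB}(d_{AB~n+1}-d_{AB~n}) \right)$, and each summand is exactly the single-particle residual treated earlier. For $\bm f^{AB}_{\mathrm{lg}}$ the difference quotient makes the summand vanish identically, yielding $H(\bm q_{n+1},\bm p_{n+1}) = H(\bm q_n,\bm p_n)$. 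For the other three I would apply the appropriate quadrature rule to $\int_{d_{AB~n}}^{d_{AB~n+1}}\hat V'_{AB}$ after the split, pairing the (super-)convex part with the rule whose remainder has the favorable sign and the (super-)concave part with the opposite one: the rectangular rules \eqref{eq:rectangular-quad-rule-1}-\eqref{eq:rectangular-quad-rule-2} for $\bm f^{AB}_{\mathrm{ge}}$, Lemma \ref{lemma:perturbed-mid-point} for $\bm f^{AB}_{\mathrm{pm}}$, and Lemma \ref{lemma:perturbed-trapezoidal-rules} for $\bm f^{AB}_{\mathrm{pt}}$. This produces the per-pair residuals $\tfrac12(d_{AB~n+1}-d_{AB~n})^2(-\hat V''_{AB~\mathrm c} + \hat V''_{AB~\mathrm e})$, $\tfrac{1}{48}(d_{AB~n+1}-d_{AB~n})^4(-\hat V^{(4)}_{AB~+} + \hat V^{(4)}_{AB~-})$, and $\tfrac{1}{24}(d_{AB~n+1}-d_{AB~n})^4(-\hat V^{(4)}_{AB~+} + \hat V^{(4)}_{AB~-})$ respectively, whose signs are fixed by the splitting conditions \eqref{eq:convex-split} and \eqref{eq:super-convex-split}. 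Multiplying by the outer $\tfrac12$ and summing over pairs gives the stated $\mathcal D_{\mathrm{ge}}$, $\mathcal D_{\mathrm{pm}}$, $\mathcal D_{\mathrm{pt}} \geq 0$, which completes the argument.
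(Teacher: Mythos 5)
Your proposal is correct and follows essentially the same route as the paper's proof: reduce the kinetic increment to $-\tfrac12\sum_A\sum_{B\neq A} G_{AB}(d_{AB~n+1}-d_{AB~n})$ via the pairwise antisymmetry of $\bm f^{AB}_{\square}$ and the factorization $d_{AB~n+1}^2 - d_{AB~n}^2 = (d_{AB~n+1}+d_{AB~n})(d_{AB~n+1}-d_{AB~n})$, then apply the rectangular, perturbed mid-point, and perturbed trapezoidal quadrature rules pairwise to the split potentials, with all constants ($\tfrac14$, $\tfrac1{96}$, $\tfrac1{48}$) matching. The only differences are cosmetic: you symmetrize the double sum by relabel-and-average rather than the paper's triangular-sum splitting, and you package the four integrators into one unified per-pair residual $\hat V_{AB}(d_{AB~n+1}) - \hat V_{AB}(d_{AB~n}) - G_{AB}(d_{AB~n+1}-d_{AB~n})$ instead of repeating the computation case by case.
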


\begin{proof}
First, invoking the equations \eqref{eq:many-body-space-time-q}, it can be shown that
\begin{align}
& \sum_{A=1}^{N} \left( \bm q_{A~n+1} - \bm q_{A~n} \right) \cdot \left( \bm p^{A}_{n+1} - \bm p^{A}_{n} \right) = \Delta t_n \sum_{A=1}^{N} \sum_{B=1}^{N} \left( \bm p^{A}_{n+1} - \bm p^{A}_{n} \right) \bm m^{-1}_{AB} \bm p^{B}_{n+\frac12} \displaybreak[2] \nonumber \\
=& \frac{\Delta t_n}{2} \sum_{A=1}^{N} \sum_{B=1}^{N} \left( \bm p^{A}_{n+1} \bm m^{-1}_{AB} \bm p^{B}_{n+1} + \bm p^{A}_{n+1} \bm m^{-1}_{AB} \bm p^{B}_{n} - \bm p^{A}_{n} \bm m^{-1}_{AB} \bm p^{B}_{n+1} - \bm p^{A}_{n} \bm m^{-1}_{AB} \bm p^{B}_{n} \right) \displaybreak[2] \nonumber \\
=& \frac{\Delta t_n}{2} \sum_{A=1}^{N} \sum_{B=1}^{N} \left( \bm p^{A}_{n+1} \bm m^{-1}_{AB} \bm p^{B}_{n+1} - \bm p^{A}_{n} \bm m^{-1}_{AB} \bm p^{B}_{n} \right)  \displaybreak[2] \nonumber \\
\label{eq:proof-many-body-kinetic-energy}
=& \Delta t_n \left( K(\bm p_{n+1}) - K(\bm p_n) \right).
\end{align}
Second, invoking the equations \eqref{eq:many-body-space-time-p}, it can be revealed that
\begin{align}
& \sum_{A=1}^{N} \left( \bm q_{A~n+1} - \bm q_{A~n} \right) \cdot \left( \bm p^{A}_{n+1} - \bm p^{A}_{n} \right) = - \sum_{A=1}^{N} \sum_{\substack{B=1\\B \neq A}}^{N} \left( \bm q^{A}_{n+1} - \bm q^{A}_{n} \right) \cdot \bm f^{AB}_{\square} \displaybreak[2] \nonumber \\
\label{eq:proof-many-body-potential-energy}
& = - \sum_{A=1}^{N} \sum_{B=1}^{A-1} \left( \bm q^{A}_{n+1} - \bm q^{A}_{n} \right) \cdot \bm f^{AB}_{\square} - \sum_{A=1}^{N} \sum_{B=A+1}^{N} \left( \bm q^{A}_{n+1} - \bm q^{A}_{n} \right) \cdot \bm f^{AB}_{\square}
\end{align}
Regarding the last term of \eqref{eq:proof-many-body-potential-energy}, algebraic manipulation leads to
\begin{align*}
\sum_{A=1}^{N} \sum_{B=A+1}^{N} \left( \bm q^{A}_{n+1} - \bm q^{A}_{n} \right) \cdot \bm f^{AB}_{\square} = - \sum_{B=1}^{N} \sum_{A=B+1}^{N} \left( \bm q^{B}_{n+1} - \bm q^{B}_{n} \right) \cdot \bm f^{AB}_{\square} = - \sum_{A=1}^{N} \sum_{B=1}^{A-1} \left( \bm q^{B}_{n+1} - \bm q^{B}_{n} \right) \cdot \bm f^{AB}_{\square}.
\end{align*}
Therefore, the relation \eqref{eq:proof-many-body-potential-energy} can be rewritten as
\begin{align*}
\sum_{A=1}^{N} \left( \bm q_{A~n+1} - \bm q_{A~n} \right) \cdot \left( \bm p^{A}_{n+1} - \bm p^{A}_{n} \right) &= - \sum_{A=1}^{N} \sum_{B=1}^{A-1} \left( \bm q^{A}_{n+1} - \bm q^{A}_{n} - \bm q^B_{n+1} + \bm q^{B}_n \right) \cdot \bm f^{AB}_{\square} \displaybreak[2] \nonumber \\
&= - \frac12 \sum_{A=1}^{N} \sum_{\substack{B=1\\B \neq A}}^{N} \left( \bm q^{A}_{n+1} - \bm q^{A}_{n} - \bm q^B_{n+1} + \bm q^{B}_n \right) \cdot \bm f^{AB}_{\square}.
\end{align*}
Combining the above with \eqref{eq:proof-many-body-kinetic-energy}, the following relation can be obtained,
\begin{align}
\label{eq:proof-many-body-kinetic-potential-energy}
\Delta t_n \left( K(\bm p_{n+1}) - K(\bm p_n) \right) = - \frac12 \sum_{A=1}^{N} \sum_{\substack{B=1\\B \neq A}}^{N} \left( \bm q^{A}_{n+1} - \bm q^{A}_{n} - \bm q^B_{n+1} + \bm q^{B}_n \right) \cdot \bm f^{AB}_{\square},
\end{align}
and we will analyze the right-hand side of \eqref{eq:proof-many-body-kinetic-potential-energy} based on the specific forms of $\bm f^{AB}_{\square}$ from now on.

For the LaBudde-Greenspan integrator, one has
\begin{align*}
- \frac12 \sum_{A=1}^{N} \sum_{\substack{B=1\\B \neq A}}^{N} \left( \bm q^{A}_{n+1} - \bm q^{A}_{n} - \bm q^B_{n+1} + \bm q^{B}_n \right) \cdot \bm f^{AB}_{\mathrm{lg}} = - \frac{\Delta t_n}{2} \sum_{A=1}^{N} \sum_{\substack{B=1\\B \neq A}}^{N} \left( \hat{V}_{AB}(d_{AB~n+1}) - \hat{V}_{AB}(d_{AB~n}) \right).
\end{align*}
Combining the above with \eqref{eq:proof-many-body-kinetic-potential-energy}, we have $H(\bm q_{n+1}, \bm p_{n+1}) = H(\bm q_{n}, \bm p_{n})$ for this integrator. 

For the generalized Eyre integrator, one has the following,
\begin{align*}
& - \frac12 \sum_{A=1}^{N} \sum_{\substack{B=1\\B \neq A}}^{N} \left( \bm q^{A}_{n+1} - \bm q^{A}_{n} - \bm q^B_{n+1} + \bm q^{B}_n \right) \cdot \bm f^{AB}_{\mathrm{ge}} \displaybreak[2] \\
 =& - \frac{\Delta t_n}{2} \sum_{A=1}^{N} \sum_{\substack{B=1\\B \neq A}}^{N} \left( d_{AB~n+1} - d_{AB~n} \right) \left( \hat{V}'_{AB~\mathrm c}(d_{AB~n+1}) + \hat{V}'_{AB~\mathrm e}(d_{AB~n}) \right) \displaybreak[2] \\
 =& - \frac{\Delta t_n}{2} \sum_{A=1}^{N} \sum_{\substack{B=1\\B \neq A}}^{N} \left( \hat{V}_{AB}(d_{AB~n+1}) - \hat{V}_{AB}(d_{AB~n}) \right) \displaybreak[2] \\
& + \frac{\Delta t_n}{4} \sum_{A=1}^{N} \sum_{\substack{B=1\\B \neq A}}^{N} \left( d_{AB~n+1} - d_{AB~n} \right)^2 \left( -\hat{V}''_{AB~\mathrm c}(d_{AB~n+\xi_1}) + \hat{V}''_{AB~\mathrm e}(d_{AB~n+\xi_2}) \right),
\end{align*}
for $\xi_1,\xi_2 \in (0,1)$. The last equality in the above results from the rectangular quadrature rules \eqref{eq:rectangular-quad-rule-1}-\eqref{eq:rectangular-quad-rule-2}. Consequently, combining with \eqref{eq:proof-many-body-kinetic-energy}, one has 
\begin{align*}
H(\bm q_{n}, \bm p_{n}) - H(\bm q_{n+1}, \bm p_{n+1}) &= \mathcal D_{\mathrm{ge}} \nonumber \displaybreak[2] \\
&= \frac{1}{4} \sum_{A=1}^{N} \sum_{\substack{B=1\\B \neq A}}^{N} \left( d_{AB~n+1} - d_{AB~n} \right)^2 \left( \hat{V}''_{AB~\mathrm c}(d_{AB~n+\xi_1}) - \hat{V}''_{AB~\mathrm e}(d_{AB~n+\xi_2}) \right).
\end{align*}
The dissipation $\mathcal D_{\mathrm{ge}}$ is non-negative, thanks to the convex-concave split.

For the perturbed mid-point scheme, after applying the definition of $\bm f^{AB}_{\mathrm{pm}}$, one has,
\begin{align*}
& - \frac12 \sum_{A=1}^{N} \sum_{\substack{B=1\\B \neq A}}^{N} \left( \bm q^{A}_{n+1} - \bm q^{A}_{n} - \bm q^B_{n+1} + \bm q^{B}_n \right) \cdot \bm f^{AB}_{\mathrm{pm}}  \displaybreak[2] \\
=& -\frac{\Delta t_n}{2} \sum_{A=1}^{N} \sum_{\substack{B=1\\B \neq A}}^{N} \Bigg( \left( d_{AB~n+1} - d_{AB~n} \right) \hat{V}'_{AB}\left(\frac{d_{AB~n+1} + d_{AB~n}}{2}\right) \displaybreak[2] \\
& + \frac{\left( d_{AB~n+1} - d_{AB~n} \right)^3}{24} \left( \hat{V}'''_{AB~+}(d_{AB~n+1}) + \hat{V}'''_{AB~-}(d_{AB~n}) \right) \Bigg) \displaybreak[2] \\
=& - \frac{\Delta t_n}{2} \sum_{A=1}^{N} \sum_{\substack{B=1\\B \neq A}}^{N} \left( \hat{V}_{AB}(d_{AB~n+1}) - \hat{V}_{AB}(d_{AB~n}) \right) \displaybreak[2] \\
& + \frac{\Delta t_n}{96} \sum_{A=1}^{N} \sum_{\substack{B=1\\B \neq A}}^{N} \left( d_{AB~n+1} - d_{AB~n} \right)^4 \left( -\hat{V}^{(4)}_{AB~+}(d_{AB~n+\xi_3}) + \hat{V}^{(4)}_{AB~-}(d_{AB~n+\xi_4}) \right),
\end{align*}
for $\xi_3,\xi_4 \in (0,1)$. In the above derivation, the perturbed mid-point quadrature rules \eqref{eq:perturbed-mid-point-a}-\eqref{eq:perturbed-mid-point-b} are applied to $\hat{V}'_{AB~+}$ and $\hat{V}'_{AB~-}$ separately. Together with \eqref{eq:proof-many-body-kinetic-potential-energy}, one has 
\begin{align*}
H(\bm q_{n}, \bm p_{n}) - H(\bm q_{n+1}, \bm p_{n+1}) &= \mathcal D_{\mathrm{pm}} \nonumber \displaybreak[2] \\
&= \frac{1}{96} \sum_{A=1}^{N} \sum_{\substack{B=1\\B \neq A}}^{N} \left( d_{AB~n+1} - d_{AB~n} \right)^4 \left( \hat{V}^{(4)}_{AB~+}(d_{AB~n+\xi_3}) - \hat{V}^{(4)}_{AB~-}(d_{AB~n+\xi_4}) \right).
\end{align*}
It is straightforward to verify that $\mathcal D_{\mathrm{pm}} \geq 0$ with the definitions of the super-convex and super-concave functions.

For the perturbed trapezoidal scheme, one has
\begin{align*}
& - \frac12 \sum_{A=1}^{N} \sum_{\substack{B=1\\B \neq A}}^{N} \left( \bm q^{A}_{n+1} - \bm q^{A}_{n} - \bm q^B_{n+1} + \bm q^{B}_n \right) \cdot \bm f^{AB}_{\mathrm{pt}}  \displaybreak[2] \\
=& -\frac{\Delta t_n}{2} \sum_{A=1}^{N} \sum_{\substack{B=1\\B \neq A}}^{N} \Bigg( \frac{\left( d_{AB~n+1} - d_{AB~n} \right)}{2} \left( \hat{V}'_{AB}(d_{AB~n+1}) + \hat{V}'_{AB}(d_{AB~n}) \right) \displaybreak[2] \\
& - \frac{\left( d_{AB~n+1} - d_{AB~n} \right)^3}{12} \left( \hat{V}'''_{AB~+}(d_{AB~n}) + \hat{V}'''_{AB~-}(d_{AB~n+1}) \right) \Bigg) \displaybreak[2] \\
=& - \frac{\Delta t_n}{2} \sum_{A=1}^{N} \sum_{\substack{B=1\\B \neq A}}^{N} \left( \hat{V}_{AB}(d_{AB~n+1}) - \hat{V}_{AB}(d_{AB~n}) \right) \displaybreak[2] \\
& - \frac{\Delta t_n}{48} \sum_{A=1}^{N} \sum_{\substack{B=1\\B \neq A}}^{N} \left( d_{AB~n+1} - d_{AB~n} \right)^4 \left( \hat{V}^{(4)}_{AB~+}(d_{AB~n+\xi_5}) - \hat{V}^{(4)}_{AB~-}(d_{AB~n+\xi_6}) \right),
\end{align*}
for $\xi_5,\xi_6 \in (0,1)$. Again, with \eqref{eq:proof-many-body-kinetic-energy}, it can be shown that
\begin{align*}
H(\bm q_{n}, \bm p_{n}) - H(\bm q_{n+1}, \bm p_{n+1}) &= \mathcal D_{\mathrm{pt}} \nonumber \displaybreak[2] \\
&= \frac{1}{48} \sum_{A=1}^{N} \sum_{\substack{B=1\\B \neq A}}^{N} \left( d_{AB~n+1} - d_{AB~n} \right)^4 \left( \hat{V}^{(4)}_{AB~+}(d_{AB~n+\xi_5}) - \hat{V}^{(4)}_{AB~-}(d_{AB~n+\xi_6}) \right),
\end{align*}
and $\mathcal D_{\mathrm{pt}} \geq 0$ due to the definitions of $\hat{V}_{AB~+}$ and $\hat{V}_{AB~-}$.
\end{proof}

\begin{remark}
If the potential energy is linear or quadratic, the third derivatives in the definition of $\bm f^{AB}_{\mathrm{pm}}$ and $\bm f^{AB}_{\mathrm{pt}}$ vanish, which leads to
\begin{align*}
\bm f^{AB}_{\mathrm{pm}} = 2 \Delta t_n \hat{V}'_{AB}\left(\frac{d_{AB~n+1} + d_{AB~n}}{2} \right), \quad \mbox{and} \quad \bm f^{AB}_{\mathrm{pt}} = \Delta t_n \left( \hat{V}'_{AB}\left(d_{AB~n+1}\right) + \hat{V}'_{AB}\left(d_{AB~n}\right) \right).
\end{align*}
The resulting integators are energy conserving. The definition of $\bm f^{AB}_{\mathrm{pm}}$ collocates $\hat{V}'_{AB}$ at $d_{AB~n+1/2}$ and is still different from the mid-point integrator. If the potential energy is cubic, the perturbation terms in $\bm f^{AB}_{\mathrm{pm}}$ and $\bm f^{AB}_{\mathrm{pt}}$ are nonzero in both integrators, while the dissipation terms remain to be zero. The dissipation in the perturbed mid-point and perturbed trapezoidal integrators arise when there is a non-vanishing fourth derivative of the potential energy.
\end{remark}

In addition to the angular momentum and total energy, the integrators also conserve the total linear momentum and mass center in the many-body dynamics, which is stated in the following proposition.
\begin{proposition}
The mid-point, LaBudde-Greenspan, generalized Eyre, perturbed mid-point, and perturbed trapezoidal integrators conserve the linear momentum, i.e., $\bm L(\bm q_{n+1}, \bm p_{n+1}) = \bm L(\bm q_n, \bm p_n)$ and the center of mass, i.e., $\bm C(\bm q_{n+1}, \bm p_{n+1}, t_{n+1}) = \bm C(\bm q_{n}, \bm p_{n}, t_{n})$.
\end{proposition}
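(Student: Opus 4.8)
The plan is to handle the two invariants in sequence, since conservation of the center of mass will rely on the linear-momentum result. The key structural fact I would invoke is the symmetric force representation $\bm f^{AB}_{\square} = \beta_{AB}(\bm q_{A~n+\frac12} - \bm q_{B~n+\frac12})$ with $\beta_{AB} = \beta_{BA}$, which was verified for all five choices of $\bm f^{AB}_{\square}$ when establishing their angular-momentum conservation via Lemma \ref{proposition:many-body-momentum}. This representation immediately yields the discrete action--reaction identity $\bm f^{AB}_{\square} = -\bm f^{BA}_{\square}$.

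For linear momentum, I would sum the momentum update \eqref{eq:many-body-space-time-p} over all particles, giving
\[
\bm L(\bm q_{n+1}, \bm p_{n+1}) - \bm L(\bm q_{n}, \bm p_{n}) = \sum_{A=1}^{N}\left(\bm p^{A}_{n+1} - \bm p^{A}_{n}\right) = -\sum_{A=1}^{N}\sum_{\substack{B=1\\B\neq A}}^{N} \bm f^{AB}_{\square}.
\]
Relabeling the dummy indices $A\leftrightarrow B$ over the symmetric index set $\{(A,B): A\neq B\}$ and applying $\bm f^{AB}_{\square} = -\bm f^{BA}_{\square}$ shows that this double sum equals its own negative, hence vanishes. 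This establishes $\bm L_{n+1} = \bm L_{n}$ uniformly for all five integrators.

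For the center of mass, let $M_{\mathrm{tot}} := \sum_{A,B} \bm m^{AB}$ denote the constant total mass. Writing $\bm L := \bm L_{n+1} = \bm L_{n}$ from the result just proved, the equality $\bm C(\bm q_{n+1}, \bm p_{n+1}, t_{n+1}) = \bm C(\bm q_{n}, \bm p_{n}, t_{n})$ reduces, after the $-t\bm L$ terms are combined, to the single identity $\sum_{A,B} \bm m^{AB}(\bm q_{B~n+1} - \bm q_{B~n}) = \Delta t_n \bm L$. The central step is to substitute the position update \eqref{eq:many-body-space-time-q}; the resulting mass-matrix product $\sum_{B} \bm m^{AB} \bm m^{-1}_{BD}$ collapses to $\delta_{AD}$ because $\bm M \bm M^{-1}$ is the identity, leaving $\Delta t_n \sum_{A} \bm p^{A}_{n+\frac12}$. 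Recognizing $\sum_{A} \bm p^{A}_{n+\frac12} = \tfrac12(\bm L_{n+1} + \bm L_{n}) = \bm L$ then closes the argument.

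The computation is largely routine once the ordering is fixed, and the main point requiring care is precisely this ordering: linear-momentum conservation must be in hand before the center-of-mass argument, where it is used both to cancel the $-t\bm L$ contributions and to evaluate the mid-point momentum sum. The only genuinely algebraic step is the collapse of the mass-matrix product against its inverse; everything else is bookkeeping with the increments $\bm q_{A~n+1}-\bm q_{A~n}$ and $\bm p^{A}_{n+1}-\bm p^{A}_{n}$.
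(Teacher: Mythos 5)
Your proof is correct and follows essentially the same route as the paper: linear momentum is obtained by summing the momentum update \eqref{eq:many-body-space-time-p} over all particles and cancelling the force double sum, and the center of mass then follows by contracting the position update \eqref{eq:many-body-space-time-q} with the mass matrix and invoking the just-proved momentum conservation to evaluate $\sum_{A}\bm p^{A}_{n+\frac12}$ and to combine the $t\bm L$ terms. If anything, your justification of the cancellation is the more careful one: the paper asserts $\bm f^{AB}_{\square} = \bm f^{BA}_{\square}$, whereas the vanishing of the double sum actually rests on the action--reaction antisymmetry $\bm f^{AB}_{\square} = -\bm f^{BA}_{\square}$, which, as you note, follows from the representation $\bm f^{AB}_{\square} = \beta_{AB}\left( \bm q_{A~n+\frac12} - \bm q_{B~n+\frac12} \right)$ with $\beta_{AB} = \beta_{BA}$.
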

\begin{proof}
Based on the definition of the linear momentum $\bm L$ given in \eqref{eq:def_angular_linear_momenta}, one has
\begin{align*}
\bm L(\bm q_{n+1}, \bm p_{n+1}) - \bm L(\bm q_n, \bm p_n) = - \sum_{A=1}^{N} \sum^{N}_{\substack{B=1\\B \neq A}} \bm f^{AB}_{\square}.
\end{align*}
Since $\bm f^{AB}_{\square} = \bm f^{BA}_{\square}$ for all integrators, the above relation suggests $\bm L(\bm q_{n+1}, \bm p_{n+1}) - \bm L(\bm q_n, \bm p_n) = \bm 0$. Next, the equations \eqref{eq:many-body-space-time-q} result in
\begin{align*}
\bm 0 =& \sum_{A=1}^{N} \left( \sum_{B=1}^{N} \bm m^{AB} \left( \bm q_{B~n+1} - \bm q_{B~n} \right) - \Delta t_n \bm p^{A}_{n+\frac12} \right) \nonumber \\
=& \sum_{A=1}^{N}\sum_{B=1}^{N} \bm m^{AB} \left( \bm q_{B~n+1} - \bm q_{B~n} \right) - \Delta t_n \bm L(\bm q_{n+1}, \bm p_{n+1}) \nonumber \\
=& \sum_{A=1}^{N}\sum_{B=1}^{N} \bm m^{AB} \left( \bm q_{B~n+1} - \bm q_{B~n} \right) - \left( t_{n+1} \bm L(\bm q_{n+1}, \bm p_{n+1}) - t_n \bm L(\bm q_{n}, \bm p_{n}) \right).
\end{align*}
In the last equality of the above derivation, the conservation of linear momentum is utilized. The above relation implies that the center of mass is conserved, i.e., $\bm C(\bm q_{n+1}, \bm p_{n+1}, t_{n+1}) = \bm C(\bm q_{n}, \bm p_{n}, t_{n})$.
\end{proof}

\begin{figure}
	\begin{center}
	\begin{tabular}{c}
\includegraphics[angle=0, trim=80 90 120 100, clip=true, scale = 0.45]{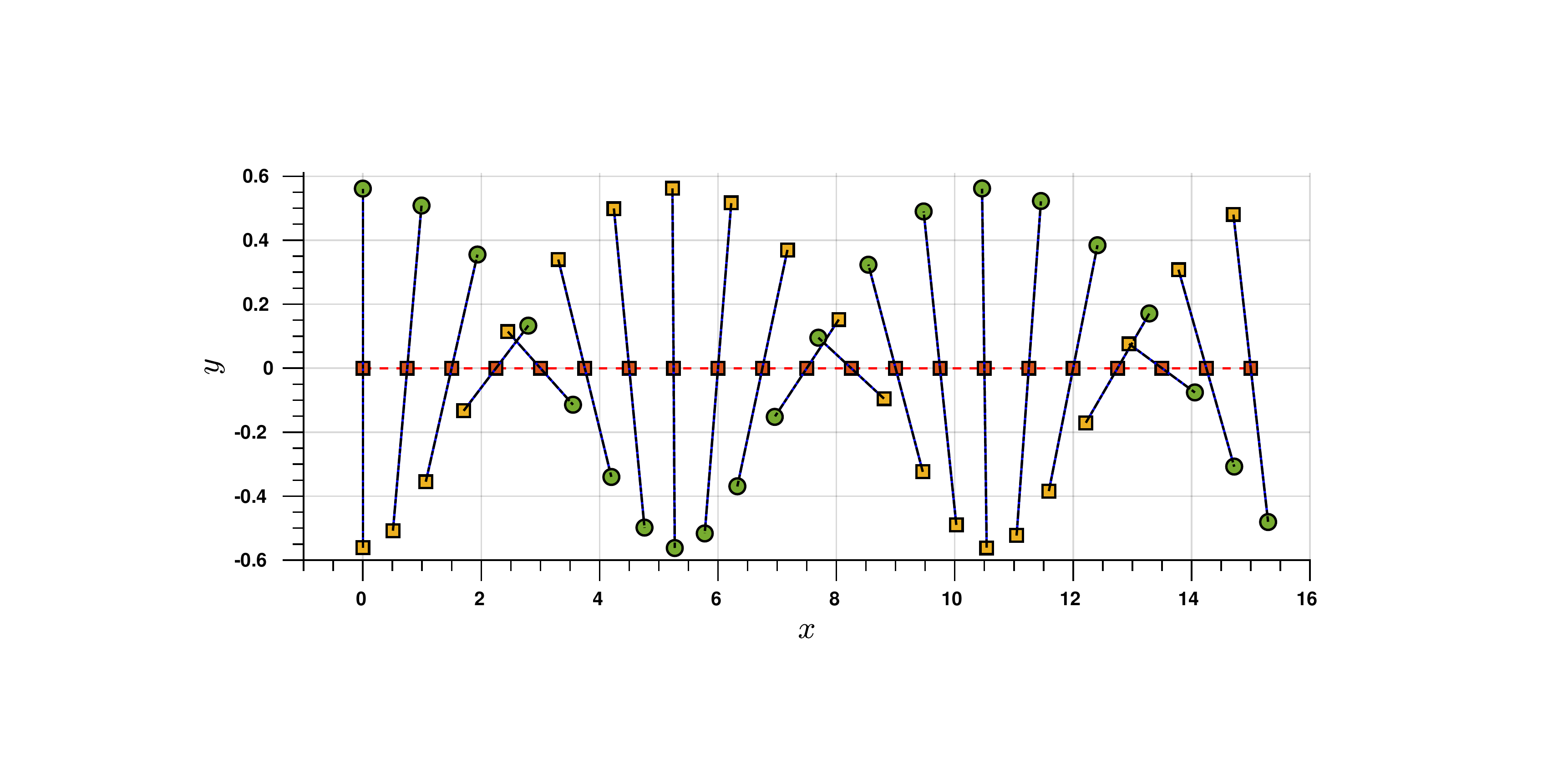}
\end{tabular}
\caption{The trajectories of the two particles with the Lennard-Jones potential over time $0 \leq t \leq 2$. The red square is the location of $(\bm m^{11} \bm q_1 + \bm m^{22} \bm q_2) / (\bm m^{11} + \bm m^{22})$.} 
\label{fig:two_particle_trajectory}
\end{center}
\end{figure}

\begin{figure}
	\begin{center}
	\begin{tabular}{cc}
\includegraphics[angle=0, trim=80 80 120 110, clip=true, scale = 0.42]{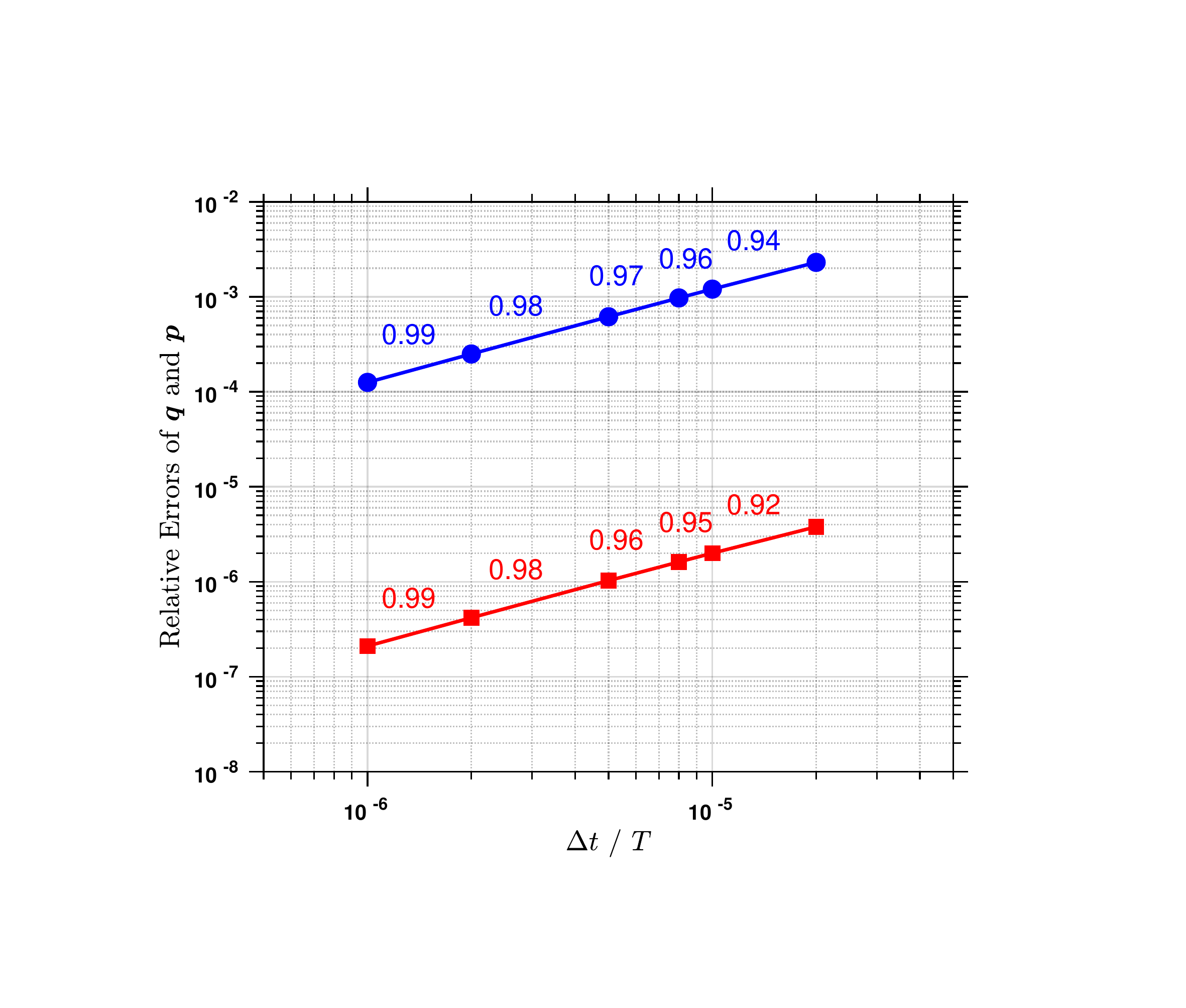} &\includegraphics[angle=0, trim=80 80 120 110, clip=true, scale = 0.42]{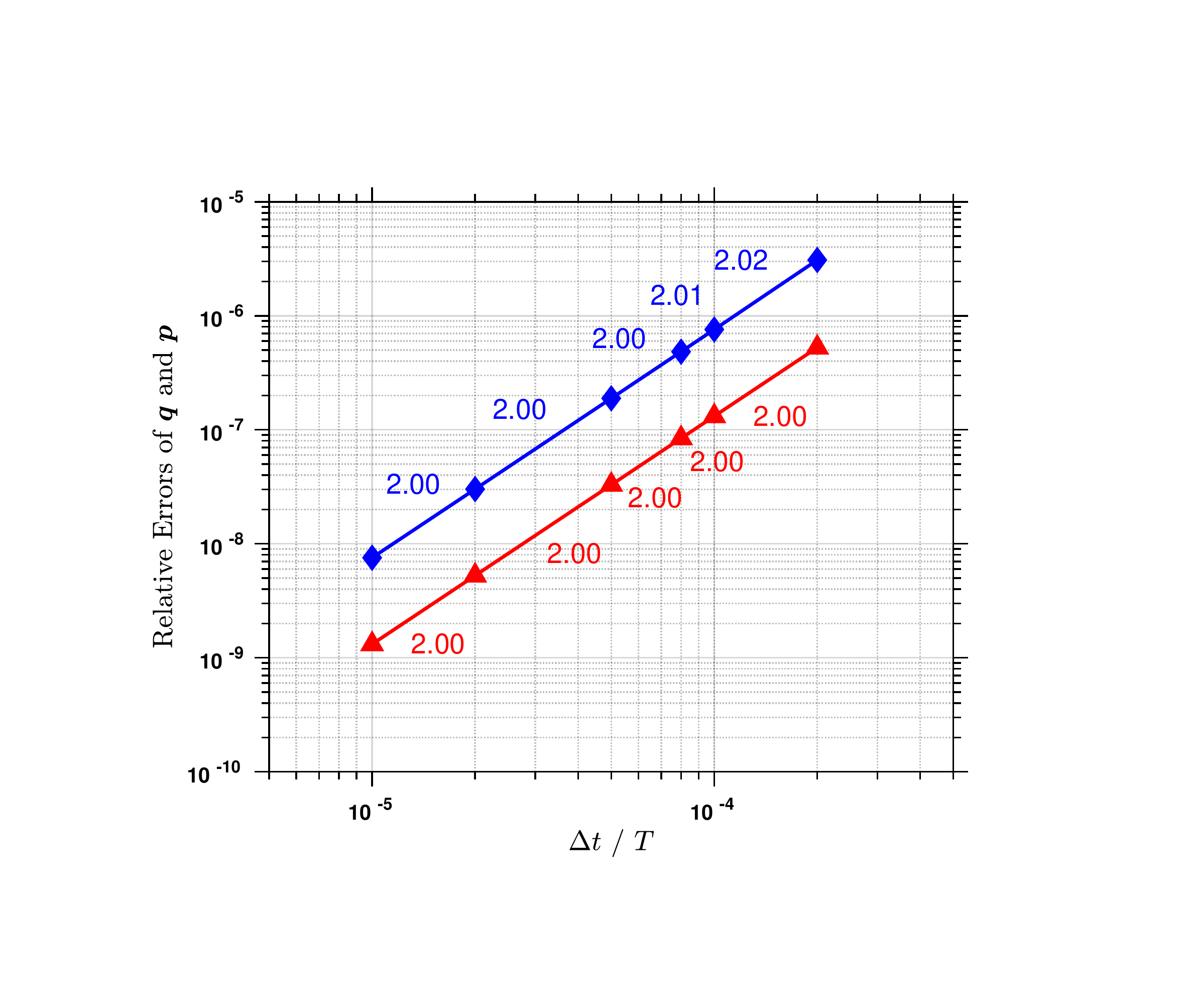} \\
(a) & (b)
\end{tabular}
\caption{The relative errors of (a) the generalized Eyre integrator and (b) the perturbed mid-point and perturbed trapezoidal integrators in the two-particle problem. The convergence rates calculated from adjacent errors are annotated. The relative errors of $\bm q$ and $\bm p$ are depicted in blue and red, respectively.} 
\label{fig:two_particle_conv_rates}
\end{center}
\end{figure}

\begin{figure}
\begin{center}
\begin{tabular}{cc}
\includegraphics[angle=0, trim=80 80 130 100, clip=true, scale = 0.3]{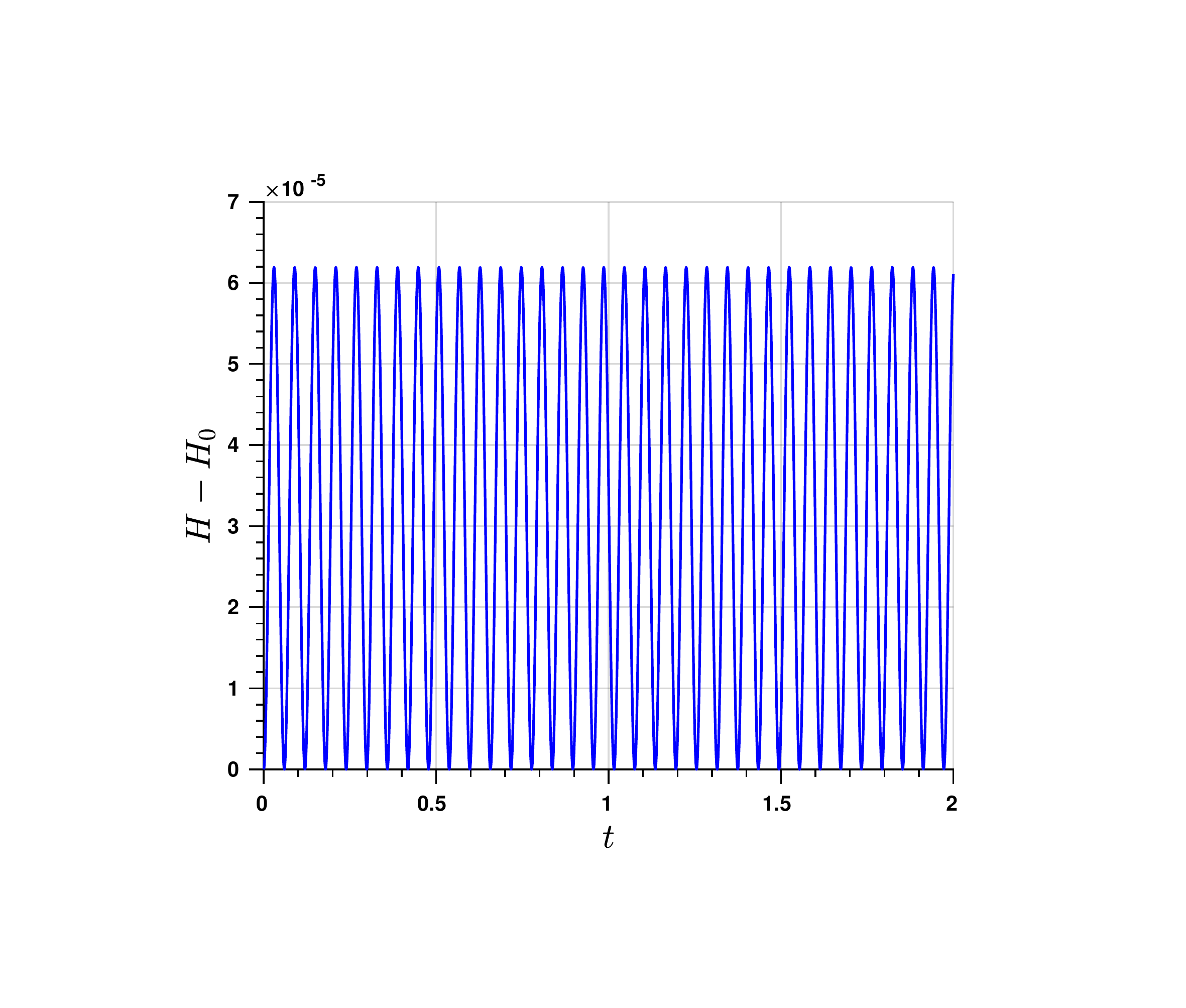} &\includegraphics[angle=0, trim=80 80 130 100, clip=true, scale = 0.3]{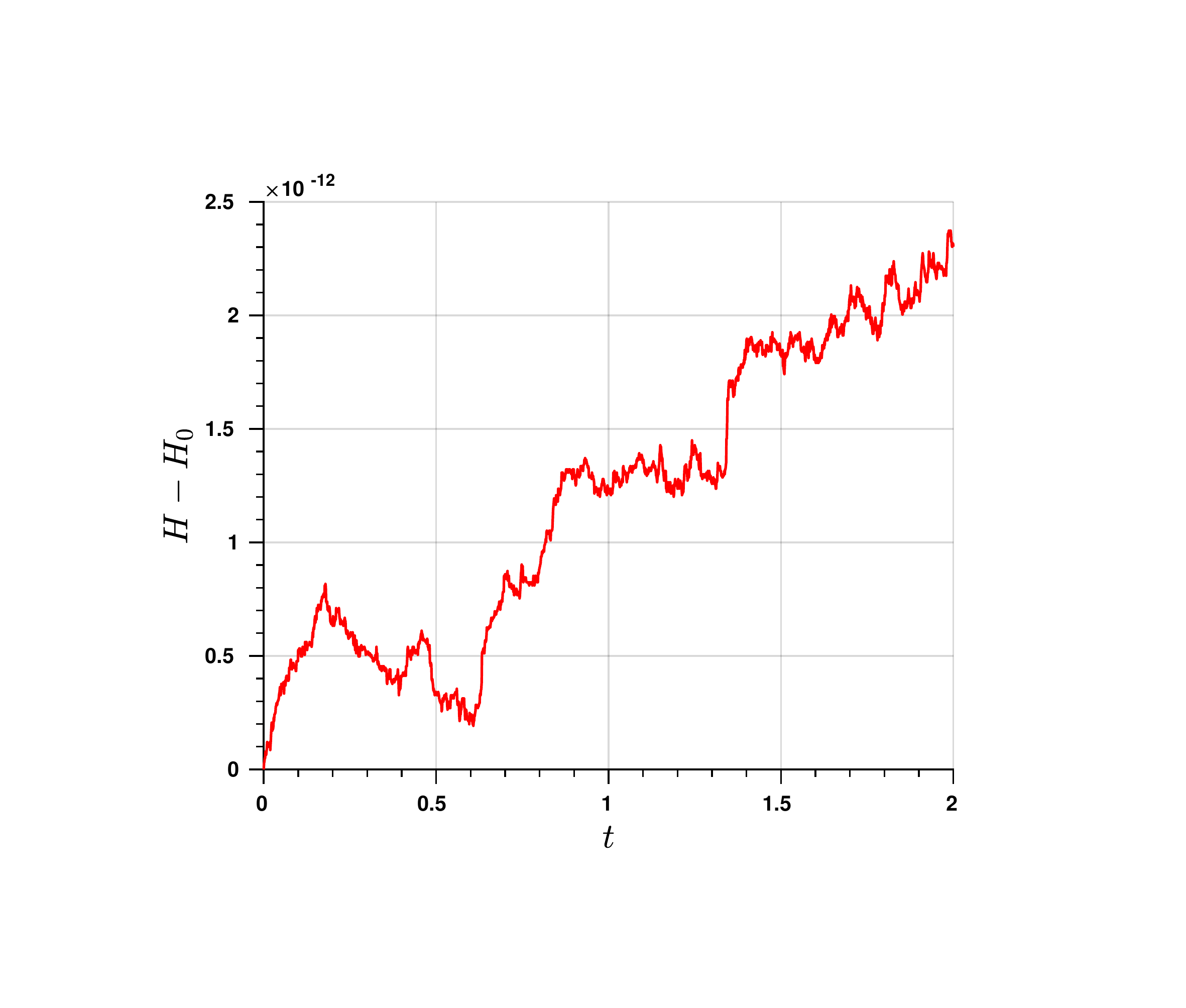} \\
(a) & (b) 
\end{tabular}
\begin{tabular}{ccc}
\includegraphics[angle=0, trim=80 80 130 100, clip=true, scale = 0.3]{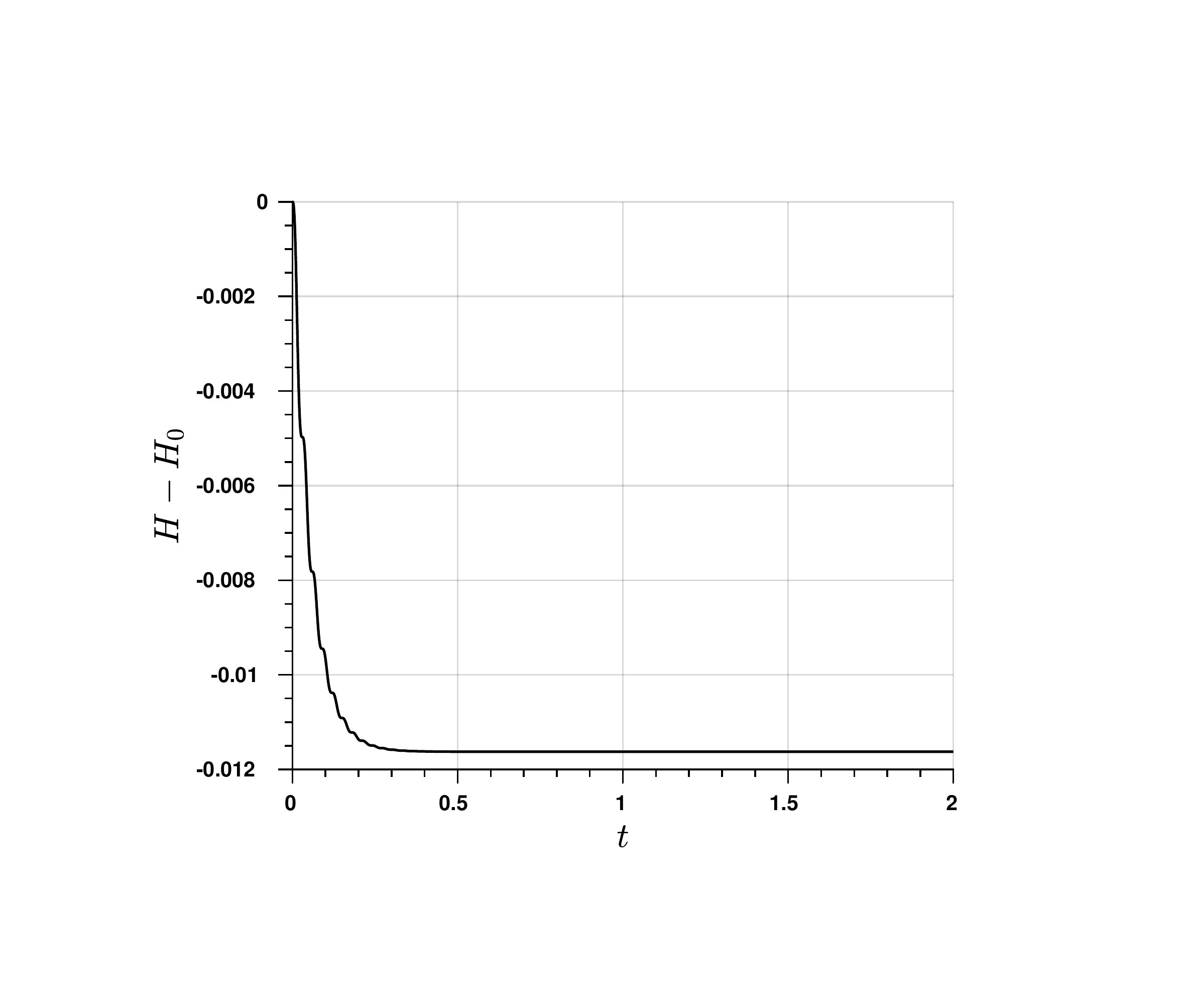} &
\includegraphics[angle=0, trim=80 80 130 100, clip=true, scale = 0.3]{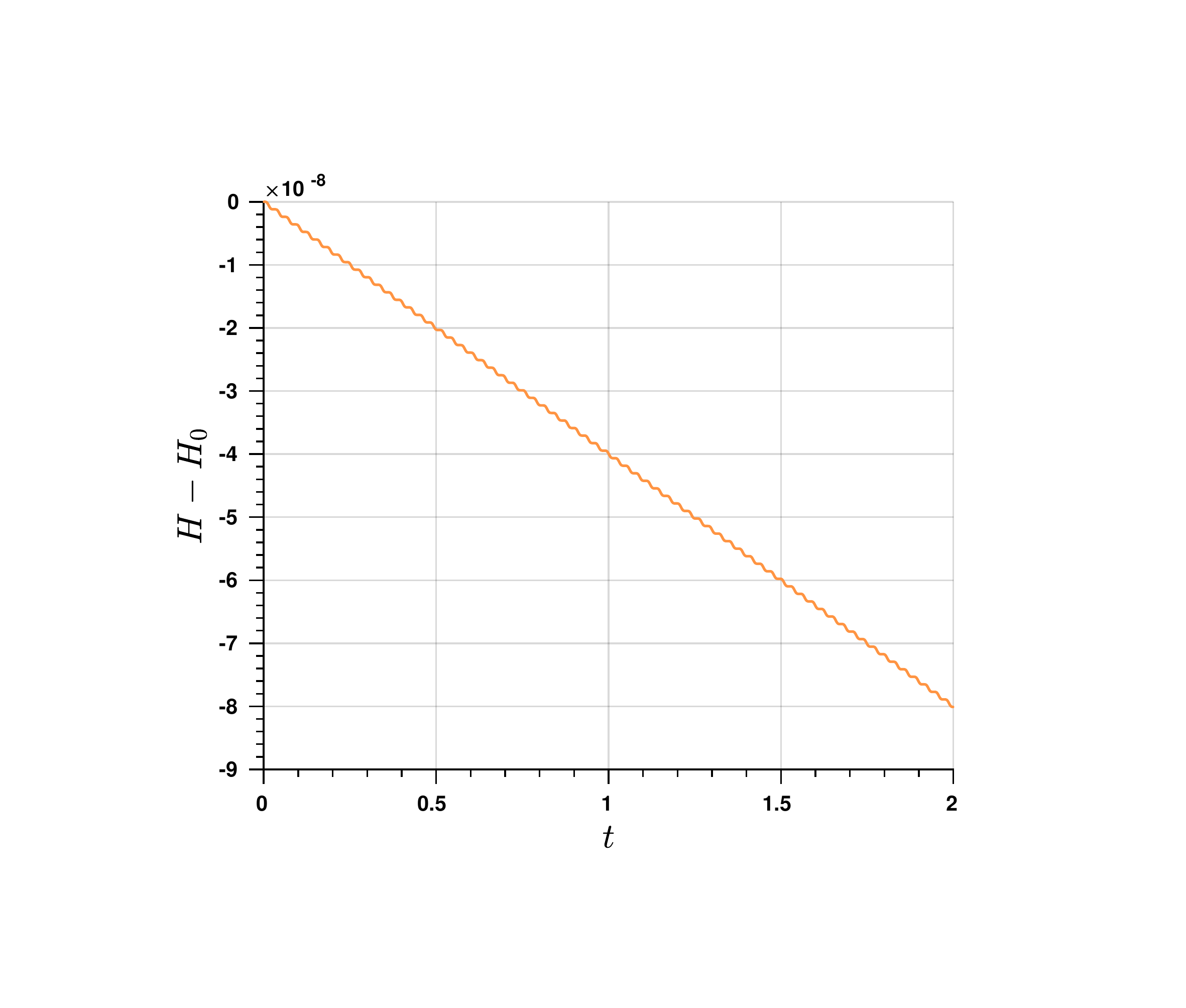} &
\includegraphics[angle=0, trim=80 80 130 100, clip=true, scale = 0.3]{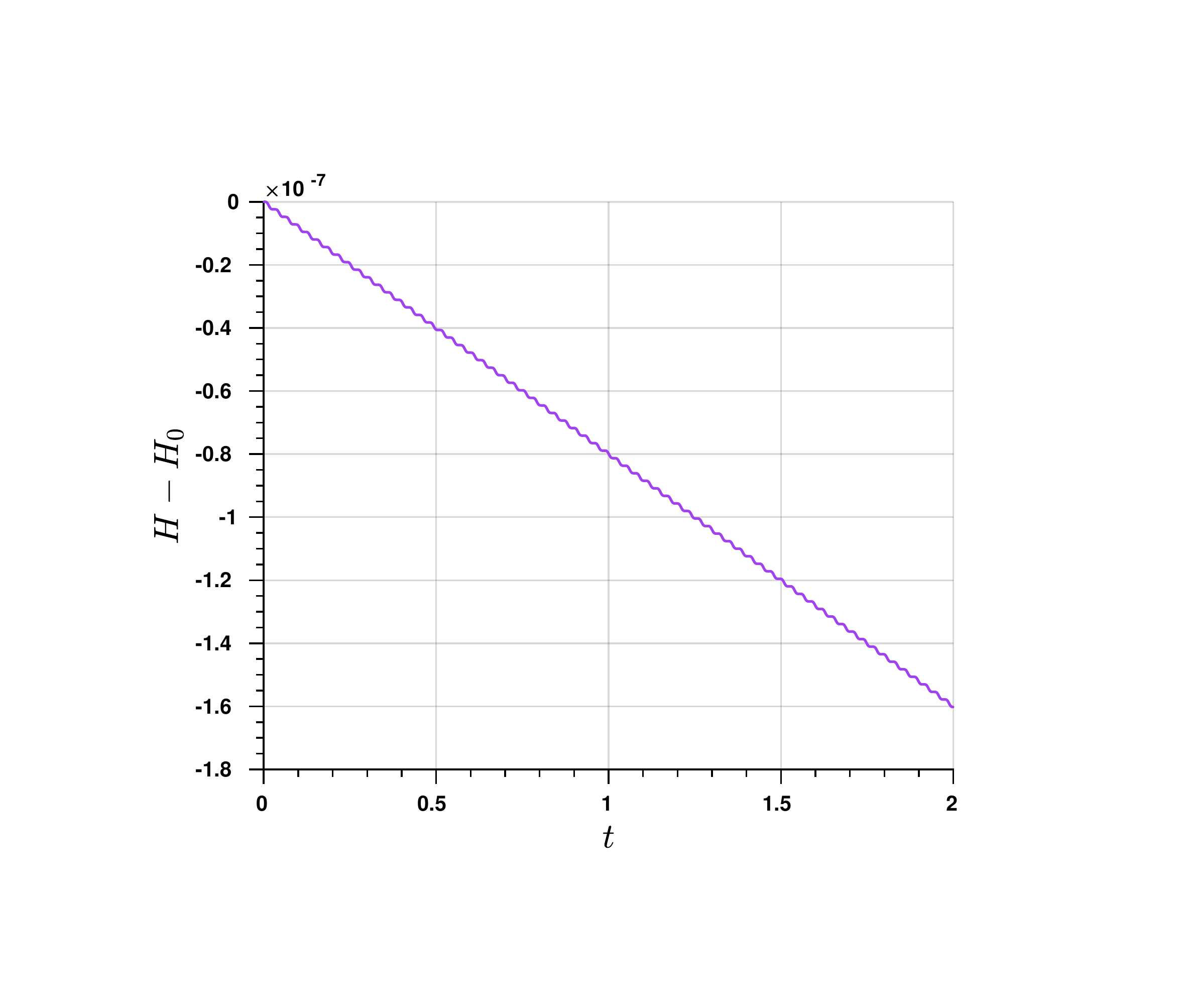} \\
(c) & (d) & (e)
\end{tabular}
\caption{The errors of the total energy $H(\bm q_n, \bm p_n) - H(\bm q_0, \bm p_0)$ of the two-particle problem calculated by (a) the mid-point, (b) LaBudde-Greenspan, (c) generalized Eyre, (d) perturbed mid-point, and (e) perturbed trapezoidal integrators. } 
\label{fig:two_particle_energy}
\end{center}
\end{figure}

\subsection{Numerical results}
The numerical implementation of the many-body dynamics is analogous to the implementation of single-particle dynamics presented in Section \ref{sec:algorithm_implementation}. The parameters of the algorithm are set to be $\mathrm{tol}_{\mathrm R} = 10^{-12}$, $\mathrm{tol}_{\mathrm A} = 10^{-15}$, $l_{\mathrm{max}} = 20$, and $\mathrm{tol}_{\mathrm Q} = 10^{-8}$, unless otherwise specified.

\subsubsection{Two bodies with the Lennard-Jones potential}
In the first example, we consider a two-particle system, in which both particles has unit mass (i.e. $\bm m^{11} = \bm m^{22} = 1$, and $\bm m^{12} = \bm m^{21} = 0$). Their initial locations and momenta are
\begin{align*}
\bm q_{A~0} = \begin{cases} (0, \: -0.5612, \: 0), & A=1 \\ (0, \quad 0.5612, \: 0), & A=2 \end{cases}, \qquad
\bm p^{A}_{0} = \begin{cases} (\: 5, \: 0, \: 0), & A=1 \\ (10, \: 0, \: 0), & A=2 \end{cases}.
\end{align*}
The potential energy takes the form of the Lennard-Jones 12-6 potential given in \eqref{eq:lennard_jones_12_6}, and the splits can be made conveniently in the following manner,
\begin{align*}
\hat{V}_{AB~\mathrm c}(\|\bm q\|) = \hat{V}_{AB~+}(\|\bm q\|) = 4\varepsilon \left( \frac{\sigma}{\|\bm q\|}\right)^{12}, \qquad \hat{V}_{AB~\mathrm e}(\|\bm q\|) = \hat{V}_{AB~-}(\|\bm q\|) = -4 \varepsilon \left( \frac{\sigma}{\|\bm q\|} \right)^6.
\end{align*}
In this study, the model parameters are taken as $\varepsilon=10^2$ and $\sigma=1.0$. To study the accuracy of the three proposed integrators, the mid-point integrator is utilized with $\Delta t_n = 10^{-8}$ to obtain an overkill solution at $T=1.0$. Then the calculations are repeated with larger time steps using the generalized Eyre, perturbed mid-point, and perturbed trapezoidal integrators. The relative errors and the corresponding convergence rates are illustrated in Figure \ref{fig:two_particle_conv_rates}. The errors between the perturbed mid-point and perturbed trapezoidal integrators coincide up to the first three significant digits, and therefore their convergence curves are not distinguished in Figure \ref{fig:two_particle_conv_rates} (b). From the results, it is confirmed that the generalized Eyre integrator is first-order accurate, while the rest two converge quadratically. 

To investigate the properties of the integrators with respect to the invariants, the calculations are performed with a uniform time step size $\Delta t_n=1\times 10^{-3}$ up to $T=2.0$. Again, it can be observed that the energy error of the mid-point integrator exhibits an oscillatory pattern. For the LaBudde-Greenspan integrator, the energy error mildly grows over time, which can be attributed to the nonlinear solver accuracy. In our test, the magnitude of the energy error will be amplified with a looser tolerance $\mathrm{tol}_{\mathrm R}$. For the three proposed integrators, the energy monotonically decays. Again, the dissipation errors from the two energy-decaying integrators are less than the energy oscillation magnitude in the mid-point integrator. In Figure \ref{fig:two_particle_momentum}, the errors of the momenta and the center of mass are depicted, which confirms that the momenta and the mass center are well preserved in all five considered integrators.

\begin{figure}
	\begin{center}
\begin{tabular}{ccc}
\multicolumn{3}{c}{ \includegraphics[angle=0, trim=780 265 690 1730, clip=true, scale = 0.45]{./momentum_legend} }\\
\includegraphics[angle=0, trim=80 80 130 100, clip=true, scale = 0.3]{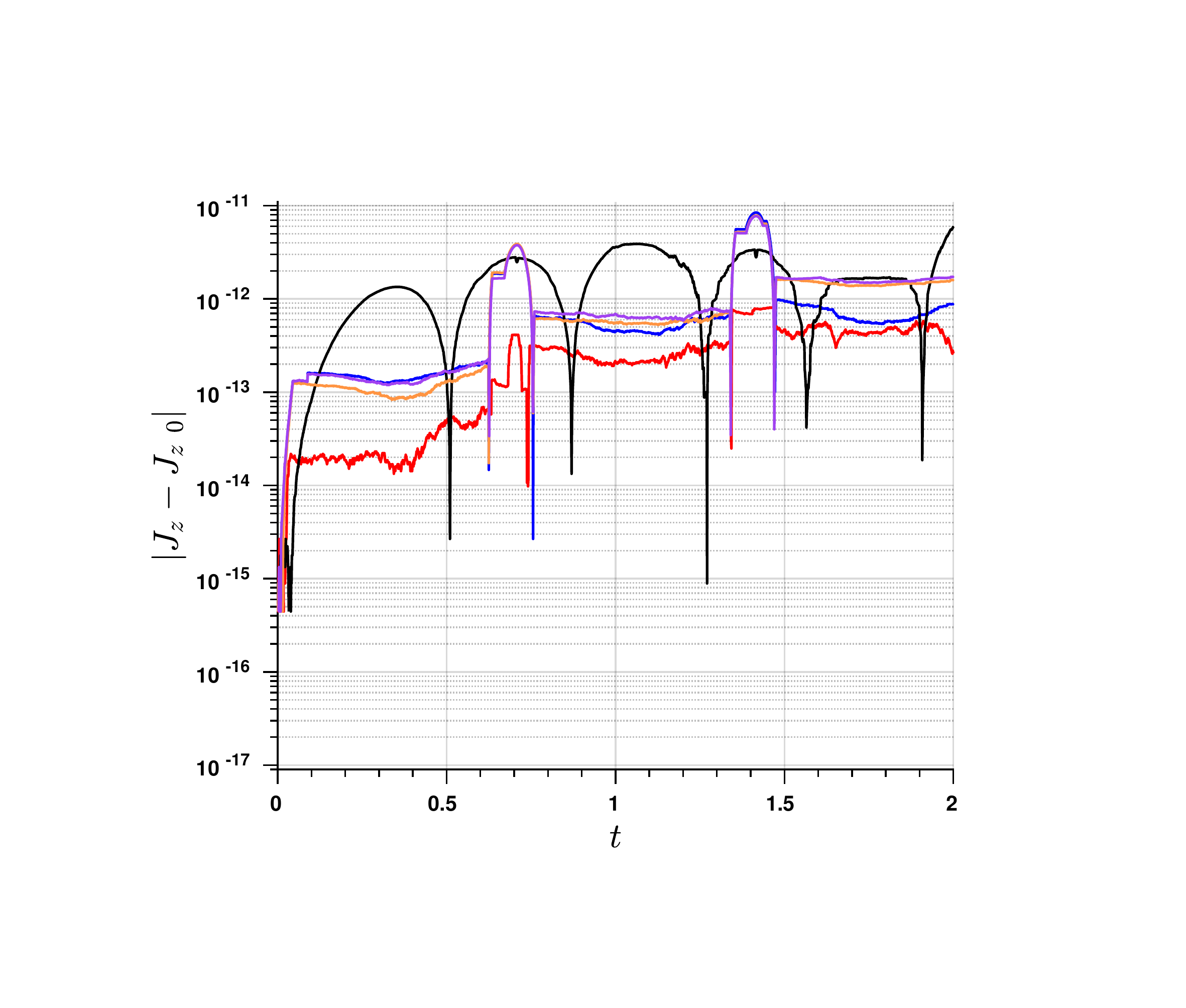} &
\includegraphics[angle=0, trim=80 80 130 100, clip=true, scale = 0.3]{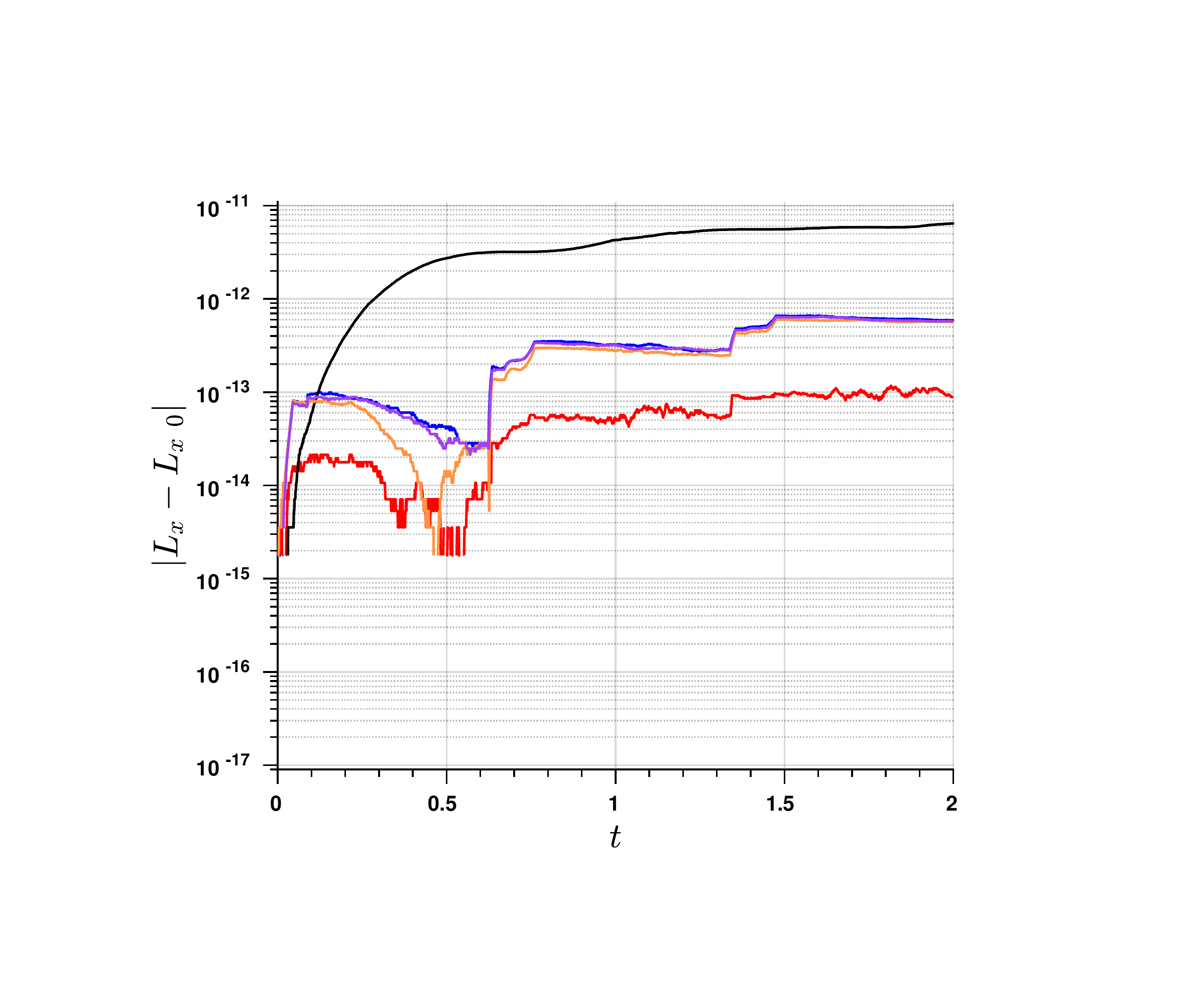} &
\includegraphics[angle=0, trim=80 80 130 100, clip=true, scale = 0.3]{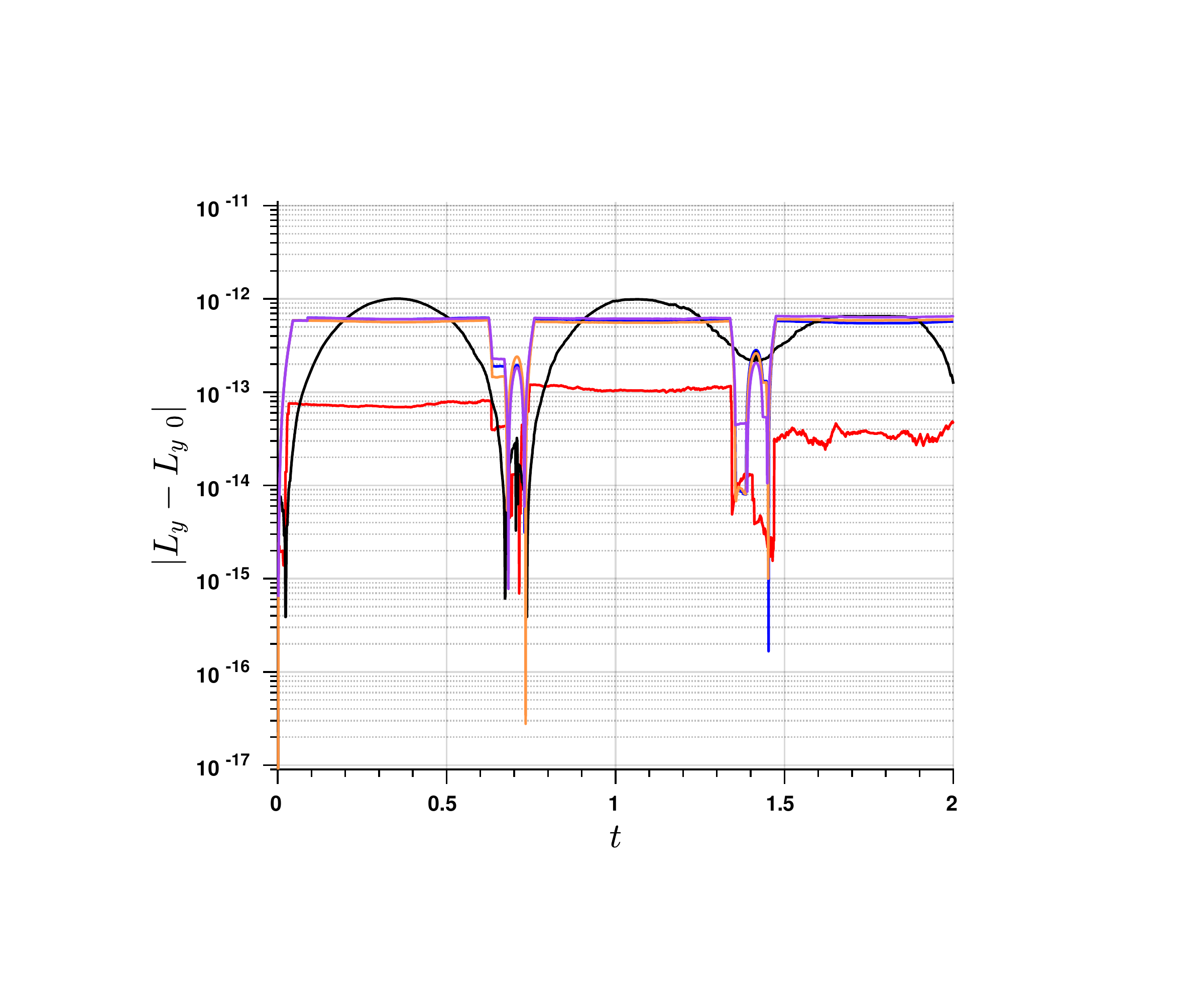} \\
(a) & (b) & (c)
\end{tabular}
\begin{tabular}{cc}
\includegraphics[angle=0, trim=80 80 130 100, clip=true, scale = 0.3]{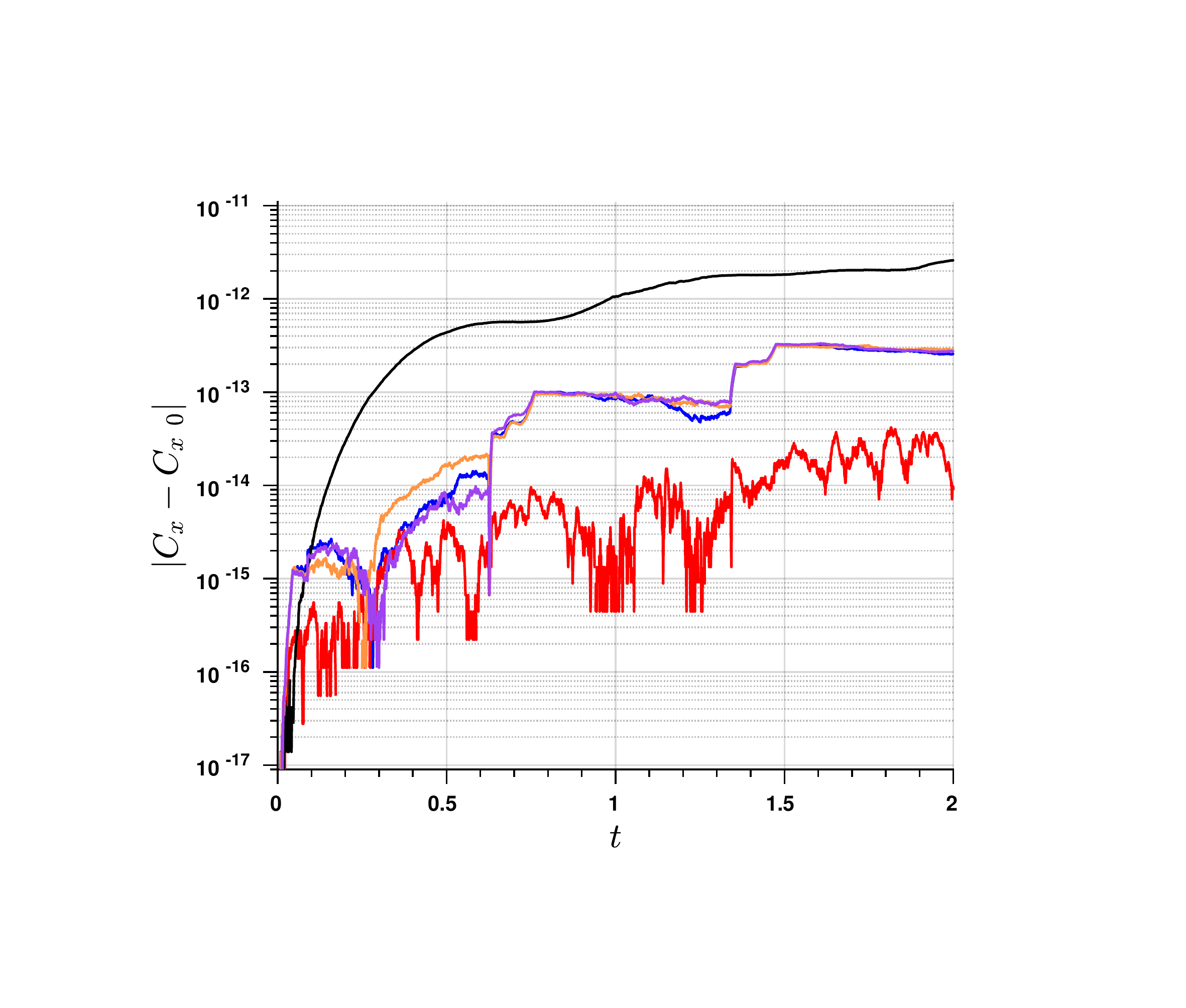} &
\includegraphics[angle=0, trim=80 80 130 100, clip=true, scale = 0.3]{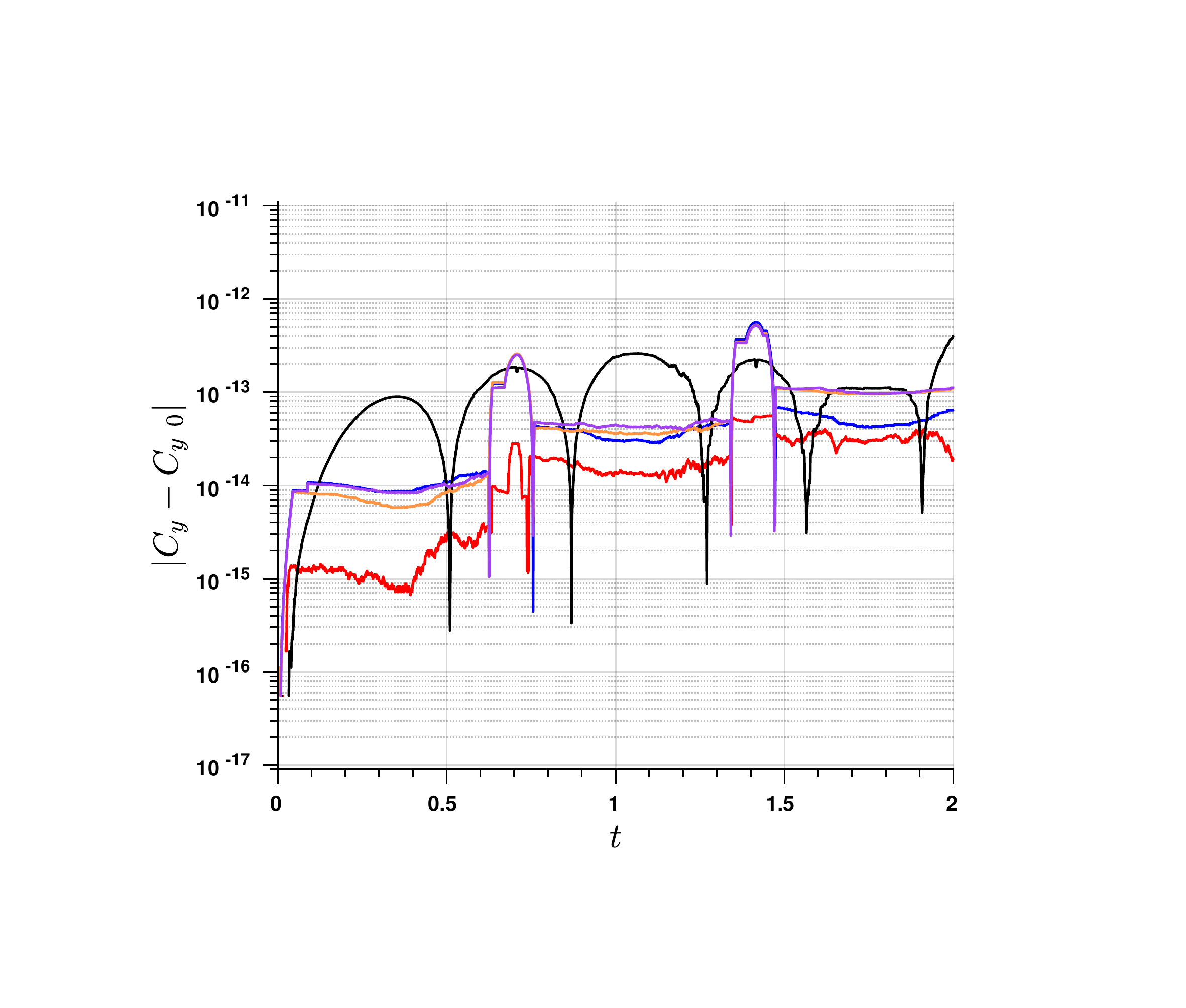} \\
(d) & (e)
\end{tabular}
\caption{The errors (a) $J_z(\bm q_n, \bm p_n) - J_z(\bm q_0, \bm p_0)$, (b) $L_x(\bm q_n, \bm p_n) - L_x(\bm q_0, \bm p_0)$, (c) $L_y(\bm q_n, \bm p_n) - L_y(\bm q_0, \bm p_0)$, (d) $C_x(\bm q_n, \bm p_n) - C_x(\bm q_0, \bm p_0)$, and (e) $C_y(\bm q_n, \bm p_n) - C_y(\bm q_0, \bm p_0)$ of the two-particle problem over time calculated the mid-point, LaBudde-Greenspan, generalized Eyre, perturbed mid-point, and perturbed trapezoidal integrators. The rest components of the invariants are identically zero from all integrators and are thus not illustrated.} 
\label{fig:two_particle_momentum}
\end{center}
\end{figure}

Regarding the robustness of the LaBudde-Greenspan integrator, we repeated the calculations using the same temporal settings (i.e., $\Delta t_n=1\times 10^{-3}$ and $T=2.0$), with varying values of $\mathrm{tol}_{\mathrm{Q}}$. The behavior of the energy conservation is depicted in Figure \ref{fig:two_particle_hybird_labudde_greenspan}. From the results, we may first observe that the default option renders the default LaBudde-Greenspan behave like the mid-point integrator when the tolerance is loose (i.e., $\mathrm{tol}_{\mathrm Q} = 10^{-4}$). With the same loose tolerance, the LaBudde-Greenspan combined with the generalized Eyre and the perturbed mid-point options produces energy-decaying results. When using the perturbed mid-point integrator as the alternate option, the energy is not monotonically decaying. An explanation is that the amount of dissipation is not strong enough to counterbalance the error from the nonlinear solver. When the tolerance $\mathrm{tol}_{\mathrm Q}$ is $10^{-6}$, we see opposite energy behavior at the \textit{same} time instance between the default option and the generalized Eyre option. This indicates that the energy growth and decaying are indeed triggered by the alternate options, rather than the solution of the LaBudde-Greenspan integrator. For the option based on the perturbed mid-point integrator, the energy error oscillates around zero with rather small magnitude, indicating the dissipation and the solver error are balanced and the energy is preserved well. In the third column of Figure \ref{fig:two_particle_hybird_labudde_greenspan}, we may observe similar patterns of the energy error between the default option and the option based on the perturbed mid-point integrator. This is likely attributed by the accuracy of the nonlinear solver, and the effect of the alternate options are negligible. Yet, there is still a clear damping effect when the generalized Eyre is adopted. The results here suggest that the proposed energy-decaying integrator can be beneficial in ensuring the energy stability when used together with the LaBudde-Greenspan integrator.

\begin{figure}
	\begin{center}
\begin{tabular}{|c|c|c|}
\hline
\includegraphics[angle=0, trim=80 80 120 80, clip=true, scale = 0.28]{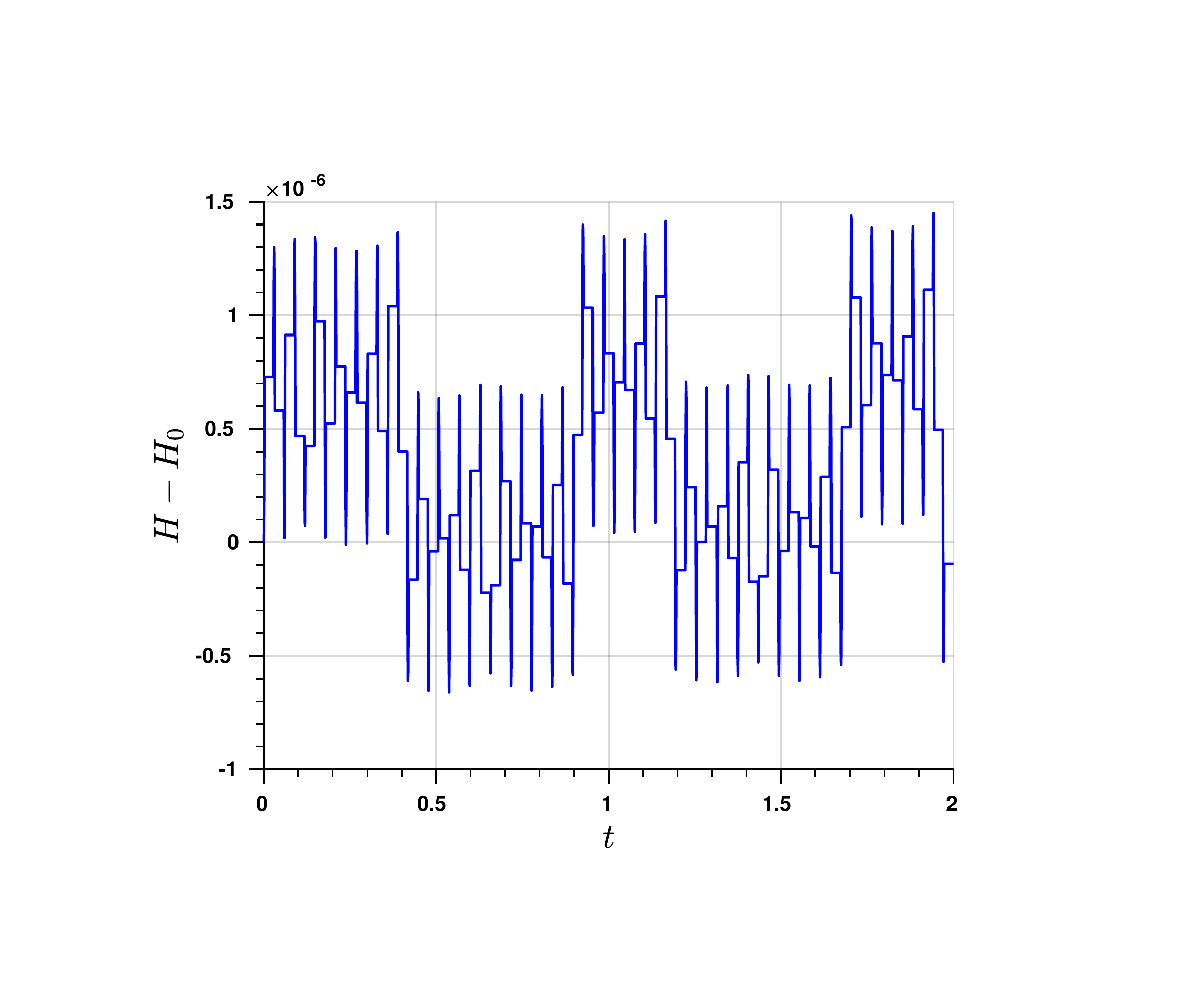} &
\includegraphics[angle=0, trim=80 80 120 80, clip=true, scale = 0.28]{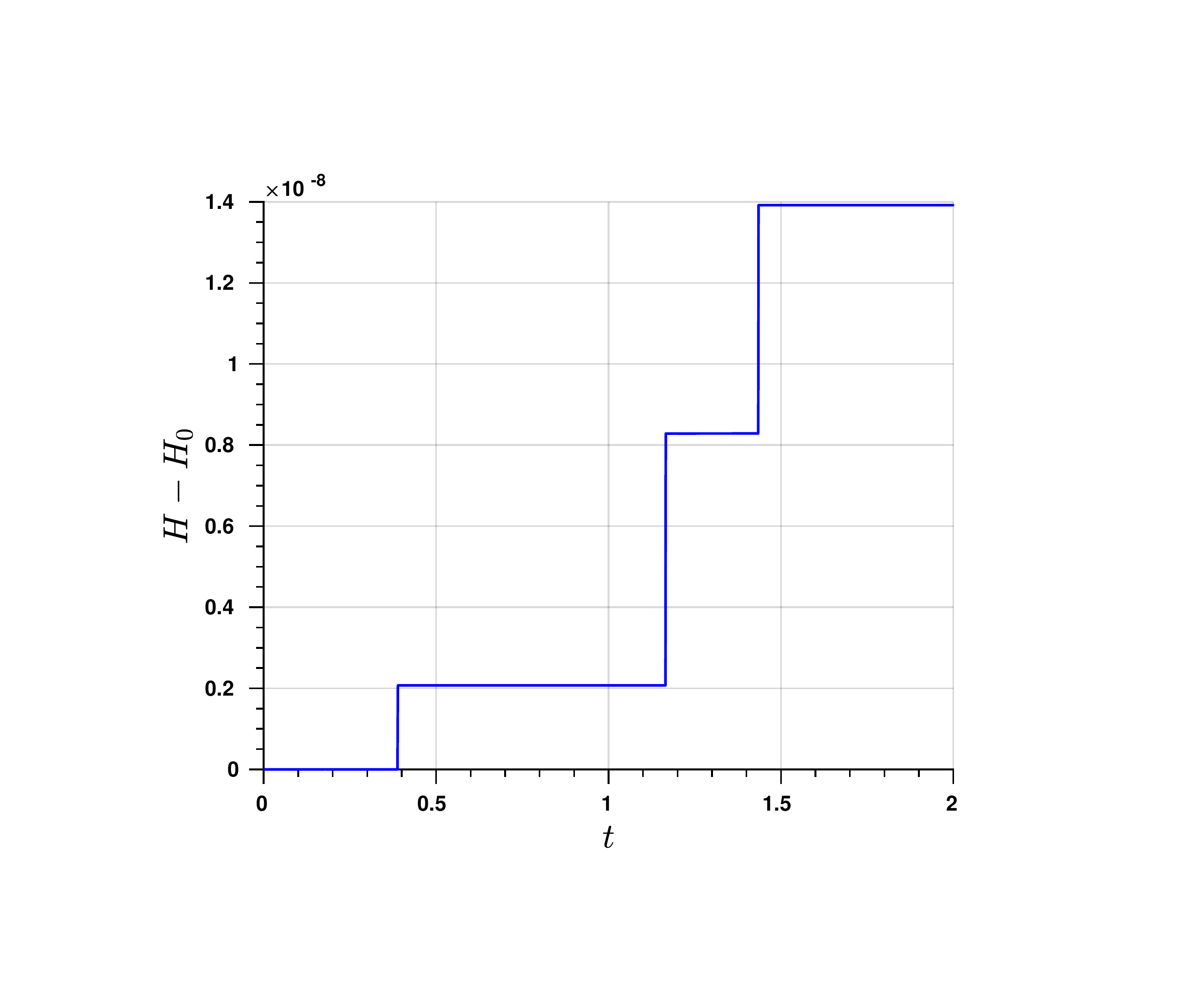} &
\includegraphics[angle=0, trim=80 80 120 80, clip=true, scale = 0.28]{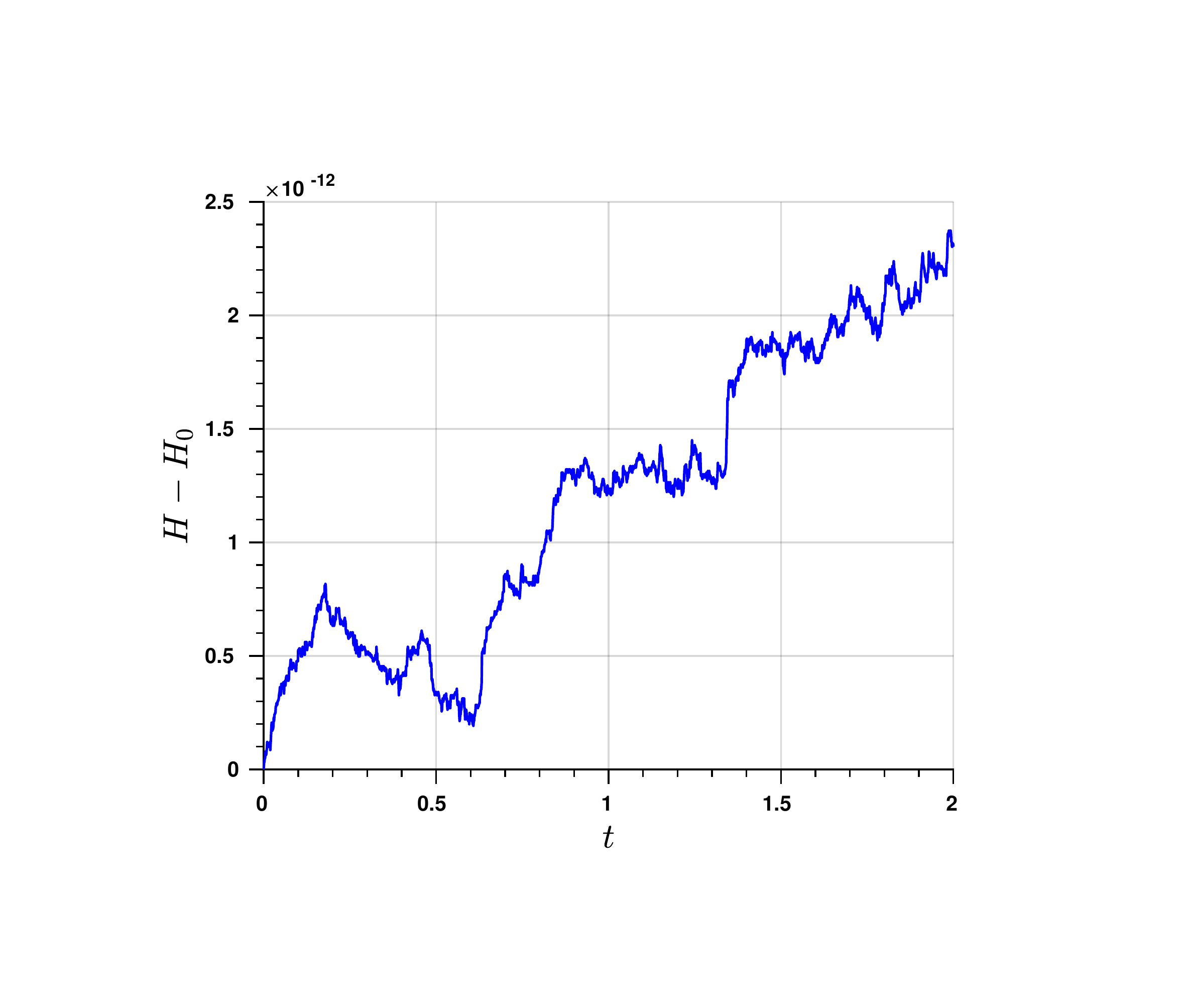} \\
\includegraphics[angle=0, trim=80 80 120 100, clip=true, scale = 0.28]{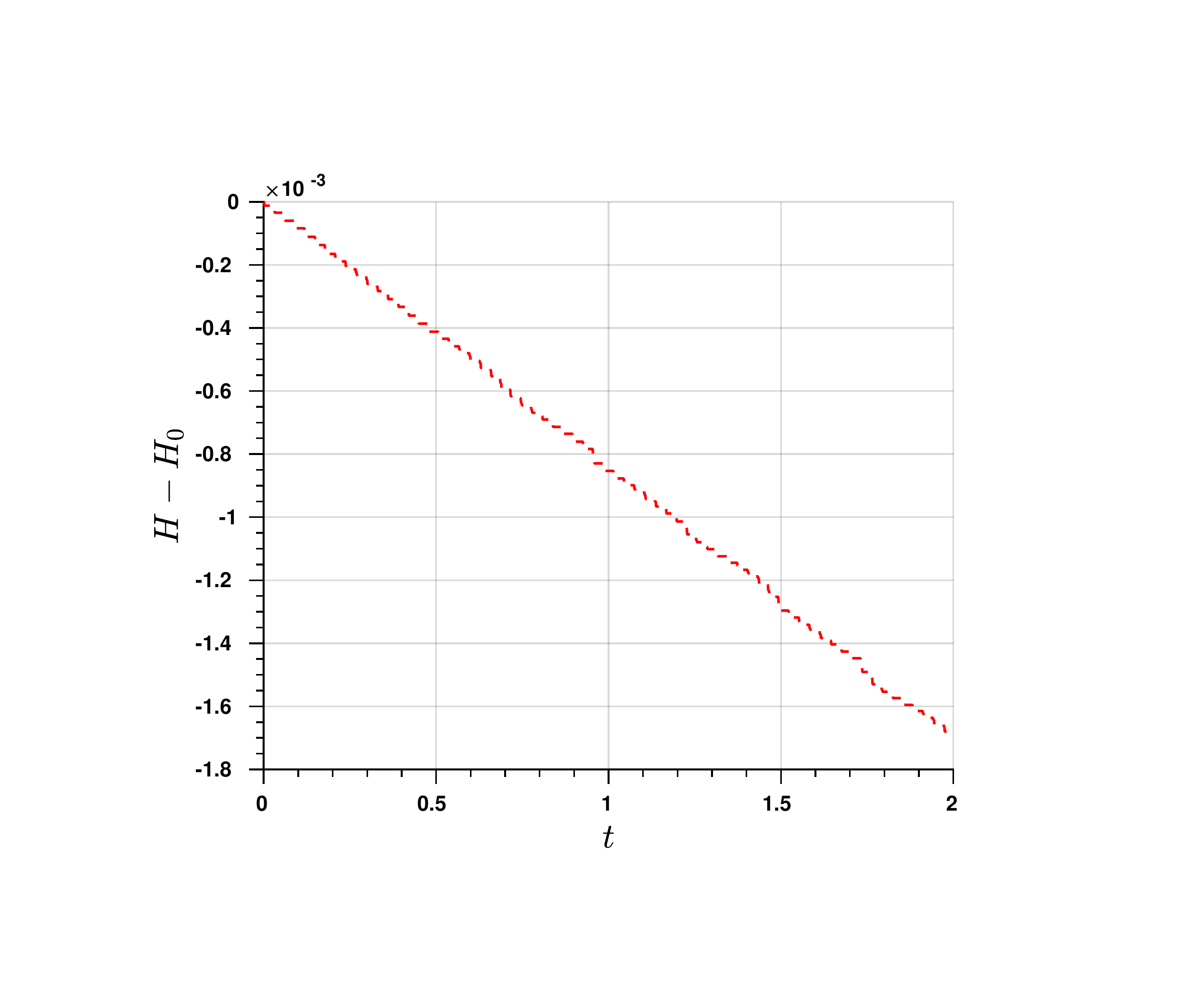} &
\includegraphics[angle=0, trim=80 80 120 100, clip=true, scale = 0.28]{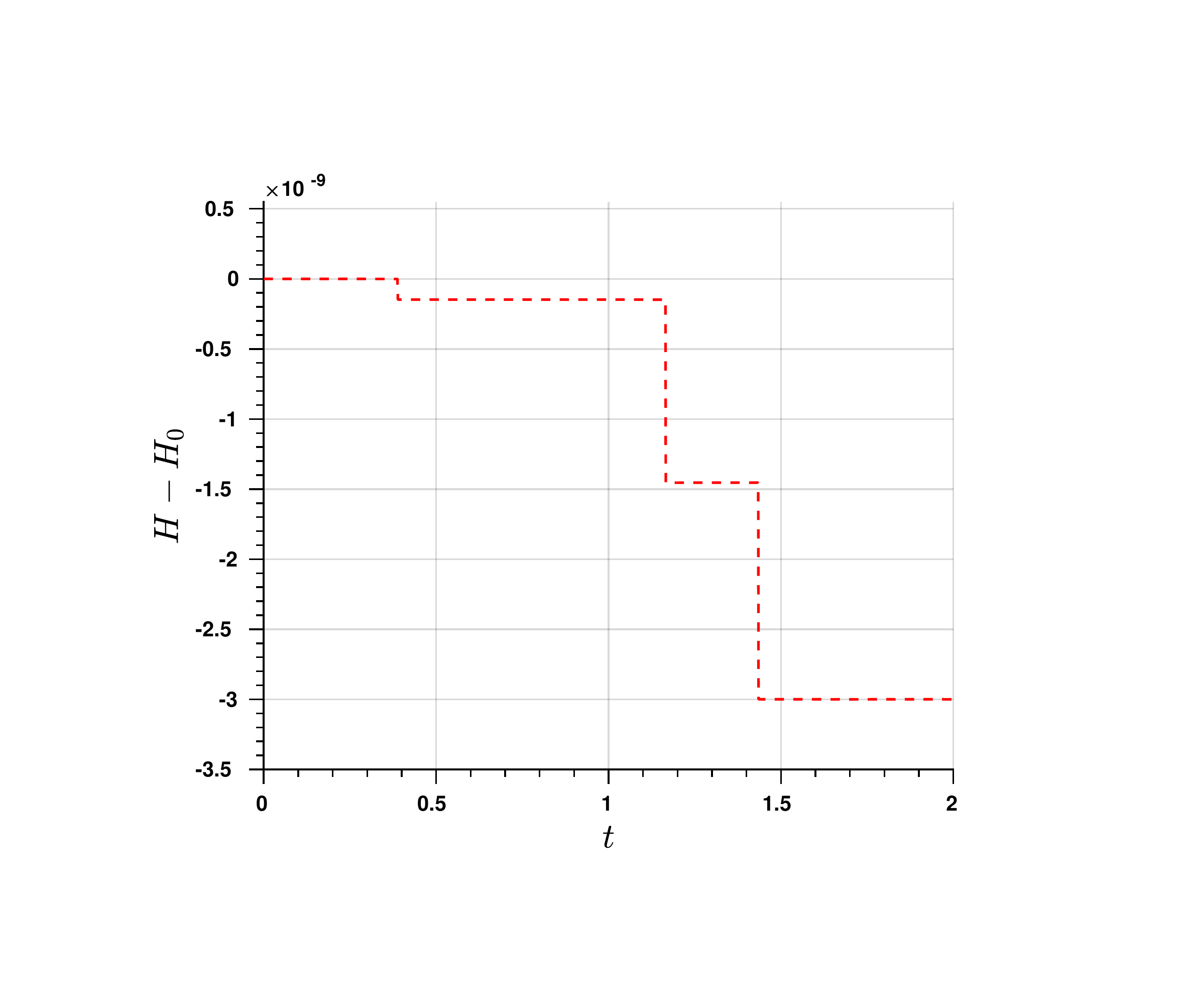} &
\includegraphics[angle=0, trim=80 80 120 100, clip=true, scale = 0.28]{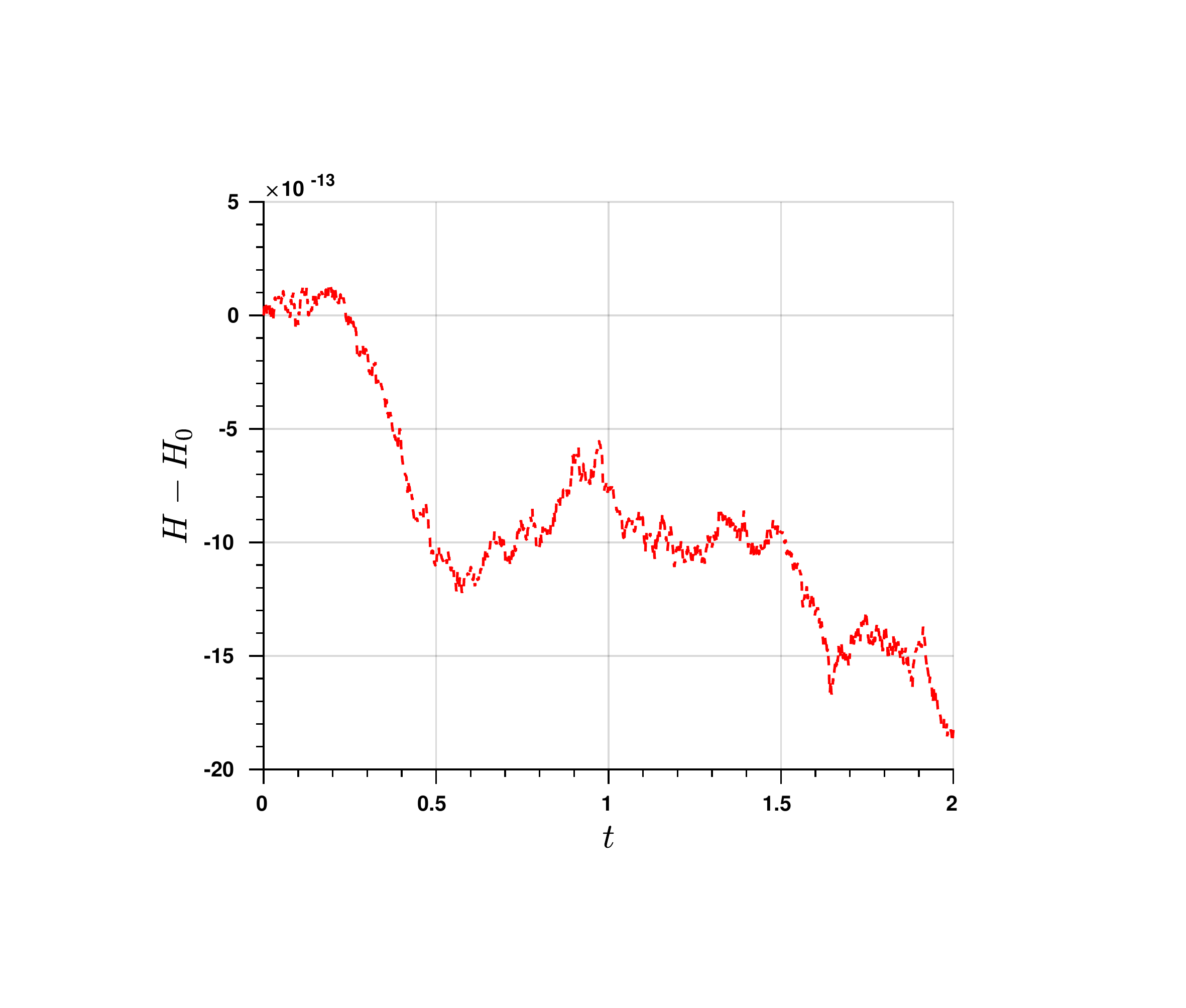} \\
\includegraphics[angle=0, trim=80 80 120 100, clip=true, scale = 0.28]{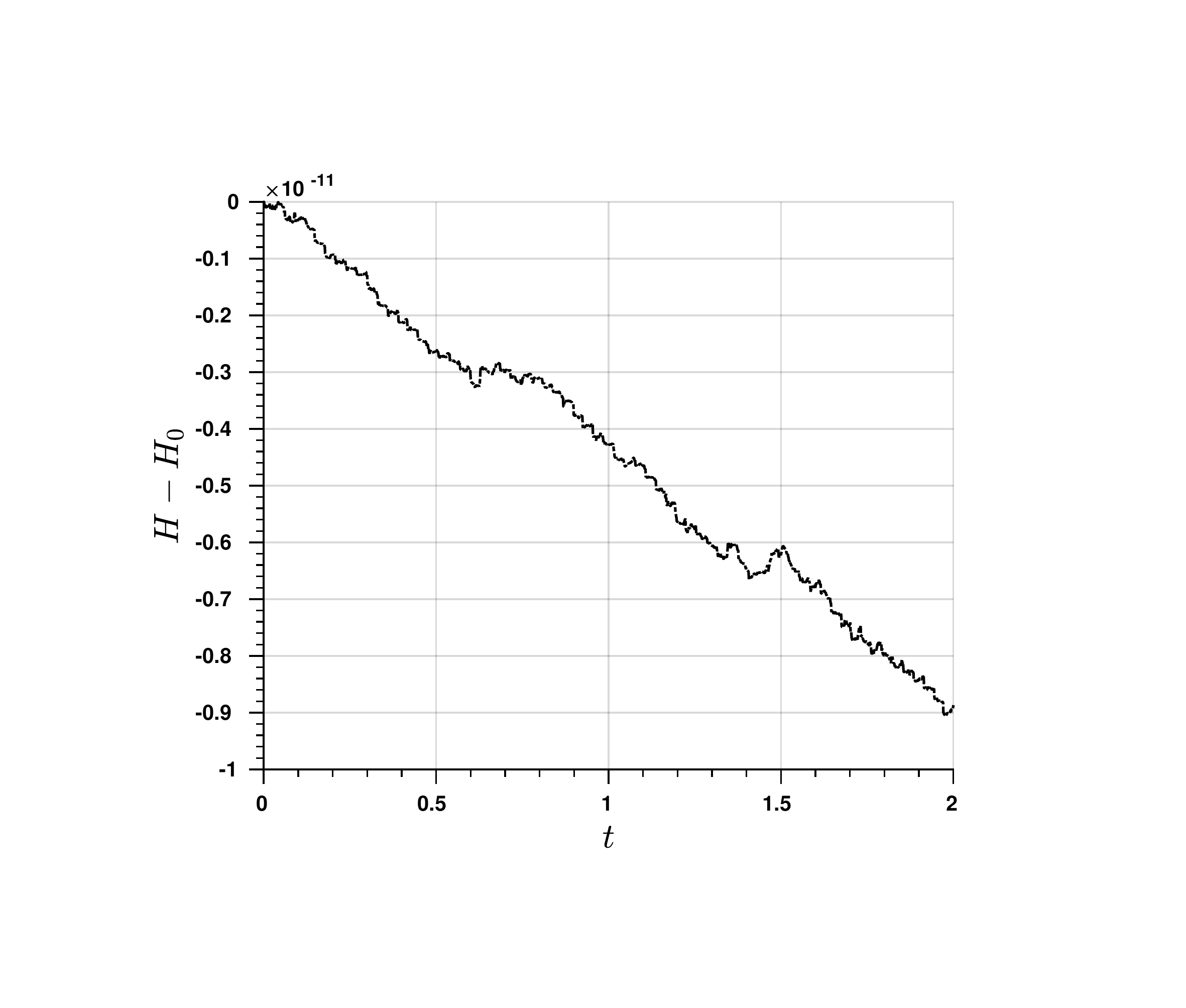} &
\includegraphics[angle=0, trim=80 80 120 100, clip=true, scale = 0.28]{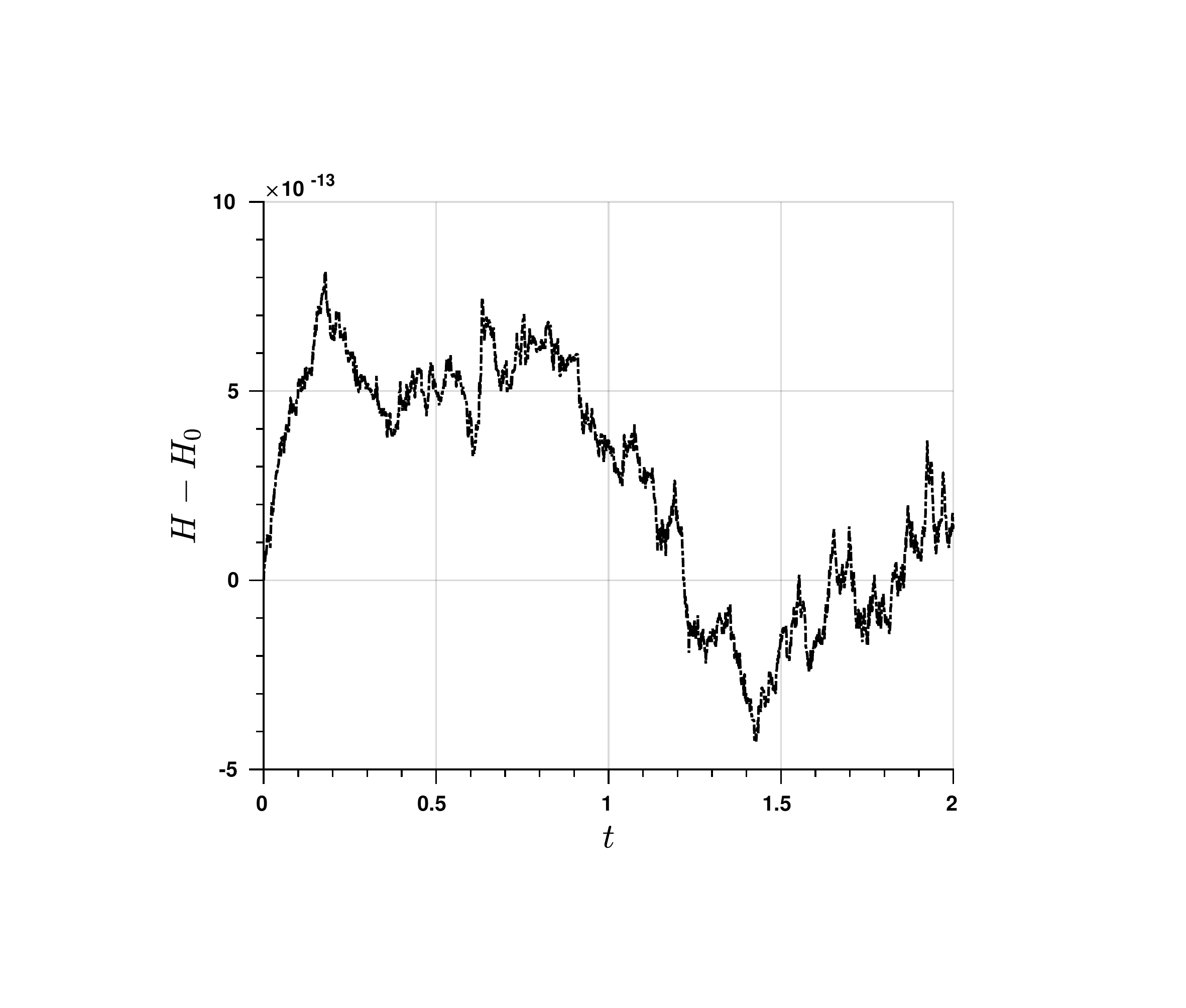} &
\includegraphics[angle=0, trim=80 80 120 100, clip=true, scale = 0.28]{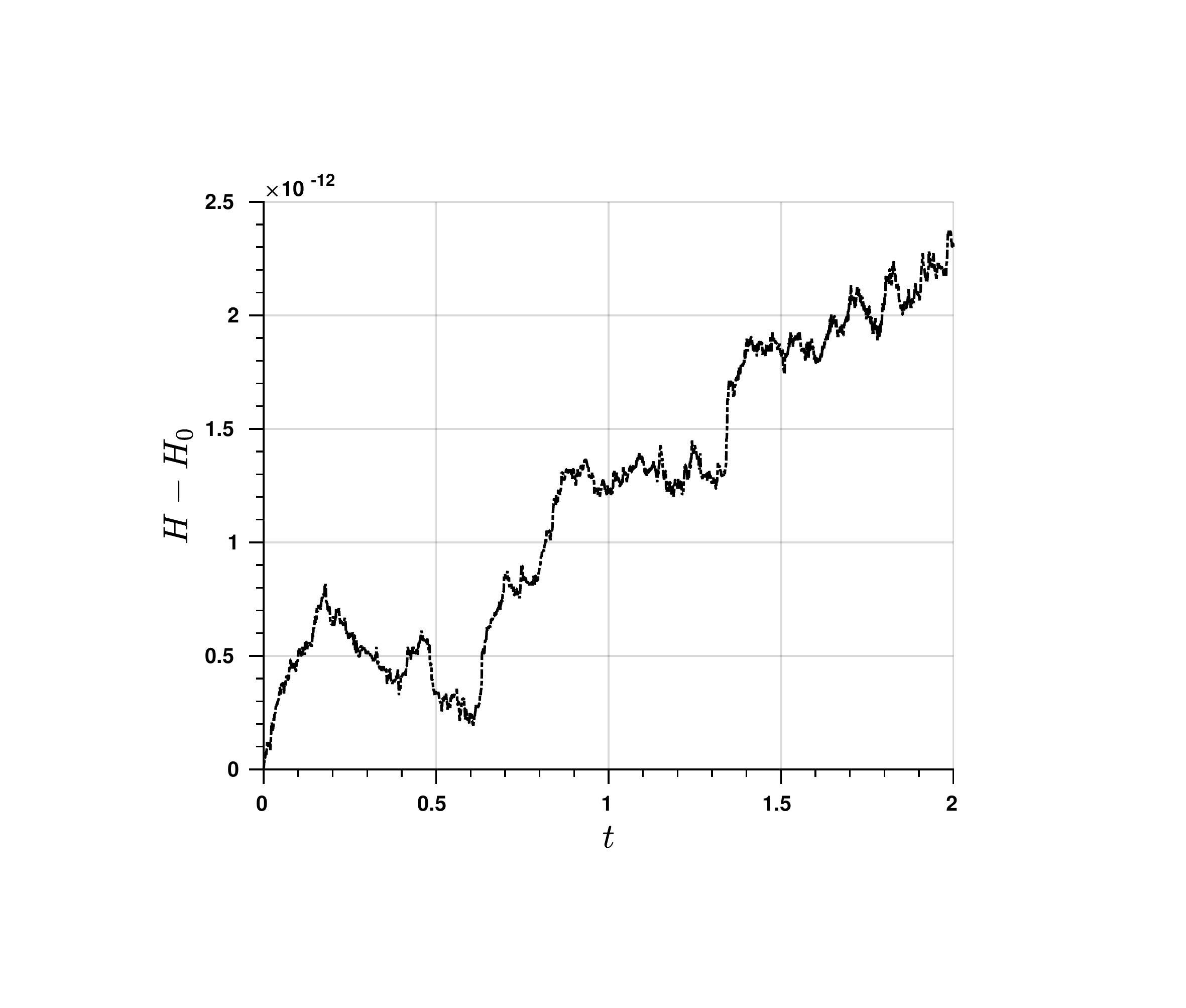} \\
$\mathrm{tol}_{\mathrm Q} = 10^{-4}$ & $\mathrm{tol}_{\mathrm Q} = 10^{-6}$ & $\mathrm{tol}_{\mathrm Q} = 10^{-8}$ \\
\hline
\end{tabular}
\caption{The error $H(\bm q_n, \bm p_n) - H(\bm q_0, \bm p_0)$ of the two-particle problem calculated by the LaBudde-Greenspan integrator combined with the default option \eqref{eq:labudde-greenspan-default-switch} (blue solid), the generalized Eyre (red dashed), and the perturbed mid-point (black dash-dotted) integrators when $|\|\bm q_{n+1,(l)}\| - \|\bm q_{n}\|| \leq \mathrm{tol}_{\mathrm Q}$. The left, middle, and right columns are results with $\mathrm{tol}_{\mathrm Q} = 10^{-4}$, $10^{-6}$, and $10^{-8}$, respectively.} 
\label{fig:two_particle_hybird_labudde_greenspan}
\end{center}
\end{figure}

\subsubsection{Ten bodies with the gravitational potential}
In this example, we examine the integrators using the standard solar system, consisting of the sun and nine planets. The mass matrix is diagonal, meaning $\bm m_{AB} = m_{A}$ if $A=B$ and $\bm m_{AB} = 0$ if $A \neq B$. Let $G$ denote the gravitational constant. The split of the gravitational potential $\hat{V}_{AB}(r) = -Gm_{A} m_{B}/r$ can be made in the following manner,
\begin{align*}
\hat{V}_{AB~\mathrm{c}}(r) = \hat{V}_{AB~+}(r) = 0, \qquad \hat{V}_{AB~\mathrm{e}}(r) = \hat{V}_{AB~-}(r) = -\frac{G m_A m_B}{r}.
\end{align*}
The initial conditions and the masses are obtained from Tables 5 and 8 of \cite{Folkner2014}. The reference mass is taken as the mass of the sun, the reference length scale is chosen as the astronomical unit, and the reference time scale is earth day. With those units, the gravitational constant is $G=2.95912208286 \times 10^{-4}$. This example has also been considered in \cite{Wan2022} to assess various integrators. Following that work, we integrated the problem with a uniform time step size $\Delta t_n=5$ up to $T=1.825\times 10^6$, which is approximately five thousand earth years. We also integrated the problem with the mid-point integrator and $\Delta t_n=0.1$ to obtain a reference solution. For the LaBudde-Greenspan integrator, the discrete conservative force can be explicitly represented by
\begin{align}
\label{eq:solar_flg_1}
\bm f^{AB}_{\mathrm{lg}} =& 2\Delta t_n \frac{\hat{V}_{AB}(d_{AB~n+1}) - \hat{V}_{AB}(d_{AB~n})}{d_{AB~n+1} - d_{AB~n}} \frac{\bm q_{A~n+\frac12} - \bm q_{B~n+\frac12}}{d_{AB~n+1} + d_{AB~n}} \\
\label{eq:solar_flg_2}
=& 2\Delta t_n Gm_A m_B \frac{\bm q_{A~n+\frac12} - \bm q_{B~n+\frac12}}{d_{AB~n+1} d_{AB~n}(d_{AB~n+1} + d_{AB~n})},
\end{align}
which does not suffer from the singular behavior of the quotient formula. 

\begin{figure}
	\begin{center}
\begin{tabular}{ccc}
\includegraphics[angle=0, trim=80 80 130 80, clip=true, scale = 0.28]{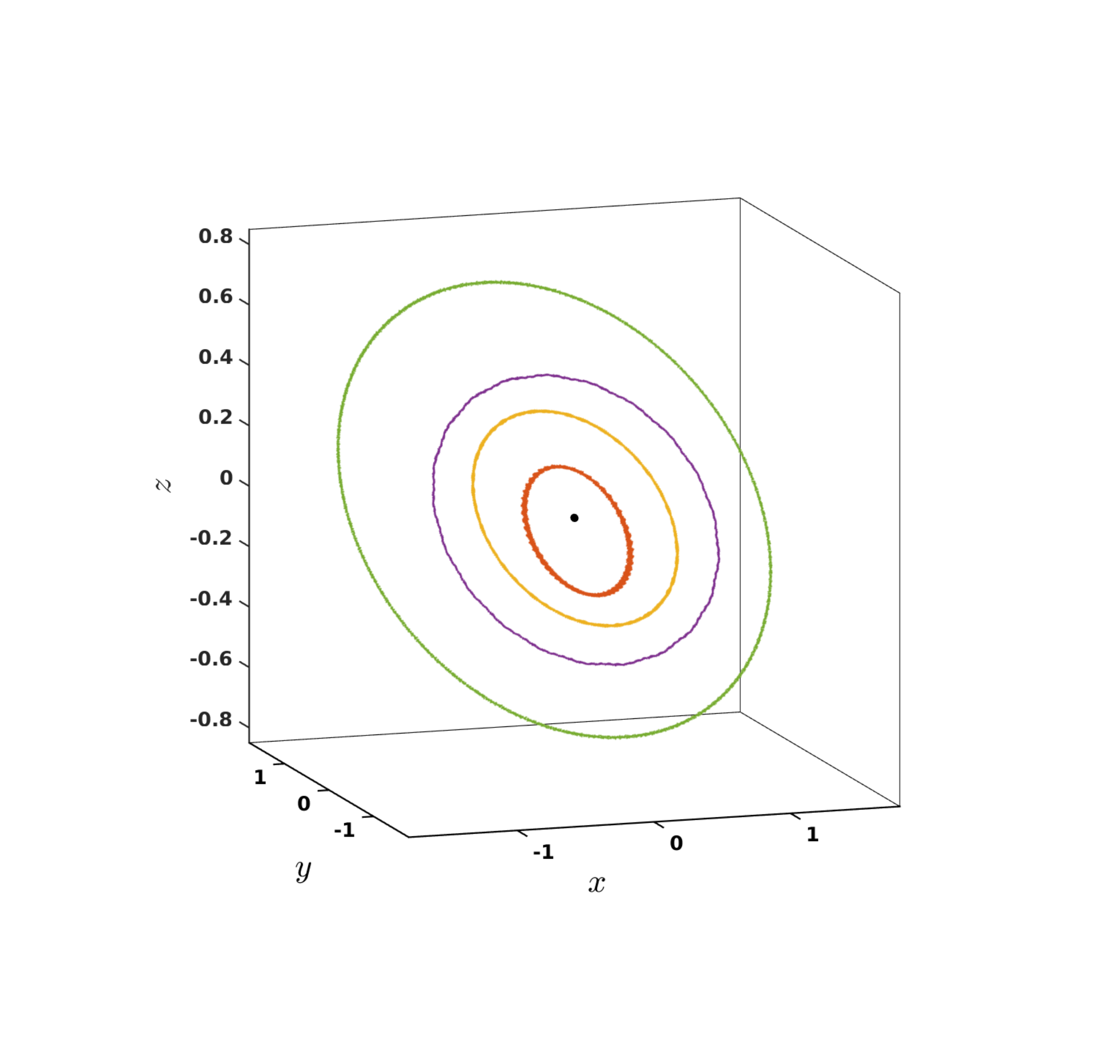} &
\includegraphics[angle=0, trim=80 80 130 80, clip=true, scale = 0.28]{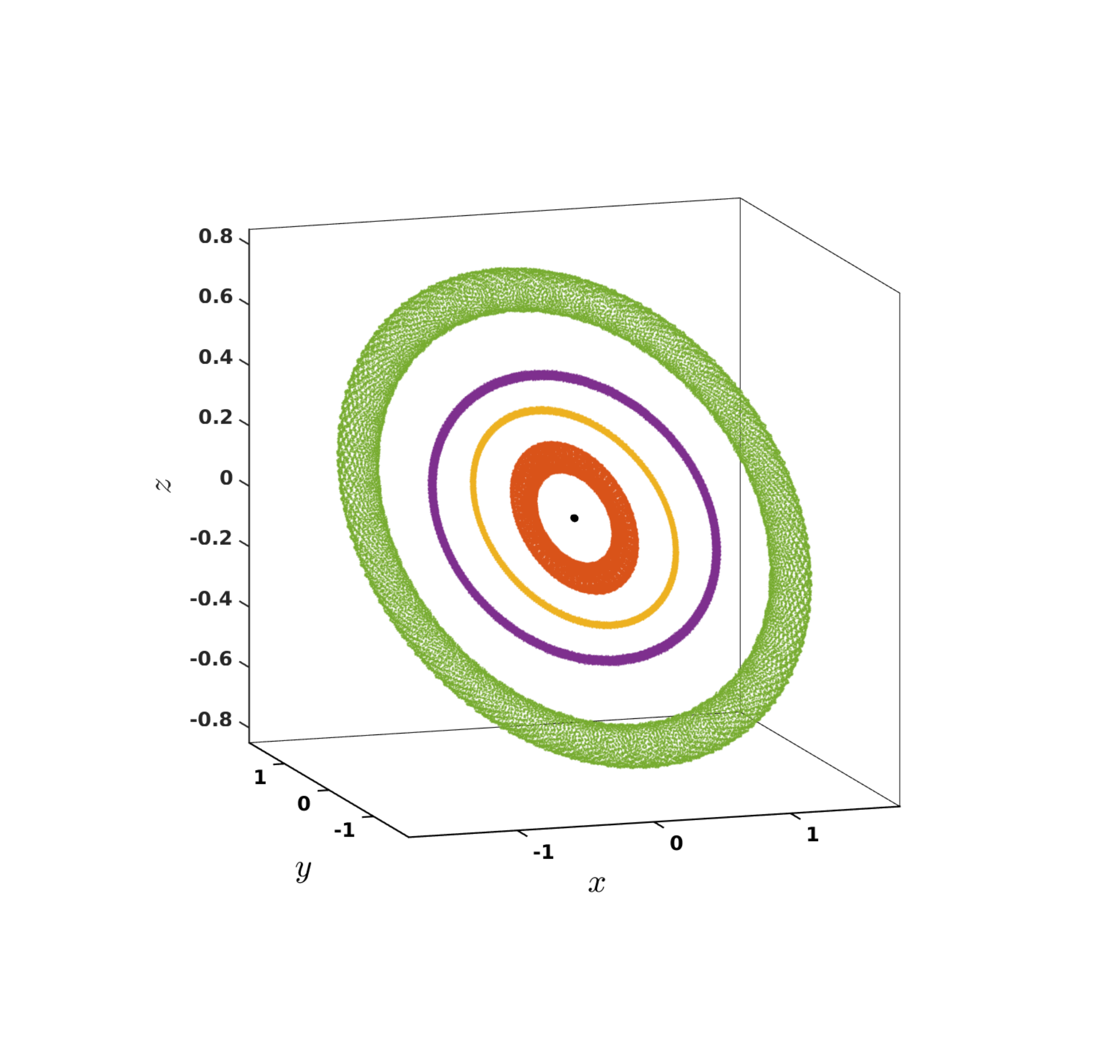} &
\includegraphics[angle=0, trim=80 80 130 80, clip=true, scale = 0.28]{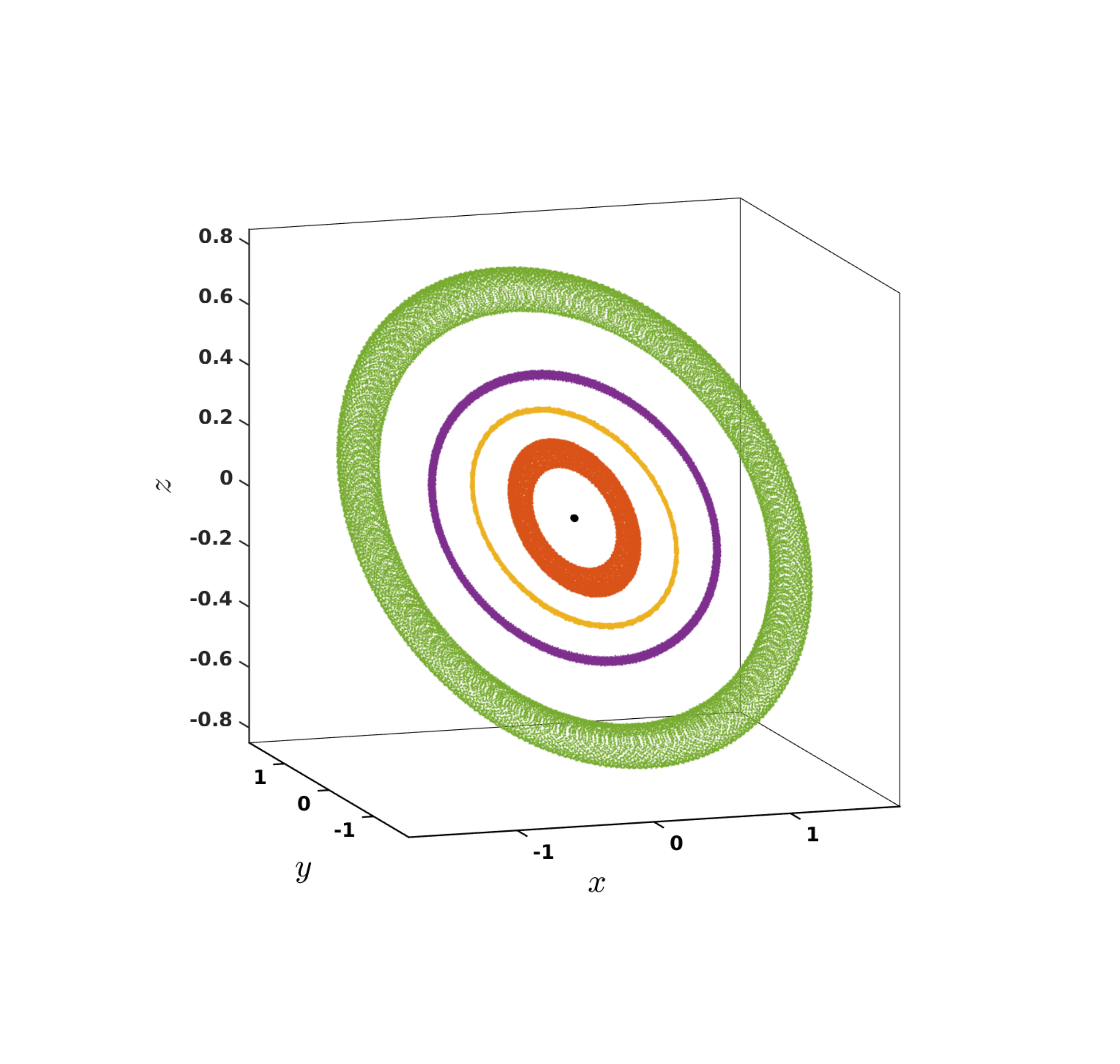} \\
$\bm f^{AB}_{\mathrm{mp}}$ and $\Delta t=0.1$ & $\bm f^{AB}_{\mathrm{mp}}$ and $\Delta t=5$ & $\bm f^{AB}_{\mathrm{lg}}$ and $\Delta t=5$ \\
\includegraphics[angle=0, trim=80 80 130 80, clip=true, scale = 0.28]{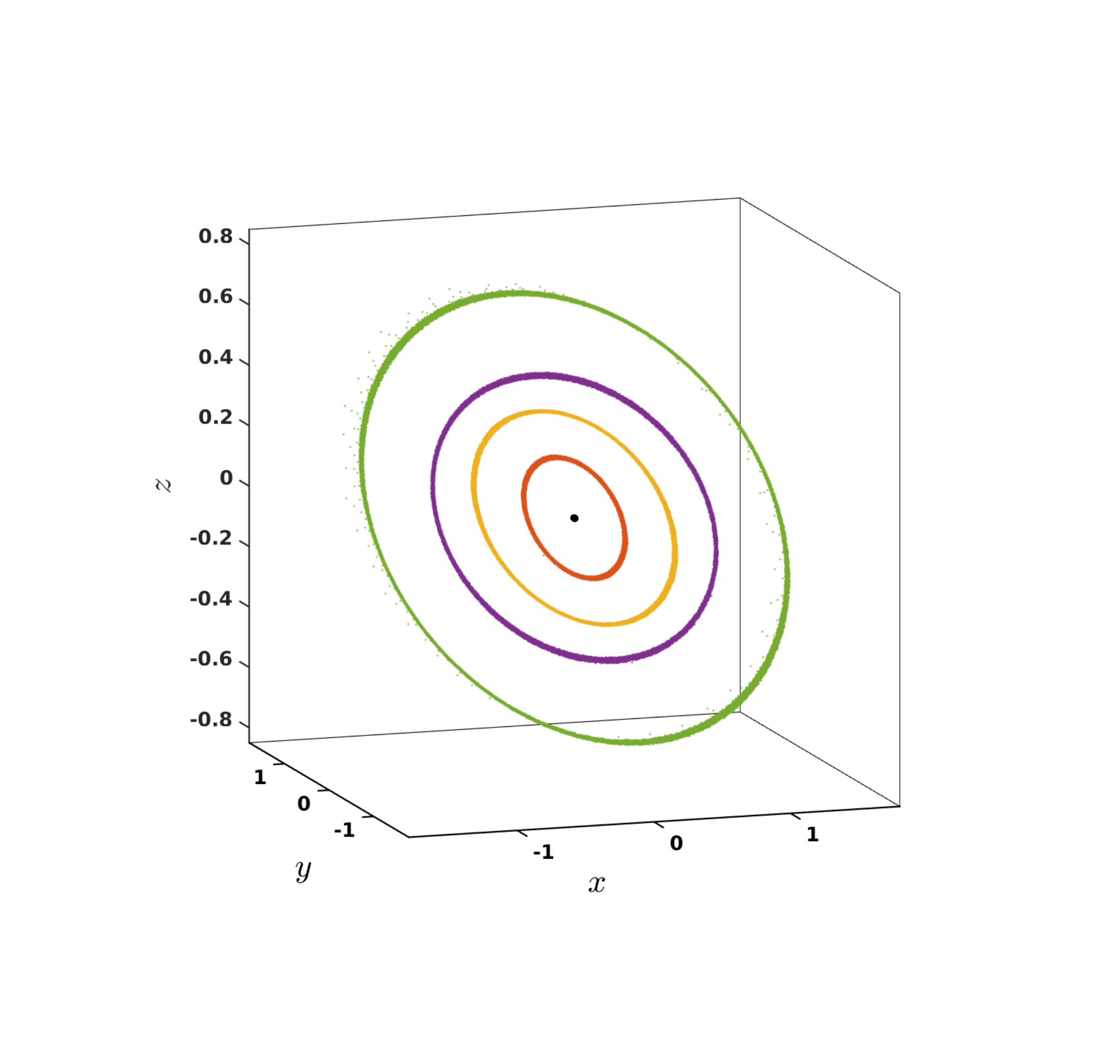} &
\includegraphics[angle=0, trim=80 80 130 80, clip=true, scale = 0.28]{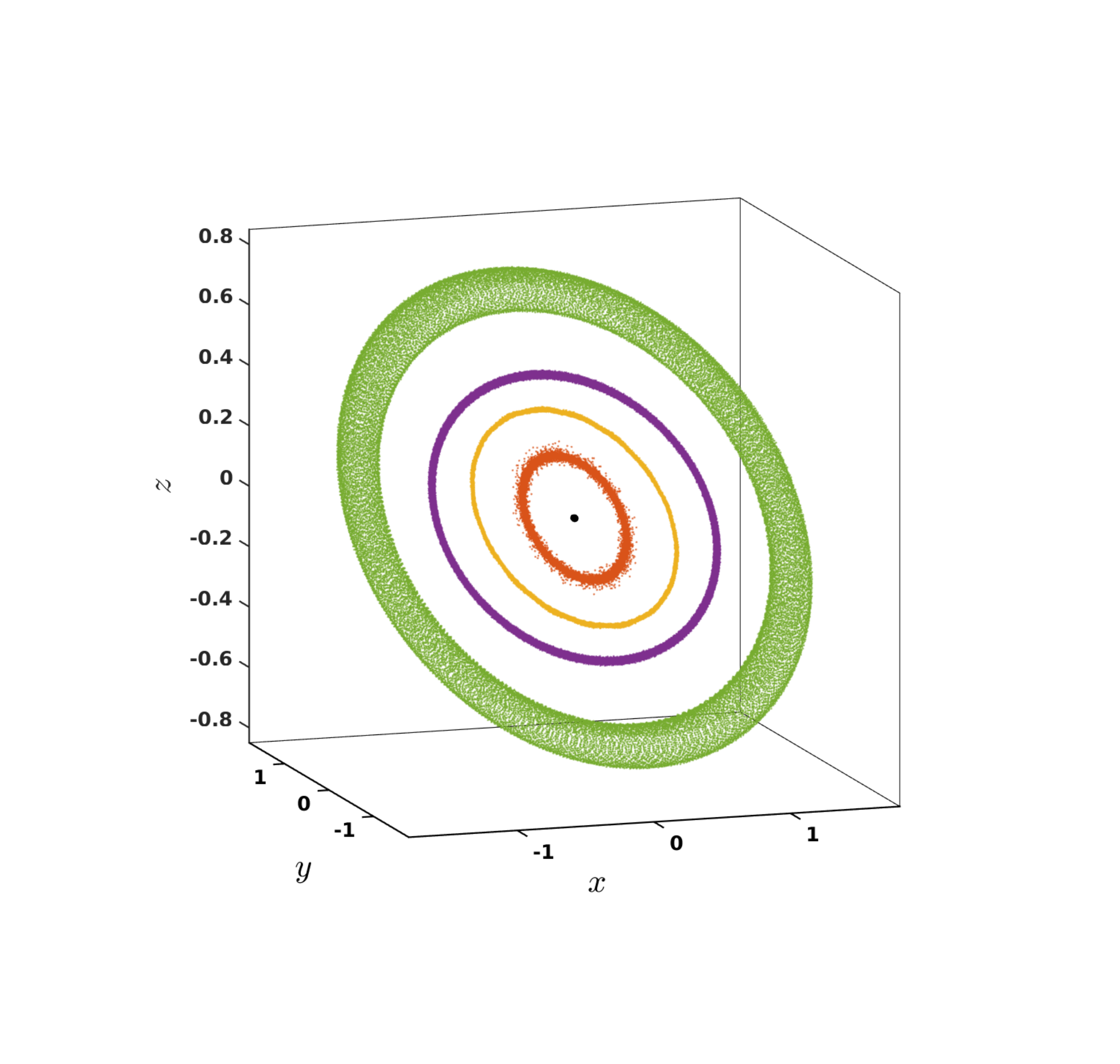} &
\includegraphics[angle=0, trim=80 80 130 80, clip=true, scale = 0.28]{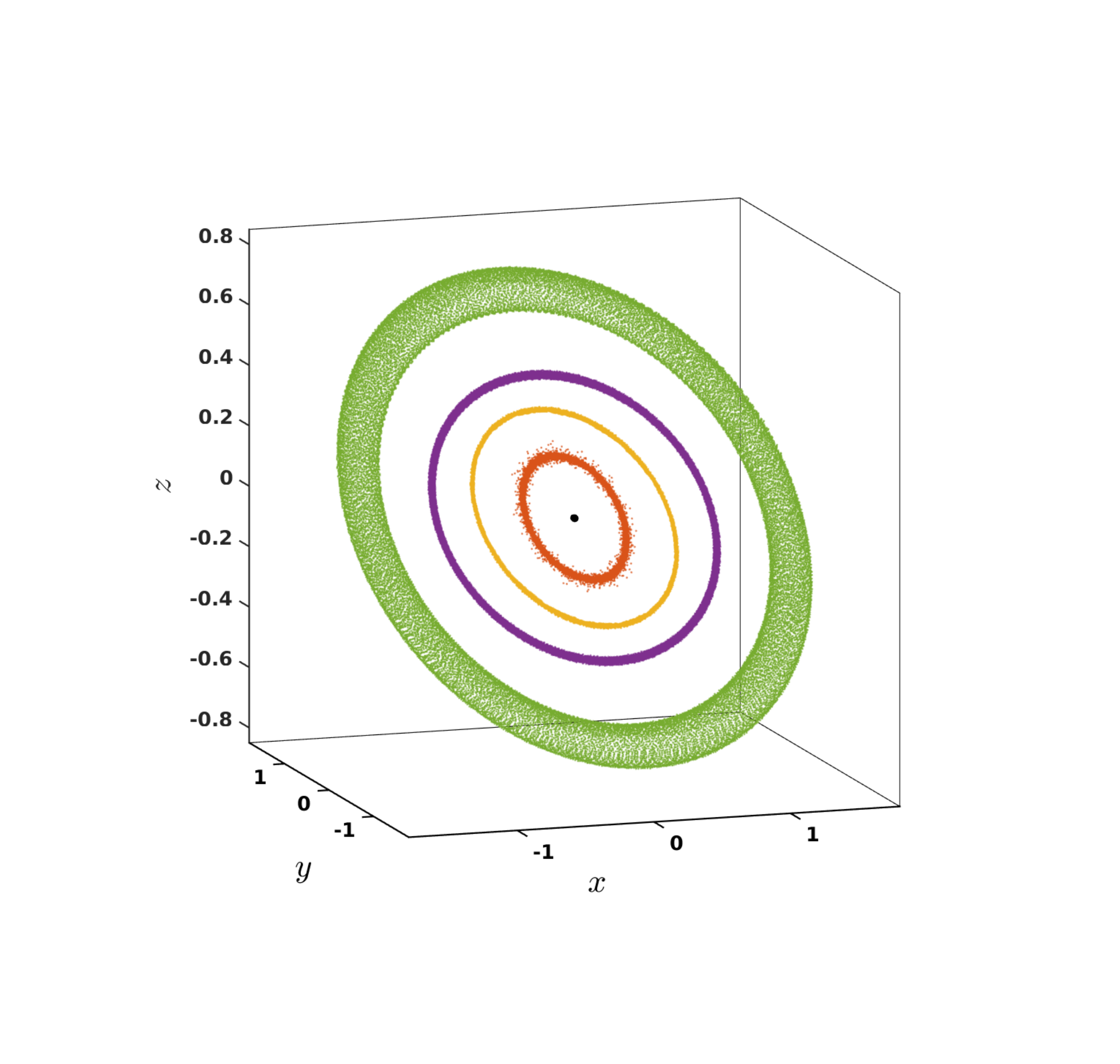} \\
$\bm f^{AB}_{\mathrm{ge}}$ and $\Delta t=5$ & $\bm f^{AB}_{\mathrm{pm}}$ and $\Delta t=5$ & $\bm f^{AB}_{\mathrm{pt}}$ and $\Delta t=5$ \\
\multicolumn{3}{c}{ \includegraphics[angle=0, trim=180 145 145 630, clip=true, scale = 0.65]{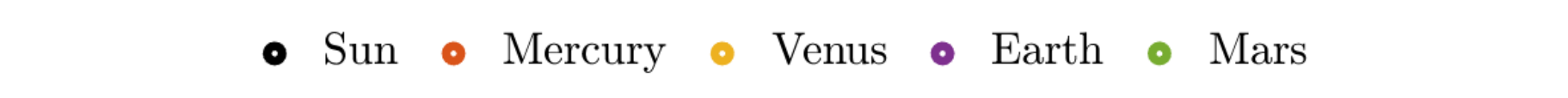} }
\end{tabular}
\caption{The trajectories of the inner planets calculated by different integrators. The visualization is made by sampling every $50$ earth days.} 
\label{fig:inner_planets_orbits}
\end{center}
\end{figure}

\begin{figure}
	\begin{center}
\begin{tabular}{ccc}
\includegraphics[angle=0, trim=80 80 120 80, clip=true, scale = 0.28]{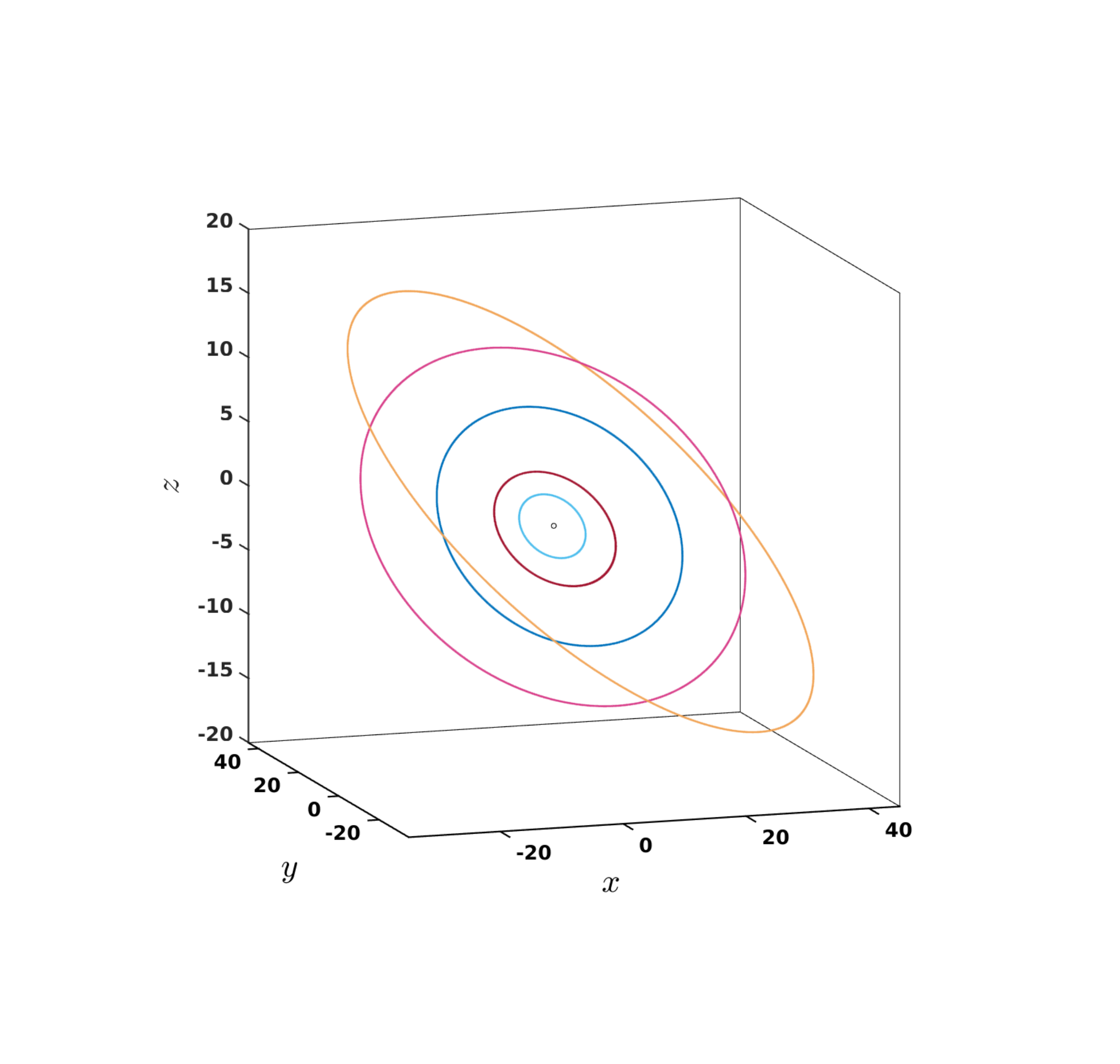} &
\includegraphics[angle=0, trim=80 80 120 80, clip=true, scale = 0.28]{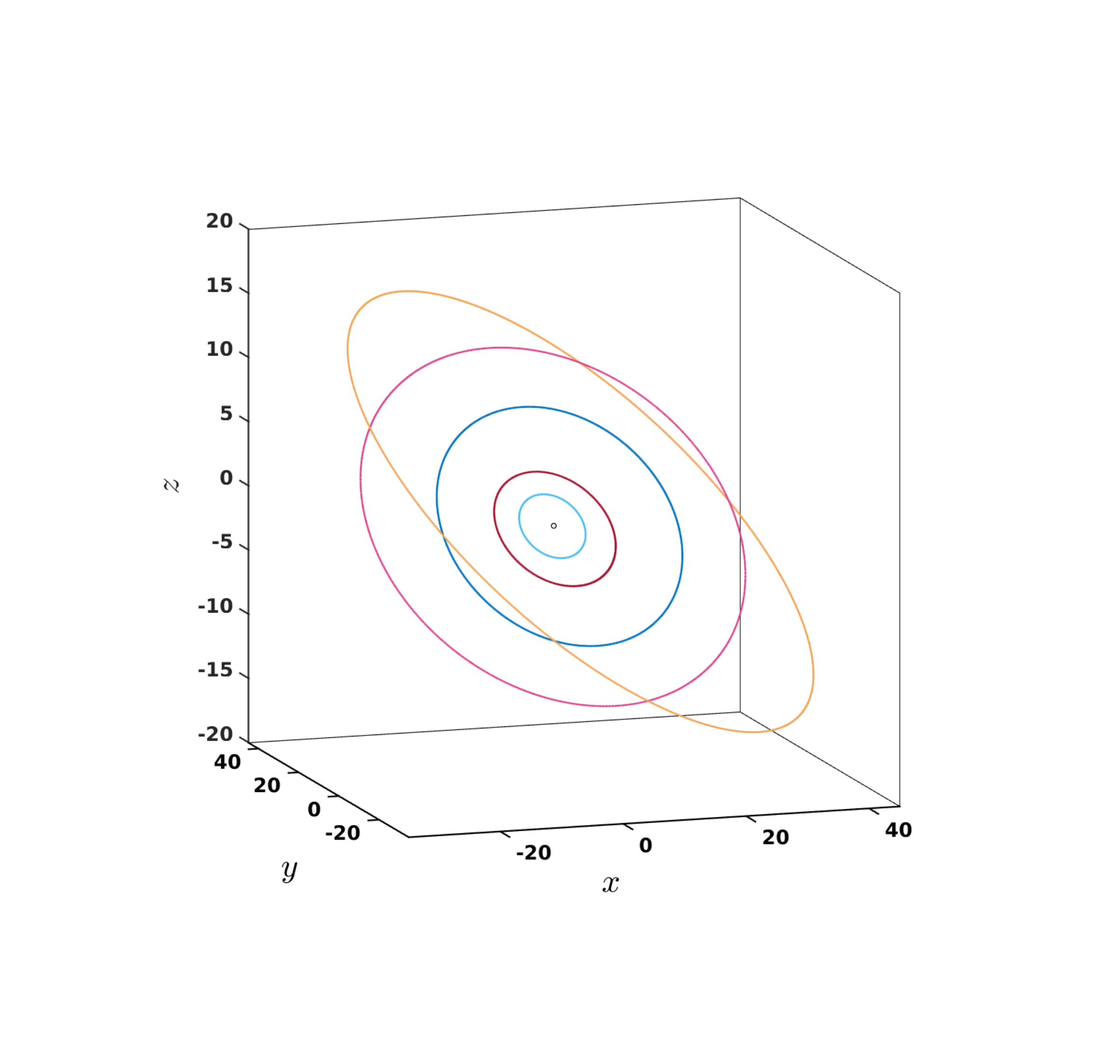} &
\includegraphics[angle=0, trim=80 80 120 80, clip=true, scale = 0.28]{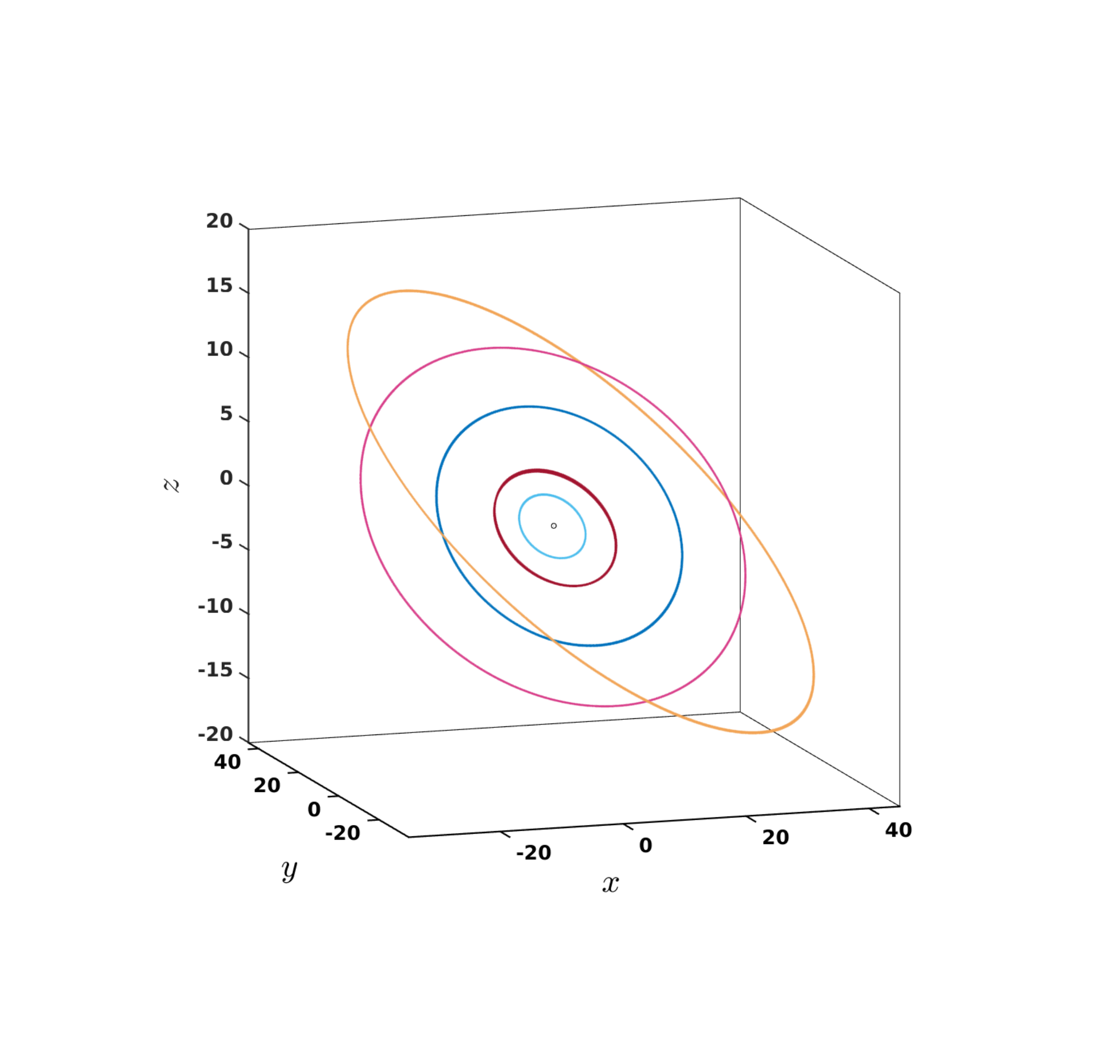} \\
$\bm f^{AB}_{\mathrm{mp}}$ and $\Delta t=0.1$ & $\bm f^{AB}_{\mathrm{mp}}$ and $\Delta t=5$ & $\bm f^{AB}_{\mathrm{lg}}$ and $\Delta t=5$ \\
\includegraphics[angle=0, trim=80 80 120 80, clip=true, scale = 0.28]{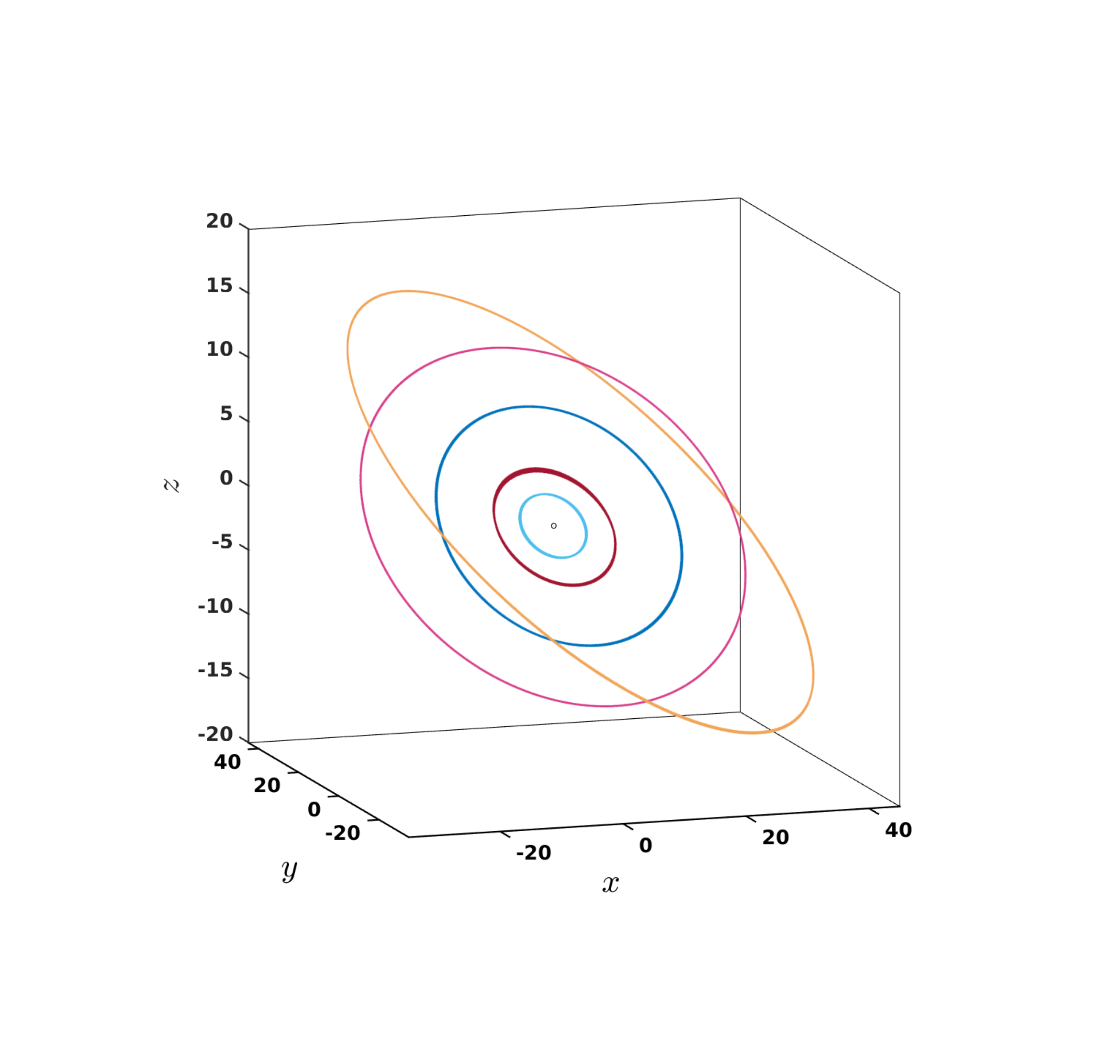} &
\includegraphics[angle=0, trim=80 80 120 80, clip=true, scale = 0.28]{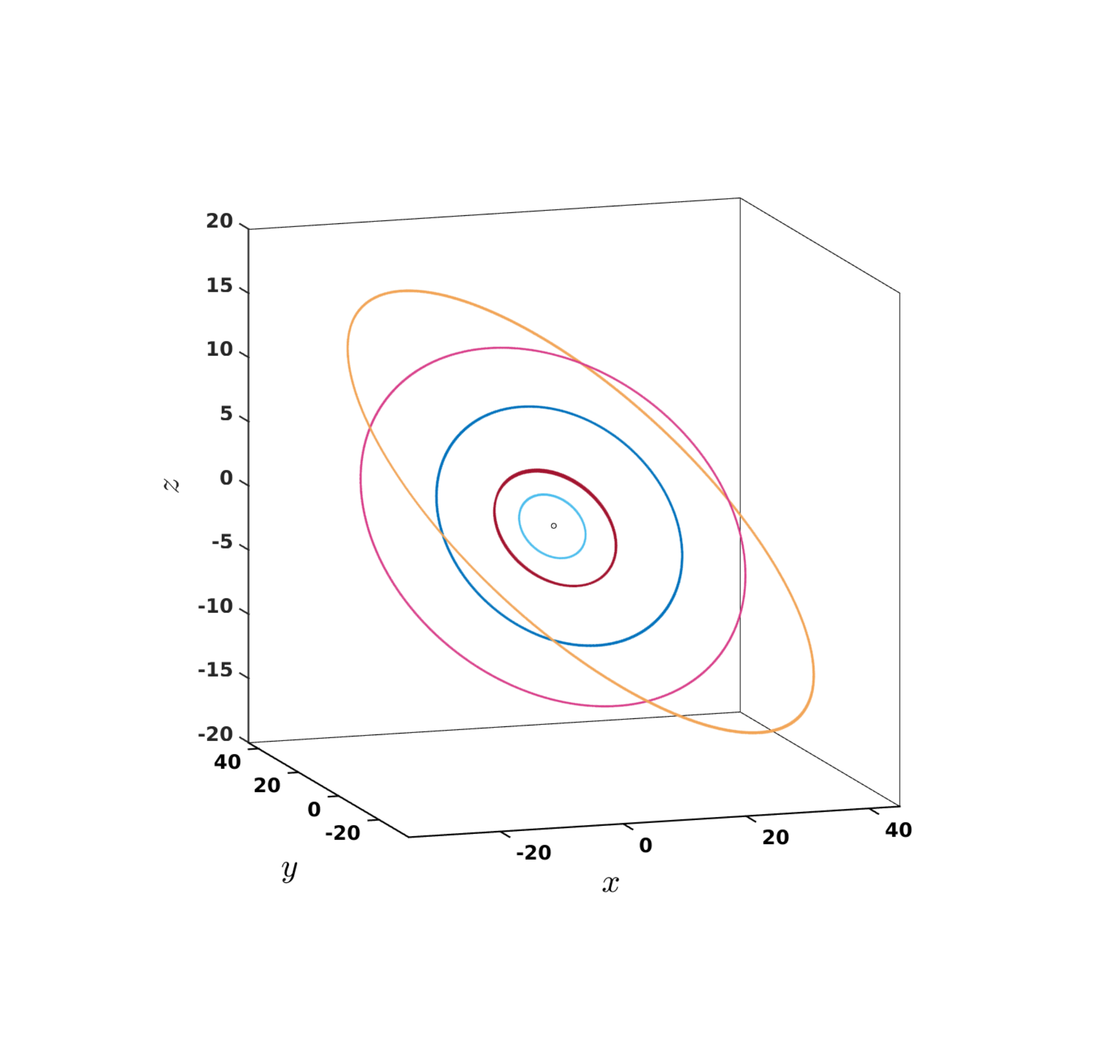} &
\includegraphics[angle=0, trim=80 80 120 80, clip=true, scale = 0.28]{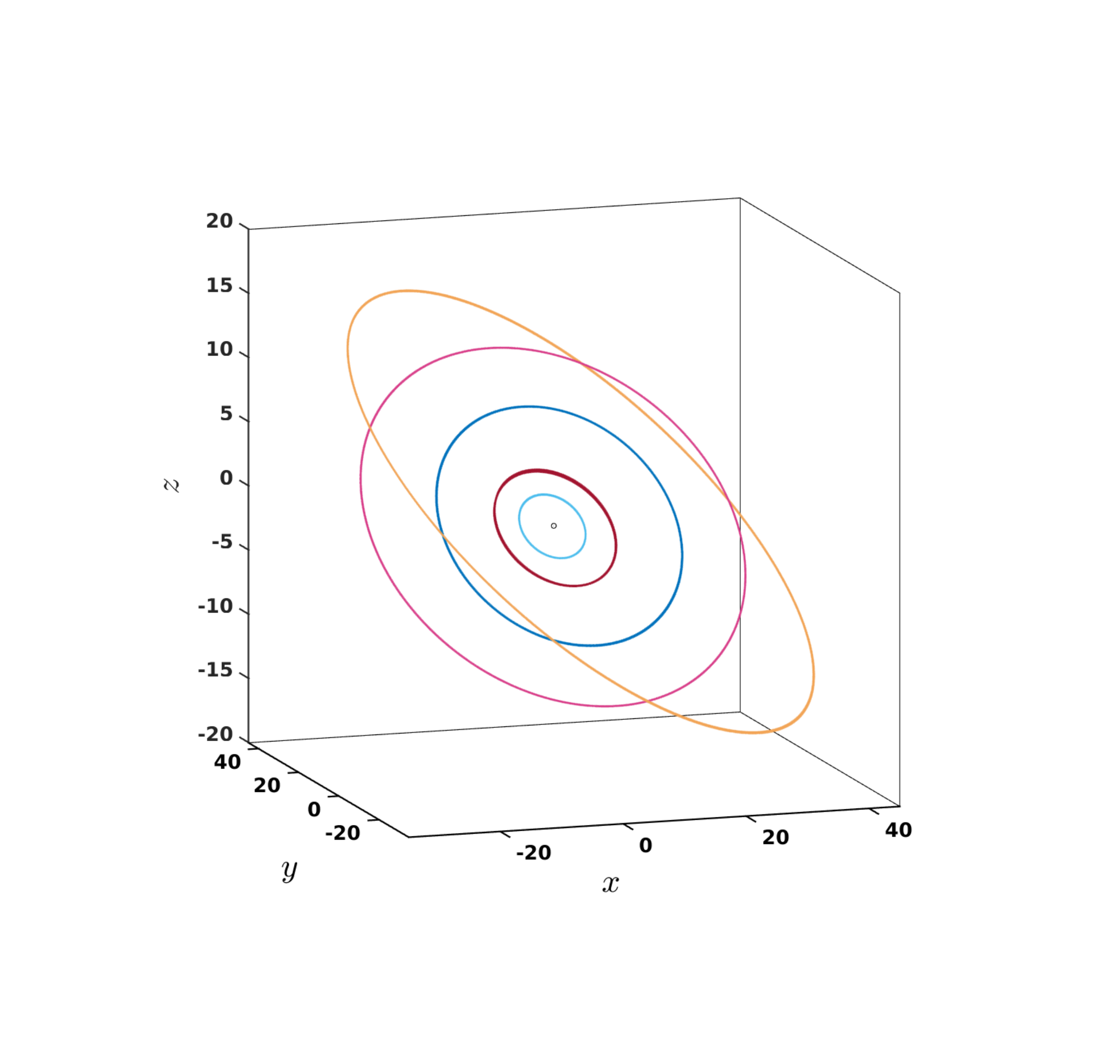} \\
$\bm f^{AB}_{\mathrm{ge}}$ and $\Delta t=5$ & $\bm f^{AB}_{\mathrm{pm}}$ and $\Delta t=5$ & $\bm f^{AB}_{\mathrm{pt}}$ and $\Delta t=5$ \\
\multicolumn{3}{c}{ \includegraphics[angle=0, trim=180 145 145 630, clip=true, scale = 0.65]{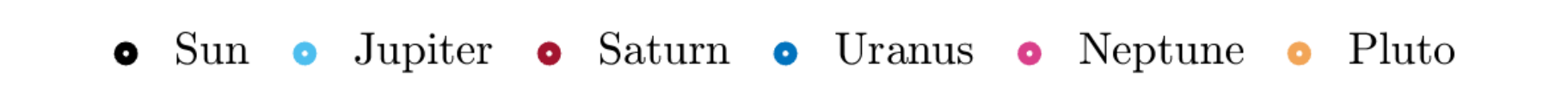} }
\end{tabular}
\caption{The trajectories of the outer planets calculated by different integrators. The visualization is made by sampling every $50$ earth days.} 
\label{fig:outer_planets_orbits}
\end{center}
\end{figure}

The trajectories of the inner and outer planets are illustrated in Figures \ref{fig:inner_planets_orbits} and \ref{fig:outer_planets_orbits}. While the simulated trajectories of the five outer planets show less differences among the integrators, there are dramatic differences in the simulated orbits for the four inner planets. The orbits of Mercury and Mars exhibit strong variations in the mid-point and LaBudde-Greenspan integrators by comparing them with the reference solution. The orbits calculated from the two second-order energy-decaying integrators are slightly sharper than the aforementioned two integrators. The results of the generalized Eyre integrator show the closest pattern to those of the reference solution, from which we can see the oscillatory modes are damped in this integrator. 

\begin{figure}
\begin{center}
\begin{tabular}{cc}
\includegraphics[angle=0, trim=80 80 120 100, clip=true, scale = 0.28]{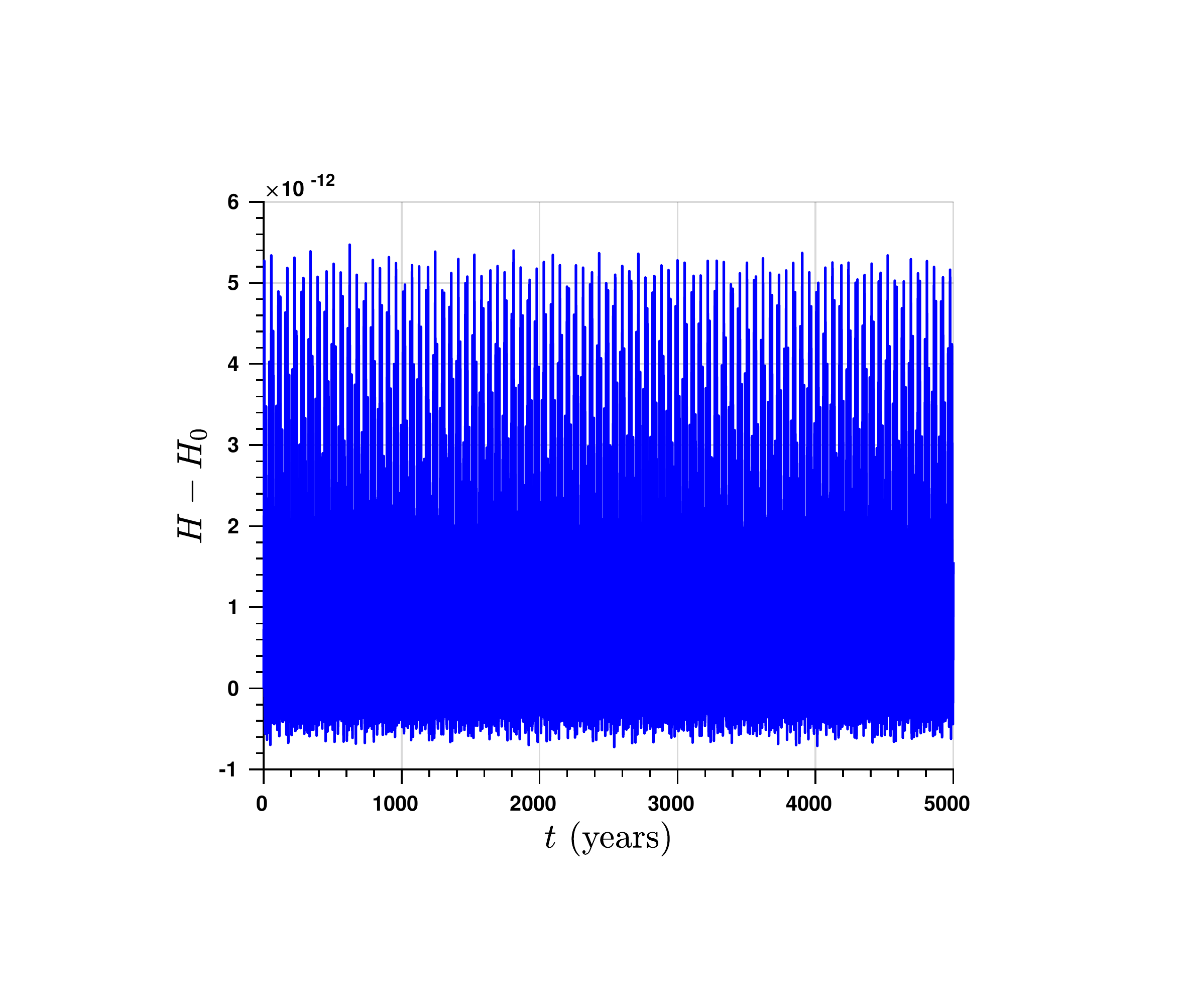} &\includegraphics[angle=0, trim=80 80 120 100, clip=true, scale = 0.28]{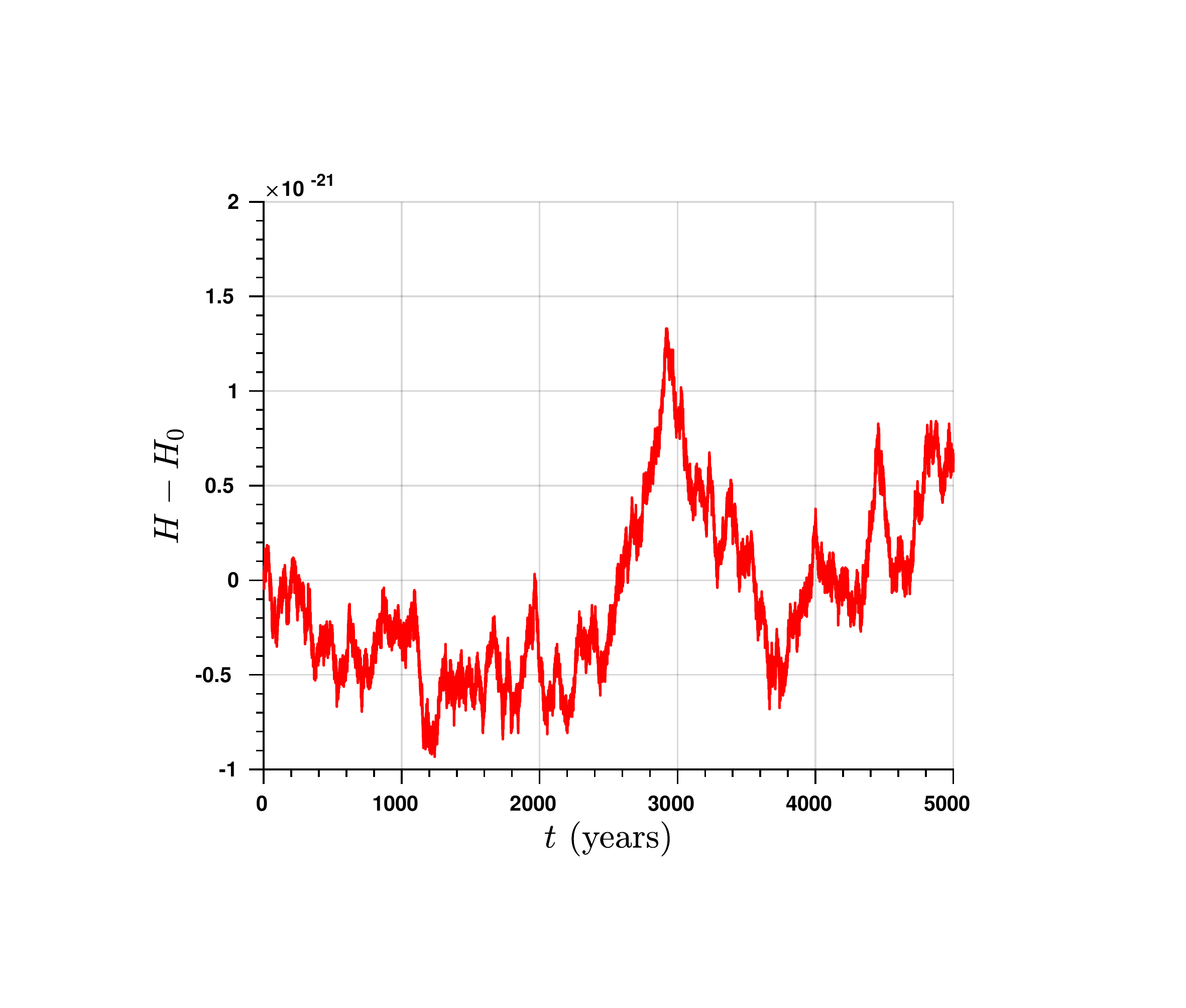} \\
(a) & (b) 
\end{tabular}
\begin{tabular}{ccc}
\includegraphics[angle=0, trim=80 80 120 100, clip=true, scale = 0.28]{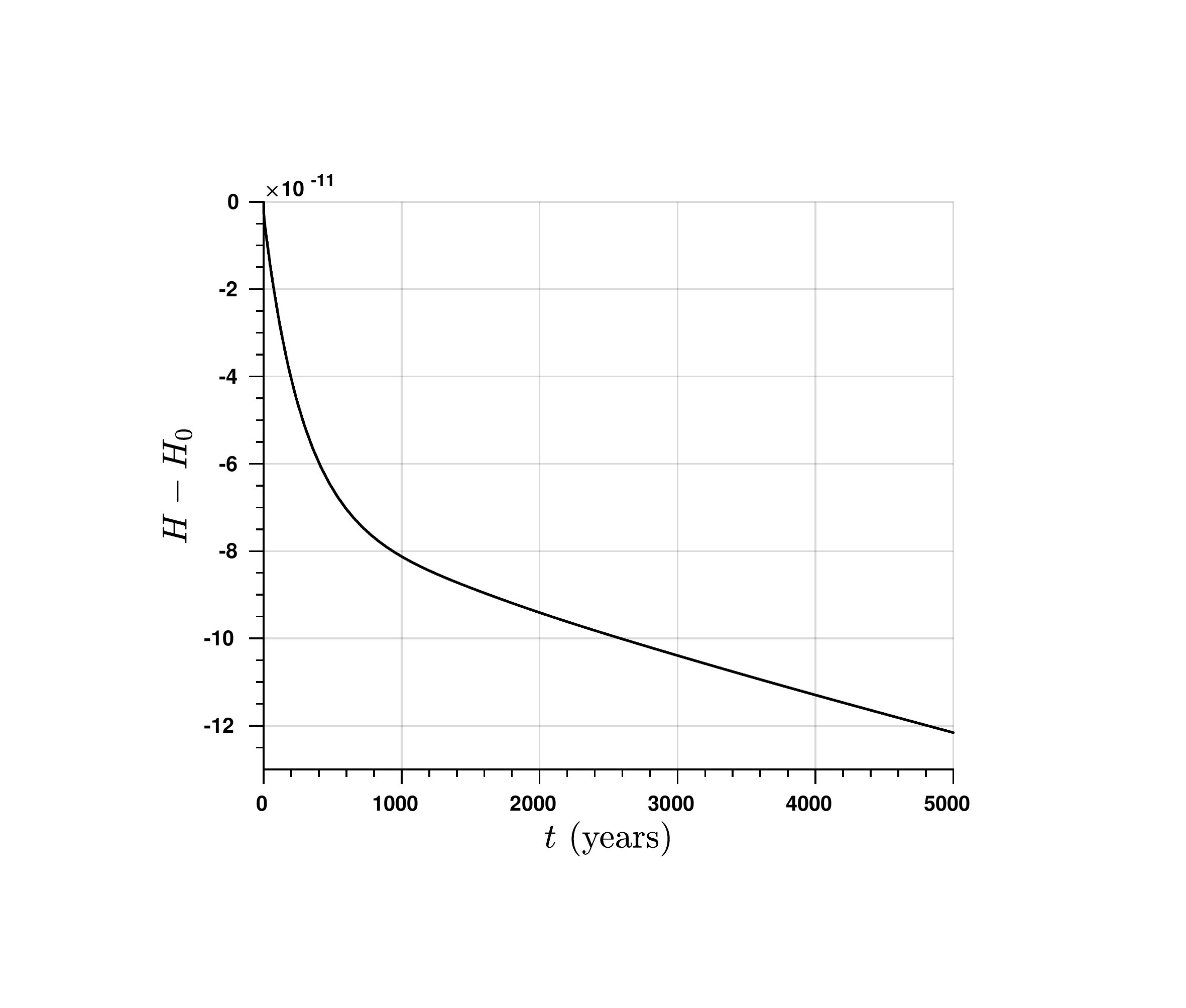} &
\includegraphics[angle=0, trim=80 80 120 100, clip=true, scale = 0.28]{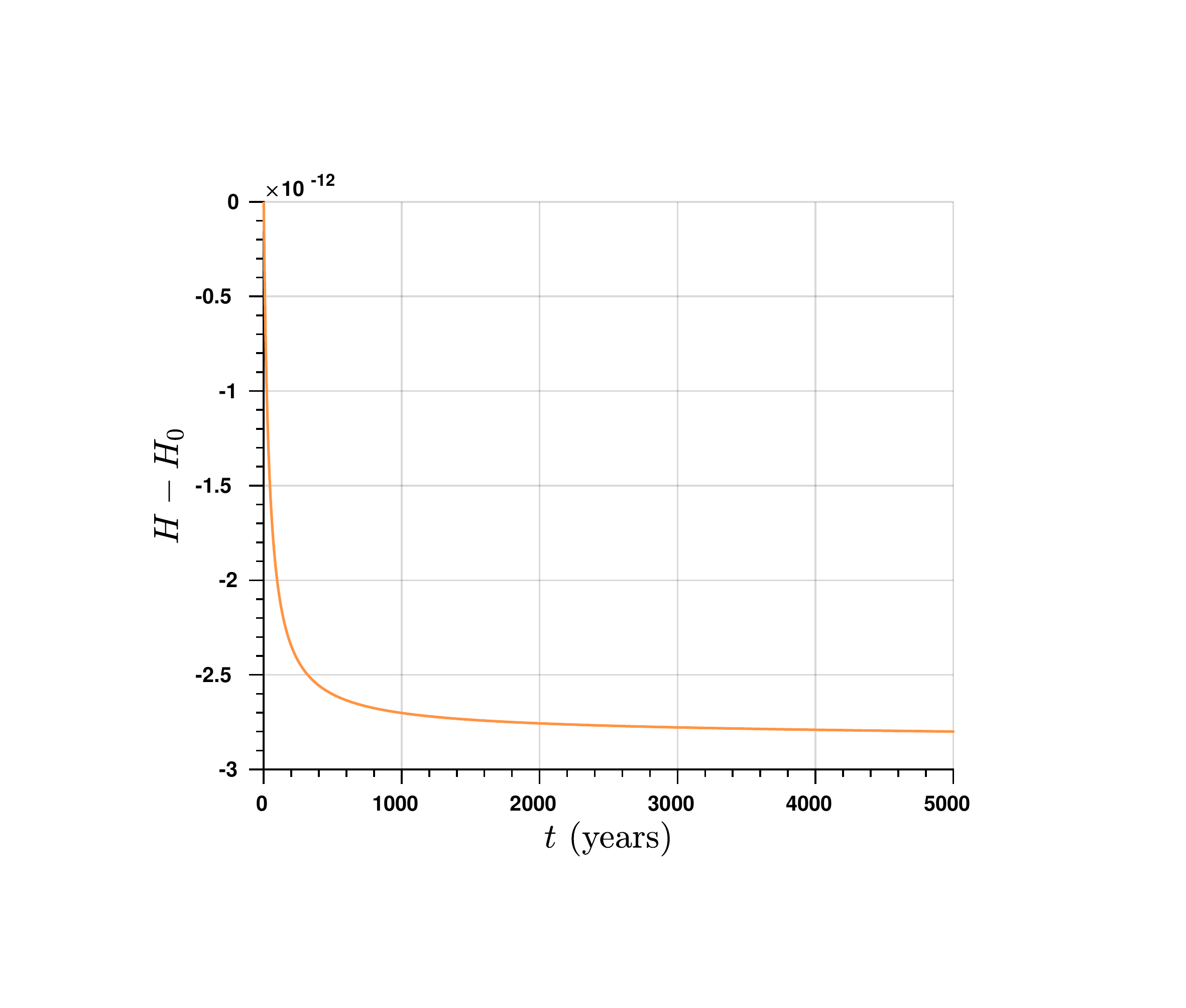} &
\includegraphics[angle=0, trim=80 80 120 100, clip=true, scale = 0.28]{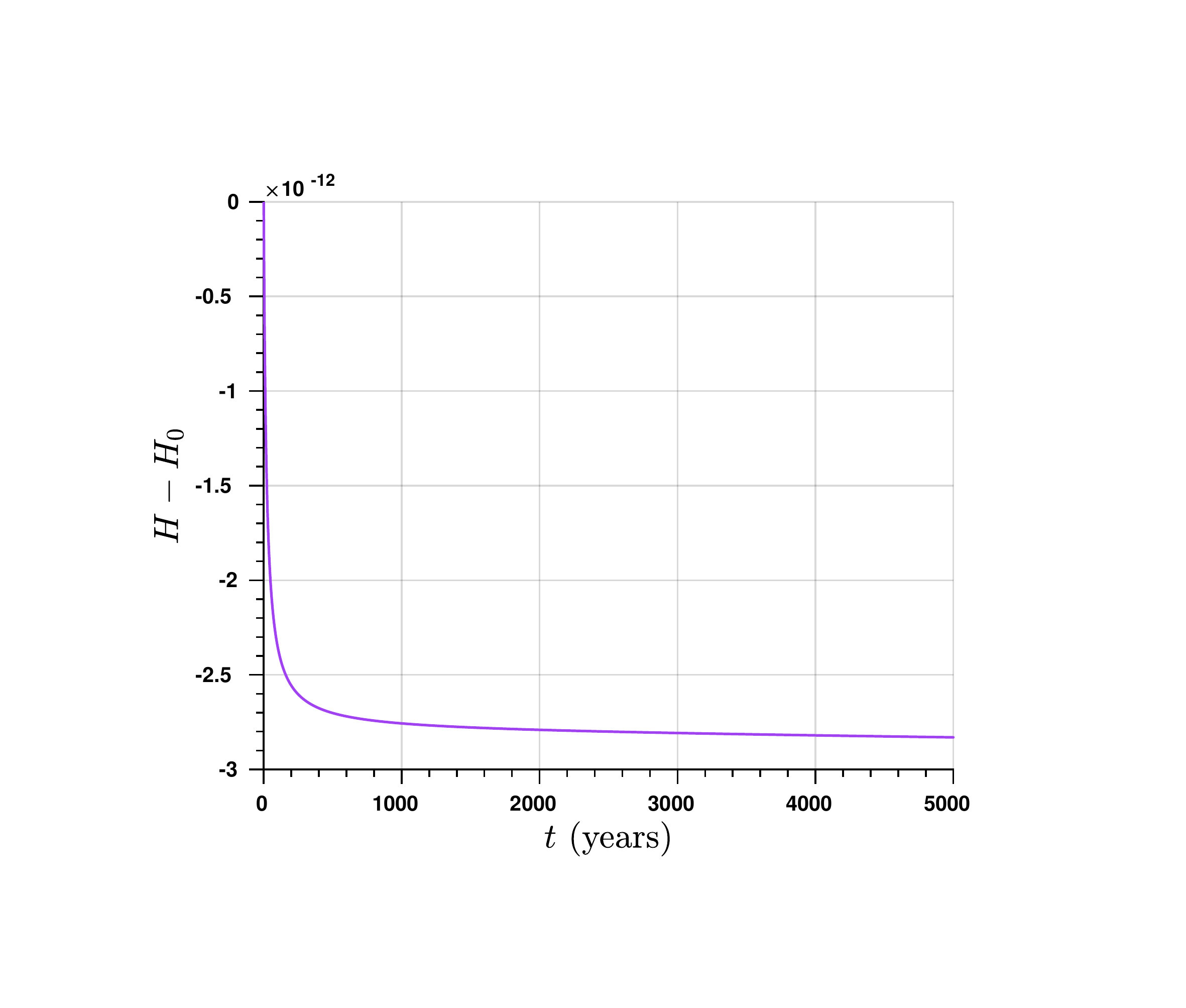} \\
(c) & (d) & (e)
\end{tabular}
\caption{The error of the total energy $H(\bm q_n, \bm p_n) - H(\bm q_0, \bm p_0)$ of the ten-body problem over time calculated by (a) the mid-point, (b) LaBudde-Greenspan, (c) generalized Eyre, (d) perturbed mid-point, and (e) perturbed trapezoidal integrators. } 
\label{fig:solar_energy}
\end{center}
\end{figure}

\begin{figure}
	\begin{center}
\begin{tabular}{ccc}
\multicolumn{3}{c}{ \includegraphics[angle=0, trim=780 265 690 1730, clip=true, scale = 0.45]{./momentum_legend} }\\
\includegraphics[angle=0, trim=80 80 120 100, clip=true, scale = 0.28]{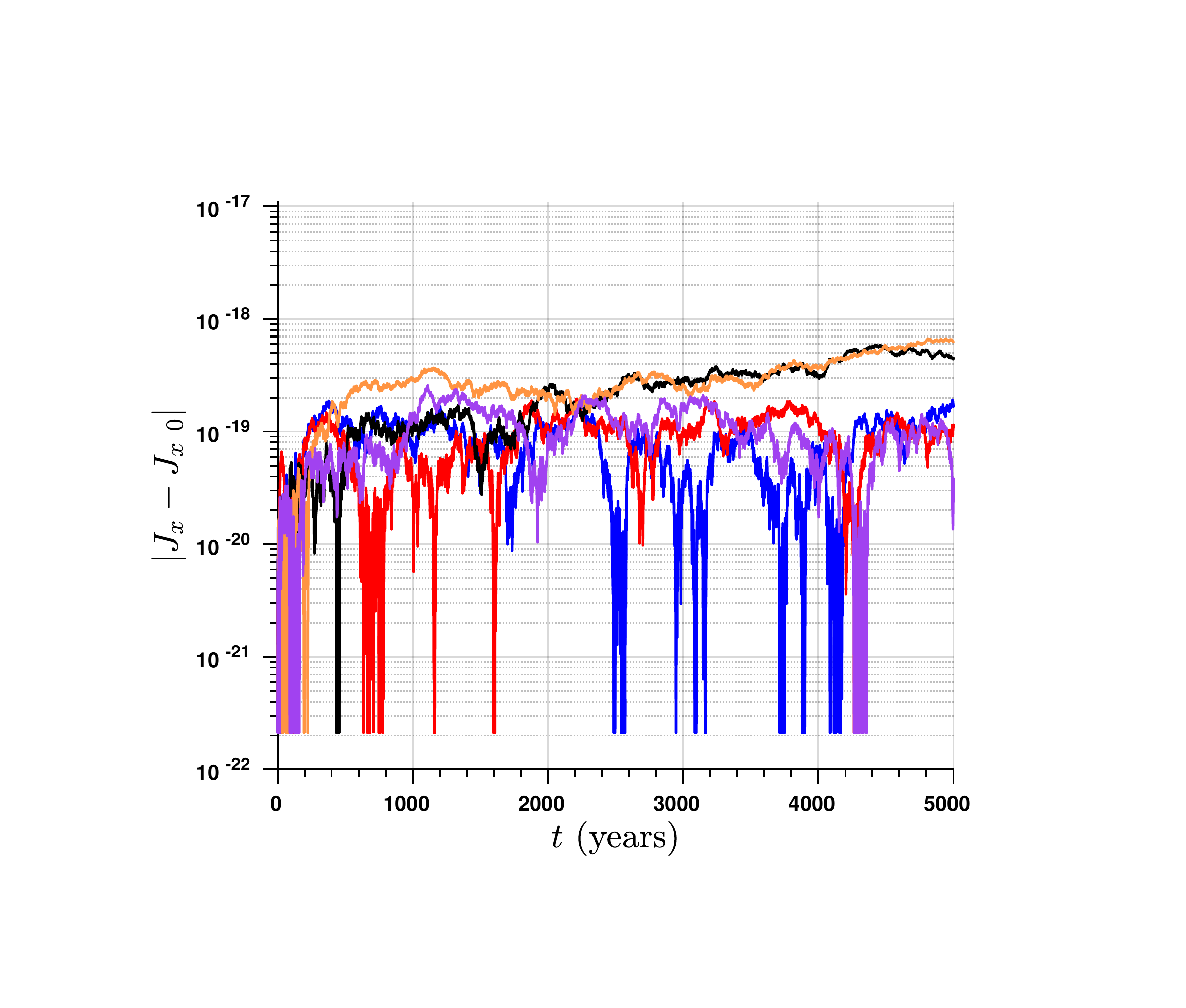} &
\includegraphics[angle=0, trim=80 80 120 100, clip=true, scale = 0.28]{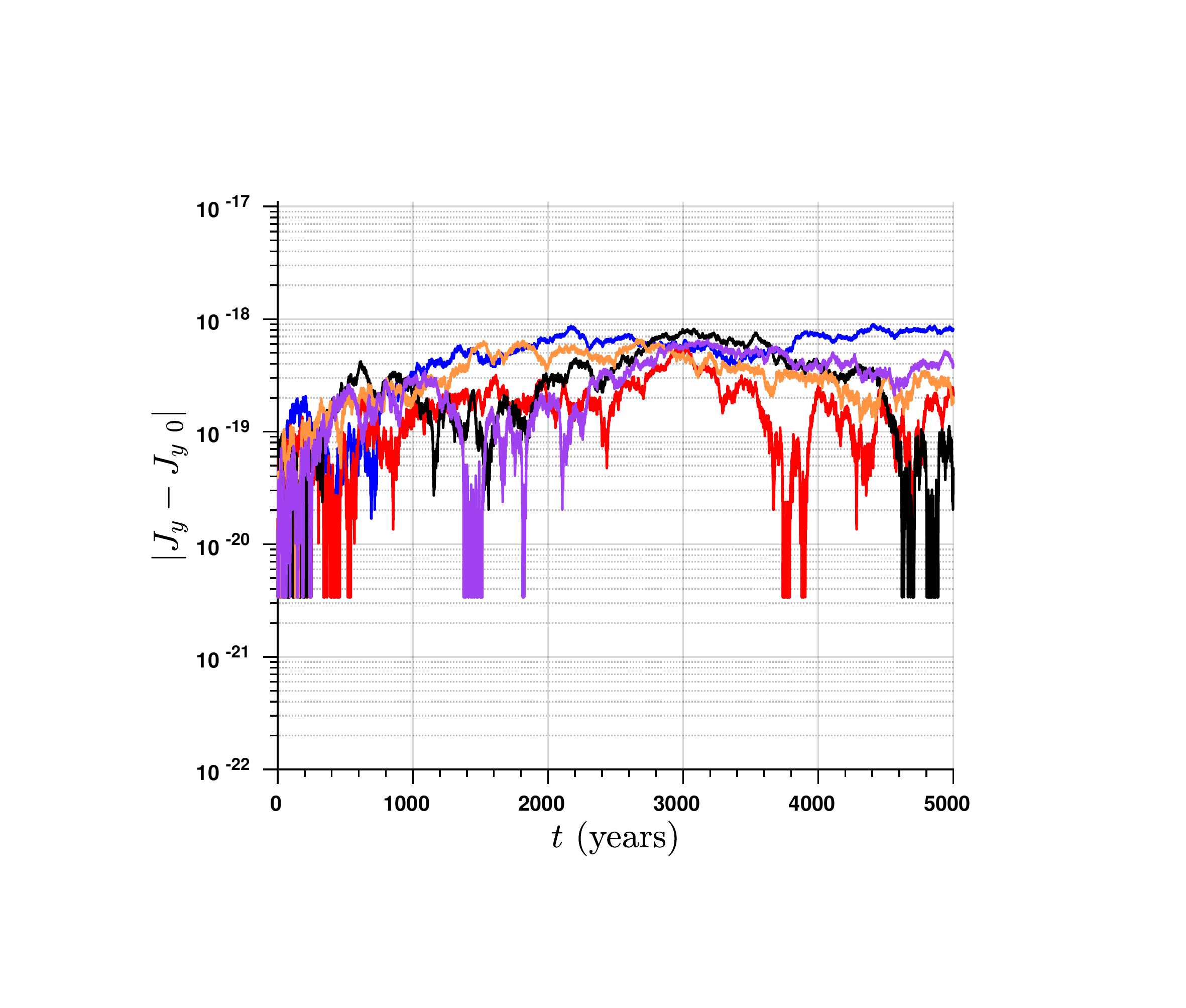} &
\includegraphics[angle=0, trim=80 80 120 100, clip=true, scale = 0.28]{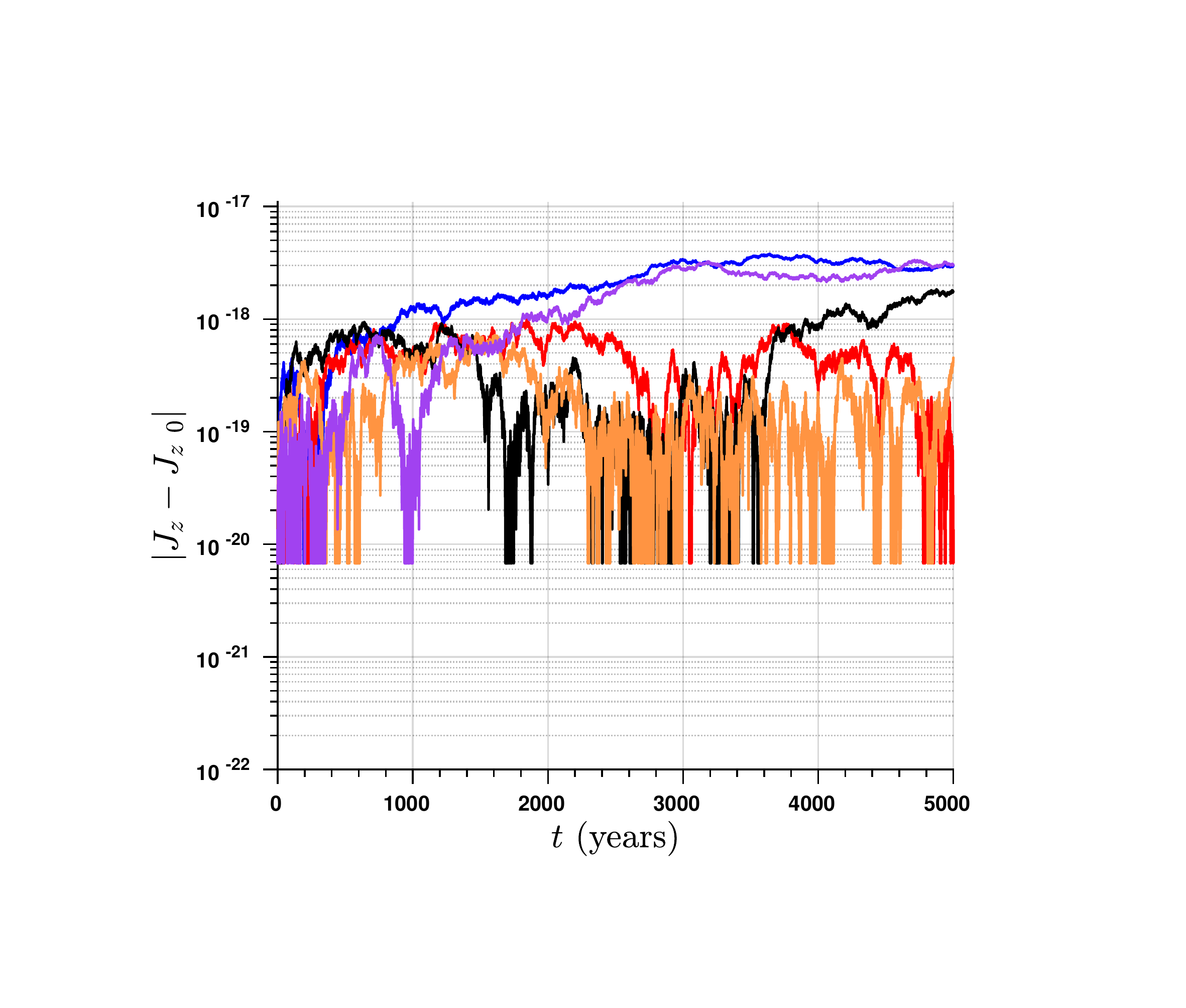} \\
(a) & (b) & (c) \\
\includegraphics[angle=0, trim=80 80 120 100, clip=true, scale = 0.28]{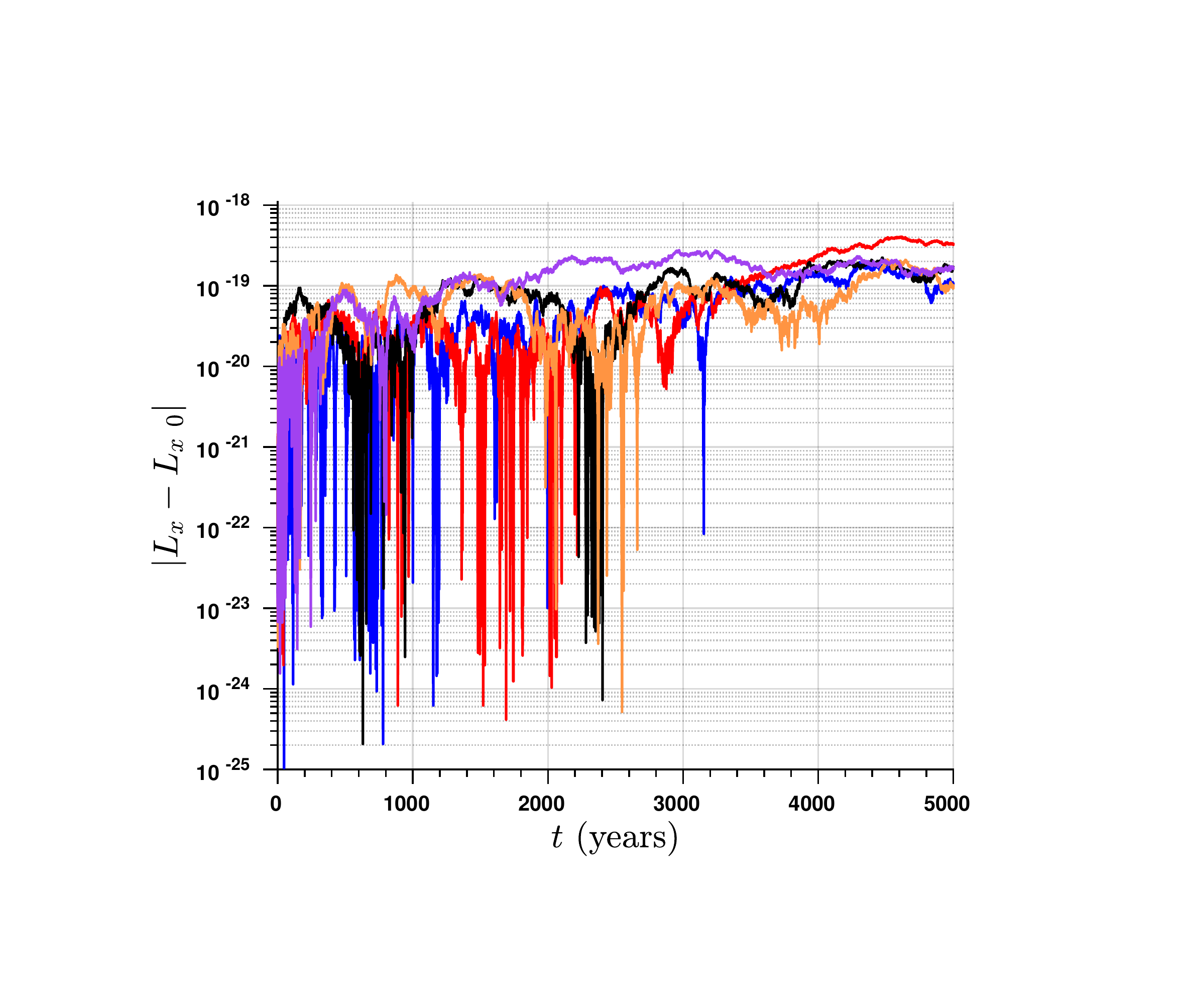} &
\includegraphics[angle=0, trim=80 80 120 100, clip=true, scale = 0.28]{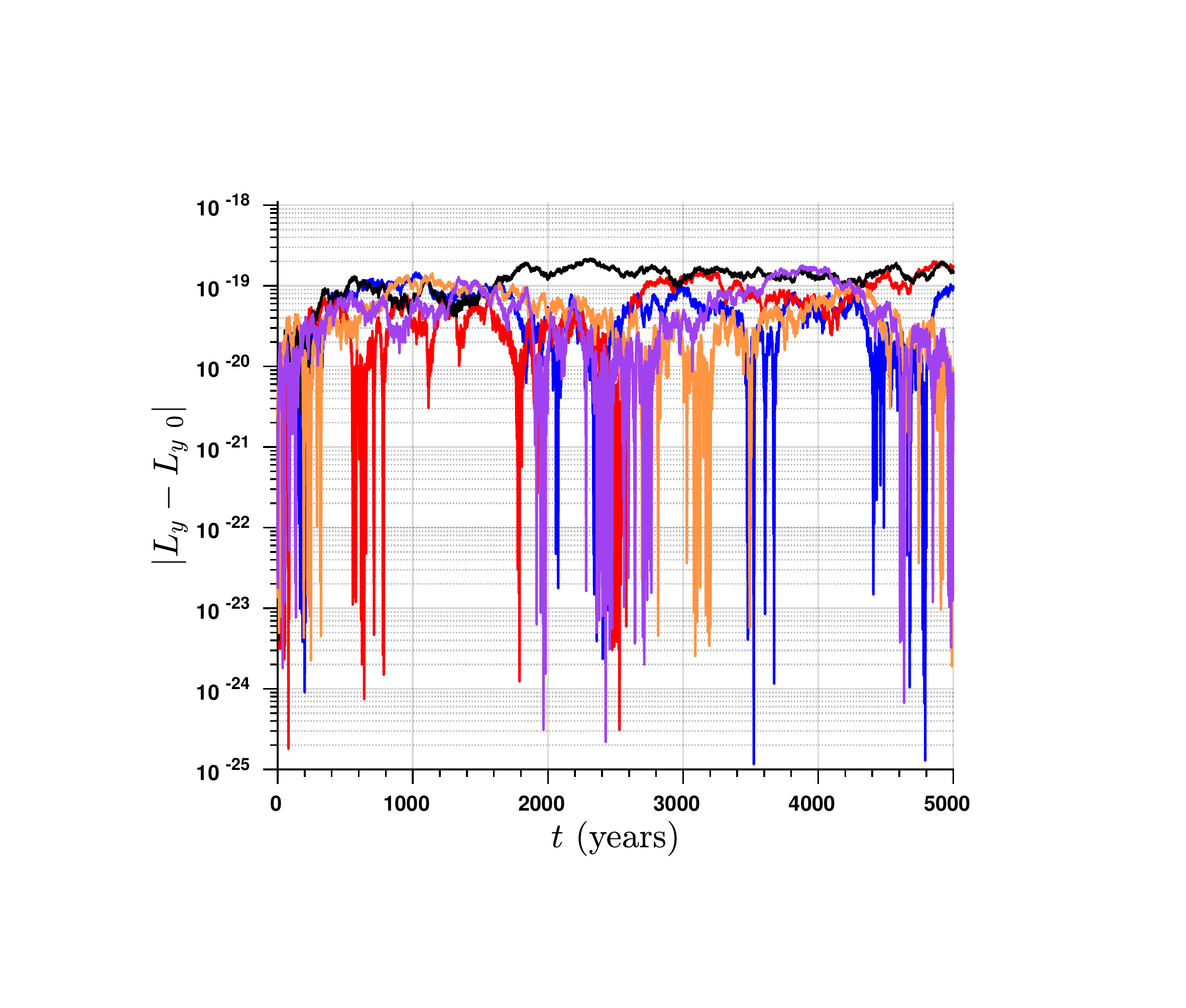} &
\includegraphics[angle=0, trim=80 80 120 100, clip=true, scale = 0.28]{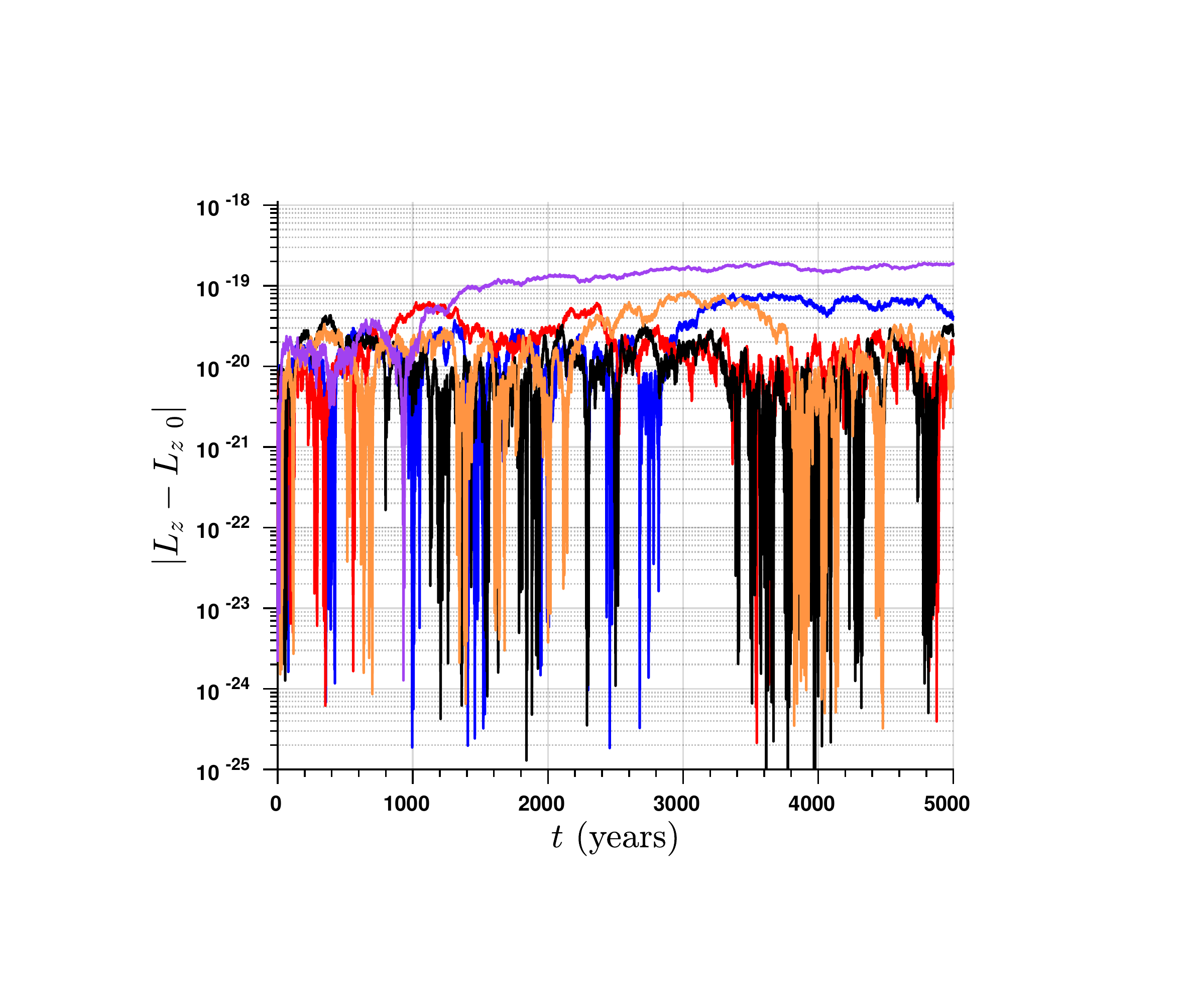} \\
(d) & (e) & (f) \\
\includegraphics[angle=0, trim=80 80 120 100, clip=true, scale = 0.28]{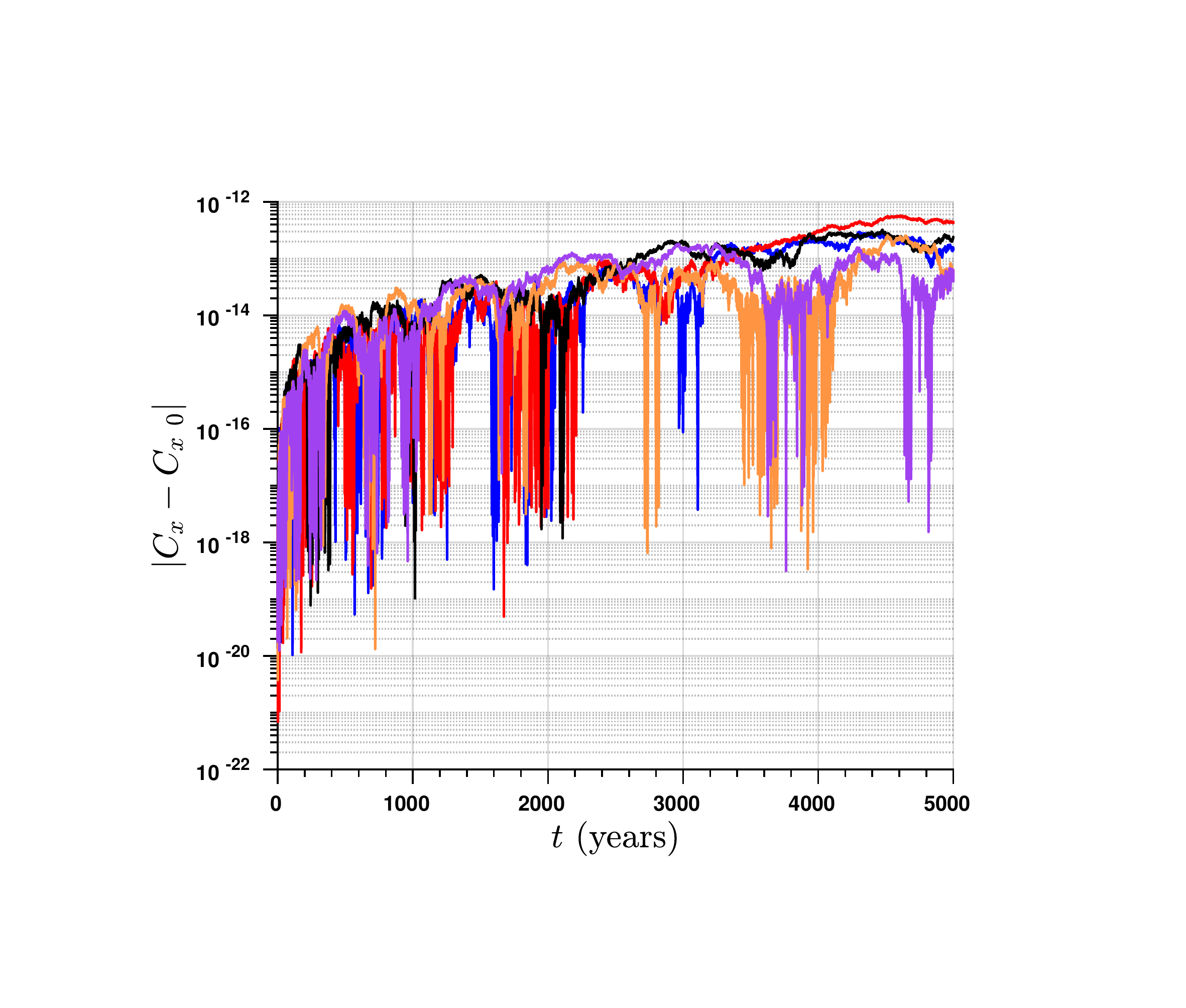} &
\includegraphics[angle=0, trim=80 80 120 100, clip=true, scale = 0.28]{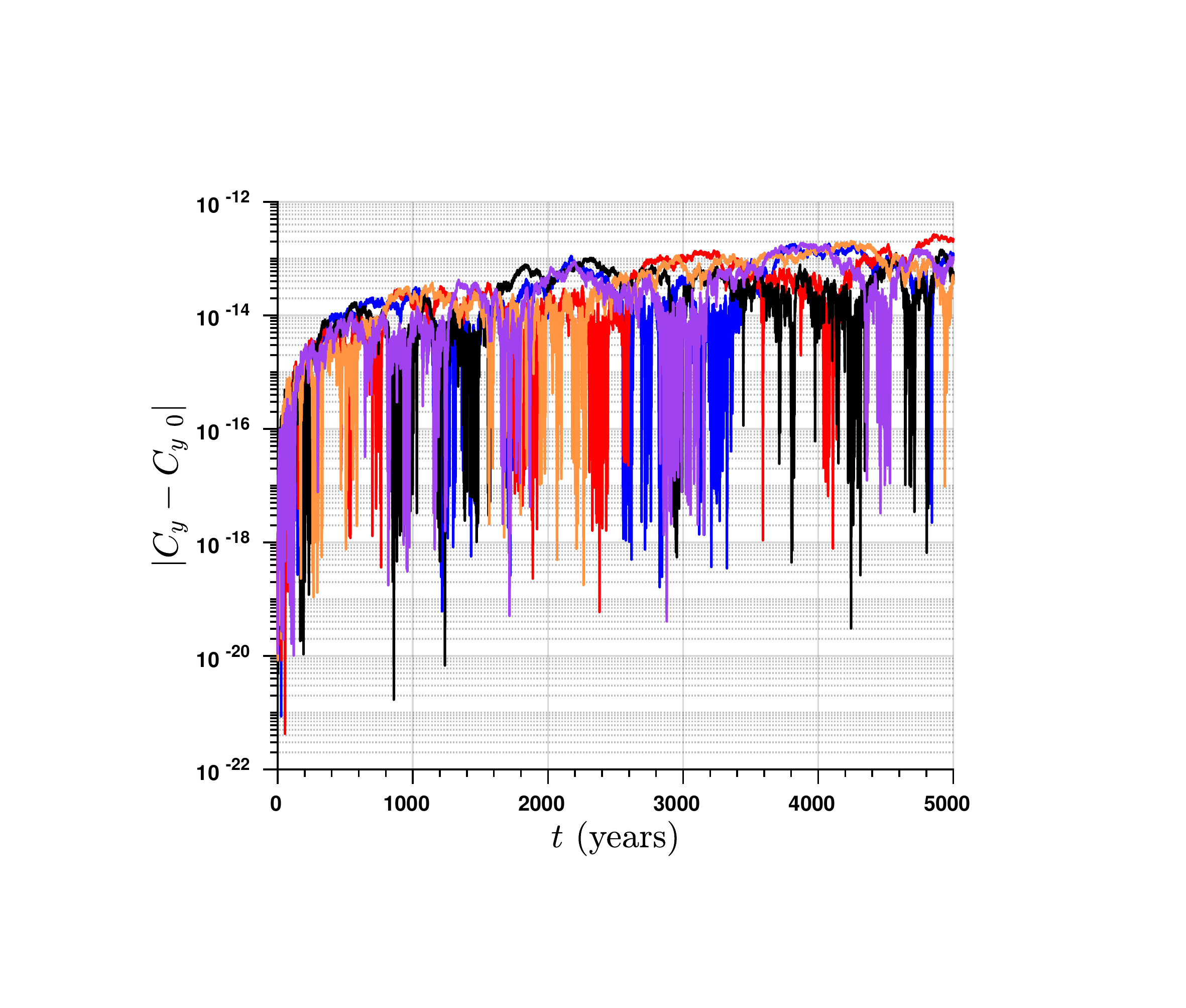} &
\includegraphics[angle=0, trim=80 80 120 100, clip=true, scale = 0.28]{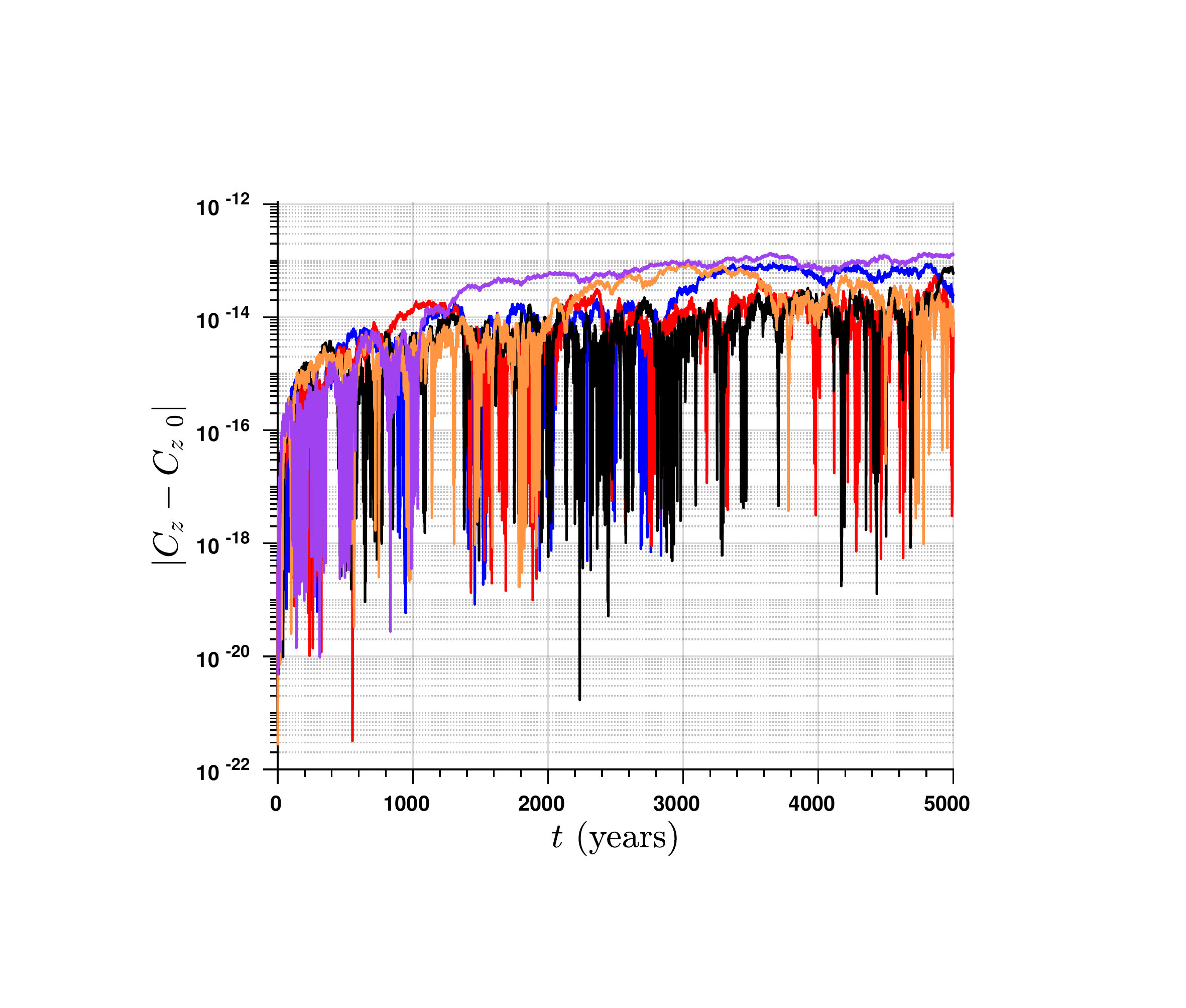} \\
(h) & (i) & (j)
\end{tabular}
\caption{The errors of the angular momentum (first row), linear momentum (second row), and center of mass (third row) of the ten-body problem over time calculated the mid-point, LaBudde-Greenspan, generalized Eyre, perturbed mid-point, and perturbed trapezoidal integrators.} 
\label{fig:solar_momentum}
\end{center}
\end{figure}

Looking at the energy error depicted in Figure \ref{fig:solar_energy}, we can see that the LaBudde-Greenspan integrator exhibits the best energy preservation property, with the energy error several orders of magnitude smaller than those of the rest four integrators. Also, the energy here does not exhibit the unfavored growth presented in the previous two-body example, due to the adoption of non-quotient formula \eqref{eq:solar_flg_2}. The energy errors of the three proposed integrators are comparable in this case, and they all tend to reach a limiting value after a certain simulated time. The limit is set by the conserved momenta in the integrators. Inspecting the momenta and center of mass, the five considered integrators all exhibit similar performances in terms of invariant preservation. The errors of $\bm C$ is a few orders of magnitude larger than those of $\bm J$ and $\bm L$. As was explained in \cite{Wan2022}, this is due to its explicit dependence on time, which has an amplifying effect on the error. 

\begin{figure}
	\begin{center}
\begin{tabular}{|c|c|c|}
\hline
\includegraphics[angle=0, trim=80 80 120 80, clip=true, scale = 0.28]{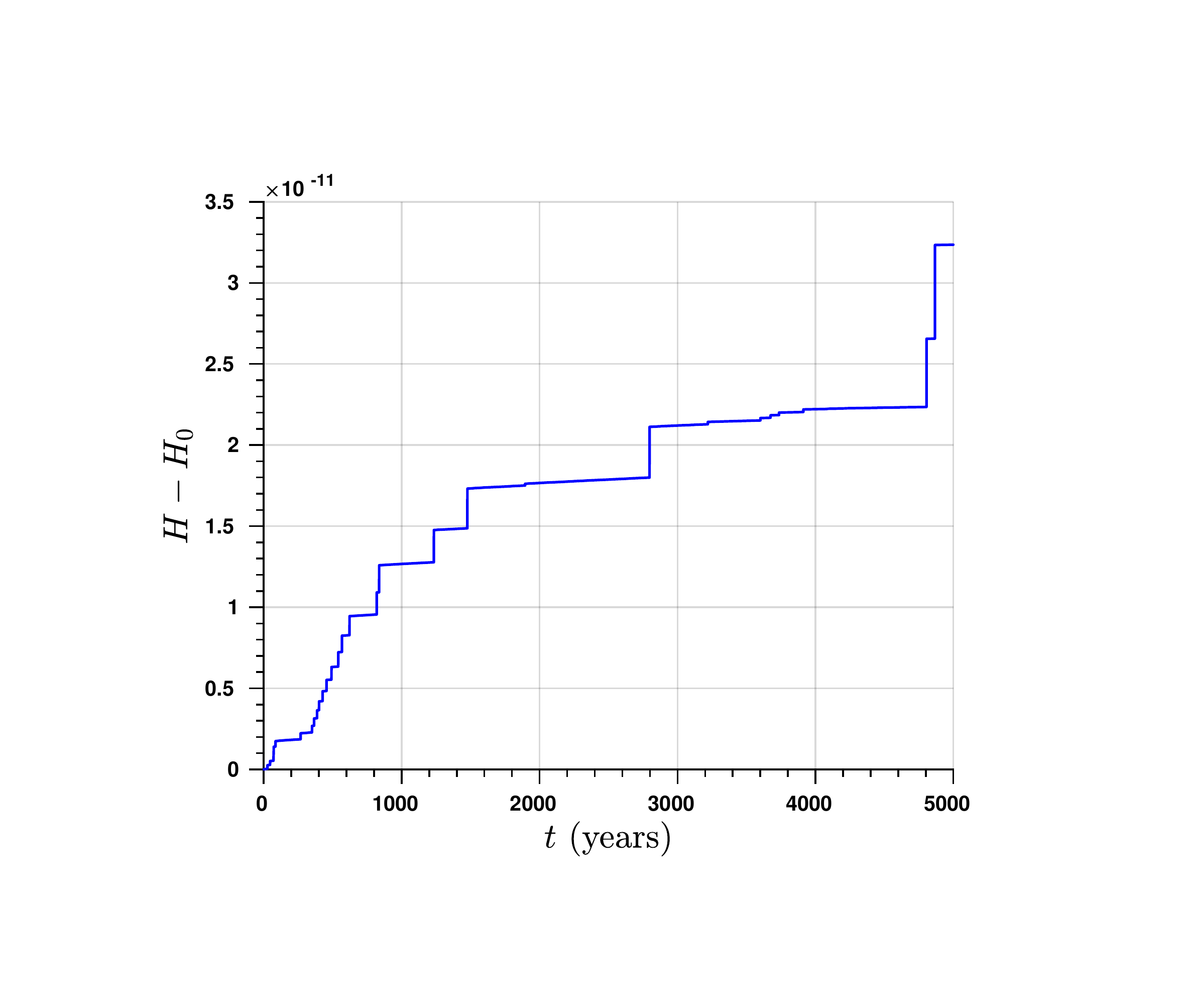} &
\includegraphics[angle=0, trim=80 80 120 80, clip=true, scale = 0.28]{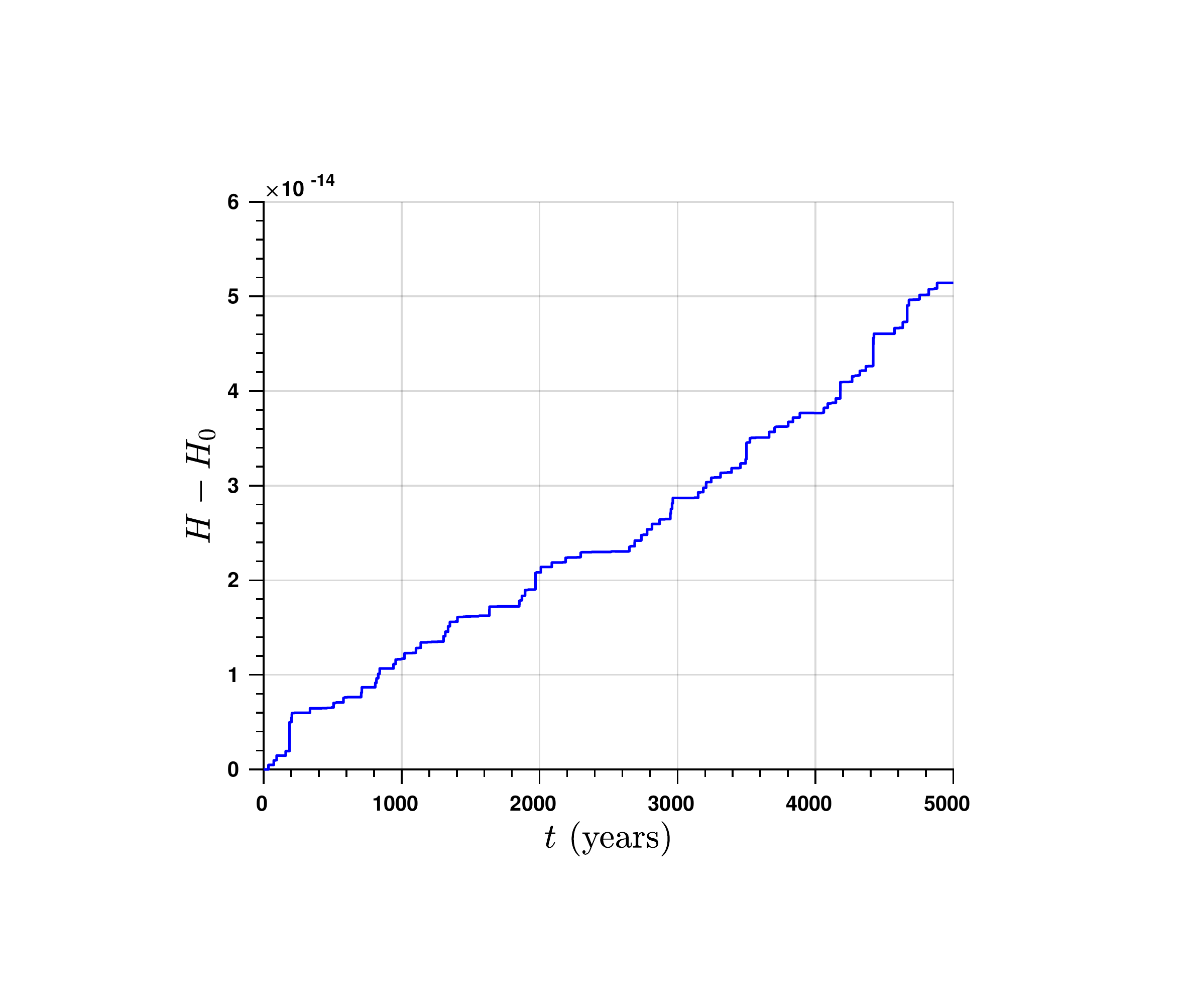} &
\includegraphics[angle=0, trim=80 80 120 80, clip=true, scale = 0.28]{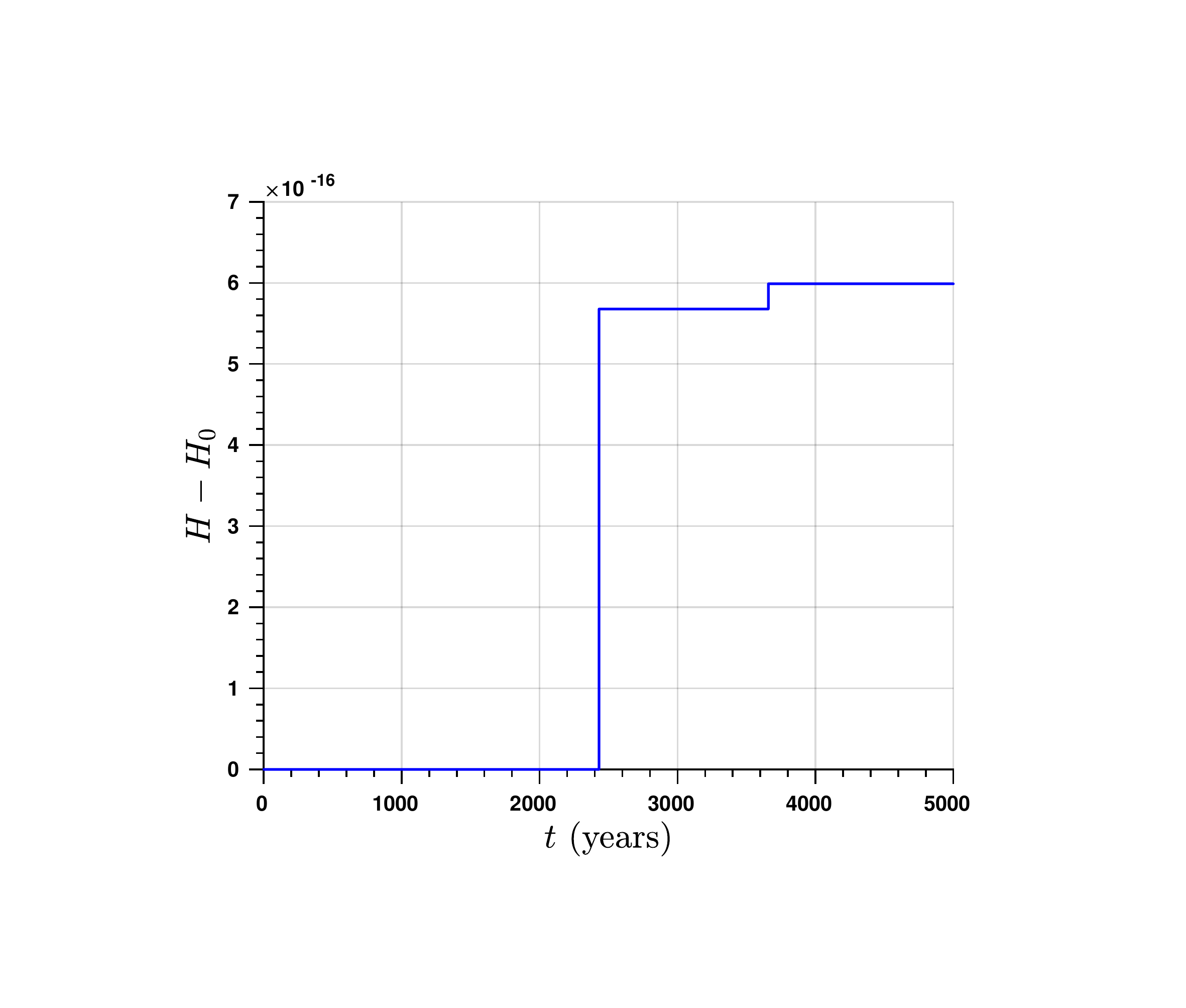} \\
\includegraphics[angle=0, trim=80 80 120 100, clip=true, scale = 0.28]{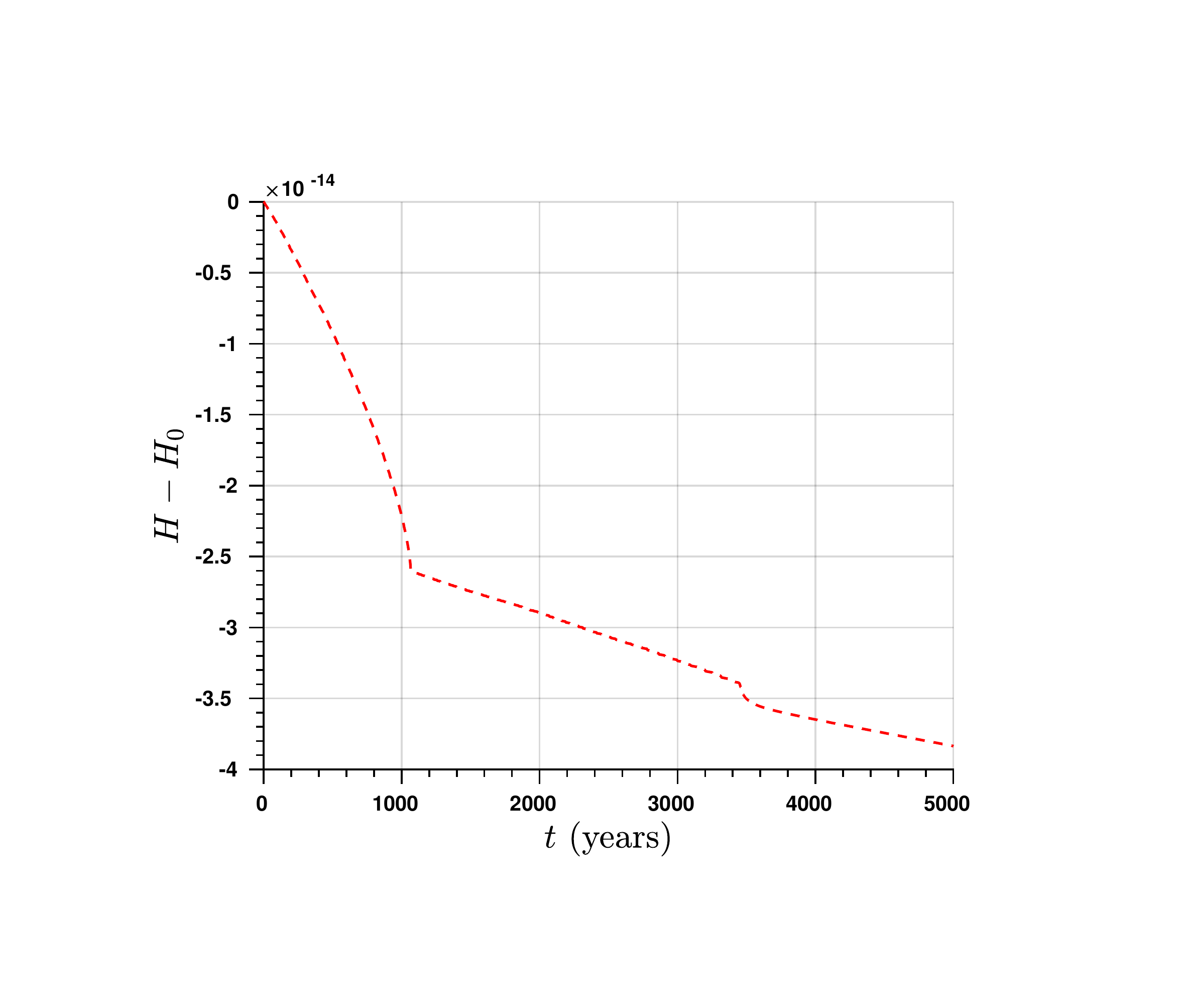} &
\includegraphics[angle=0, trim=80 80 120 100, clip=true, scale = 0.28]{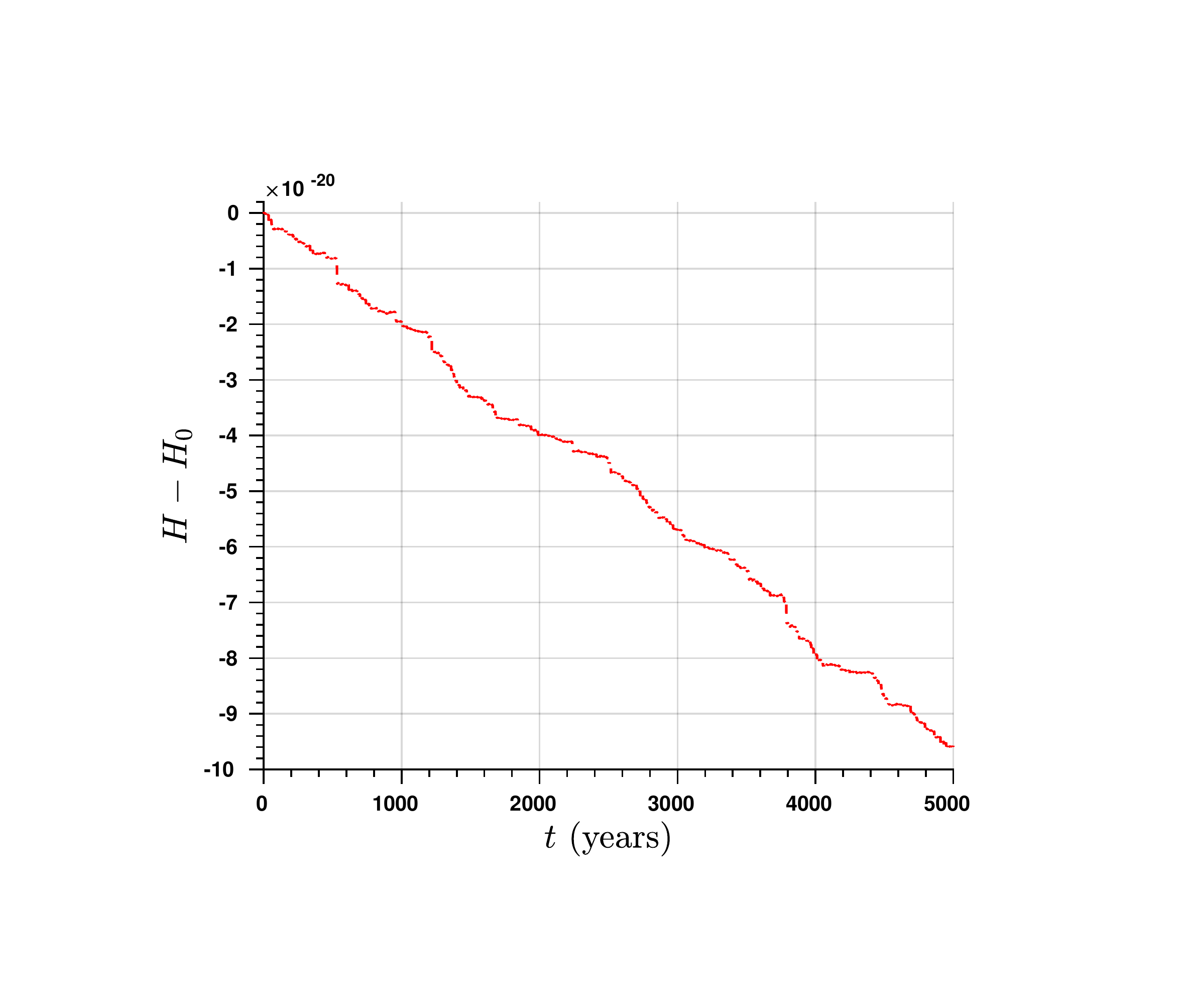} &
\includegraphics[angle=0, trim=80 80 120 100, clip=true, scale = 0.28]{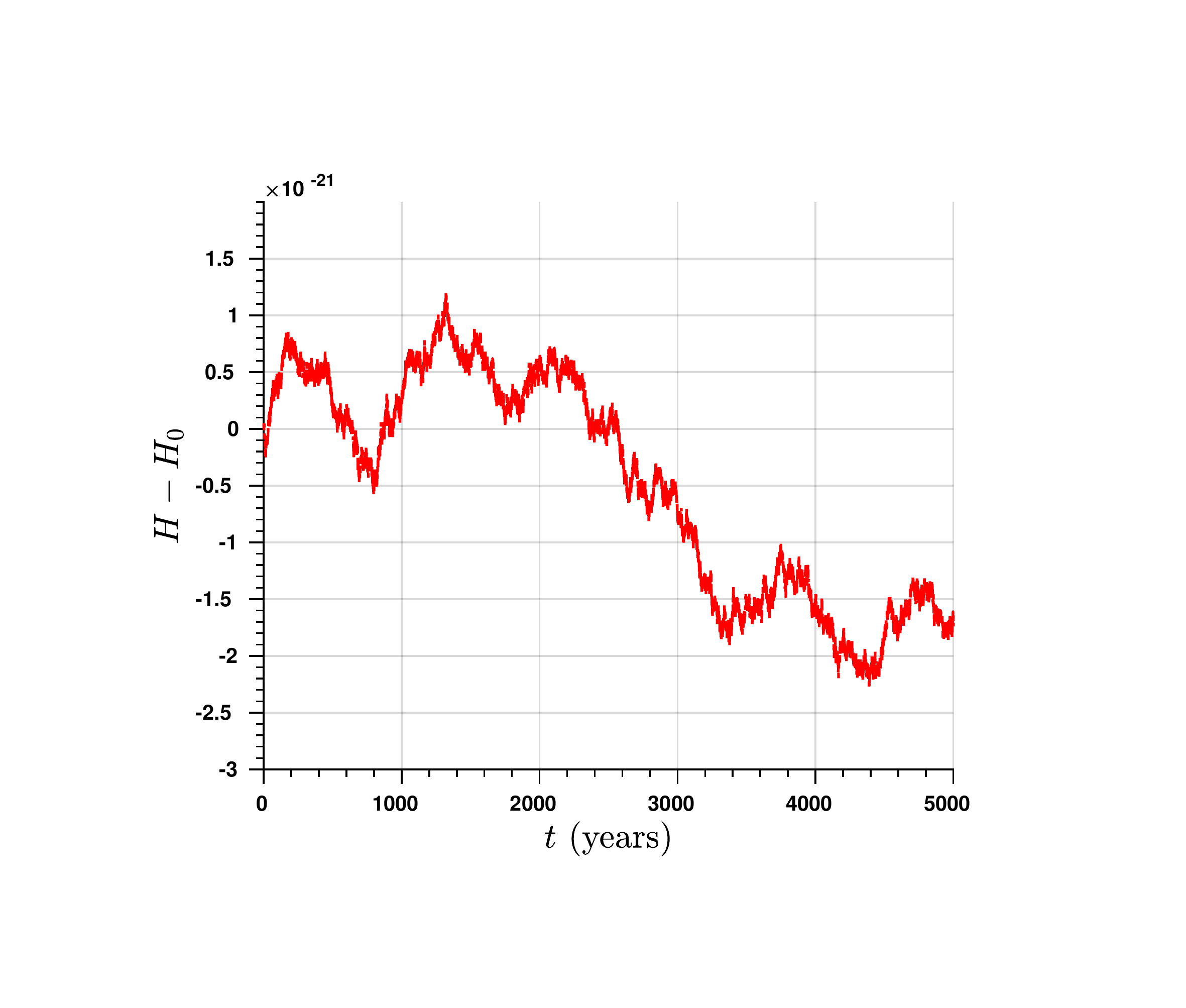} \\
\includegraphics[angle=0, trim=80 80 120 100, clip=true, scale = 0.28]{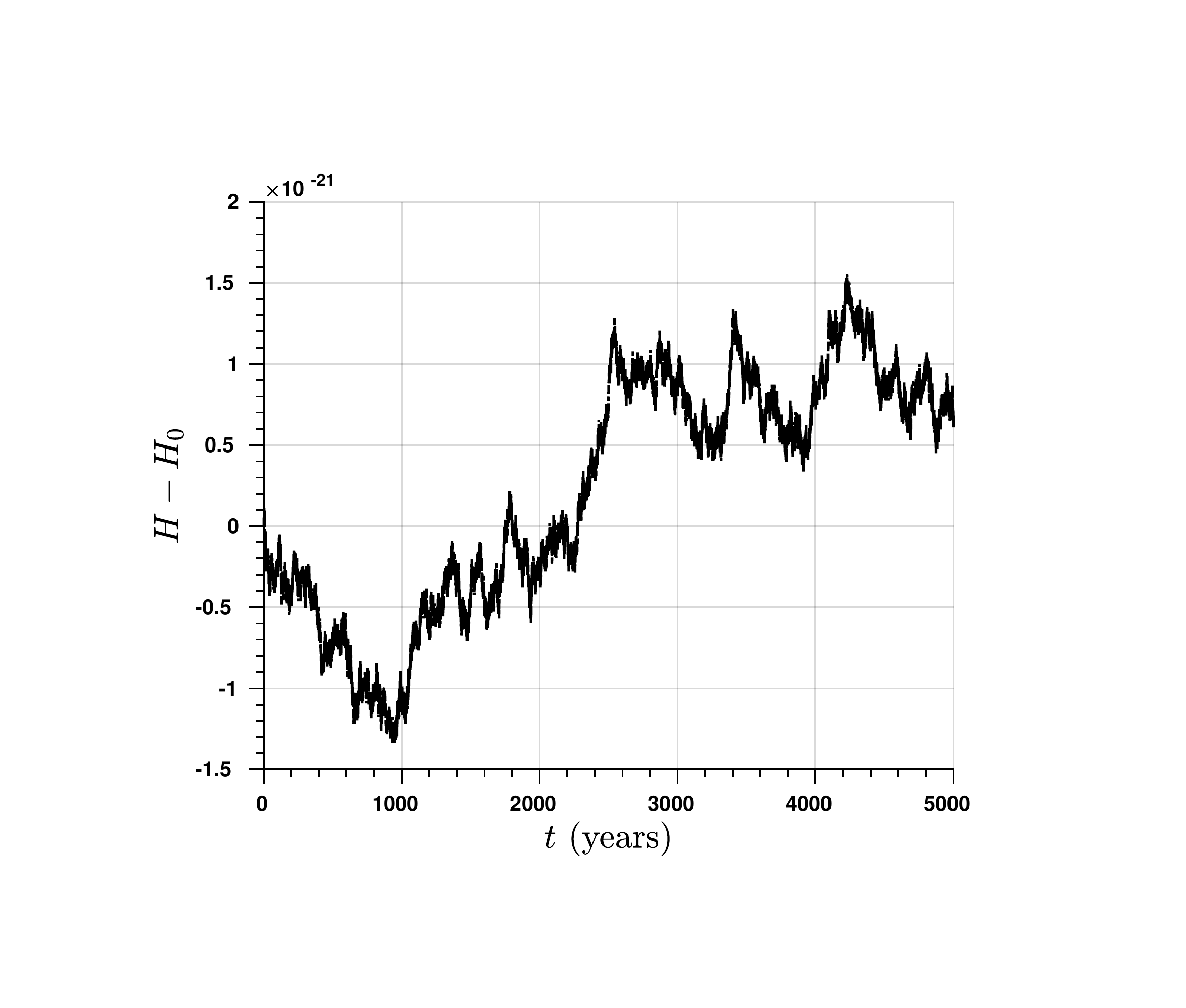} &
\includegraphics[angle=0, trim=80 80 120 100, clip=true, scale = 0.28]{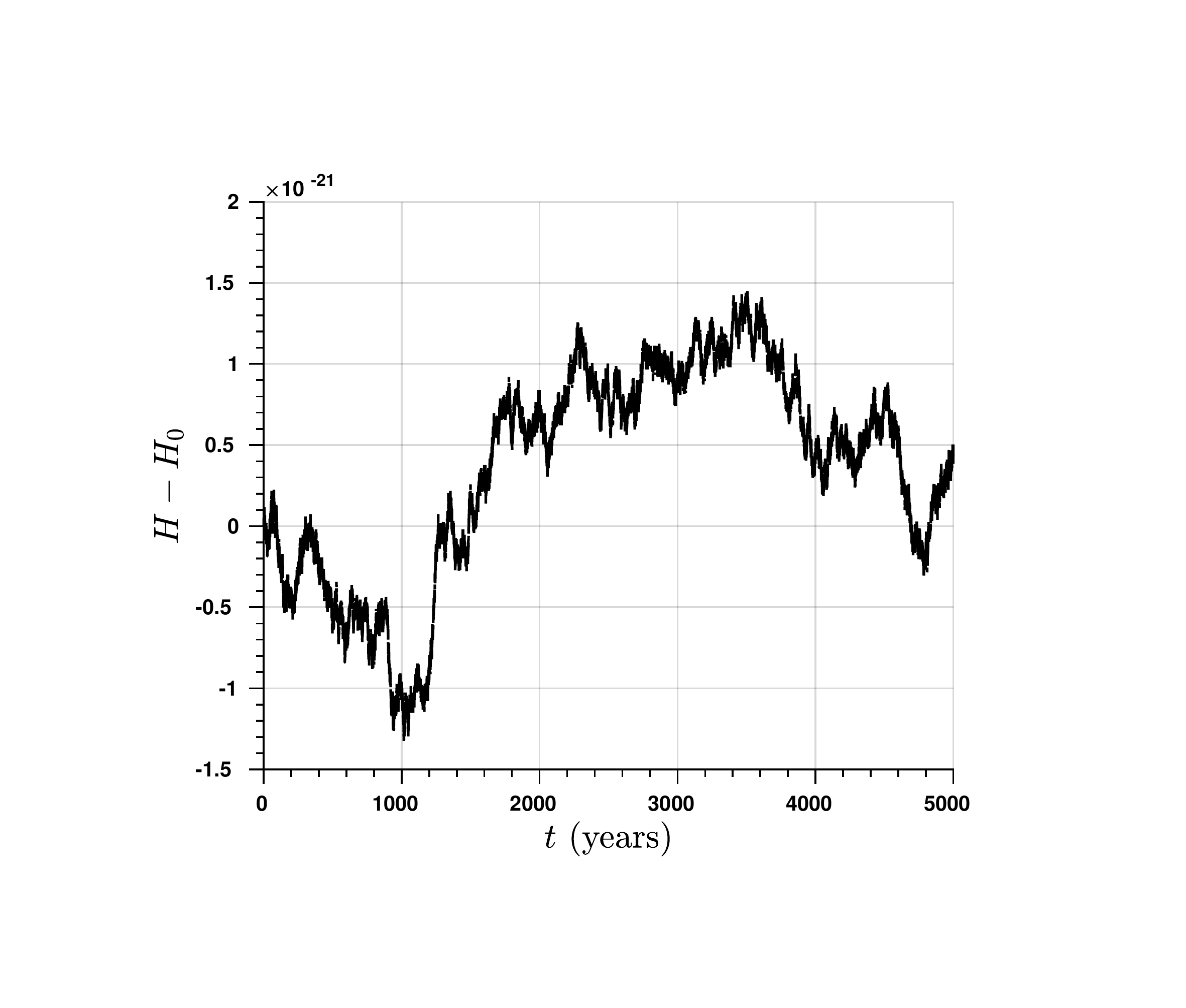} &
\includegraphics[angle=0, trim=80 80 120 100, clip=true, scale = 0.28]{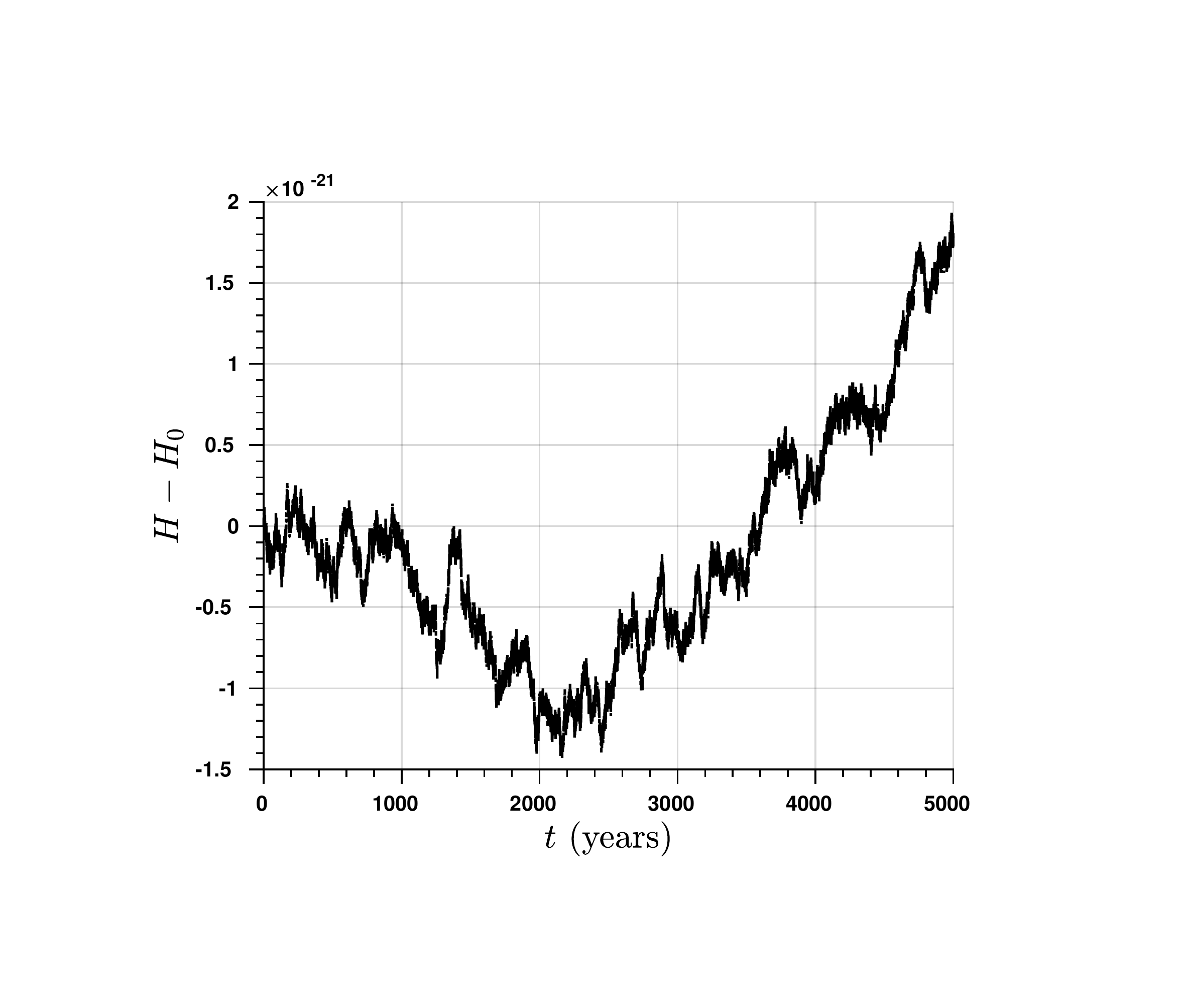} \\
$\mathrm{tol}_{\mathrm Q} = 10^{-4}$ & $\mathrm{tol}_{\mathrm Q} = 10^{-6}$ & $\mathrm{tol}_{\mathrm Q} = 10^{-8}$ \\
\hline
\end{tabular}
\caption{The error $H(\bm q_n, \bm p_n) - H(\bm q_0, \bm p_0)$ of the ten-body problem calculated by the LaBudde-Greenspan integrator with different alternate options, including the default option \eqref{eq:labudde-greenspan-default-switch} (blue solid), the generalized Eyre (red dashed), and the perturbed mid-point (black dash-dotted) integraotrs. The left, middle, and right columns are results with $\mathrm{tol}_{\mathrm Q} = 10^{-4}$, $10^{-6}$, and $10^{-8}$, respectively.} 
\label{fig:solar_hybird_labudde_greenspan}
\end{center}
\end{figure}

In the last, we again examine the robustness of the quotient formula of the LaBudde-Greenspan integrator by switching the definition of $\bm f^{AB}_{\mathrm{lg}}$ from \eqref{eq:solar_flg_2} to \eqref{eq:solar_flg_1}. In Figure \ref{fig:solar_hybird_labudde_greenspan}, the first row depicts the energy error of the default option \eqref{eq:janz-mid-point-rescue} with varying values of $\mathrm{tol}_{\mathrm Q}$. It can be gleaned from the figure that the error grows over time, similar to that of the two-particle example. The larger value of the tolerance $\mathrm{tol}_{\mathrm Q}$ results in a larger magnitude of the error, indicating the primary source of the energy growth comes from the alternate option. In the second row of Figure \ref{fig:solar_hybird_labudde_greenspan}, the results of the LaBudde-Greenspan paired with the generalized Eyre integrator are illustrated. It can be seen that for the two larger values of $\mathrm{tol}_{\mathrm Q}$, the energy decays over time. When $\mathrm{tol}_{\mathrm Q} = 10^{-8}$, the dissipation effect compensate the energy accumulation from the nonlinear solver, and the overall energy error is maintained within $\mathcal O(10^{-21})$, comparable to the LaBudde-Greenspan with \eqref{eq:solar_flg_2} (i.e., Figure \ref{fig:solar_energy} (b)). Finally, the third row of Figure \ref{fig:solar_hybird_labudde_greenspan} depicts the energy error of the LaBudde-Greenspan with the perturbed mid-point integrator. The dissipation introduced by $\bm f^{AB}_{\mathrm{mp}}$ is weaker than that of the generalized Eyre. We thus do not see the decay of the Hamiltonian. For all values of $\mathrm{tol}_{\mathrm Q}$, the energy error is bounded within $\mathcal O(10^{-21})$. The results again demonstrate that the developed integrators can be used as an effective option when the classical LaBudde-Greenspan exhibits a singular behavior numerically.

\section{Conclusions and future work}
\label{sec:conclusion}
In this work, we focused on the singular behavior of the quotient formula presented in the classical LaBudde-Greenspan integrator and energy-momentum schemes. Leveraging the specially developed quadrature rules, the energy can be split into two parts and treated separately to guarantee a dissipative nature in the numerical residual. In particular, a convex-concave split results in a first-order energy-decaying, momentum-conserving integrator. It is closely related to the celebrated Eyre's scheme in diffuse-interface problems, and we termed it the generalized Eyre integrator. The energy can be split into super-convex and super-concave parts, and they may facilitate the design of two second-order energy-decaying, momentum-conserving integrators. One can be viewed as the perturbed mid-point integrator, and the other one can be viewed as the perturbed trapezoidal integrator. These perturbation terms rectify the energy behavior of the two classical schemes in the nonlinear setting. Numerical examples are presented to corroborate the claimed properties of the developed integrators. Also, numerical evidence reveals that the classical LaBudde-Greenspan integrator paired with a classical mid-point rule in the singular limit may trigger an unfavored energy growth in long-term simulations. Combining the LaBudde-Greenspan integrator with one of the developed energy-decaying integrators helps counterbalance the error induced by the nonlinear solver.

There are several directions worthy of future study. First, the examples studied in this work are all restricted to the dynamics of the Hamilton system. The numerical strategy can be conveniently extended to non-conservative systems. The GENERIC formalism can be a suitable platform for this extension. An apparent advantage is that the proposed integrators survive when the system approaches a steady state, while the conventional energy-conserving integrators will suffer from the numerical instability rooted in its discrete force definition. Second, it is desirable to remove unwanted high-frequency modes in certain applications. The dissipative scheme developed in \cite{Armero2001} can be conveniently introduced to achieve this goal. Third, it is beneficial to extend the approach to the tensorial case, allowing an algorithm stress definition in elasticity without invoking the quotient formula \cite{Gonzalez2000}. This can be salubrious in the study of elastodynamics with a steady state and contact problems.

\section*{Acknowledgments}
This work is supported by the National Natural Science Foundation of China [Grant Number 12172160], Southern University of Science and Technology [Grant Number Y01326127], and the Guangdong-Hong Kong-Macao Joint Laboratory for Data-Driven Fluid Mechanics and Engineering Applications [Grant Number 2020B1212030001]. 

\bibliographystyle{elsarticle-num} 
\bibliography{em_dynamics}

\end{document}